\newtheorem{Theorem}{Theorem}[section]
\newtheorem{Corollary}[Theorem]{Corollary}
\newtheorem{Lemma}[Theorem]{Lemma}
\newtheorem{Example}[Theorem]{Example}
\newtheorem{Proposition}[Theorem]{Proposition}
\newtheorem{Remark}[Theorem]{Remark}
\newtheorem{Conjecture}[Theorem]{Conjecture}
\newtheorem{Definition}[Theorem]{Definition}
\newtheorem*{remark}{Remark}
\numberwithin{equation}{section}
\newcommand{\NSym}{\mathrm{NSym}}
\newcommand{\NCQSym}{\mathrm{NCQSym}}
\newcommand{\NCSym}{\mathrm{NCSym}}
\newcommand{\QSym}{\mathrm{QSym}}
\newcommand{\Sym}{\mathrm{Sym}}
\newcommand{\Peak}{\mathrm{Peak}}
\newcommand{\SetComp}{\mathrm{SetComp}}
\newcommand{\OddSetComp}{\mathrm{SetCompOdd}}
\newcommand{\peak}{\mathrm{Peak}}
\newcommand{\odd}{\mathrm{Odd}}
\newcommand{\oddd}{\mathrm{ODiff}}
\newcommand{\std}{\mathrm{Std}}
\newcommand{\Cat}{\mathrm{Cat}}
\newcommand{\NCPeak}{\mathrm{NC}\Pi}
\newcommand{\NCGamma}{\mathrm{NC}\Omega}
\newcommand{\Des}{\mathrm{Des}}
\newcommand{\bM}{\mathbf{M}}
\newcommand{\bF}{\mathbf{F}}
\newcommand{\bx}{{\bf x}}
\newcommand{\bbQ}{{\mathbb{Q}}}
\newcommand{\bfF}{{\mathbf{F}}}
\newcommand{\scrX}{{\mathscr{X}}}
\newcommand{\scrY}{{\mathscr{Y}}}
\newcommand{\scrE}{{\mathscr{E}}}
\newcommand{\scrF}{{\mathscr{F}}}
\newcommand{\bfone}{{\mathbf{1}}}
\newcommand{\bbmone}{{\mathbbm{1}}}
\newcommand{\bftwo}{{\mathbf{2}}}
\newcommand{\bfthree}{{\mathbf{3}}}
\newcommand{\bffour}{{\mathbf{4}}}
\newcommand{\bfdet}{{\mathbf{det}}}
\newcommand{\frakS}{{\mathfrak{S}}}
\newcommand{\bfh}{{\mathbf{h}}}
\newcommand{\bfe}{{\mathbf{e}}}
\newcommand{\bfp}{{\mathbf{p}}}
\newcommand{\bfm}{{\mathbf{m}}}
\newcommand{\bfs}{{\mathbf{s}}}
\newcommand{\bfM}{{\mathbf{M}}}
\newcommand{\bfK}{{\mathbf{K}}}
\newcommand{\bfG}{{\mathbf{G}}}
\newcommand{\calB}{{\mathcal{B}}}
\newcommand{\boldeta}{{\boldsymbol{\eta}}}
\newcommand{\sort}{{\mathrm{sort}}}
\newcommand{\cf}{{\mathrm{cf}}}
\newcommand{\sgn}{{\mathrm{sgn}}}
\newcommand{\reg}{{\mathrm{reg}}}
\title{The peak algebra in noncommuting variables}
\author{Farid Aliniaeifard}
\address{
	Research Center for Mathematics and Interdisciplinary Sciences, Shandong University \\ Frontiers Science Center for Nonlinear Expectations, Ministry of Education, Qingdao, Shandong, 266237, P. R. China}
\email{farid@sdu.edu.cn}
\author{Shu Xiao Li}
\address{
	Research Center for Mathematics and Interdisciplinary Sciences, Shandong University \\ Frontiers Science Center for Nonlinear Expectations, Ministry of Education, Qingdao, Shandong, 266237, P. R. China}
\email{lishuxiao@sdu.edu.cn}
\thanks{Both author were supported in part by the Provincial Nature Science Foundation of Shandong, Project No. ZR2024QA026 and the Fundamental Research Funds for the Central Universities.}
\subjclass[2010]{05E05, 05E10, 16T30, 20C30}
\keywords{quasisymmetric functions, peak algebra, descent-to-peak map}
\date{}
\begin{document}

\maketitle

\begin{abstract}
The well-known descent-to-peak map $\Theta_{\QSym}$ for the Hopf algebra of quasisymmetric functions, $\QSym$, and the peak algebra $\Pi$ were originally defined by Stembridge in 1997. We introduce their noncommutative analogues,  the labelled descent-to-peak map $\Theta_{{\NCQSym}}$ for the Hopf algebra of quasisymmetric functions in noncommuting variables, $\NCQSym$, and the peak algebra in noncommuting variables $\NCPeak$. Then, we define the Hopf algebra of  Schur $Q$-functions in noncommuting variables. We show that our generalizations possess many properties analogous to their classical counterparts. Furthermore, we show that the coefficients in the expansion of certain elements of $\NCPeak$ in the monomial basis of $\NCQSym$ satisfy the generalized Dehn-Sommerville equation of Bayer and Billera. In the end, we give representation-theoretic interpretations of the descent-to-peak map for the Hopf algebras of symmetric functions and noncommutative symmetric functions. 
\end{abstract}

\tableofcontents

\section{Introduction}

The first comprehensive study on the combinatorics of peaks was conducted by Stembridge \cite{S97}, who developed and introduced enriched $(P,\gamma)$-partitions, which is an analouge to Stanley’s theory of $(P,\gamma)$-partitions, with the key distinction that the notion of peaks replaces the notion of descents in the context of linear extensions of posets.
Generating functions of $(P,\gamma)$-partitions are the fundamental functions, $F_{(P,\gamma)}$, which give the Hopf algebra of quasisymmetric functions $\QSym$, and the generating functions of enriched $(P,\gamma)$-partitions are the enriched fundamental functions, $K_{(P,\gamma)}$, which span the peak algebra $\Pi$. Stembridge also defined the descent-to-peak algebra morphism $\Theta_\QSym$ from $\QSym$ to $\Pi$ where  $F_{(P,\gamma)}$ maps to $K_{(P,\gamma)}$, and showed that the dimension of the homogenous functions of degree $n$ of $\Pi$, $\Pi_n$, is equal to the number of odd compositions, compositions whose all parts are odd, which is equal to Fibonacci number $f_n$. 
Moreover, he showed that restricting the map $\Theta_\QSym$ to symmetric functions gives the Hopf algebra of Schur $Q$-functions. The Hopf algebra of Schur $Q$-functions, whose bases are indexed by odd partitions, are introduced in \cite{S11} to study the projective representations of symmetric and alternating groups. Combinatorially, the Schur $Q$-functions are equipped with a theory of shifted tableaux, including RSK correspondence, Littlewood-Richardson rule, and jeu de taquin \cite{S87, S89, W84}. In \cite{BMSvW}, Bergeron et al. showed that the peak algebra is a Hopf algebra and also the map $\Theta_\QSym$ is a Hopf algebra morphism. Also, the main result of \cite{S05} by Schocker is that the peak algebra is a left co-ideal of $\QSym$ under internal coproduct. \\

One intriguing application of peak algebra is its role in providing a combinatorial formula for the Kazhdan–Lusztig polynomials, as discussed.
In \cite{B14}, Brenti and Caselli gave
new characterization of the peak subalgebra of the
algebra of quasisymmetric functions and used this characterization to construct
a new basis for the peak algebra and a combinatorial formula for the Kazhdan–Lusztig polynomials. 
\\
 
  It is also shown that the peak algebra corresponds to the representations of the $0$-Hecke-Clifford algebra \cite{B04}. 
Further studies revealed connections between peaks and a variety of seemingly unrelated topics, such as the generalized Dehn-Sommerville equation \cite{A06, BMSvW0, B03} and the Schubert calculus of isotropic flag manifolds \cite{BMSvW0, BH}. Notably, in \cite{Nym01, N03}, the peak algebra is generalized to the Poirier-Reutenauer Hopf algebra of standard Young tableaux, which is introduced in \cite{P95}. Other generalizations can be found in \cite{A04, B06, H06}. \\

We provide an elegant and natural definition of the peak algebra in noncommuting variables and extend the theta map. We also generalize many known results related to the peak algebra to noncommuting variables. The proofs we provide here differ from the commutative cases; as a result of our constructions, we give alternative and often shorter proofs.  Our construction also establishes the definition of Schur $Q$-functions in noncommutative variables. We show that the relation between $\NCPeak$ and generalized Dehn-Sommerville equation is the same as that of the commutative case. 
 We also mention two surprising and interesting applications of the internal coproduct of $\NCPeak$ given by \cite{ZLZ} in the enumeration of Genocchi numbers and $q$-tangent numbers, in addition to giving representation-theoretic interpretations of theta maps of symmetric functions and noncommutative symmetric functions. \\

 In Section \ref{sec:painting} we provide a summary of the results using generalized chromatic functions along with a compelling combinatorial description of the findings. This section also offers a clear presentation of the results in this paper without proofs, except for the representation-theoretic interpretations. 
Section \ref{sec:per} gives the relevant background.
Section \ref{sec:sten} describes set compositions and odd set compositions of $[n]=\{1,2,\dots, n\}$ using pairs of subsets of $[n-1]$ and permutations of $[n]$. 
In Section \ref{Sec:fundamental} we extend the notion of the fundamental functions to noncommuting variables and in Proposition \ref{prop:fund1} prove an analogue of the fundamental theorem of $(P,\gamma)$-partitions. 
In Section \ref{sec:peak-en} we extend the notion of the enriched fundamental functions to noncommuting variables and in Proposition \ref{prop:fund2} prove an analog of the fundamental theorem of enriched $(P,\gamma)$-partitions. 
In Section \ref{sec:peak-n}, we introduce enriched monomial functions. In Thereom \ref{Thm:NCPeakBasis}, we show that enriched monomial functions give a basis for the peak algebra in noncommuting variables, $\NCPeak$, and the dimension of the homogenous functions of degree $n$ of $\NCPeak$, $\NCPeak_n$, is equal to the number of odd set compositions, set compositions whose all parts are odd, which is equal to $a_n$ where $a_n$ is the sequence A006154 in the OEIS. 
In Section \ref{Sec:NCPeak}, we show that $\NCPeak$ is a Hopf algebra and describe its product and coproduct in the language of generalized chromatic functions.
In Section \ref{sec:thetamap-n}, we extend the notion of descent-to-peak map and define the labelled descent-to-peak map $\Theta_{\NCQSym}$, and give short proofs for the main results in \cite{BMSvW}. In Proposition \ref{prop:M}, we show that $\Theta_{\NCQSym}$ maps the monomial basis of $\NCQSym$ to a constant multiple of the enriched monomial basis of $\NCPeak$. In Theorem \ref{thm:theta}, we find the value of $\Theta_{\NCQSym}$ at fundamental functions and that applying the map $\rho$, where it commutes the variables, we obtain the descent-to-peak map $\Theta_{\QSym}$.
  In Section \ref{sec:D-S},  we fix a permutation $\sigma_n$ in each degree and we show that the coefficients of the expansion of certain elements in $\NCPeak$ in terms of monomial elements $\bfM_{(A,\sigma_n)}$ of $\NCQSym$ give a function that satisfies the generalized Dehn-Sommerville equation. 
In Section \ref{Sec:NCSym}, we define the Hopf algebra of Schur $Q$-functions in noncommuting variables $\NCGamma$ and show that it is equal to the restriction of the map $\Theta_{\NCQSym}$ to symmetric functions in noncommuting variables. The Schur $Q$-functions that are a basis for $\NCGamma$ are indexed by odd set partitions.  
In Section \ref{Sec:combid} and \ref{Sec:com}, we present several combinatorial properties of set compositions that we use them in the last section to show that the peak algebra in noncommuting variables is a left co-ideal of $\NCQSym$ under internal coproduct, extending    Schocker's result in \cite{S11} that the peak algebra is a left co-ideal of $\QSym$ under internal coproduct. In the last section, we present a functor that yields the theta map for the Hopf algebra of symmetric functions. We also provide a representation-theoretic interpretation of the theta map for noncommutative symmetric functions, based on a structure originating from supercharacter theory in Section \ref{Sec:rep}.

\section{A painted synopsis}\label{sec:painting}

We summarize the combinatorial results of this paper using generalized chromatic functions \cite{ALvW23}. For the representation theoretic results see the last section. 

\subsection{Edge-coloured digraphs and some operators}
Stanley \cite{Sta71} defined $(P,\gamma)$-partitions by generalizing MacMahon's work on plane partitions \cite{Mac11}\footnote{For a complete history of $P$-partitions see I. M. Gessel's paper \cite{Ges16}.}. $(P,\gamma)$-partitions can be identified as certain vertex-colourings of some family of edge-coloured digraphs. We describe this family of edge-coloured digraphs and some useful operators between them. 

The Hasse diagram of a poset $P=(X,\leq)$ can be seen as a digraph whose vertices are the elements of the ground set $X$ of the poset, and there is a directed edge from $a$ to $b$ if $a\leq b$ and if there is $c\in X$ such that $a\leq c\leq b$, then either $c=a$ or $c=b$. {\bf \emph{ Throughout this paper, all digraphs are Hasse diagrams of some posets.}} For example, the following digraph corresponds to a poset with the ground set $\{1,2,3,4\}$.
$$
 \begin{tikzpicture}[scale=0.7]
 \node(a) at (0,0){$4$};
 \node(b) at (-1.5,1.5){$2$};
 \node(c) at (1.5,1.5){$1$};
 \node(d) at (-1.5,3){$3$};

 \draw[thick,->] (a)--(b);
 \draw[thick,->] (a)--(c);
 \draw[thick,->] (b)--(d);
 \end{tikzpicture} 
 $$

 An \emph{edge-coloured digraph} is a digraph whose edges are of the form $\rightarrow$ or $\Rightarrow$.
\begin{center}
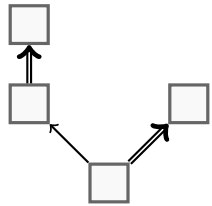
 
  \begin{tikzpicture}[scale=0.7,
roundnode/.style={circle, draw=black!60, fill=gray!10, very thick, minimum size=5mm},
squarednode/.style={rectangle, draw=black!60, fill=gray!5, very thick, minimum size=5mm},]

 \node[squarednode](a) at (0,0){};
 \node[squarednode](b) at (-1.5,1.5){};
 \node[squarednode](c) at (1.5,1.5){};
 \node[squarednode](d) at (-1.5,3){};
 
 \draw[thick,->] (a)--(b);
 \draw[thick,double,->] (a)--(c);
 \draw[thick,double,->] (b)--(d);
 \end{tikzpicture} 
   \captionof{figure}{An edge-coloured digraph}\label{Fig:edge-coloured}
 \end{center} 
 
 We now define some useful edge-coloured digraphs. Let
 $Q_n$ (resp. $P_n$) be the edge-coloured directed path with $n$ vertices whose all edges are of the form  $\Rightarrow$ (resp. $\rightarrow$).  
  $$
  \begin{tikzpicture}[
roundnode/.style={circle, draw=black!60, fill=gray!10, very thick, minimum size=7mm},
squarednode/.style={rectangle, draw=black!60, fill=gray!5, very thick, minimum size=5mm},]
 \node[squarednode](1) at (1.3,0){};
  \node[squarednode](2) at (2.6,0){};
    \node[squarednode](3) at (3.9,0){};
    \node at (2.6,-0.7){$Q_{3}$};
    \draw[thick,double,->](1)--(2);
       \draw[ thick,double,->](2)--(3);
 \end{tikzpicture}\quad \quad  \quad   \begin{tikzpicture}[
roundnode/.style={circle, draw=black!60, fill=gray!10, very thick, minimum size=7mm},
squarednode/.style={rectangle, draw=black!60, fill=gray!5, very thick, minimum size=5mm},]
 \node[squarednode](1) at (1.3,0){};
  \node[squarednode](2) at (2.6,0){};
    \node[squarednode](3) at (3.9,0){};
    \node at (2.6,-0.7){$P_{3}$};
    \draw[thick,->](1)--(2);
       \draw[ thick,->](2)--(3);
 \end{tikzpicture} 
 $$
 
 The \emph{disjoint union} of edge-coloured digraphs $G_1$ and $G_2$ with $V(G_1)\cap V(G_2)=\emptyset$, denoted $G_1\sqcup G_2$,  is an edge-coloured digraph such that 
 \begin{enumerate} 
 \item The vertex set of $G_1\sqcup G_2$ is  the disjoint union of the vertex sets of $G_1$ and $G_2$. 
 \item The edge set of $G_1\sqcup G_2$ is  the disjoint union of the edge sets of $G_1$ and $G_2$. 
 \item $a\Rightarrow b$ in $G_1\sqcup G_2$ if either $a\Rightarrow b$ in $G_1$ or in $G_2$. 
  \item $a\rightarrow b$ in $G_1\sqcup G_2$ if either $a\rightarrow b$ in $G_1$ or in $G_2$.
 \end{enumerate} 
   $$
  \begin{tikzpicture}[
roundnode/.style={circle, draw=black!60, fill=gray!10, very thick, minimum size=7mm},
squarednode/.style={rectangle, draw=black!60, fill=gray!5, very thick, minimum size=5mm},]
 \node[squarednode](1) at (1.3,0){};
  \node[squarednode](2) at (2.6,0){};
    \node[squarednode](3) at (3.9,0){};
    \draw[thick,double,->](1)--(2);
       \draw[ thick,double,->](2)--(3);
 \node[squarednode](11) at (4.8,0){};
  \node[squarednode](21) at (6.1,0){};
    \node[squarednode](31) at (7.4,0){};
    \draw[thick,->](11)--(21);
       \draw[ thick,->](21)--(31);
       \node at (4.35,-0.7){$Q_3 \sqcup P_3$};
 \end{tikzpicture} 
 $$

 The \emph{solid sum} of edge-coloured directed paths $G_1$ and $G_2$, denoted by $G_1\rightarrow G_2$,  is an edge-coloured digraph obtained by connecting the last vertex of $G_1$ to the first vertex of $G_2$ by a solid edge $\rightarrow$. 
    $$
  \begin{tikzpicture}[
roundnode/.style={circle, draw=black!60, fill=gray!10, very thick, minimum size=7mm},
squarednode/.style={rectangle, draw=black!60, fill=gray!5, very thick, minimum size=5mm},]
 \node[squarednode](1) at (1.3,0){};
  \node[squarednode](2) at (2.6,0){};
    \node[squarednode](3) at (3.9,0){};
    \node at (4.6,-0.7){$Q_{3}\rightarrow  Q_{3}$};
    \draw[thick,double,->](1)--(2);
       \draw[ thick,double,->](2)--(3);
 \node[squarednode](a) at (5.2,0){${}$};
  \node[squarednode](b) at (6.5,0){};
    \node[squarednode](c) at (7.8,0){};
    \draw[thick,double,->](a)--(b);
       \draw[ thick,double,->](b)--(c);
          \draw[ thick,double,->](b)--(c);
            \draw[ thick,->](3)--(a);
 \end{tikzpicture} 
 $$

 \subsection{Proper colourings and generalized chromatic functions}\label{subsec:gcf}
 
 As we mentioned earlier, $(P,\gamma)$-partitions can be identified as certain types of vertex-colourings of edge-coloured digraphs. We describe these types of vertex-colourings of edge-coloured digraphs, and then we construct their generating functions, which are called generalized chromatic functions. 

A \emph{proper} colouring of an edge-coloured digraph $G$ is a function $$\kappa:V(G)\rightarrow \mathbb{N}=\{1,2,3,\dots\}$$ such that 
\begin{enumerate}
\item If $a\Rightarrow b$, then $\kappa(a)\leq \kappa(b)$.
\item If $a\rightarrow b$, then $\kappa(a)< \kappa(b)$.
\end{enumerate}

Given an edge-coloured digraph, we usually write the colours of the vertices outside of the vertices, which are non-bold positive integers. 
\begin{center}
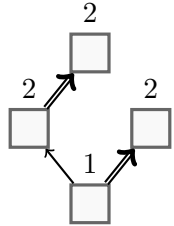
 
\begin{tikzpicture}[scale=0.4,
roundnode/.style={circle, draw=black!60, fill=gray!10, very thick, minimum size=5mm},
squarednode/.style={rectangle, draw=black!60, fill=gray!5, very thick, minimum size=5mm},]
                 \node[squarednode,label=above:{$2$}](d) at (0,5) {};
				\node[squarednode,label=above:{$2$}](b)  at (-2,2.5) {};
				\node[squarednode,label=above:{$1$}](a)  at (0,0) {};
				\node[squarednode, label=above:{$2$}](c)  at (2,2.5) {};
				\draw[black,double, thick,->] (a)--(c);
				\draw[black, thick,->] (a)--(b);
				\draw[black, double, thick,->] (b)--(d);
			\end{tikzpicture}
			 \captionof{figure}{A proper colouring of the edge-coloured digraph in Figure \ref{Fig:edge-coloured}}\label{Fig:proper-colouring-edge-coloured}
\end{center}

Recall that $\bbQ[[x_1,x_2,\dots]]$ is the algebra of formal power series in infinitely many commuting variables $x=\{x_1,x_2,\dots\}$ over $\bbQ$.

The \emph{generalized chromatic function}  of an edge-coloured digraph $G$ is 
$$\scrX_G=\sum_{\kappa} \prod_{v\in V(G)} x_{\kappa(v)}$$  where the sum is over all proper colourings $\kappa$ of $G$. For example, if $G$ is the edge-coloured digraph in Figure \ref{Fig:edge-coloured}, then 
\begin{align*}
		\begin{array}{ccccccccc}
			& \begin{tikzpicture}[scale=0.4,
roundnode/.style={circle, draw=black!60, fill=gray!10, very thick, minimum size=5mm},
squarednode/.style={rectangle, draw=black!60, fill=gray!5, very thick, minimum size=5mm},]
				\draw[black,double, thick,->] (a)--(c);
				\draw[black, thick,->] (a)--(b);
				\draw[black, double, thick,->] (b)--(d);
				\node[squarednode,label=above:{$2$}](d) at (0,5) {};
				\node[squarednode,label=above:{$2$}](b)  at (-2,2.5) {};
				\node[squarednode,label=above:{$1$}](a)  at (0,0) {};
				\node[squarednode, label=above:{$2$}](c)  at (2,2.5) {};
			\end{tikzpicture} &
			&\begin{tikzpicture}[scale=0.4,
roundnode/.style={circle, draw=black!60, fill=gray!10, very thick, minimum size=5mm},
squarednode/.style={rectangle, draw=black!60, fill=gray!5, very thick, minimum size=5mm},]
				\draw[black,double, thick,->] (a)--(c);
				\draw[black, thick,->] (a)--(b);
				\draw[black, double, thick,->] (b)--(d);
				\node[squarednode,label=above:{$3$}](d) at (0,5) {};
				\node[squarednode,label=above:{$2$}](b)  at (-2,2.5) {};
				\node[squarednode,label=above:{$1$}](a)  at (0,0) {};
				\node[squarednode, label=above:{$2$}](c)  at (2,2.5) {};
			\end{tikzpicture} &
			&  \begin{tikzpicture}[scale=0.4,
roundnode/.style={circle, draw=black!60, fill=gray!10, very thick, minimum size=5mm},
squarednode/.style={rectangle, draw=black!60, fill=gray!5, very thick, minimum size=5mm},]
				\draw[black,double, thick,->] (a)--(c);
				\draw[black, thick,->] (a)--(b);
				\draw[black, double, thick,->] (b)--(d);
				\node[squarednode,label=above:{$2$}](d) at (0,5) {};
				\node[squarednode,label=above:{$2$}](b)  at (-2,2.5) {};
				\node[squarednode,label=above:{$1$}](a)  at (0,0) {};
				\node[squarednode, label=above:{$3$}](c)  at (2,2.5) {};
			\end{tikzpicture} & 
			& \begin{tikzpicture}[scale=0.4,
roundnode/.style={circle, draw=black!60, fill=gray!10, very thick, minimum size=5mm},
squarednode/.style={rectangle, draw=black!60, fill=gray!5, very thick, minimum size=5mm},]
				\draw[black,double, thick,->] (a)--(c);
				\draw[black, thick,->] (a)--(b);
				\draw[black, double, thick,->] (b)--(d);
				\node[squarednode,label=above:{$3$}](d) at (0,5) {};
				\node[squarednode,label=above:{$2$}](b)  at (-2,2.5) {};
				\node[squarednode,label=above:{$1$}](a)  at (0,0) {};
				\node[squarednode, label=above:{$3$}](c)  at (2,2.5) {};
			\end{tikzpicture}\\
			\scrX_G=& x_1x_2^3 & + & x_1x_2^2x_3 & + & x_1x_2^2x_3 & + & x_1x_2x_3^2 &+\cdots.
		\end{array}
	\end{align*}
	
	Generalized chromatic functions can be seen as generating functions of $(P,\gamma)$-partitions, and so they are quasisymmetric functions (see Section \ref{subsec:(enr)Ppar}).


	\subsection{Enriched colourings and enriched chromatic functions}\label{subsec:ecf}
	Stembridge \cite{S97} defined enriched $(P, \gamma)$-partitions and used them to associate tableaux with Schur $Q$-functions \cite{S11}\footnote{For an English reference, see I. G. Macdonald's book \cite[Chapter III, Section 8]{M95}, where he described Schur's $Q$-functions in more detail. }. Enriched $(P,\gamma)$-partitions can be identified as certain types of vertex-colourings of edge-coloured digraphs. We describe these types of vertex-colourings of edge-coloured digraphs, and then we construct their generating functions, which are called enriched chromatic functions. 
	
	Given an edge-coloured digraph $G$, an \emph{enriched} colouring of $G$ is a function 
	$$\kappa: V(G)\rightarrow \{\dots \prec -1\prec1\prec -2\prec 2 \prec \cdots \}$$ such that 
	\begin{enumerate} 
	\item If $a\Rightarrow b$, then either $\kappa(a)\prec \kappa(b)$ or $\kappa(a)=\kappa(b)>0$. 
	\item If $a\rightarrow b$, then either $\kappa(a)\prec \kappa(b)$ or $\kappa(a)=\kappa(b)<0$.
	\end{enumerate}

The \emph{enriched chromatic function}  of an edge-coloured digraph $G$ is 
$$\scrE_G=\sum_{\kappa} \prod_{v\in V(G)} x_{|\kappa(v)|}$$  where the sum is over all enriched colourings $\kappa$ of $G$. For example, if $G$ is the edge-coloured digraph in Figure \ref{Fig:edge-coloured}, then 
\begin{align*}
		\begin{array}{ccccccccc}
			& \begin{tikzpicture}[scale=0.4,
roundnode/.style={circle, draw=black!60, fill=gray!10, very thick, minimum size=5mm},
squarednode/.style={rectangle, draw=black!60, fill=gray!5, very thick, minimum size=5mm},]
				\draw[black,double, thick,->] (a)--(c);
				\draw[black, thick,->] (a)--(b);
				\draw[black, double, thick,->] (b)--(d);
				\node[squarednode,label=above:{$1$}](d) at (0,5) {};
				\node[squarednode,label=above:{$-1$}](b)  at (-2,2.5) {};
				\node[squarednode,label=above:{$-1$}](a)  at (0,0) {};
				\node[squarednode, label=above:{$1$}](c)  at (2,2.5) {};
			\end{tikzpicture} &
			&\begin{tikzpicture}[scale=0.4,
roundnode/.style={circle, draw=black!60, fill=gray!10, very thick, minimum size=5mm},
squarednode/.style={rectangle, draw=black!60, fill=gray!5, very thick, minimum size=5mm},]
				\draw[black,double, thick,->] (a)--(c);
				\draw[black, thick,->] (a)--(b);
				\draw[black, double, thick,->] (b)--(d);
				\node[squarednode,label=above:{$1$}](d) at (0,5) {};
				\node[squarednode,label=above:{$1$}](b)  at (-2,2.5) {};
				\node[squarednode,label=above:{$-1$}](a)  at (0,0) {};
				\node[squarednode, label=above:{$1$}](c)  at (2,2.5) {};
			\end{tikzpicture} &
			&\begin{tikzpicture}[scale=0.4,
roundnode/.style={circle, draw=black!60, fill=gray!10, very thick, minimum size=5mm},
squarednode/.style={rectangle, draw=black!60, fill=gray!5, very thick, minimum size=5mm},]
				\draw[black,double, thick,->] (a)--(c);
				\draw[black, thick,->] (a)--(b);
				\draw[black, double, thick,->] (b)--(d);
				\node[squarednode,label=above:{$2$}](d) at (0,5) {};
				\node[squarednode,label=above:{$-2$}](b)  at (-2,2.5) {};
				\node[squarednode,label=above:{$1$}](a)  at (0,0) {};
				\node[squarednode, label=above:{$1$}](c)  at (2,2.5) {};
			\end{tikzpicture} & 
			& \begin{tikzpicture}[scale=0.4,
roundnode/.style={circle, draw=black!60, fill=gray!10, very thick, minimum size=5mm},
squarednode/.style={rectangle, draw=black!60, fill=gray!5, very thick, minimum size=5mm},]
				\draw[black,double, thick,->] (a)--(c);
				\draw[black, thick,->] (a)--(b);
				\draw[black, double, thick,->] (b)--(d);
				\node[squarednode,label=above:{$2$}](d) at (0,5) {};
				\node[squarednode,label=above:{$-2$}](b)  at (-2,2.5) {};
				\node[squarednode,label=above:{$1$}](a)  at (0,0) {};
				\node[squarednode, label=above:{$-2$}](c)  at (2,2.5) {};
			\end{tikzpicture}\\
			\scrE_G=& x_1^4 & + & x_1^4 & + & x_1^2x_2^2 & + & x_1x_2^3&+\cdots.
		\end{array}
	\end{align*}

\subsection{Peak algebra, descent-to-peak map,  and earlier results}\label{subsec:peak-theta-results}

Stembridge defined the peak algebra $\Pi$\footnote{See Section \ref{lem:ppar-to-gcf} for the justification that this space is called the peak algebra.} in \cite{S97} as a space spanned by the generating functions of enriched $(P,\gamma)$-partitions. Since generating functions of enriched $(P,\gamma)$-partitions are the enriched chromatic functions of edge-coloured digraphs and vice versa (Lemma  \ref{lem:ppar-to-gcf}), we have that the peak algebra is spanned by the set 
$$\{\scrE_G: \text{$G$ is an edge-coloured digraph}\}.$$ 

Stembridge also defined the \emph{descent-to-peak map}\footnote{See Section \ref{subsec:(enr)Ppar} for the justification that this map is called descent-to-peak map.} $\Theta_{\QSym}$; as shown in Corollary \ref{cor:theta-gcf1}, we can write it as follows, 
 \begin{align}\label{Def:ThetaMap}
\begin{array}{cccc}
\Theta_\QSym:& \QSym & \rightarrow & \Pi\\
& \scrX_G & \mapsto & \scrE_G.
\end{array} 
\end{align} 
He showed that the descent-to-peak map is a surjective algebra morphism \cite[Theorem 3.1]{S97}.

We now present a series of known results for peak algebra and the descent-to-peak map that we later extend to noncommuting variables. 

\begin{enumerate}[leftmargin=*]
\item   A \emph{composition} $\alpha$ of $n$, denoted $\alpha\vDash n$, is a list of positive integers whose sum is $n$. An  \emph{odd composition} is a composition whose all parts are odd. A set $B\subseteq \{2,3,\dots,n-1\}$ is called a \emph{peak set}\footnote{The \emph{peak set of a permutation} $\sigma\in \mathfrak{S}_n$ is the set $\peak(\sigma)=\{i\in \{2,3,\dots,n-1\}: \sigma(i-1)<\sigma(i) >\sigma(i+1)\}$. If $i\in \peak(\sigma)$, then $i-1,i+1\not\in \peak(\sigma)$. Thus each peak set is the peak set of a  permutation.} if $b\in B$ implies that $\{b-1,b+1\}\cap B=\emptyset$. By \cite[Theorem 3.1]{S97} the dimension of the space of the homogeneous elements of degree $n$ of $\Pi$, $\Pi_n$, is 
\begin{align*}
\dim(\Pi_n)&=|\{B \subseteq\{2,3,\dots,n-1\} \text{~is a peak set}\}|\\
&=|\{\alpha\vDash [n]: \text{$\alpha$ is an odd composition} \}|\\
&=f_n,
\end{align*}
where $f_n$ is the $n$th Fibonacci number.

\item  The peak algebra $\Pi$ is a Hopf algebra \cite[Theorem 2.2]{BMSvW}.

\item The descent-to-peak map, $\Theta_{\QSym}$, is a Hopf algebra morphism \cite[Section 2]{BMSvW}.

\item The value of the descent-to-peak map at the monomial and fundamental bases elements of $\QSym$ are as follows. 
\begin{enumerate} 
\item {\bf Fundamental basis.}  
Given a subset $A=\{a_1<a_2<\cdots<a_{k}\}$ of $[n-1]$, the \emph{fundamental} basis element $F_A$ of $\QSym$ is the generalized chromatic function of the following edge-coloured digraph,
$$Q_{a_1}\rightarrow Q_{a_2-a_1}\rightarrow \dots \rightarrow Q_{n-a_k}.$$
Therefore, by the expression of the descent-to-peak map $\Theta_\QSym$ in (\ref{Def:ThetaMap}) we have
$$\Theta_{\QSym}(F_A)=\Theta_{\QSym}(\scrX_{Q_{a_1}\rightarrow Q_{a_2-a_1}\rightarrow \dots \rightarrow Q_{n-a_k}})=\scrE_{Q_{a_1}\rightarrow Q_{a_2-a_1}\rightarrow \dots \rightarrow Q_{n-a_k}}.$$
\item {\bf Monomial basis.} Given a subset $A=\{a_1<a_2<\cdots<a_{k}\}$ of $[n-1]$, the \emph{monomial} basis element $M_A$ of $\QSym$ is 
$$M_A=\sum_{C\subseteq A}(-1)^{|A|-|C|} F_C.$$  
Given a peak set $B\subseteq \{2,3,\dots,n-1\}$, the \emph{monomial peak function} $\eta_B$ is 
$$\eta_B=(-1)^{|B|}\sum_{A\subseteq \odd(B)}  2^{|A|+1}M_A$$ where  
$$\odd(B)=[n-1]\setminus (B\cup (B-1)).$$ 
Let $A=\{a_1<a_2<\cdots<a_k\}\subseteq [n-1]$ where $n-a_k$ is odd. Then there is a unique peak set $B\subseteq \{2,3,\dots,n-1\}$ such that 
$$\odd(B)=\oddd(A)$$ where 
$\oddd(A)=\{a_i:1\leq i\leq k, a_i-a_{i-1} \text{ is odd}\}$ with $a_0=0$. 
By \cite[Equation (4.9)]{A06}, we have 
\begin{align*}
\Theta_{\QSym}(M_A)
&=\begin{cases}
(-1)^{n-1-|B|-|A|} \eta_B & \text{if $n-\max(A)$ is odd,}\\
0 & \text{otherwise.}
\end{cases} 
\end{align*}

\end{enumerate}

\item Bayer and Billera \cite{BB85} generalized the Dehn-Sommerville equation and showed that if a function satisfies this equation, then it is a linear combination of flag $f$-vectors of Eulerian posets. Later, in \cite{B03}, Billera, Hsiao, and van Willigenburg showed that a function $f$ satisfies the generalized Dehn-Sommerville equation if and only if $\sum f(A)M_A$ is in $\Pi$.

\item A \emph{partition} $\lambda$ of $n$, denoted $\lambda\vdash n$, is a non-increasing composition of $n$.   The values of the descent-to-peak map at different bases of $\Sym$ are as follows. 
\begin{enumerate} 
\item For $\lambda=\lambda_1\lambda_2\dots \lambda_{\ell(\lambda)}$, 
by \cite[Table 2]{ALvW23} we have that the \emph{complete homogeneous symmetric function}  $h_\lambda$ (resp., \emph{elementary symmetric function} $e_\lambda$) is the generalized chromatic function of
$$Q_{\lambda}=Q_{\lambda_1} \sqcup Q_{\lambda_2} \sqcup \cdots \sqcup  Q_{\lambda_{\ell(\lambda)}},~~~{\text{(resp.}~ P_{\lambda}=P_{\lambda_1} \sqcup P_{\lambda_2} \sqcup \cdots \sqcup  P_{\lambda_{\ell(\lambda)}})}.$$ 
Thus by the description of the descent-to-peak map in (\ref{Def:ThetaMap})
$$\Theta_{\Sym}(h_\lambda)=\scrE_{Q_{\lambda}}~~~\text{(resp. $\Theta_{\QSym}(e_\lambda)=\scrE_{P_{\lambda}}$)}.$$

\item  For $\lambda=\lambda_1\lambda_2\cdots \lambda_{\ell(\lambda)}\vdash n$, the \emph{Schur function} $s_\lambda$ is 
$$s_\lambda=\det \left( \scrX_{Q_{\lambda_i-i+j} } \right)_{1\leq i,j\leq \ell(\lambda)}.$$ By  the description of the descent-to-peak map in (\ref{Def:ThetaMap})
\begin{align*}
\Theta_{\Sym}(s_\lambda)&=\Theta_{\Sym}(\det \left( \scrX_{Q_{\lambda_i-i+j} } \right)_{1\leq i,j\leq \ell(\lambda)})\\
&=\det \left( \scrE_{Q_{\lambda_i-i+j} } \right)_{1\leq i,j\leq \ell(\lambda)}.
\end{align*}

\item  For $\lambda=\lambda_1 \lambda_2\dots \lambda_{\ell(\lambda)}\vdash n$, we have the \emph{power sum symmetric function} $p_\lambda$ is 
$$p_\lambda=\left(\sum_{i=1} x_{i}^{\lambda_1}\right)\left(\sum_{i=1} x_{i}^{\lambda_2}\right) \cdots \left(\sum_{i=1} x_{i}^{\lambda_{\ell(\lambda)}}\right).$$
By \cite[Chapter III, Example 10]{M95}
$$ \Theta_\Sym(p_\lambda)=\begin{cases}
2^{\ell(\lambda)} p_{\lambda} & \text{if $\lambda$ is odd,}\\
0 & \text{otherwise.}
\end{cases} $$

\end{enumerate}

\item For a partition $\lambda$, the \emph{Schur $Q$-function} is 
$$q_\lambda=\scrE_{Q_{\lambda}}=\scrE_{P_{\lambda}}.$$ The Hopf algebra of \emph{Schur $Q$-functions} introduced in \cite{S11} is denoted by $\Omega$ and 
$$\Omega=\mathbb{C}\text{-span}\{q_\lambda: \lambda \text{~is an odd partition}\}.$$
Moreover, 
$$\Theta_{\QSym}(\Sym)=\Omega.$$
 By \cite[Theorem 3.8]{S97}, $\Omega$ is the intersection of the Peak algebra $\Pi$ and the Hopf algebra of symmetric functions $\Sym$, that is, 
$$\Omega=\Pi\cap \Sym.$$


 The restriction of the descent-to-peak map $\Theta_{\QSym}$ to $\Sym$ is denoted by $\Theta_\Sym$. By \cite[Theorem 3.1]{S97} the following diagram commutes.
\begin{center}
\begin{tikzpicture}
	\node(Sym) at (4,0){$\mathsf{\Sym}$};
	\node(QSym) at (8,0){$\QSym$};
	\node(Gamma) at (4,-4){$\Omega$};
	\node(Pi) at (8,-4){$\Pi$};
	
	\draw[thick,->]  (Sym)->(QSym);
	\draw[thick,->]  (Gamma)->(Pi);
	\draw[thick,->]   (Sym)->(Gamma);
	\draw[thick,->]   (QSym)->(Pi);

	\node(iota) at (6,0.2){$\iota$};
	\node(iota1) at (6,-3.8){$\iota$};
	\node(ThetaSym) at (4-0.5,-2){$\Theta_{\Sym}$};
	\node(ThetaQSym) at (8.7,-2){$\Theta_\QSym$};
	
	\node(xg) at (7.5,-1){$\scrX_G$};
	\node(eg) at (7.5,-3){$\scrE_G$};
	
	\draw[thick,|->] (xg)--(eg);
\end{tikzpicture} 
\end{center}

\item   In \cite{S11} Schocker proves that the peak algebra is a left co-ideal of $\QSym$ under internal coproduct.

\end{enumerate}

\subsection{Generalized chromatic functions in noncommuting variables}

A \emph{labelled edge-coloured digraph} is an edge-coloured digraph where its vertex set is a subset of $\mathbb{N}$. We usually denote the vertices of a labelled edge-coloured digraph by bold positive integers. 
\begin{center}
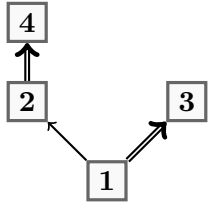
 
  \begin{tikzpicture}[scale=0.7,
roundnode/.style={circle, draw=black!60, fill=gray!10, very thick, minimum size=5mm},
squarednode/.style={rectangle, draw=black!60, fill=gray!5, very thick, minimum size=5mm},]

 \node[squarednode](a) at (0,0){$\bf 1$};
 \node[squarednode](b) at (-1.5,1.5){$\bf 2$};
 \node[squarednode](c) at (1.5,1.5){$\bf 3$};
 \node[squarednode](d) at (-1.5,3){$\bf 4$};
 
 \draw[thick,->] (a)--(b);
 \draw[thick,double,->] (a)--(c);
 \draw[thick,double,->] (b)--(d);
 \end{tikzpicture} 
   \captionof{figure}{A labelled edge-coloured digraph}\label{Fig:labelled-edge-coloured}
 \end{center} 
We usually use $\bfG$  to denote a labelled edge-coloured digraph whose underlying edge-coloured digraph is $G$. For example, if we denote the labelled edge-coloured digraph in Figure \ref{Fig:labelled-edge-coloured} by $\bfG$, then $G$ is the edge-coloured digraph in Figure \ref{Fig:edge-coloured}.

 We now define some useful labelled edge-coloured digraphs. For any set $$S=\{i_1<i_2<\dots<i_k\}$$ of positive integers, let 
 $Q_S$ (resp. $P_S$) be the labelled edge-coloured directed path with vertex set $S$ such that its coloured edges are  ${i_j}\Rightarrow {i_{j+1}}$ (resp. ${i_j}\rightarrow {i_{j+1}}$) for $1\leq j \leq k$.   
  $$
  \begin{tikzpicture}[
roundnode/.style={circle, draw=black!60, fill=gray!10, very thick, minimum size=7mm},
squarednode/.style={rectangle, draw=black!60, fill=gray!5, very thick, minimum size=5mm},]
 \node[squarednode](1) at (1.3,0){${\bf 3}$};
  \node[squarednode](2) at (2.6,0){{\bf 5}};
    \node[squarednode](3) at (3.9,0){{\bf 8}};
    \node at (2.6,-0.7){$Q_{\{3,5,8\}}$};
    \draw[thick,double,->](1)--(2);
       \draw[ thick,double,->](2)--(3);
 \end{tikzpicture}\quad \quad  \quad \quad \quad   \begin{tikzpicture}[
roundnode/.style={circle, draw=black!60, fill=gray!10, very thick, minimum size=7mm},
squarednode/.style={rectangle, draw=black!60, fill=gray!5, very thick, minimum size=5mm},]
 \node[squarednode](1) at (1.3,0){${\bf 3}$};
  \node[squarednode](2) at (2.6,0){{\bf 5}};
    \node[squarednode](3) at (3.9,0){{\bf 8}};
    \node at (2.6,-0.7){$P_{\{3,5,8\}}$};
    \draw[thick,->](1)--(2);
       \draw[ thick,->](2)--(3);
 \end{tikzpicture} 
 $$
 
 The \emph{disjoint union} of labelled edge-coloured digraphs $\bfG_1$ and $\bfG_2$ with $V(\bfG_1)\cap V(\bfG_2)=\emptyset$, is denoted by $\bfG_1\sqcup \bfG_2$. 
     $$
  \begin{tikzpicture}[
roundnode/.style={circle, draw=black!60, fill=gray!10, very thick, minimum size=7mm},
squarednode/.style={rectangle, draw=black!60, fill=gray!5, very thick, minimum size=5mm},]
 \node[squarednode](1) at (1.3,0){$\bf 3$};
  \node[squarednode](2) at (2.6,0){$\bf 5$};
    \node[squarednode](3) at (3.9,0){$\bf 8$};
    \draw[thick,double,->](1)--(2);
       \draw[ thick,double,->](2)--(3);
 \node[squarednode](11) at (4.8,0){$\bf 2$};
  \node[squarednode](21) at (6.1,0){$\bf 6$};
    \node[squarednode](31) at (7.4,0){$\bf 9$};
    \draw[thick,->](11)--(21);
       \draw[ thick,->](21)--(31);
       \node at (4.35,-0.7){$Q_{\{3,5,8\}} \sqcup P_{\{2,6,9\}}$};
 \end{tikzpicture} 
 $$

 The \emph{solid sum} of labelled edge-coloured directed paths $\bfG_1$ and $\bfG_2$, denoted $\bfG_1\rightarrow \bfG_2$, is an edge-coloured digraph obtained by connecting the last vertex of  $\bfG_1$ to the first vertex of $\bfG_2$ by a solid edge $\rightarrow$. 
    $$
  \begin{tikzpicture}[
roundnode/.style={circle, draw=black!60, fill=gray!10, very thick, minimum size=7mm},
squarednode/.style={rectangle, draw=black!60, fill=gray!5, very thick, minimum size=5mm},]
 \node[squarednode](1) at (1.3,0){${\bf 3}$};
  \node[squarednode](2) at (2.6,0){{\bf 5}};
    \node[squarednode](3) at (3.9,0){{\bf 8}};
    \node at (4.6,-0.7){$Q_{\{3,5,8\}}\rightarrow  Q_{\{2,6,9\}}$};
    \draw[thick,double,->](1)--(2);
       \draw[ thick,double,->](2)--(3);
 \node[squarednode](a) at (5.2,0){${\bf 2}$};
  \node[squarednode](b) at (6.5,0){{\bf 6}};
    \node[squarednode](c) at (7.8,0){{\bf 9}};
    \draw[thick,double,->](a)--(b);
       \draw[ thick,double,->](b)--(c);
          \draw[ thick,double,->](b)--(c);
            \draw[ thick,->](3)--(a);
 \end{tikzpicture} 
 $$

Recall that $\bbQ\langle \langle \bx_1,\bx_2,\dots\rangle\rangle$ is the algebra of formal power series in infinitely many noncommuting variables $\bx=\{\bx_1,\bx_2,\dots\}$ over $\bbQ$.

The \emph{generalized chromatic function in noncommuting variables} of a labelled edge-coloured digraph $\bfG$ with vertex set $[n]$ is 
$$\scrY_\bfG=\sum_{\kappa} \bx_{\kappa(1)} \bx_{\kappa(2)} \dots \bx_{\kappa(n)}$$   where the sum is over all proper colourings $\kappa$ of $G$. For example, if $\bfG$ is the labelled edge-coloured digraph in Figure \ref{Fig:labelled-edge-coloured}, then 
\begin{align*}
		\begin{array}{ccccccccc}
			& \begin{tikzpicture}[scale=0.4,
roundnode/.style={circle, draw=black!60, fill=gray!10, very thick, minimum size=5mm},
squarednode/.style={rectangle, draw=black!60, fill=gray!5, very thick, minimum size=5mm},]
				\draw[black,double, thick,->] (a)--(c);
				\draw[black, thick,->] (a)--(b);
				\draw[black, double, thick,->] (b)--(d);
				\node[squarednode,label=above:{$2$}](d) at (0,5) {$\bffour$};
				\node[squarednode,label=above:{$2$}](b)  at (-2,2.5) {$\bftwo$};
				\node[squarednode,label=above:{$1$}](a)  at (0,0) {$\bfone$};
				\node[squarednode, label=above:{$2$}](c)  at (2,2.5) {$\bfthree$};
			\end{tikzpicture} &
			&\begin{tikzpicture}[scale=0.4,
roundnode/.style={circle, draw=black!60, fill=gray!10, very thick, minimum size=5mm},
squarednode/.style={rectangle, draw=black!60, fill=gray!5, very thick, minimum size=5mm},]
				\draw[black,double, thick,->] (a)--(c);
				\draw[black, thick,->] (a)--(b);
				\draw[black, double, thick,->] (b)--(d);
				\node[squarednode,label=above:{$3$}](d) at (0,5) {$\bffour$};
				\node[squarednode,label=above:{$2$}](b)  at (-2,2.5) {$\bftwo$};
				\node[squarednode,label=above:{$1$}](a)  at (0,0) {$\bfone$};
				\node[squarednode, label=above:{$2$}](c)  at (2,2.5) {$\bfthree$};
			\end{tikzpicture} &
			&  \begin{tikzpicture}[scale=0.4,
roundnode/.style={circle, draw=black!60, fill=gray!10, very thick, minimum size=5mm},
squarednode/.style={rectangle, draw=black!60, fill=gray!5, very thick, minimum size=5mm},]
				\draw[black,double, thick,->] (a)--(c);
				\draw[black, thick,->] (a)--(b);
				\draw[black, double, thick,->] (b)--(d);
				\node[squarednode,label=above:{$2$}](d) at (0,5) {$\bffour$};
				\node[squarednode,label=above:{$2$}](b)  at (-2,2.5) {$\bftwo$};
				\node[squarednode,label=above:{$1$}](a)  at (0,0) {$\bfone$};
				\node[squarednode, label=above:{$3$}](c)  at (2,2.5) {$\bfthree$};
			\end{tikzpicture} & 
			& \begin{tikzpicture}[scale=0.4,
roundnode/.style={circle, draw=black!60, fill=gray!10, very thick, minimum size=5mm},
squarednode/.style={rectangle, draw=black!60, fill=gray!5, very thick, minimum size=5mm},]
				\draw[black,double, thick,->] (a)--(c);
				\draw[black, thick,->] (a)--(b);
				\draw[black, double, thick,->] (b)--(d);
				\node[squarednode,label=above:{$3$}](d) at (0,5) {$\bffour$};
				\node[squarednode,label=above:{$2$}](b)  at (-2,2.5) {$\bftwo$};
				\node[squarednode,label=above:{$1$}](a)  at (0,0) {$\bfone$};
				\node[squarednode, label=above:{$3$}](c)  at (2,2.5) {$\bfthree$};
			\end{tikzpicture}\\
			\scrY_G=& \bx_1\bx_2\bx_2\bx_2 & + & \bx_1\bx_2\bx_2\bx_3 & + & \bx_1\bx_2\bx_3\bx_2 & + & \bx_1\bx_2\bx_3\bx_3 &+\cdots.
		\end{array}
	\end{align*}
	
	\begin{Remark}
	The generalized chromatic functions in noncommuting variables first appeared in \cite{ALvW23}.
	\end{Remark}

\subsection{Enriched chromatic functions in noncommuting variables}\label{subsec:enriched-chromatic-functions} 
The \emph{enriched chromatic function in noncommuting variables} of a labelled edge-coloured digraph $\bfG$ with vertex set $[n]$ is 
$$\scrF_\bfG=\sum_{\kappa} \bx_{|\kappa(1)|} \bx_{|\kappa(2)|} \dots \bx_{|\kappa(n)|}$$  where the sum is over all enriched colouring $\kappa$ of $G$. For example, if $\bfG$ is the labelled edge-coloured digraph in Figure \ref{Fig:labelled-edge-coloured}, then 
\begin{align*}
		\begin{array}{ccccccccc}
			& \begin{tikzpicture}[scale=0.4,
roundnode/.style={circle, draw=black!60, fill=gray!10, very thick, minimum size=5mm},
squarednode/.style={rectangle, draw=black!60, fill=gray!5, very thick, minimum size=5mm},]
				\draw[black,double, thick,->] (a)--(c);
				\draw[black, thick,->] (a)--(b);
				\draw[black, double, thick,->] (b)--(d);
				\node[squarednode,label=above:{$1$}](d) at (0,5) {$\bffour$};
				\node[squarednode,label=above:{$-1$}](b)  at (-2,2.5) {$\bftwo$};
				\node[squarednode,label=above:{$-1$}](a)  at (0,0) {$\bfone$};
				\node[squarednode, label=above:{$1$}](c)  at (2,2.5) {$\bfthree$};
			\end{tikzpicture} &
			&\begin{tikzpicture}[scale=0.4,
roundnode/.style={circle, draw=black!60, fill=gray!10, very thick, minimum size=5mm},
squarednode/.style={rectangle, draw=black!60, fill=gray!5, very thick, minimum size=5mm},]
				\draw[black,double, thick,->] (a)--(c);
				\draw[black, thick,->] (a)--(b);
				\draw[black, double, thick,->] (b)--(d);
				\node[squarednode,label=above:{$1$}](d) at (0,5) {$\bffour$};
				\node[squarednode,label=above:{$1$}](b)  at (-2,2.5) {$\bftwo$};
				\node[squarednode,label=above:{$-1$}](a)  at (0,0) {$\bfone$};
				\node[squarednode, label=above:{$1$}](c)  at (2,2.5) {$\bfthree$};
			\end{tikzpicture} &
			&\begin{tikzpicture}[scale=0.4,
roundnode/.style={circle, draw=black!60, fill=gray!10, very thick, minimum size=5mm},
squarednode/.style={rectangle, draw=black!60, fill=gray!5, very thick, minimum size=5mm},]
				\draw[black,double, thick,->] (a)--(c);
				\draw[black, thick,->] (a)--(b);
				\draw[black, double, thick,->] (b)--(d);
				\node[squarednode,label=above:{$2$}](d) at (0,5) {$\bffour$};
				\node[squarednode,label=above:{$-2$}](b)  at (-2,2.5) {$\bftwo$};
				\node[squarednode,label=above:{$1$}](a)  at (0,0) {$\bfone$};
				\node[squarednode, label=above:{$1$}](c)  at (2,2.5) {$\bfthree$};
			\end{tikzpicture} & 
			& \begin{tikzpicture}[scale=0.4,
roundnode/.style={circle, draw=black!60, fill=gray!10, very thick, minimum size=5mm},
squarednode/.style={rectangle, draw=black!60, fill=gray!5, very thick, minimum size=5mm},]
				\draw[black,double, thick,->] (a)--(c);
				\draw[black, thick,->] (a)--(b);
				\draw[black, double, thick,->] (b)--(d);
				\node[squarednode,label=above:{$2$}](d) at (0,5) {$\bffour$};
				\node[squarednode,label=above:{$-2$}](b)  at (-2,2.5) {$\bftwo$};
				\node[squarednode,label=above:{$1$}](a)  at (0,0) {$\bfone$};
				\node[squarednode, label=above:{$-2$}](c)  at (2,2.5) {$\bfthree$};
			\end{tikzpicture}\\
			\scrF_\bfG=& \bx_1\bx_1\bx_1\bx_1 & + & \bx_1\bx_1\bx_1\bx_1 & + & \bx_1\bx_2\bx_1\bx_2& + & \bx_1\bx_2\bx_2\bx_2&+\cdots.
		\end{array}
	\end{align*}

\subsection{Peak algebra in noncommuting variables and labelled descent-to-peak map}\label{subsec:peak-theta-results-n} In this section, we show that all earlier definitions and results in Section \ref{subsec:peak-theta-results} can be generalized to noncommuting variables.

The \emph{peak algebra in noncommuting variables}, denoted $\NCPeak$, is the space spanned by
$$\{\scrF_\bfG: \text{$\bfG$ is a labelled edge-coloured digraph}\}.$$
The \emph{labelled descent-to-peak} map $\Theta_{\NCQSym}$ is   
 \begin{align}\label{Def:ThetaMapn}
\begin{array}{cccc}
\Theta_\NCQSym:& \NCQSym & \rightarrow & \NCPeak\\
& \scrY_\bfG & \mapsto & \scrF_\bfG.
\end{array} 
\end{align}

\begin{enumerate}[leftmargin=*]
\item A \emph{set composition} $\phi$ of $[n]$,   denoted $\phi\vDash[n]$, is a sequence of mutually disjoint nonempty sets whose union is $[n]$. An \emph{odd set composition} is a set composition whose all blocks have odd sizes. We show in Theorem \ref{Thm:NCPeakBasis} that the dimension of the space of homogeneous elements of degree $n$ of $\NCPeak$, $\NCPeak_n$, is  
\begin{align*}
\dim(\NCPeak_n)& = |\{(B,\sigma): B\subseteq \{2,3,\dots,n-1\} \text{~is a peak set and~}\Des(\sigma)\subseteq \odd(B) \}| \\
&=|\{\phi\vDash [n]: \text{$\phi$ is an odd set composition} \}|\\
&=a_n
\end{align*}
where 
$\Des(\sigma)=\{i\in [n-1]: \sigma(i)>\sigma(i+1)\},$ and $a_n$ is the sequence A006154 in the OEIS \cite{OEISA006154}.

\item  In Section \ref{Sec:NCPeak}, we study the structure of $\NCPeak$ and show that it is a Hopf algebra.

\item   In Theorem \ref{thm:theta}, we show that the labelled descent-to-peak map,
 $\Theta_{\NCQSym}$, is a surjective Hopf algebra morphism and the following diagram commutes.
 
\begin{center}
\begin{tikzpicture}
	\node(NCSym) at (4,0){$\mathsf{\NCQSym}$};
	\node(NCQSym) at (8,0){$\QSym$};
	\node(NCGamma) at (4,-4){$\NCPeak$};
	\node(NCPeak) at (8,-4){$\Pi$};
	
	\draw[thick,->]  (NCSym)->(NCQSym);
	\draw[thick,->]  (NCGamma)->(NCPeak);
	\draw[thick,->]   (NCSym)->(NCGamma);
	\draw[thick,->]   (NCQSym)->(NCPeak);
	
	\node(iota1) at (6,0.2){$\rho$};
	\node(iota2) at (6,-3.8){$\rho$};
	\node(ThetaNCSym) at (3.15,-2){$\Theta_{\NCQSym}$};
	\node(ThetaNCQSym) at (8.65,-2){$\Theta_\QSym$};
	
	\node(yg) at (4.5,-1){$\scrY_\bfG$};
	\node(fg) at (4.5,-3){$\scrF_\bfG$};
	
		\node(xg) at (7.5,-1){$\scrX_G$};
	\node(eg) at (7.5,-3){$\scrE_G$};

	
	\draw[thick,|->] (yg)--(fg);
	\draw[thick,|->] (xg)--(eg);
	\draw[thick,|->] (yg)--(xg);
	\draw[thick,|->] (fg)--(eg);
\end{tikzpicture} 
\end{center}

\item Given a subset $A=\{a_1<a_2<\cdots<a_{k}\}$ of $[n-1]$
 and $\sigma\in\mathfrak{S}_n$, we say $(A,\sigma)$ is \emph{standard} if $\Des(\sigma)\subseteq A.$ The value of the labelled descent-to-peak map at the monomial and fundamental bases elements of $\NCQSym$ are as follows. 
 \begin{enumerate} 
\item   
{\bf Fundamental basis.} 
Let $A=\{a_1<a_2<\cdots<a_{k}\}$ of $[n-1]$
 and $\sigma\in\mathfrak{S}_n$ such that $(A,\sigma)$ is standard. The \emph{fundamental} basis element $\bF_{(A,\sigma)}$ of $\NCQSym$ is the generalized chromatic function in noncommuting variables of the following labelled edge-coloured digraph,
 $$\bfG=Q_{\{\sigma(1),\dots,\sigma(a_1)\}}\rightarrow Q_{\{\sigma(a_1+1), \dots, \sigma(a_2)\}}\rightarrow \dots \rightarrow Q_{\{\sigma(a_k+1),\dots,\sigma(n)\}}.$$
 Therefore, by the description of the labelled descent-to-peak map in (\ref{Def:ThetaMapn})
 $$\Theta_{\NCQSym}(\bF_{(A,\sigma)})=\Theta_{\NCQSym}(\scrY_{\bfG})=\scrF_\bfG.$$
 
\item {\bf Monomial basis.} Let $A=\{a_1<a_2<\cdots<a_{k}\}$ of $[n-1]$
 and $\sigma\in\mathfrak{S}_n$ such that $(A,\sigma)$ is standard. The \emph{monomial} basis element $\bM_{(A,\sigma)}$ of $\NCQSym$ is
$$\bM_{(A,\sigma)}=\sum_{C\subseteq A} (-1)^{|A|-|C|} \bF_{(C,\sigma)}.$$

For each peak set $B\subseteq \{2,3,\dots, n-1\}$, the \emph{monomial peak function in noncommuting variables} $\boldeta_{(B,\sigma)}$ is 
$$\boldeta_{(B,\sigma)}=(-1)^{|B|}\sum_{A\subseteq \odd(B)} 2^{|A|+1}\bM_{(A,\sigma)}.$$

In Proposition \ref{prop:M} we show that 
\begin{align*}
\Theta_{\QSym}(\bM_{(A,\sigma)})
&=\begin{cases}
(-1)^{n-1-|B|-|A|} \boldeta_{(B,\sigma)} & \text{if $n-\max(A)$ is odd,}\\
0 & \text{otherwise.}
\end{cases} 
\end{align*}

\end{enumerate}

\item If $f$ is a rational valued function from $\sqcup_{n\geq 0}\{A:A\subseteq [n-1]\}$, then we define $f_{0\text{-}1}$ to be a rational valued function from the set of $0\text{-}1$ sequences such that $f_{0\text{-}1}(a_1a_2\cdots a_\ell(a))=f(A)$ where $A$ is a subset of $[\ell(a)]$ with $i\in A$ if and only if $a_i=1$.  In Section \ref{sec:D-S}, we show that the following are equivalent.
 \begin{enumerate}
  \item $f_{0\text{-}1}$ satisfies generalized Dehn-Sommerville equation.
  \item $f_{0\text{-}1}$ is a linear combination of flag $f$-vectors of Eulerian posets.
  \item $\sum f(A)M_A$ is in $\Pi$.
  \item For each $n$, fix $\sigma_n\in\mathfrak{S}_n$. Then $\sum_n\sum_{A\subseteq[n-1]}f(A)\bfM_{(A,\sigma_n)}$ is in $\NCPeak$.
 \end{enumerate}

\item  A \emph{set partition} $\pi$ of $[n]$ is the set of mutually disjoint non-empty subsets $\pi_1,\pi_2,\dots,\pi_{\ell(\pi)}$ of $[n]$ whose their union is $[n]$. We denote it by $\pi=\pi_1/\pi_2/\cdots/\pi_{\ell(\pi)}\vdash [n]$.
\index{$\pi$ set partition}\index{All bases of $\NCSym$ are bold.}

For a set $S$, let $\mathfrak{S}_S$ be the set of all bijections from $S$ to $S$.   Given a labelled edge-coloured digraph $\bfG$ with vertex set $S$ and $\sigma\in \mathfrak{S}_S$, define $\sigma\circ \bfG$ to be the labelled edge-coloured digraph with vertex set $S$ in which 
\begin{itemize} 
 \item $i\Rightarrow j$ in $\bfG$ if and only if ${\sigma(i)}\Rightarrow {\sigma(j)}$ in $\sigma\circ \bfG$.
\item $i\rightarrow j$ in $\bfG$ if and only if ${\sigma(i)}\rightarrow {\sigma(j)}$ in $\sigma\circ \bfG$.
\end{itemize}

 The values of the labelled descent-to-peak map at different bases of $\NCSym$ are as follows.
\begin{enumerate} 
\item For $\pi=\pi_1/\pi_2/\cdots/\pi_{\ell(\pi)}\vdash [n]$, by \cite[Section 10]{ALvW23},  we have that the \emph{complete homogeneous symmetric function in noncommuting variables} $\bfh_\pi$ (resp.,  \emph{elementary symmetric function in noncommuting variables} $\bfe_\pi$) is 
$$\bfh_\pi=\sum_{\sigma\in \frakS_{\pi}} \scrY_{\sigma\circ Q_{\pi}},~~~{\text{(resp.}}~ \bfe_\pi=\sum_{\sigma\in \frakS_{\pi}} \scrY_{\sigma\circ P_{\pi}}),$$  where 
\begin{center} $Q_\pi=Q_{\pi_1}\sqcup Q_{\pi_2}\sqcup \cdots \sqcup Q_{\pi_{\ell(\pi)}},$ (resp. $P_\pi=P_{\pi_1}\sqcup P_{\pi_2}\sqcup \cdots \sqcup P_{\pi_{\ell(\pi)}}$)
and $\mathfrak{S}_{\pi}=\mathfrak{S}_{\pi_1}\times \mathfrak{S}_{\pi_2} \times \cdots \times \mathfrak{S}_{\pi_{\ell(\pi)}}.$
\end{center} 
Thus by (\ref{Def:ThetaMapn}),
$$\Theta_{\NCSym}(\bfh_\pi)=\sum_{\sigma\in \frakS_{\pi}} \scrF_{\sigma\circ Q_{\pi}}~~~\text{(resp.}~ \Theta_{\NCSym}(\bfe_\pi)=\sum_{\sigma\in \frakS_{\pi}} \scrF_{\sigma\circ P_{\pi}}).$$ As for each $\sigma\in \mathfrak{S}_\pi$, $\scrF_{\sigma\circ Q_{\pi}}=\scrF_{\sigma\circ P_{\pi}}$, we conclude that 
\begin{equation}\label{eq:theta-h-e}
\Theta_{\NCSym}(\bfh_\pi) = \Theta_{\NCSym}(\bfe_\pi).
\end{equation}


\item For $\pi=\pi_1/\pi_2/\cdots/\pi_{\ell(\pi)}\vdash [n]$, define the permutation $\delta_\pi\in\mathfrak{S}_n$ such that  
if $\delta_\pi(i)\in \pi_s$ and $\delta_\pi(i+1)\in \pi_t$, then either $s=t$ and $\delta_\pi(i)<\delta_\pi(i+1)$ or $\min(\pi_s)<\min(\pi_t)$. For example, if $\pi=156/24/36$, then $\delta_\pi=1562436$.
The \emph{Schur function in noncommuting variable} $\bfs_\pi$ is 
 \[\bfs_\pi=\delta_{\pi}\circ \bfdet \left( \scrY_{Q_{[\lambda_i-i+j]}}  \right)_{1\leq i,j\leq \ell(\pi)}.\]
 Thus by (\ref{Def:ThetaMapn}),
$$\Theta_{\NCSym}(\bfs_\pi)=\delta_{\pi}\circ \bfdet \left( \scrF_{Q_{[\lambda_i-i+j]}}  \right)_{1\leq i,j\leq \ell(\pi)}.$$

\item  For $\pi=\pi_1/\pi_2/\cdots/\pi_{\ell(\pi)}\vdash [n]$, by Theorem \ref{thm:NCSym} we have
$$\Theta_\NCSym(\bfp_\pi)=\begin{cases}
2^{\ell(\pi)}\bfp_\pi & \text{if all blocks of $\pi$ have odd sizes,}\\
0 & \text{otherwise.}
\end{cases} $$

\end{enumerate}

\item For an odd set partition $\pi$, the \emph{Schur $Q$-function in noncommuting variables} is 
$$\mathbf{q}_\pi=\sum_{\sigma\in \frakS_{\pi}} \scrF_{\sigma\circ Q_{\pi}}=\sum_{\sigma\in \frakS_{\pi}} \scrF_{\sigma\circ P_{\pi}}.$$
The Hopf algebra of \emph{Schur Q-functions in noncommuting variables} is denoted by $\NCGamma$ and by Theorem \ref{thm:qschur},
$$\NCGamma=\mathbb{C}\text{-span}\{\mathbf{q}_\pi: \pi \text{~is an odd set partition}\}.$$ 
 and by the action of $\Theta_\NCSym$ on $\mathbf{h}_\pi$, we have $$\NCGamma=\Theta_\NCQSym(\NCSym).$$ 
By Theorem \ref{thm:NCGamma},
$$\NCGamma=\NCPeak\cap \NCSym.$$
The restriction of the labelled descent-to-peak map $\Theta_{\NCQSym}$ to $\NCSym$ is denoted by $\Theta_\NCSym$. By Corollary \ref{cor:theta-gcf} the following diagram commutes.

\begin{center}
\begin{tikzpicture}
	\node(NCSym) at (4,0){$\mathsf{\NCSym}$};
	\node(NCQSym) at (8,0){$\NCQSym$};
	\node(NCGamma) at (4,-4){$\NCGamma$};
	\node(NCPeak) at (8,-4){$\NCPeak$};
	
	\draw[thick,->]  (NCSym)->(NCQSym);
	\draw[thick,->]  (NCGamma)->(NCPeak);
	\draw[thick,->]   (NCSym)->(NCGamma);
	\draw[thick,->]   (NCQSym)->(NCPeak);
	
	\node(iota1) at (6,0.2){$\iota$};
	\node(iota2) at (6,-3.8){$\iota$};
	\node(ThetaNCSym) at (4-0.7,-2){$\Theta_{\NCSym}$};
	\node(ThetaNCQSym) at (9,-2){$\Theta_\NCQSym$};
	
	\node(yg) at (7.5,-1){$\scrY_\bfG$};
	\node(fg) at (7.5,-3){$\scrF_\bfG$};

	
	\draw[thick,|->] (yg)--(fg);
\end{tikzpicture} 
\end{center}

More generally, the following diagram commutes.

\begin{center}
\begin{tikzpicture}
	\node(Sym) at (5,0){${\Sym}$};
	\node(QSym) at (8,0){$\QSym$};
	\node(Gamma) at (5,-4){$\Omega$};
	\node(Pi) at (8,-4){$\Pi$};
	
	\node(NCSym) at (3,2){$\NCSym$};
	\node(NCQSym) at (11,2){$\NCQSym$};
	\node(NCGamma) at (3,-6){$\NCGamma$};
	\node(NCPeak) at (11,-6){$\NCPeak$};

	\draw[thick,->]  (Sym)->(QSym);
	\draw[thick,->]  (Gamma)->(Pi);
	\draw[thick,->]   (Sym)->(Gamma);
	\draw[thick,->]   (QSym)->(Pi);
	
	
	\draw[thick,->]  (NCSym)->(NCQSym);
	\draw[thick,->]  (NCGamma)->(NCPeak);
	\draw[thick,->]   (NCSym)->(NCGamma);
	\draw[thick,->]   (NCQSym)->(NCPeak);

	

	\node(iota1) at (6.5,0.2){$\iota$};
	\node(iota2) at (6.5,-3.8){$\iota$};
	\node(ThetaSym) at (5-0.5,-2){$\Theta_{\Sym}$};
	\node(ThetaQSym) at (7.3,-2){$\Theta_\QSym$};
	
        \node(iota3) at (6.5,2.2){$\iota$};
	\node(iota4) at (6.5,-3.8-2){$\iota$};
	\node(ThetaNCSym) at (2.3,-2){$\Theta_{\NCSym}$};
	\node(ThetaNCQSym) at (11.9,-2){$\Theta_\NCQSym$};
	
	\node(xg) at (8.7,-0.7){$\scrX_G$};
	\node(eg) at (8.7,-3.3){$\scrE_G$};
	\draw[thick,|->] (xg)--(eg);
	
	\node(yg) at (10.4,0.5){$\scrY_\bfG$};
	\node(fg) at (10.4,-4.5){$\scrF_\bfG$};
	
	\draw[thick,|->] (yg)--(fg);
	\draw[thick,|->] (yg)--(xg);
	\draw[thick,|->] (fg)--(eg);
	
	\draw[thick,->] (NCQSym)--(QSym);
	\draw[thick,->] (NCPeak)--(Pi);
	\draw[thick,->] (NCGamma)--(Gamma);
	\draw[thick,->] (NCSym)--(Sym);
	

	
	

\end{tikzpicture} 
\end{center}

\item   In Section \ref{Sec:com}, we show that the peak algebra in noncommuting variables is a left co-ideal of $\NCQSym$ under internal coproduct. We give a combinatorial description of the structure constants, and our result implies Schocker's result on $\QSym$.

 \end{enumerate}

\section{Preliminaries}\label{sec:per}

\subsection{Compositions}  
 Given a composition $\alpha=(\alpha_1,\alpha_2,\ldots, \alpha_k),$ each $\alpha_i$ is called a \emph{part} of $\alpha$, the \emph{size} of $\alpha$ is $|\alpha|=\alpha_1+\alpha_2+\cdots+\alpha_k$, and the \emph{length} of $\alpha$ is $k$. By convention, we denote by $\emptyset$ the unique composition of size and length zero.  For example, $(2,1,2)$ is a composition of $5$ with length $3$.

For every $A=\{a_1<a_2<\cdots<a_k\}\subseteq [n-1]$, define  
 \[ {\rm Comp}(A)=(a_1,a_2-a_1,\dots, n-a_k)\vDash [n]. \]
%
  For example,  if $A=\{2,3\}\subseteq [5]$, then \[{\rm Comp}(A)=(2,1,3).\] The map 
  \[
  \begin{array}{ccc}
  \{A: A\subseteq[n-1]\} & \rightarrow & \{\alpha: \alpha\vDash[n]\}\\
  A& \mapsto&  {\rm Comp}(A)
  \end{array} 
  \]
  is a bijection. 
  
%

\subsection{Odd compositions and peak sets}
 Recall that an odd composition is a composition whose all parts are odd.  We say a subset $A=\{a_1<a_2< \dots< a_k\}$ of $[n-1]$ is an \emph{odd set} if for each $1\leq i\leq k+1$, 
$$a_i-a_{i-1} \text{~is an odd number,}$$
where $a_0=0$ and $a_{k+1}=n.$  Therefore, for any odd set $A$, ${\rm Comp}(A)$ is an odd composition.

 As we mentioned earlier, a set $B\subseteq \{2,3,\dots,n-1\}$ is called a {peak set}  if $b\in B$ implies that $\{b-1,b+1\}\cap B=\emptyset$.  For any peak set $B\subseteq \{2,3,\dots,n-1\}$, define
$${\odd}(B)=[n-1]\setminus (B\cup (B-1))$$ where $B-1=\{b-1:b\in B\}.$ 
For example, if $B=\{2,5,7,10\}\subseteq \{2,3,\dots, 10\}$, then \[\odd(B)=\{3,8\}.\]


\begin{Proposition}\label{Pro:oddbij}
The map $$\begin{array}{cccc}
  \{B\subseteq \{2,3,\dots, n-1\}: \text{~$B$ is a peak set}\}& \rightarrow & \{A\subseteq [n-1]: \text{~$A$ is an odd set}\}\\
 B & \mapsto & \odd(B).
\end{array}$$
 is a bijection.
\end{Proposition}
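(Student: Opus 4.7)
The plan is to exhibit an explicit two-sided inverse to the map $B \mapsto \odd(B)$. The key combinatorial observation driving the argument is that, for any peak set $B \subseteq \{2, \ldots, n-1\}$, any two distinct elements of $B$ differ by at least $2$, so the set $B \cup (B-1)$ is the \emph{disjoint} union $\bigsqcup_{b \in B}\{b-1, b\}$ of consecutive pairs in $[n-1]$.

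First I will verify that the map is well-defined, i.e., that $\odd(B)$ is always an odd set. Writing $\odd(B) = \{a_1 < \cdots < a_k\}$ and setting $a_0 = 0$, $a_{k+1} = n$, each ``gap'' $\{a_{i-1}+1, \ldots, a_i - 1\}$ lies entirely in $B \cup (B-1)$ by the definition of $\odd(B)$. A pair $\{b-1, b\}$ cannot straddle an interior boundary $a_i$ (this would force $a_i \in B \cup (B-1)$, contradicting $a_i \in \odd(B)$), nor can it lie outside $[n-1]$ (since $B \subseteq \{2, \ldots, n-1\}$ gives $\{b-1,b\} \subseteq [n-1]$). Hence each gap is a disjoint union of consecutive pairs, so it has even cardinality, so $a_i - a_{i-1}$ is odd. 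Conversely, given an odd set $A = \{a_1 < \cdots < a_k\}$, I will define the candidate inverse
\[
B_A \;=\; \bigcup_{i=1}^{k+1} \bigl\{a_{i-1} + 2j \;:\; 1 \leq j \leq (a_i - a_{i-1} - 1)/2\bigr\},
\]
which is legitimate because each $a_i - a_{i-1} - 1$ is even. Intuitively, $B_A$ picks out the larger element of each pair in the canonical pairing of every gap of $A$.

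It is then routine to check that $B_A$ is a peak set (two elements of $B_A$ inside the same gap differ by a multiple of $2$; two elements in adjacent gaps are separated by some $a_i \notin B_A$, hence differ by at least $3$) and that $B_A \subseteq \{2, \ldots, n-1\}$. The two compositions are identities essentially by construction: by design $B_A \cup (B_A - 1) = [n-1] \setminus A$, giving $\odd(B_A) = A$; conversely, applying the construction to $A = \odd(B)$ recovers precisely the larger element of each of the pairs $\{b-1, b\}$ identified in the first step, so $B_{\odd(B)} = B$. The only real subtlety is the boundary analysis ensuring that no pair $\{b-1, b\}$ crosses any $a_i$ or lies outside $[n-1]$; everything else is bookkeeping, and as a sanity check both sides are counted by $f_n$ by Stembridge's result, which matches the bijection produced.
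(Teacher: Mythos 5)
Your proof is correct and takes essentially the same approach as the paper's: establish well-definedness by showing each gap of $\odd(B)$ has even size, then exhibit the explicit inverse $B_A$ (which is exactly the construction in the paper, up to a typo there) and verify the compositions. The only cosmetic difference is that you argue the gap parity via the disjoint-pairs decomposition $B\cup(B-1)=\bigsqcup_{b\in B}\{b-1,b\}$, while the paper tracks the alternating membership pattern in $B$ and $B-1$ along each gap; these are equivalent observations, and your explicit check of $B_{\odd(B)}=B$ is slightly more careful than the paper's appeal to injectivity being clear from the definition.
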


\begin{proof}
Let $B\subseteq\{2,3,\dots, n-1\}$ be a peak set, and let 
$$\odd(B)=[n-1]\setminus (B\cup (B-1))=\{a_1<a_2<\dots<a_k\}.$$ Set $a_0=0$. Consider that for $1\leq i\leq k$, if $a_{i-1}\neq a_i-1$, then  $a_{i-1}+1 \in (B-1)$ and $a_{i}-1\in B$ (otherwise, $a_{i-1}\in B$ or $a_{i}\in B$).

 Since $B$ is a peak set, each of $B$ and $B-1$ does not include a pair of consecutive integers. Therefore, the sequence 
 $$a_{i-1}+1,a_{i-1}+2, \ldots, a_{i}-2,a_{i}-1$$
 is a sequence whose elements are alternatively in $B-1$ and $B$, that is 
 $$a_{i-1}+1\in (B-1),a_{i-1}+2\in B, \ldots, a_{i}-2\in (B-1) ,a_{i}-1\in B.$$
 Therefore, there are an even number of integers between $a_i$ and $a_{i-1}$, and so their difference $a_{i}-a_{i-1}$ is odd. This shows that  $\odd(B)$ is an odd set and the function $\odd$ is well-defined.

 The injectivity of the map $\odd$ is clear from the definition and now we show that $\odd$ is bijective. Given an odd set $A=\{a_1<a_2<\dots<a_k\}\subseteq [n-1]$, let
 \[B=\bigcup_{i=1}^{k+1}\{a_{i-1}+2t:t\in \mathbb{N}, a_{i}+2t\leq a_{i}-1\}\] where $a_0=0$ and $a_{k+1}=n.$ For example, if $A=\{3,8\}\subseteq [10]$, then $B=\{2,5,7,10\}.$
%
%
We have
 $$\odd(B)=A,$$
 which gives the inverse of $\odd$.
\end{proof} 

For any peak set $B$, define 
\[{\rm CompOdd}(B)={\rm Comp}({\rm Odd}(B)).\]
For example, if $B=\{2,5,7, 10\}\subseteq \{2,3,\dots,10\}$, then 
\[{\rm CompOdd}(B)={\rm Comp}(\{3,8\})=(3,5,3).\] 
Since always ${\rm CompOdd}(B)$ is an odd composition for any peak set $B$, we have the following corollary.
\begin{Corollary} 
The map 
$$\begin{array}{cccc}
  \{B\subseteq \{2,3,\dots, n-1\}: \text{~$B$ is a peak set}\}& \rightarrow & \{\alpha\vDash n: \text{~$\alpha$ is odd}\}\\
 B & \mapsto & {\rm Comp}\odd(B)
\end{array}$$
is a bijection.
\end{Corollary}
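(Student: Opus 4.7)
The plan is to obtain this corollary by composing two bijections that have already been established in the excerpt. Proposition \ref{Pro:oddbij} gives a bijection
\[
\{B \subseteq \{2,3,\dots,n-1\} : B \text{ is a peak set}\} \longrightarrow \{A \subseteq [n-1] : A \text{ is an odd set}\}, \qquad B \mapsto \odd(B),
\]
and the discussion preceding that proposition notes that the map $A \mapsto {\rm Comp}(A)$ is a bijection from subsets of $[n-1]$ to compositions of $n$. So the composition
\[
B \;\longmapsto\; \odd(B) \;\longmapsto\; {\rm Comp}(\odd(B)) \;=\; {\rm CompOdd}(B)
\]
is automatically a bijection onto its image, and the only substantive task is to identify that image with the set of odd compositions of $n$.

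First I would verify that ${\rm Comp}$ restricts to a bijection between odd sets of $[n-1]$ and odd compositions of $n$. This is essentially by definition: if $A = \{a_1 < a_2 < \cdots < a_k\} \subseteq [n-1]$ is an odd set, then by definition every consecutive difference $a_i - a_{i-1}$ (with $a_0 = 0$ and $a_{k+1} = n$) is odd, and these differences are exactly the parts of ${\rm Comp}(A)$; conversely, any odd composition $\alpha \vDash n$ arises this way from the partial sums of its parts (excluding the final one), which form an odd set in $[n-1]$. So ${\rm Comp}$ gives a bijection between odd sets of $[n-1]$ and odd compositions of $n$.

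Composing this with the bijection $\odd$ of Proposition \ref{Pro:oddbij} then yields the desired bijection $B \mapsto {\rm CompOdd}(B)$ between peak sets of $\{2,3,\dots,n-1\}$ and odd compositions of $n$. I do not anticipate any real obstacle: the only thing one must be careful about is precisely the definitional unfolding that ${\rm Comp}$ sends odd sets to odd compositions, which is already implicit in the subsection on odd compositions and peak sets. The proof should therefore be only a couple of lines, essentially a one-sentence appeal to Proposition \ref{Pro:oddbij} together with the observation that ${\rm Comp}$ matches odd sets with odd compositions.
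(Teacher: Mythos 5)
Your proof is correct and matches the paper's (implicit) argument: the paper likewise deduces this corollary by composing the bijection of Proposition \ref{Pro:oddbij} with the observation that ${\rm Comp}$ sends odd sets to odd compositions. Nothing to add.
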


\subsection{$P$-partitions, enriched $P$-partitions, fundamental functions, and the peak algebra}\label{subsec:(enr)Ppar}

The notation and definitions in this section mainly follow from \cite{S97}.

Throughout this paper, $P=(X,\leq)$ denotes a partially ordered set of size $n$ with the ground set $X$. A \emph{linear extension} of $P$ is a total order $w=(w_1,w_2,\dots,w_n)$ of the ground set $X$ such that $w_i< w_j$ implies $i< j$. The set of linear extensions of $P$ is denoted by $\mathcal{L}(P)$. A \emph{labelled poset} is a pair $(P,\gamma)$ where $P$ is a poset with ground set $X$ and $\gamma:X\to[n]$ is a bijection.

Let $(P,\gamma)$ be a labelled poset. A \emph{$(P,\gamma)$-partition} is a map 
$$f:X\to\mathbb{N}=\{1,2,3,\dots\}$$ such that for all $a,b\in X$ with $a\leq b$, we have:
\begin{enumerate}
	\item $f(a)\leq f(b)$.
	\item If $\gamma(a)>\gamma(b)$, then $f(a)<f(b)$.
\end{enumerate}

The set of all $(P,\gamma)$-partitions is denoted by $\mathcal{A}(P,\gamma)$.

\subsection{Quasisymmetric functions}

Given $A\subseteq [n-1]$, define the \emph{monomial quasisymmetric function} $M_A$ to be 
\[
M_A=\sum x_{i_1}^{\alpha_1}x_{i_2}^{\alpha_2} \dots x_{i_k}^{\alpha_k}
\]
where $(\alpha_1,\alpha_2,\dots,\alpha_k)={\rm Comp}(A)$ and the sum is over all tuples $(i_1,i_2,\dots,i_k)$ of positive integers such that $i_1<i_2<\cdots <i_k$.

The Hopf algebra of \emph{quasisymmetric functions} is  
$$\QSym=\displaystyle\bigoplus_{n\geq 0}\QSym_n$$
where
$$\QSym_n=\mathbb{Q}\text{-span}\{M_A:A\subseteq [n-1]\}.$$
The basis $\{M_A\}$ is called the \emph{monomial basis} of the Hopf algebra of quasisymmetric functions.
%

\subsection{Fundamental functions}
Given a labelled poset $(P,\gamma)$ with ground set $X$, define the \emph{fundamental function} $F_{(P,\gamma)}$ to be
$$F_{(P,\gamma)}=\sum_{f\in\mathcal{A}(P,\gamma)}\prod_{a\in X}x_{f(a)}.$$

\begin{Example}\label{ex:fundfun}
Let $P$ be the following poset 
	 $$
 \begin{tikzpicture}[scale=0.7]
 \node(a) at (0,0){$a$};
 \node(b) at (-1.5,1.5){$b$};
 \node(c) at (1.5,1.5){$c$};
 \node(d) at (-1.5,3){$d$};

 \draw[thick,->] (a)--(b);
 \draw[thick,->] (a)--(c);
 \draw[thick,->] (b)--(d);
 \end{tikzpicture} 
 $$
 and 
 $$\gamma(a)\gamma(b)\gamma(c)\gamma(d)=2143.$$ 	Then
	\begin{align*}
		\begin{array}{ccccccccc}
			& \begin{tikzpicture}[scale=0.3]
				\draw[black, thick,->] (0.7,0.7) -- (1.3,1.3);
				\draw[black, thick,->] (-0.7,0.7) -- (-1.3,1.3);
				\draw[black, thick,->] (-2,2.8) -- (-2,4.2);
				\node at (-2,5) {$2$};
				\node at (-2,2) {$2$};
				\node at (0,0) {$1$};
				\node at (2,2) {$2$};
			\end{tikzpicture} &
			& \begin{tikzpicture}[scale=0.3]
				\draw[black, thick,->] (0.7,0.7) -- (1.3,1.3);
				\draw[black, thick,->] (-0.7,0.7) -- (-1.3,1.3);
				\draw[black, thick,->] (-2,2.8) -- (-2,4.2);
				\node at (-2,5) {$3$};
				\node at (-2,2) {$2$};
				\node at (0,0) {$1$};
				\node at (2,2) {$2$};
			\end{tikzpicture} &
			& \begin{tikzpicture}[scale=0.3]
				\draw[black, thick,->] (0.7,0.7) -- (1.3,1.3);
				\draw[black, thick,->] (-0.7,0.7) -- (-1.3,1.3);
				\draw[black, thick,->] (-2,2.8) -- (-2,4.2);
				\node at (-2,5) {$2$};
				\node at (-2,2) {$2$};
				\node at (0,0) {$1$};
				\node at (2,2) {$3$};
			\end{tikzpicture} & 
			& \begin{tikzpicture}[scale=0.3]
				\draw[black, thick,->] (0.7,0.7) -- (1.3,1.3);
				\draw[black, thick,->] (-0.7,0.7) -- (-1.3,1.3);
				\draw[black, thick,->] (-2,2.8) -- (-2,4.2);
				\node at (-2,5) {$3$};
				\node at (-2,2) {$2$};
				\node at (0,0) {$1$};
				\node at (2,2) {$3$};
			\end{tikzpicture}\\
			F_{(P,\gamma)}=& x_1x_2^3 & + & x_1x_2^2x_3 & + & x_1x_2^2x_3 & + & x_1x_2x_3^2 &+\cdots.
		\end{array}
	\end{align*}
\end{Example}

 The following is known as the fundamental theorem of $(P,\gamma)$-partitions.

\begin{Theorem}\label{thm:ppar}\cite[Lemma 3.15.3]{EC} 
	Let $(P,\gamma)$ be a labelled poset. Then
	$$\mathcal{A}(P,\gamma)=\bigsqcup_{w\in\mathcal{L}(P)}\mathcal{A}(w,\gamma).$$
	Consequently,
	$$F_{(P,\gamma)}=\sum_{w\in\mathcal{L}(P)}F_{(w,\gamma)}.$$
\end{Theorem}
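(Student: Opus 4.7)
The plan is to establish the disjoint union decomposition of $\mathcal{A}(P,\gamma)$ combinatorially; once this is done, the generating function identity follows immediately by summing $\prod_{a\in X} x_{f(a)}$ over each piece. The overall strategy is the classical one of Stanley: associate to each $(P,\gamma)$-partition a unique linear extension of $P$ by sorting elements according to their $f$-values, with $\gamma$ used as a tiebreaker.

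First I would define, for a $(P,\gamma)$-partition $f \in \mathcal{A}(P,\gamma)$, a total order $w=(w_1,\dots,w_n)$ on $X$ by arranging the elements so that $f(w_1)\leq f(w_2)\leq\cdots\leq f(w_n)$, breaking ties by requiring $\gamma(w_i)<\gamma(w_{i+1})$ whenever $f(w_i)=f(w_{i+1})$. I then need to check that this produces a linear extension of $P$: if $a\leq_P b$ in $P$, then condition (1) of the $(P,\gamma)$-partition definition gives $f(a)\leq f(b)$, and in the case of equality $f(a)=f(b)$, condition (2) forces $\gamma(a)<\gamma(b)$ (otherwise $\gamma(a)>\gamma(b)$ would demand $f(a)<f(b)$). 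Either way, $a$ precedes $b$ in $w$, so $w\in\mathcal{L}(P)$.

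Next I would verify that $f\in\mathcal{A}(w,\gamma)$: since $w$ is a totally ordered chain, the required inequalities reduce to checking, for each $i$, that $f(w_i)\leq f(w_{i+1})$ with strict inequality whenever $\gamma(w_i)>\gamma(w_{i+1})$. But by construction $f$-values are nondecreasing along $w$, and equality of $f$-values was only permitted when $\gamma$-values increase, so the contrapositive gives the desired strict inequality. Conversely, any $f'\in\mathcal{A}(w,\gamma)$ for some $w\in\mathcal{L}(P)$ automatically lies in $\mathcal{A}(P,\gamma)$, since $a\leq_P b$ implies $a$ precedes $b$ in $w$, and the chain conditions on $f'$ then yield the required partition conditions on $(P,\gamma)$.

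The key step for \emph{disjointness} of the union is uniqueness of $w$: given $f\in\mathcal{A}(P,\gamma)$, the total order constructed above is forced, because in any $w'\in\mathcal{L}(P)$ with $f\in\mathcal{A}(w',\gamma)$ the sequence $f(w'_1),\dots,f(w'_n)$ must be nondecreasing (condition 1 on the chain), and within any block of equal $f$-values the $\gamma$-labels must be strictly increasing (condition 2 on the chain); these constraints pin down $w'$ uniquely. This yields $\mathcal{A}(P,\gamma)=\bigsqcup_{w\in\mathcal{L}(P)}\mathcal{A}(w,\gamma)$, and summing $\prod_{a\in X}x_{f(a)}$ over both sides gives $F_{(P,\gamma)}=\sum_{w\in\mathcal{L}(P)}F_{(w,\gamma)}$. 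The only delicate point is making sure the tiebreaking rule is consistent — that is, that within an $f$-constant block the $\gamma$-values really can be totally ordered without conflicting with $P$-comparabilities — but this is exactly what the two $(P,\gamma)$-partition conditions guarantee, so no real obstruction arises.
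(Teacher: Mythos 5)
Your proof is correct and is exactly the classical Stanley argument that the paper invokes by citation (to \cite[Lemma 3.15.3]{EC}) rather than reproving. You build $w$ from $f$ by sorting on $f$-values with $\gamma$ as tiebreaker, check that $w\in\mathcal{L}(P)$ and $f\in\mathcal{A}(w,\gamma)$, observe the converse containment, and establish uniqueness of $w$ to get disjointness; all the steps are right, including the implicit but needed fact that for a chain the consecutive constraints propagate to all pairs $i<j$, and the observation that the tiebreak cannot conflict with $P$-comparabilities because the two partition conditions already force $\gamma(a)<\gamma(b)$ whenever $a\leq_P b$ and $f(a)=f(b)$.
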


Given $w=w_1w_2\dots w_n\in \mathcal{L}(P)$, 
$$\Des(w,\gamma)=\{i\in \{1,2,\dots,n-1\}:\gamma(w_i)>\gamma(w_{i+1})\}.$$ 

For $w\in\mathcal{L}(P)$, the function $F_{(w,\gamma)}$ only depends on the descent set $\Des(w,\gamma)$  (see \cite[Section 2.2]{S97}). That is, $F_{(w,\gamma)} = F_{(w,\gamma')}$ if and only if $\Des(w,\gamma)=\Des(w,\gamma')$. Given $A\subseteq [n-1]$, define $$F_A=F_{(w,\gamma)}$$ where $(w,\gamma)$ is any labelled chain  with $\Des(w,\gamma)=A$.

The set $\{F_A: A\subseteq [n-1]\}$ is a basis for $\QSym_n$, and the set $\{F_A\}$ is called the \emph{fundamental basis} of the Hopf algebra of quasisymmetric functions.  We have that 
\[ F_A=\sum_{A\subseteq C \subseteq [n-1]} M_{C}.\]

\begin{Example}\label{ex:fund-to-gcf}
Let $w=(w_1,w_2,w_3,w_4, w_5)$ be a chain, and 
$$\gamma(w_1)\gamma(w_2)\gamma(w_3)\gamma(w_4)\gamma(w_5)=34152,$$ then 
$$\Des(w,\gamma)=\{2,4\},$$ and so 
$$F_{\{2,4\}}=F_{(w,\gamma)}.$$ 
\end{Example} 


Let $(P,\gamma)$ be a labelled poset. An \emph{enriched $(P,\gamma)$-partition} is a map $$f:X\to\{ -1\prec1\prec -2\prec 2 \prec \cdots \}$$ such that for all pairs $a,b\in X$ with $a\leq b$, we have:
\begin{enumerate}
	\item $f(a)\preceq f(b)$.
	\item $f(a)=f(b)>0$ implies $\gamma(a)<\gamma(b)$.
	\item $f(a)=f(b)<0$ implies $\gamma(a)>\gamma(b)$.
\end{enumerate}
The set of all enriched $(P,\gamma)$-partitions is denoted by $\mathcal{B}(P,\gamma)$. 

Given  a labelled poset $(P,\gamma)$ with ground set $X$, define the \emph{enriched fundamental function}  $K_{(P,\gamma)}$ to be 
$$K_{(P,\gamma)}=\sum_{f\in\mathcal{B}(P,\gamma)}\prod_{a\in X}x_{|f(a)|}.$$

\begin{Example}
	Let $(P,\gamma)$ be the labelled poset in Example \ref{ex:fundfun}.
%
%
	Then
	\begin{align*}
		\begin{array}{ccccccccc}
			& \begin{tikzpicture}[scale=0.3]
				\draw[black, thick,->] (0.7,0.7) -- (1.3,1.3);
				\draw[black, thick,->] (-0.7,0.7) -- (-1.3,1.3);
				\draw[black, thick,->] (-2,2.8) -- (-2,4.2);
				\node at (-2,5) {$1$};
				\node at (-2,2) {$-1$};
				\node at (0,0) {$-1$};
				\node at (2,2) {$1$};
			\end{tikzpicture} &
			& \begin{tikzpicture}[scale=0.3]
				\draw[black, thick,->] (0.7,0.7) -- (1.3,1.3);
				\draw[black, thick,->] (-0.7,0.7) -- (-1.3,1.3);
				\draw[black, thick,->] (-2,2.8) -- (-2,4.2);
				\node at (-2,5) {$1$};
				\node at (-2,2) {$1$};
				\node at (0,0) {$-1$};
				\node at (2,2) {$1$};
			\end{tikzpicture} &
			& \begin{tikzpicture}[scale=0.3]
				\draw[black, thick,->] (0.7,0.7) -- (1.3,1.3);
				\draw[black, thick,->] (-0.7,0.7) -- (-1.3,1.3);
				\draw[black, thick,->] (-2,2.8) -- (-2,4.2);
				\node at (-2,5) {$2$};
				\node at (-2,2) {$-2$};
				\node at (0,0) {$1$};
				\node at (2,2) {$1$};
			\end{tikzpicture} & 
			& \begin{tikzpicture}[scale=0.3]
				\draw[black, thick,->] (0.7,0.7) -- (1.3,1.3);
				\draw[black, thick,->] (-0.7,0.7) -- (-1.3,1.3);
				\draw[black, thick,->] (-2,2.8) -- (-2,4.2);
				\node at (-2,5) {$2$};
				\node at (-2,2) {$-2$};
				\node at (0,0) {$1$};
				\node at (2,2) {$-2$};
			\end{tikzpicture}\\
			K_{(P,\gamma)}=& x_1^4 & + & x_1^4 & + & x_1^2x_2^2 & + & x_1x_2^3 &+\cdots.
		\end{array}
	\end{align*}
\end{Example}

The following is the analogue of the fundamental theorem for enriched $P$-partitions.

\begin{Theorem}\label{thm:eppar}\cite[Lemma 2.1]{S97}  
	Let $(P,\gamma)$ be a labelled poset. Then 
	$$\mathcal{B}(P,\gamma)=\bigsqcup_{w\in\mathcal{L}(P)}\mathcal{B}(w,\gamma).$$ Consequently,
	$$K_{(P,\gamma)}=\sum_{w\in\mathcal{L}(P)}K_{(w,\gamma)}.$$
\end{Theorem}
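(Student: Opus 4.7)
The plan is to adapt the proof of the classical fundamental theorem of $(P,\gamma)$-partitions (Theorem \ref{thm:ppar}) to the enriched setting. The heart of the argument is to exhibit, for each $f \in \mathcal{B}(P,\gamma)$, a canonical linear extension $w(f) \in \mathcal{L}(P)$ with $f \in \mathcal{B}(w(f),\gamma)$, and to verify that this assignment is a bijection realizing $\mathcal{B}(P,\gamma)$ as the claimed disjoint union.

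First I would establish the easy inclusion: if $w \in \mathcal{L}(P)$ then $\mathcal{B}(w,\gamma) \subseteq \mathcal{B}(P,\gamma)$. Since $w$ is a linear extension of $P$, every relation $a \leq_P b$ also holds in the chain $w$, so the three enriched conditions applied along $w$ immediately imply the corresponding conditions on $P$.

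For the reverse inclusion and disjointness, given $f \in \mathcal{B}(P,\gamma)$ I would produce $w(f)$ by sorting the ground set $X$ so that the $f$-values are weakly increasing in the order $\cdots \prec -1 \prec 1 \prec -2 \prec 2 \prec \cdots$, breaking ties by the rule: among elements sharing a common positive $f$-value, sort by increasing $\gamma$; among elements sharing a common negative $f$-value, sort by decreasing $\gamma$. This tie-breaking rule is exactly the content of conditions (2) and (3) in the definition of $\mathcal{B}(P,\gamma)$, so whenever $a <_P b$ the sorting places $a$ before $b$, and $w(f)$ is indeed a linear extension of $P$. By construction the three enriched conditions hold along the chain $w(f)$, so $f \in \mathcal{B}(w(f),\gamma)$. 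Any other linear extension $w' \neq w(f)$ must reverse the relative order of some pair on which the sorting rule is strict; inspection shows this swap violates either $\preceq$ monotonicity or one of the two tie-breaking conditions, so $f \notin \mathcal{B}(w',\gamma)$. This gives uniqueness and hence disjointness of the union.

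The consequent identity for $K_{(P,\gamma)}$ is then formal: splitting the defining sum over $f \in \mathcal{B}(P,\gamma)$ according to which $\mathcal{B}(w,\gamma)$ it lies in yields $K_{(P,\gamma)} = \sum_{w \in \mathcal{L}(P)} K_{(w,\gamma)}$. The main subtlety, rather than difficulty, is the direction reversal in the tie-breaking rule between positive and negative $f$-values; getting this asymmetry exactly right is essential, as it is precisely what makes conditions (2) and (3) of the enriched definition compatible with a single canonical sorted order, and it is the feature that distinguishes this argument from the classical $(P,\gamma)$-partition case where all values are positive.
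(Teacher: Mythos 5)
The paper states this result with a citation to Stembridge's \cite[Lemma 2.1]{S97} and does not give its own proof, so there is nothing in the paper to compare against directly. Your argument is correct and is essentially Stembridge's original one: the forward inclusion $\mathcal{B}(w,\gamma)\subseteq\mathcal{B}(P,\gamma)$ is immediate from the fact that a linear extension preserves all order relations of $P$, and the reverse inclusion with disjointness is established by assigning to each $f\in\mathcal{B}(P,\gamma)$ the unique linear extension $w(f)$ obtained by sorting on $f$-values in the order $\prec$, with ties at a common positive value broken by increasing $\gamma$ and ties at a common negative value broken by decreasing $\gamma$. The checks you sketch are the right ones: conditions (1)--(3) force every $P$-comparable pair to be ordered consistently with this sort (so $w(f)\in\mathcal{L}(P)$ and $f\in\mathcal{B}(w(f),\gamma)$), and any other linear extension must invert some pair on which one of the three conditions is rigid, ejecting $f$ from $\mathcal{B}(w',\gamma)$. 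You have also correctly identified where the argument genuinely departs from the unenriched Theorem~\ref{thm:ppar}: the sign-dependent asymmetry in the tie-break, which encodes conditions (2) and (3). This is the only place a reader could slip, and you handled it correctly.
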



Given $w\in \mathcal{L}(P)$, $$\peak(w,\gamma)=\{i:\gamma(w_{i-1})<\gamma(w_i)>\gamma(w_{i+1})\}$$ is a peak set, and also any peak set is equal to some $\peak(w,\gamma)$.

For $w\in\mathcal{L}(P)$, the function $K_{(w,\gamma)}$ only depends on the peak set $\peak(w,\gamma)$ (see  \cite[Section 2.2]{S97}). That is, $K_{(w,\gamma)}=K_{(w,\gamma')}$ if and only if $\peak(w,\gamma)=\peak(w,\gamma')$. Given a peak set $B\subseteq\{2,3,\dots,n-1\}$, define $$K_B=K_{(w,\gamma)}$$ where $(w,\gamma)$ is any labelled chain with $\Peak(w,\gamma)=B$. 

The \emph{peak algebra} is 
$$\Pi=\displaystyle\bigoplus_{n\geq 0}\Pi_n$$
where
$$\Pi_n=\bbQ\text{-span}\{K_B:B\subseteq\{2,3,\dots,n-1\} \text{~is a peak set}\}.$$
The basis $\{K_B: B \text{~is a peak set}\}$ is called the {\it enriched fundamental basis} of the peak algebra. 

Given a peak set $B\subseteq \{2,3,\dots,n-1\}$, define the \emph{enriched monomial function} to be 
\[\eta_B=\sum_{C\subseteq B} (-1)^{|B|-|C|} K_{C}.\] 
 By inclusion-exclusion, we have 
 \[K_B=\sum_{C\subseteq B} \eta_C.\]

The \emph{descent-to-peak} map is 
$$\begin{array}{cccc}
	\Theta_\QSym: & \QSym & \to & \Pi\\
	& F_{(P,\gamma)} & \mapsto & K_{(P,\gamma)},
\end{array}$$
which is a surjective morphism of graded Hopf algebras (see \cite{BMSvW}).

When $(w,\gamma)$ is a labelled chain with descent set $A$ and peak set $B$, then 
$$\Theta_{\QSym}(F_{A})=\Theta_{\QSym}(F_{(w,\gamma)})=K_{(w,\gamma)}=K_B,$$ so  $\Theta_{\QSym}$ maps a fundamental basis element indexed by a descent set of a labelled chain to the enriched fundamental basis element indexed by the peak set of the same labelled chain. Consequently, the map $\Theta_{\QSym}$ is called the descent-to-peak map. 

	\begin{Lemma}\label{lem:ppar-to-gcf}
	Given a labelled poset $(P,\gamma)$, let $G$  be an edge-coloured digraph isomorphic to the Hasse diagram of $P$, and moreover,
	 \begin{enumerate}
	\item $ a\Rightarrow  b$ in $G$ if and and only if $b$ covers $a$ in $P$ and $\gamma(a)\leq \gamma(b)$.
	\item $ a\rightarrow  b$ in $G$ if and and only if $b$ covers $a$ in $P$ and $\gamma(a)> \gamma(b)$.
	\end{enumerate} 
	Then
	\begin{enumerate} 
	 \item[(i)] the set of $(P,\gamma)$-partitions and the set of proper colourings of $G$ are identical, 
	 \item[(ii)] $F_{(P,\gamma)}=\scrX_G,$
	 \item[(iii)] the set of enriched $(P,\gamma)$-partitions and the set of enriched colourings of $G$ are identical,
	  \item[(iv)] $K_{(P,\gamma)}=\scrE_G.$
	\end{enumerate} 
	\end{Lemma}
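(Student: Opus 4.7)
The plan is to observe that the lemma is essentially an unfolding of the two sets of definitions, with the only substantive content being that the inequalities, which for $(P,\gamma)$-partitions are imposed on every order relation $a\le b$ in $P$, may equivalently be imposed on just the cover relations (i.e.\ the edges of the Hasse diagram), provided one keeps track of the sign of $\gamma(a)-\gamma(b)$ via the two edge colours. Once (i) and (iii) are proven, statements (ii) and (iv) are immediate since $F_{(P,\gamma)}$ and $\scrX_G$ (resp.\ $K_{(P,\gamma)}$ and $\scrE_G$) are sums of identical monomials over identical index sets, after identifying $X$ with $V(G)$.

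For (i), I would prove the two inclusions separately. The forward direction ``every $(P,\gamma)$-partition is a proper colouring of $G$'' is immediate: each cover $a\lessdot b$ in $P$ gives either $a\Rightarrow b$ (when $\gamma(a)<\gamma(b)$) and condition (1) in the $P$-partition definition supplies $f(a)\le f(b)$, or $a\to b$ (when $\gamma(a)>\gamma(b)$) and condition (2) supplies $f(a)<f(b)$. For the reverse direction, given a proper colouring $\kappa$ and any $a\le b$, pick a saturated chain $a=c_0\lessdot c_1\lessdot\cdots\lessdot c_k=b$; in either edge colour one has $\kappa(c_{i-1})\le\kappa(c_i)$, and composing gives $\kappa(a)\le\kappa(b)$, which is condition (1). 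For condition (2), if $\gamma(a)>\gamma(b)$ then along this chain the values $\gamma(c_0),\ldots,\gamma(c_k)$ must drop at some step, i.e.\ there exists $i$ with $\gamma(c_{i-1})>\gamma(c_i)$, producing an edge $c_{i-1}\to c_i$ and hence a strict $\kappa(c_{i-1})<\kappa(c_i)$; the remaining weak inequalities then upgrade to $\kappa(a)<\kappa(b)$.

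The same pattern proves (iii), but the ``equality'' cases in the enriched definitions need a little care and will be the main place where one must be precise. The forward direction is a case-check on cover relations: for $a\Rightarrow b$, $f(a)\preceq f(b)$ already holds, and if $f(a)=f(b)<0$ then condition (3) of the enriched definition would force $\gamma(a)>\gamma(b)$, contradicting $a\Rightarrow b$; so the admissible possibilities match the enriched-colouring rule for $\Rightarrow$, and symmetrically for $\to$. For the reverse direction one again walks a saturated chain $a=c_0\lessdot\cdots\lessdot c_k=b$; each step gives $\kappa(c_{i-1})\preceq\kappa(c_i)$, yielding condition (1). For condition (2), if $\kappa(a)=\kappa(b)>0$ then all $\kappa(c_i)$ are squeezed to the common value, which is positive; the enriched-colouring rule then forces every intermediate edge to be of type $\Rightarrow$ (the $\to$ equality case requires a negative value), hence $\gamma(c_{i-1})<\gamma(c_i)$ at each step (strict because $\gamma$ is a bijection), so $\gamma(a)<\gamma(b)$. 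Condition (3) is entirely symmetric, with all edges forced to be $\to$.

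The hard part, such as it is, is articulating cleanly this ``a step must drop $\gamma$ somewhere'' argument for (i) and the ``all intermediate colours equal the common value, forcing a single edge type'' argument for (iii). Both are elementary, but they are the places where one actually uses that $G$ is the Hasse diagram of $P$ (so that $\le$-relations factor through cover relations) and that $\gamma$ is a bijection (so that weak inequalities $\gamma(c_{i-1})\le\gamma(c_i)$ between distinct vertices are strict). After these are set out, (ii) and (iv) are one line each.
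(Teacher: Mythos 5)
Your proof is correct. The paper actually omits the proof of this lemma entirely, treating it as a routine unpacking of definitions, and your write-up supplies exactly the details one would want: the forward inclusions are trivial restrictions to covers, while the reverse inclusions correctly exploit (a) that $G$ is the Hasse diagram, so order relations factor through saturated chains, and (b) that $\gamma$ is a bijection, so equalities along a chain squeeze the $\gamma$-values and force a uniform edge type in the enriched case (resp.\ force a strict drop somewhere in the ordinary case). Parts (ii) and (iv) then follow since the two generating functions sum identical monomials over sets you have just shown to be equal.
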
 
	
	The above lemma enables us to translate the results about $(P,\gamma)$-partitions and peak algebra into the language of generalized chromatic functions.

\begin{Corollary}\label{cor:theta-gcf1}
The {descent-to-peak} map can be rewritten as 
$$\begin{array}{cccc}
	\Theta_\QSym: & \QSym & \to & \Pi\\
	& \scrX_G & \mapsto & \scrE_G.
\end{array}$$

\end{Corollary}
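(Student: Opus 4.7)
The plan is to apply Lemma \ref{lem:ppar-to-gcf} directly, viewing this corollary as a translation of the existing definition of $\Theta_\QSym$ into the language of edge-coloured digraphs. For any edge-coloured digraph $G$ arising from a labelled poset $(P,\gamma)$ via the construction in that lemma, parts (ii) and (iv) immediately yield the identifications $\scrX_G = F_{(P,\gamma)}$ and $\scrE_G = K_{(P,\gamma)}$. Combining these with the definition $\Theta_\QSym(F_{(P,\gamma)}) = K_{(P,\gamma)}$ gives the chain of equalities
$$\Theta_\QSym(\scrX_G) = \Theta_\QSym(F_{(P,\gamma)}) = K_{(P,\gamma)} = \scrE_G,$$
which is exactly the claimed formula.

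To see that this reformulation captures the entire map on all of $\QSym$, I would note that every fundamental basis element $F_A$ equals $F_{(w,\gamma)}$ for some labelled chain $(w,\gamma)$ with $\Des(w,\gamma)=A$, and this in turn equals $\scrX_G$ for the associated edge-coloured directed path produced by the lemma. Since the $F_A$'s form a basis of $\QSym$ and the $K_B$'s span $\Pi$, the action $\scrX_G \mapsto \scrE_G$ on chain-type digraphs already determines $\Theta_\QSym$ everywhere by linear extension, and any other edge-coloured digraph produces a $\scrX_G$ that is a $\mathbb{Q}$-linear combination of such fundamental functions, with the image under $\Theta_\QSym$ being the corresponding combination of $\scrE_G$'s.

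There is no real obstacle here beyond bookkeeping: the substantive content was already carried out in Lemma \ref{lem:ppar-to-gcf} and in Stembridge's original verification that $\Theta_\QSym$ is well-defined on $\QSym$. The step that matters most to flag is the appeal to well-definedness, namely that if two distinct edge-coloured digraphs $G_1,G_2$ (or two labelled posets) happen to satisfy $\scrX_{G_1}=\scrX_{G_2}$ then necessarily $\scrE_{G_1}=\scrE_{G_2}$; this is immediate because $\Theta_\QSym$ is already known to be a well-defined linear map on $\QSym$. The practical value of the corollary is notational: it sets up the template $\scrX_G \mapsto \scrE_G$ that will be extended in the noncommuting setting to $\scrY_\bfG \mapsto \scrF_\bfG$, enabling the uniform treatment of $\Theta_{\NCQSym}$ in the sections that follow.
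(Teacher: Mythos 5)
Your proof is correct and follows the same route as the paper, which states the corollary immediately after Lemma \ref{lem:ppar-to-gcf} with no further argument: combining parts (ii) and (iv) of that lemma with the definition $\Theta_\QSym(F_{(P,\gamma)}) = K_{(P,\gamma)}$ gives the chain $\Theta_\QSym(\scrX_G) = \scrE_G$ directly. Your additional remarks on well-definedness (inherited from Stembridge's map) and on the fundamental basis $F_A = \scrX_G$ for chain-type $G$ correctly fill in the bookkeeping the paper leaves implicit.
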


\begin{Example}\label{Ex:}
The representation of $F_{\{2,4\}}$ as a generalized chromatic function is as follows. 
$$\scrX_{
  \begin{tikzpicture}[
roundnode/.style={circle, draw=black!60, fill=gray!10, very thick, minimum size=7mm},
squarednode/.style={rectangle, draw=black!60, fill=gray!5, very thick, minimum size=5mm},]
 \node[squarednode](1) at (1.3,0){};
  \node[squarednode](2) at (2.6,0){};
    \node[squarednode](3) at (3.9,0){};
      \node[squarednode](4) at (5.2,0){};
        \node[squarednode](5) at (6.5,0){};
        
    \draw[thick,double,->](1)--(2);
       \draw[ thick,->](2)--(3);
          \draw[ thick,double,->](3)--(4);
             \draw[ thick,->](4)--(5);

 \end{tikzpicture}
}$$
The set $\{2,4\}$ clarifies the types of edges; the edges starting at the second and fourth vertices are solid edges, and the rest of the edges are double. 

The representation of $K_{\{2,4\}}$ as an enriched chromatic function is as follows. 
$$\scrE_{
  \begin{tikzpicture}[
roundnode/.style={circle, draw=black!60, fill=gray!10, very thick, minimum size=7mm},
squarednode/.style={rectangle, draw=black!60, fill=gray!5, very thick, minimum size=5mm},]
 \node[squarednode](1) at (1.3,0){};
  \node[squarednode](2) at (2.6,0){};
    \node[squarednode](3) at (3.9,0){};
      \node[squarednode](4) at (5.2,0){};
        \node[squarednode](5) at (6.5,0){};
        
    \draw[thick,double,->](1)--(2);
       \draw[ thick,->](2)--(3);
          \draw[ thick,double,->](3)--(4);
             \draw[ thick,->](4)--(5);

 \end{tikzpicture}
}$$

And by {Corollary \ref{cor:theta-gcf1}}
$$\Theta_{\QSym}(\scrX_{
  \begin{tikzpicture}[
roundnode/.style={circle, draw=black!60, fill=gray!10, very thick, minimum size=7mm},
squarednode/.style={rectangle, draw=black!60, fill=gray!5, very thick, minimum size=5mm},]
 \node[squarednode](1) at (1.3,0){};
  \node[squarednode](2) at (2.6,0){};
    \node[squarednode](3) at (3.9,0){};
      \node[squarednode](4) at (5.2,0){};
        \node[squarednode](5) at (6.5,0){};
        
    \draw[thick,double,->](1)--(2);
       \draw[ thick,->](2)--(3);
          \draw[ thick,double,->](3)--(4);
             \draw[ thick,->](4)--(5);

 \end{tikzpicture}
})=\scrE_{
  \begin{tikzpicture}[
roundnode/.style={circle, draw=black!60, fill=gray!10, very thick, minimum size=7mm},
squarednode/.style={rectangle, draw=black!60, fill=gray!5, very thick, minimum size=5mm},]
 \node[squarednode](1) at (1.3,0){};
  \node[squarednode](2) at (2.6,0){};
    \node[squarednode](3) at (3.9,0){};
      \node[squarednode](4) at (5.2,0){};
        \node[squarednode](5) at (6.5,0){};
        
    \draw[thick,double,->](1)--(2);
       \draw[ thick,->](2)--(3);
          \draw[ thick,double,->](3)--(4);
             \draw[ thick,->](4)--(5);

 \end{tikzpicture}.
}$$

\end{Example}

\section{Standard and enriched standard pairs}\label{sec:sten}

\subsection{Set compositions and standard pairs}
Recall that a {set composition} $\phi$ of $[n]$ is a list  of mutually disjoint  nonempty subsets  $\phi_1,\phi_2,\ldots,\phi_{\ell(\phi)}$ of $[n]$ whose union is $[n]$; this is denoted by $\phi_1|\phi_2|\cdots|\phi_{\ell(\phi)}\vDash [n]$.
 Each $\phi_i$ is called a \emph{block} of the set composition $\phi$, and the \emph{length} of $\phi$ is $\ell(\phi)$.  By convention, we denote by $\emptyset$ the unique empty set composition of $[0]=\emptyset$.  For example, $\phi=12|5|34\vDash [5]$ has length 3.


Given a pair $(A,\sigma)$, $A=\{a_1<a_2<\dots<a_k \}\subseteq [n-1]$ and $\sigma\in \frakS_n$, define 
\[\SetComp(A,\sigma)=\sigma(1)\cdots \sigma(a_1)~|~\sigma(a_1+1)\cdots \sigma(a_2)~|\cdots |~ \sigma(a_k) \cdots \sigma(n)\vDash [n].\]
For example, 
$$\SetComp(\{2,5,6\},67325841)=67|325|8|41.$$

%

	Let  $A\subseteq[n-1]$ and $\sigma\in\mathfrak{S}_n$. Remember that the pair $(A,\sigma)$ is {standard} if  $\Des(\sigma)\subseteq A$, that is $a\notin A$ implies $\sigma(a)<\sigma(a+1)$. 
	For example, when  $A=\{2,5,6\}$ and $\sigma = 37124856$, then $(A,\sigma)$ is standard and $\SetComp(A,\sigma)=37|124|8|56$. 
Note that for any pair $(A,\sigma)$, $A\subseteq [n-1]$ and $\sigma\in \frakS_n$, there is a unique $\sigma'\in\mathfrak{S}_n$ such that $(A,\sigma')$ is standard and $\SetComp(A,\sigma)=\SetComp(A,\sigma')$. We denote $(A,\sigma')$ by $\std(A,\sigma)$. 
For example,
$$\std(\{3,5\},37284516)=(\{3,5\},23748156).$$

\begin{Proposition}\label{prop:std}
The map 
$$\begin{array}{cccc}
& \{ (A,\sigma): A\subseteq [n-1], \sigma\in \frakS_n,  (A,\sigma)~ {\rm is~ standard}\}& \rightarrow & \{ \phi: \phi\vDash[n]\} \\
& (A,\sigma)  & \mapsto & \SetComp(A,\sigma)
\end{array}$$
is a bijection. 	
	
\end{Proposition}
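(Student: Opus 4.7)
The plan is to construct an explicit inverse to the map $\SetComp$. Given a set composition $\phi = \phi_1\,|\,\phi_2\,|\,\cdots\,|\,\phi_{\ell}\vDash[n]$, define $a_i = |\phi_1|+|\phi_2|+\cdots+|\phi_i|$ for $1 \leq i \leq \ell-1$, and set $A = \{a_1<a_2<\cdots<a_{\ell-1}\}\subseteq[n-1]$. Then define $\sigma\in\frakS_n$ by writing out the elements of $\phi_1$ in increasing order as $\sigma(1),\ldots,\sigma(a_1)$, followed by the elements of $\phi_2$ in increasing order as $\sigma(a_1+1),\ldots,\sigma(a_2)$, and so on. The candidate inverse sends $\phi\mapsto(A,\sigma)$.

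First I would verify that $(A,\sigma)$ produced this way is actually a standard pair: within each block $\{\sigma(a_{i-1}+1),\ldots,\sigma(a_i)\}$ the entries are increasing by construction, so any descent of $\sigma$ must occur at some position $a_i\in A$, hence $\Des(\sigma)\subseteq A$. Second, applying $\SetComp$ to $(A,\sigma)$ recovers $\phi$ directly from the definition of $\SetComp$ and the fact that the block sizes agree.

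For the other composition, suppose $(A,\sigma)$ is any standard pair with $A=\{a_1<\cdots<a_k\}$, and let $\phi = \SetComp(A,\sigma)$. The block sizes of $\phi$ are $a_1,a_2-a_1,\ldots,n-a_k$, which determine $A$ uniquely via partial sums, so the proposed inverse recovers $A$. It remains to see that the permutation $\sigma'$ reconstructed from $\phi$ as above equals $\sigma$. Since $(A,\sigma)$ is standard, $\sigma$ has no descent inside any block $\sigma(a_{i-1}+1)\cdots\sigma(a_i)$, meaning these entries are already in increasing order; but the reconstruction rule also lists the elements of each block $\phi_i$ in increasing order, so $\sigma = \sigma'$. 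This establishes that the two maps are mutually inverse.

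The construction is essentially bookkeeping, and I do not expect a substantive obstacle; the only subtlety is the role of the standardness condition, which is precisely what removes the ambiguity inside each block and makes the inverse well-defined. This justifies reading set compositions as equivalence classes of pairs $(A,\sigma)$ under the relation already used to define $\std(A,\sigma)$, so the proposition can equivalently be phrased as saying that each equivalence class contains a unique standard representative.
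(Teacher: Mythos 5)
Your proof is correct and follows essentially the same approach as the paper: both construct the candidate preimage $(A,\sigma)$ from $\phi$ by reading off block-size partial sums for $A$ and sorted block contents for $\sigma$. The only minor difference is that you explicitly verify the constructed pair is standard and frame the argument as mutual inversion, whereas the paper argues surjectivity and injectivity separately and leaves standardness of the preimage implicit; these are cosmetic rather than substantive distinctions.
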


%
%

\begin{proof} We first show that the function $\SetComp$ is surjective. Given a set composition $\phi=\phi_1|\phi_2|\dots|\phi_{\ell(\phi)}$, let $\phi_{ij}$ be the $j$th smallest element in $\phi_i$. Let $k=\ell(\phi)-1$. Let $|\phi_i|=a_i-a_{i-1}$, $1\leq i\leq k+1$, with $a_0=0$ and $a_{k+1}=n$.
Set $$\sigma=\phi_{11}\phi_{12}\dots\phi_{1a_1} \phi_{21}\phi_{22}\dots\phi_{2(a_2-a_1)} \dots \phi_{k1}\phi_{k2}\dots\phi_{k(n-a_k)}$$
and 
$$A=\{a_1,a_2,\dots, a_k\}$$ then 
$$\SetComp(A,\sigma)=\phi_{11}\phi_{12}\dots\phi_{1a_1}~|~ \phi_{21}\phi_{22}\dots\phi_{2(a_2-a_1)}~| \dots|~ \phi_{k1}\phi_{k2}\dots\phi_{k(n-a_k)}=\phi.$$
Therefore, the function $\SetComp$ is surjective.

Now we show that $\SetComp$ is an injection. If $\phi=\SetComp(A,\sigma)=\SetComp(A',\sigma')$, since $A$ and $A'$ identify the sizes of the blocks of $\phi$, we have that $A=A'$. Moreover, by the definition of the function $\SetComp$, we must have $\sigma=\sigma'$. Therefore, $\SetComp$ is injective.  

\end{proof}

\subsection{Odd set compositions and enriched standard pairs}

An \emph{odd} set composition is a set composition whose all blocks have odd sizes. We say a pair $(A,\sigma)$, $A\subseteq[n-1]$ and $\sigma\in \mathfrak{S}_n$, is \emph{odd} if $A$ is an odd set. Therefore, for any odd pair $(A,\sigma)$, ${\rm SetComp}(A,\sigma)$ is an odd set composition.

A pair $(B,\sigma)$, $B\subseteq \{2,3,\dots, n-1\}$ is a peak set and $\sigma\in \frakS_n$,  is called \emph{enriched standard} if $(\odd(B),\sigma)$ is standard, that is $\Des(\sigma)\subseteq \odd(B)$. 
For example, when  $B=\{2,5,7\}$ and $\sigma = 278134569$, then $(B,\sigma)$ is enriched standard since 
$$\Des(\sigma)=\{3\}\subseteq \odd(B)=\{3,8\}.$$

Note that for any pair $(B,\sigma)$,  $B\subseteq \{2,3,\dots, n-1\}$ is a peak set and $\sigma\in \frakS_n$, there is a unique $\sigma'\in\mathfrak{S}_n$ such that $(B,\sigma')$ is enriched standard. We denote $(B,\sigma')$ by ${\rm EStd}(B,\sigma)$.   For example, 
\[{\rm EStd}(\{2,5,7\},287134659)=(\{2,5,7\},278134569).\]

For each enriched standard pair $(B,\sigma)$, define 
\[ \OddSetComp(B,\sigma)=\SetComp(\odd(B),\sigma). \]

\begin{Proposition}\label{Prop:OddSetComp}
The map from the set of enriched standard pairs $(B,\sigma)$, $B\subseteq[n-1]$ and $\sigma\in\mathfrak{S}_n$, to the set of odd set compositions of $[n]$ given by 
\[
\begin{array}{cccc}
& (B,\sigma) & \mapsto & \OddSetComp(B,\sigma).
\end{array} 
\]
is a bijection.
\end{Proposition}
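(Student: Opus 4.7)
The plan is to exhibit the map as a composition of two bijections already established in the paper, namely Proposition \ref{prop:std} (standard pairs $\leftrightarrow$ set compositions) and Proposition \ref{Pro:oddbij} (peak sets $\leftrightarrow$ odd sets). By definition, $\OddSetComp(B,\sigma) = \SetComp(\odd(B),\sigma)$, so factoring through the pair $(\odd(B),\sigma)$ reduces everything to known facts.

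First I would verify well-definedness: if $(B,\sigma)$ is enriched standard, then $\Des(\sigma) \subseteq \odd(B)$, so $(\odd(B),\sigma)$ is a standard pair; moreover $\odd(B)$ is an odd set by Proposition \ref{Pro:oddbij}. The block sizes of $\SetComp(\odd(B),\sigma)$ are exactly the gaps of $\odd(B)$ (padded by $0$ and $n$), which are all odd, so $\OddSetComp(B,\sigma)$ is indeed an odd set composition.

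For the inverse, given an odd set composition $\phi \vDash [n]$, apply the inverse of $\SetComp$ from Proposition \ref{prop:std} to obtain a unique standard pair $(A,\sigma)$ with $\SetComp(A,\sigma) = \phi$. Since the blocks of $\phi$ all have odd size, the set $A \subseteq [n-1]$ encoding the block boundaries is an odd set. By Proposition \ref{Pro:oddbij}, there is a unique peak set $B \subseteq \{2,3,\dots,n-1\}$ with $\odd(B) = A$. Then $(B,\sigma)$ is enriched standard (since $\Des(\sigma) \subseteq A = \odd(B)$) and maps to $\phi$ under $\OddSetComp$.

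Finally I would check the two compositions are identities. Starting from $(B,\sigma)$ enriched standard, the above procedure recovers $\odd(B)$ as $A$, then $B$ itself via the bijection $\odd$, and $\sigma$ from the standardness condition. Conversely, starting from $\phi$ and carrying out the forward map recovers $\phi$ by Proposition \ref{prop:std}. The routine to check is mostly bookkeeping; no step should present a genuine obstacle, since all the substantive content was already packaged into Propositions \ref{Pro:oddbij} and \ref{prop:std}.
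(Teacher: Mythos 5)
Your proof is correct and follows essentially the same route as the paper. The paper's proof also works by constructing the inverse (it writes "we only need to show that it is surjective" and then exhibits, for each odd set composition $\phi$, the peak set $B$ with $\odd(B)$ recording the partial sums of block sizes and the permutation $\sigma$ sorting the blocks), implicitly relying on the already-established injectivity of $\odd$ and $\SetComp$; you make the same underlying factorization through Propositions \ref{Pro:oddbij} and \ref{prop:std} explicit and spell out the well-definedness and the two identity checks, which the paper leaves tacit.
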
 
\begin{proof}
To show that $\OddSetComp$ is a bijection, we only need to show that it is surjective. Given an odd set composition 
$$\phi=\phi_1|\phi_2|\cdots|\phi_{\ell(\phi)},$$ 
let $B$ be a peak set such that 
$$\odd(B)=\{ |\phi_1|< |\phi_1|+ |\phi_2|< \cdots<|\phi_1|+|\phi_2|+\cdots+|\phi_{\ell(\phi)-1}| \},$$ and let $\sigma\in\mathfrak{S}_n$ be the permutation for which
$$ \sigma(\{|\phi_{1}|+ \cdots+|\phi_{i-1}|+1,|\phi_{1}|+ \cdots+|\phi_{i-1}|+2, |\phi_{1}|+ \cdots+|\phi_{i-1}|+|\phi_{i}| )=\sort(\phi_i)$$ for all $i$. Then 
$$\OddSetComp(B,\sigma)=\phi.$$
\end{proof}

\section{Fundamental functions in noncommuting variables and the analogue of the fundamental theorem}\label{Sec:fundamental}

\subsection{Quasisymmetric functions in noncommuting variables}\label{subsec:NCQSym}

Recall that $\bbQ\langle \langle \bx_1,\bx_2,\dots\rangle\rangle$ is the algebra of formal power series in infinitely many noncommuting variables $\bx=\{\bx_1,\bx_2,\dots\}$ over $\bbQ$. 

Given $(A,\sigma)$, $A\subseteq [n-1]$ and $\sigma\in \mathfrak{S}_n$, define the \emph{monomial quasisymmetric function in noncommuting variables} $\bfM_{(A,\sigma)}$ to be  
$$\bfM_{(A,\sigma)}=\sum \bx_{i_1}\bx_{i_2}\cdots \bx_{i_n}$$ where the sum is over all tuples $(i_1,i_2,\dots,i_n)$ of positive integers such that 
\begin{enumerate} 
\item $i_j=i_k$ if and only if $j$ and $k$ are in the same block of $\SetComp(A,\sigma)$.
\item $i_j<i_k$ if and only if $j$ is in a block of $\SetComp(A,\sigma)$ to the left of the block containing $k$.
\end{enumerate} 

Note that for pairs $(A,\sigma_1)$ and $(A,\sigma_2)$ with $\std(A,\sigma_1)=\std(A,\sigma_2)$, we have 
 $$\bM_{(A,\sigma_1)}=\bM_{(A,\sigma_2)}.$$

The Hopf algebra of \emph{quasisymmetric functions in noncommuting variables} is  
$$\NCQSym=\displaystyle\bigoplus_{n\geq 0}\NCQSym_n$$
where
$$\NCQSym_n=\mathbb{Q}\text{-span}\{ \bM_{(A,\sigma)}: A\subseteq [n-1], \sigma\in \mathfrak{S}_n, \text{~$(A,\sigma)$ is standard}  \}.$$

The basis $\{ \bM_{(A,\sigma)}:\text{$(A,\sigma)$ is standard}\}$ is called the \emph{monomial basis} of the Hopf algebra of quasisymmetric functions in noncommuting variables.

 
 \subsection{Fundamental functions in noncommuting variables}\label{subsec:fund-fun}
Given $(P,\gamma,\sigma)$ where $(P,\gamma)$ is a labelled poset with ground set $X$, $|X|=n$, and $\sigma:X\to[n]$ is a bijection, define the \emph{fundamental function  in noncommuting variables} $\mathbf{F}_{(P,\gamma,\sigma)}$ to be
$$\mathbf{F}_{(P,\gamma,\sigma)}=\sum_{f\in\mathcal{A}(P,\gamma)}\prod_{i=1}^{n}\mathbf{x}_{f(\sigma^{-1}(i))}=\sum_{f\in\mathcal{A}(P,\gamma)}\mathbf{x}_{f(\sigma^{-1}(1))}\mathbf{x}_{f(\sigma^{-1}(2))}\cdots \mathbf{x}_{f(\sigma^{-1}(n))}.$$

\begin{Example}\label{ex:labelfund}
	Given $(P,\gamma)$  in Example \ref{ex:fundfun}
%
%
 and
 $$\sigma(a)\sigma(b)\sigma(c)\sigma(d)=1234,$$ 
 \[
		\begin{array}{ccccccccc}
			& \begin{tikzpicture}[scale=0.3]
				\draw[black, thick,->] (0.7,0.7) -- (1.3,1.3);
				\draw[black, thick,->] (-0.7,0.7) -- (-1.3,1.3);
				\draw[black, thick,->] (-2,2.8) -- (-2,4.2);
				\node at (-2,5) {$2$};
				\node at (-2,2) {$2$};
				\node at (0,0) {$1$};
				\node at (2,2) {$2$};
			\end{tikzpicture} &
			& \begin{tikzpicture}[scale=0.3]
				\draw[black, thick,->] (0.7,0.7) -- (1.3,1.3);
				\draw[black, thick,->] (-0.7,0.7) -- (-1.3,1.3);
				\draw[black, thick,->] (-2,2.8) -- (-2,4.2);
				\node at (-2,5) {$3$};
				\node at (-2,2) {$2$};
				\node at (0,0) {$1$};
				\node at (2,2) {$2$};
			\end{tikzpicture} &
			& \begin{tikzpicture}[scale=0.3]
				\draw[black, thick,->] (0.7,0.7) -- (1.3,1.3);
				\draw[black, thick,->] (-0.7,0.7) -- (-1.3,1.3);
				\draw[black, thick,->] (-2,2.8) -- (-2,4.2);
				\node at (-2,5) {$2$};
				\node at (-2,2) {$2$};
				\node at (0,0) {$1$};
				\node at (2,2) {$3$};
			\end{tikzpicture} & 
			& \begin{tikzpicture}[scale=0.3]
				\draw[black, thick,->] (0.7,0.7) -- (1.3,1.3);
				\draw[black, thick,->] (-0.7,0.7) -- (-1.3,1.3);
				\draw[black, thick,->] (-2,2.8) -- (-2,4.2);
				\node at (-2,5) {$3$};
				\node at (-2,2) {$2$};
				\node at (0,0) {$1$};
				\node at (2,2) {$3$};
			\end{tikzpicture}\\
			\bF_{(P,\gamma,\sigma)}=& \bx_1\bx_2\bx_2\bx_2 & + & \bx_1\bx_2\bx_3 & + & \bx_1\bx_2\bx_3\bx_2 & + & \bx_1\bx_2\bx_3\bx_3 &+\cdots.
		\end{array}
		\]
\end{Example}

\begin{Definition} 
Let $(P,\gamma)$ be a labelled poset with ground set $X$ and $|X|=n$. Let $\sigma:X\to[n]$ be a bijection. The labelled edge-coloured digraph $\bfG$ associated to $(P,\gamma,\sigma)$  is a labelled edge-coloured digraph $\bfG$ such that 
\begin{enumerate}
\item The vertex set of $\bfG$ is $[n]$.
\item $\sigma^{-1}(a) \Rightarrow \sigma^{-1}(b)$ in $\bfG$ if and only if $b$ covers $a$ in $P$ and $\gamma(\sigma^{-1}(a))< \gamma(\sigma^{-1}(b))$.
\item $\sigma^{-1}(a) \rightarrow \sigma^{-1}(b)$ in $\bfG$ if and only if $b$ covers $a$ in $P$ and $\gamma(\sigma^{-1}(a))> \gamma(\sigma^{-1}(b))$.
 \end{enumerate} 
\end{Definition} 

\begin{Lemma}\label{Def:PosetGraph}
 Given the labelled edge-coloured digraph $\bfG$ associated to $(P,\gamma,\sigma)$, we have 
 $$\bfF_{(P,\gamma,\sigma)}=\scrY_\bfG.$$
 \end{Lemma}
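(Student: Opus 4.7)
The proof will boil down to matching monomials term by term under an explicit bijection between $(P,\gamma)$-partitions and proper colourings of $\bfG$. The commutative analogue, Lemma \ref{lem:ppar-to-gcf}, already supplies this bijection in the unlabelled setting; I only need to track how the $\sigma$-indexing orders the variables in the noncommuting product.

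First I would apply Lemma \ref{lem:ppar-to-gcf} to the labelled poset $(P,\gamma)$ together with the edge-coloured digraph $G$ underlying $\bfG$. This identifies $\mathcal{A}(P,\gamma)$ with the set of proper colourings of $G$ via a map that sends $f\in\mathcal{A}(P,\gamma)$ to the colouring $\kappa$ determined by $\kappa(v)=f(v)$ after identifying $V(G)$ with the ground set $X$ of $P$. The definitions of $\Rightarrow$ and $\rightarrow$ in $\bfG$ are engineered so that the vertex labelled $i\in[n]$ in $\bfG$ corresponds to $\sigma^{-1}(i)\in X$; that is, the relabelling $v\mapsto\sigma(v)$ transports proper colourings of $G$ to proper colourings of $\bfG$. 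Under this identification, a proper colouring $\kappa$ of $\bfG$ is precisely a function $\kappa:[n]\to\mathbb{N}$ of the form $\kappa(i)=f(\sigma^{-1}(i))$ for a (unique) $(P,\gamma)$-partition $f$.

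Substituting this correspondence into the definition of $\scrY_\bfG$ gives
\[
\scrY_\bfG \;=\; \sum_{\kappa}\bx_{\kappa(1)}\bx_{\kappa(2)}\cdots\bx_{\kappa(n)} \;=\; \sum_{f\in\mathcal{A}(P,\gamma)}\bx_{f(\sigma^{-1}(1))}\bx_{f(\sigma^{-1}(2))}\cdots\bx_{f(\sigma^{-1}(n))} \;=\; \bfF_{(P,\gamma,\sigma)},
\]
which is the desired identity.

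There is no genuine obstacle here beyond bookkeeping: the content of the lemma is that the noncommutative fundamental function is just the commutative one with its variables placed in the slots prescribed by $\sigma$, and this is exactly how $\bfG$ was defined. The only point requiring care is to check, when invoking Lemma \ref{lem:ppar-to-gcf}, that the directions of the coloured edges in $\bfG$ match those in the Hasse diagram convention used there; this is immediate from comparing the two definitions ($\Rightarrow$ corresponds to $\gamma$-ascents along covers and $\rightarrow$ to $\gamma$-descents along covers in both settings).
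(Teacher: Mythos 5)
Your proof is correct; the paper states this lemma without proof (treating it as a definitional unwinding), and your argument supplies exactly the expected one, composing the bijection from Lemma~\ref{lem:ppar-to-gcf} with the relabelling $v\mapsto\sigma(v)$ and tracking the resulting ordering of the noncommuting factors. The only caveat is that the paper's stated conditions for the edges of $\bfG$ associated to $(P,\gamma,\sigma)$ mix elements of $X$ with elements of $[n]$ (a typo); your reading---vertex $i\in[n]$ of $\bfG$ corresponds to $\sigma^{-1}(i)\in X$, with edge types inherited from $\gamma$---is the one consistent with Example~\ref{ex:PosetGraph} and makes the identity hold.
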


 \begin{Example}\label{ex:PosetGraph}
Given $(P,\gamma, \sigma)$ in  Example \ref{ex:labelfund},
 $$\bfF_{(P,\gamma,\sigma)}=\scrY_\bfG$$ where $\bfG$ is the following edge-coloured digraph. 
 \begin{center} 
  \begin{tikzpicture}[scale=0.7,
roundnode/.style={circle, draw=black!60, fill=gray!10, very thick, minimum size=5mm},
squarednode/.style={rectangle, draw=black!60, fill=gray!5, very thick, minimum size=5mm},]

 \node[squarednode](a) at (0,0){$\bfone $};
 \node[squarednode](b) at (-1.5,1.5){$\bftwo $};
 \node[squarednode](c) at (1.5,1.5){$\bfthree $};
 \node[squarednode](d) at (-1.5,3){$\bffour $};
 
 \draw[thick,->] (a)--(b);
 \draw[thick,double,->] (a)--(c);
 \draw[thick,double,->] (b)--(d);
 \end{tikzpicture} 
  \end{center}

 \end{Example}

\subsection{The analogue of the fundamental theorem}\label{subsec:fund-thm-n}

From now on  $w=(w_1,w_2,\dots,w_n)$ is a chain with the ground set $\{w_1,w_2, \dots, w_n\}=\{1,2,\dots,n\}$. So we can look at any bijection  $\sigma: \{w_1,w_2, \dots, w_n\} \rightarrow [n]$ as a permutation in $\frakS_n$.

\begin{Proposition}\label{prop:fund1} We have that
	$$\mathbf{F}_{(P,\gamma,\sigma)}=\sum_{w\in\mathcal{L}(P)}\mathbf{F}_{(w,\gamma,\sigma)}.$$
\end{Proposition}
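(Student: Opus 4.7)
The plan is to reduce the statement to the classical fundamental theorem of $(P,\gamma)$-partitions, namely Theorem \ref{thm:ppar}. The definition of $\mathbf{F}_{(P,\gamma,\sigma)}$ is arranged so that the permutation $\sigma$ only controls the \emph{order} in which the noncommuting variables $\mathbf{x}_{f(v)}$ appear in each monomial, while the set of monomials being summed is indexed by $\mathcal{A}(P,\gamma)$ itself. Consequently, any partition of $\mathcal{A}(P,\gamma)$ immediately yields a decomposition of $\mathbf{F}_{(P,\gamma,\sigma)}$.

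First I would recall the definition
\[
\mathbf{F}_{(P,\gamma,\sigma)}=\sum_{f\in\mathcal{A}(P,\gamma)}\mathbf{x}_{f(\sigma^{-1}(1))}\mathbf{x}_{f(\sigma^{-1}(2))}\cdots \mathbf{x}_{f(\sigma^{-1}(n))},
\]
noting that the label $\sigma$ affects only the ordering of the letters in each word, not the indexing set of the sum. Then I would invoke Theorem \ref{thm:ppar}, which gives the disjoint union decomposition
\[
\mathcal{A}(P,\gamma)=\bigsqcup_{w\in\mathcal{L}(P)}\mathcal{A}(w,\gamma).
\]
Splitting the sum over $\mathcal{A}(P,\gamma)$ according to this decomposition yields
\[
\mathbf{F}_{(P,\gamma,\sigma)}=\sum_{w\in\mathcal{L}(P)}\sum_{f\in\mathcal{A}(w,\gamma)}\mathbf{x}_{f(\sigma^{-1}(1))}\mathbf{x}_{f(\sigma^{-1}(2))}\cdots \mathbf{x}_{f(\sigma^{-1}(n))},
\]
and by definition the inner sum is precisely $\mathbf{F}_{(w,\gamma,\sigma)}$.

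There is essentially no obstacle here, since Theorem \ref{thm:ppar} does all the combinatorial work; the role of the extra datum $\sigma$ is purely to fix a placement of each value $f(v)$ into a position of a word, and this placement is a function of $v$ alone, hence commutes with any partitioning of $\mathcal{A}(P,\gamma)$. The only point worth remarking on is that $\sigma$ is the same on both sides of the identity (so the positions into which the values are placed coincide in $\mathbf{F}_{(P,\gamma,\sigma)}$ and in each $\mathbf{F}_{(w,\gamma,\sigma)}$), which is what allows the term-by-term comparison to go through.
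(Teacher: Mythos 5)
Your proof is correct and follows exactly the same route as the paper's: invoke the classical fundamental theorem (Theorem \ref{thm:ppar}) to decompose $\mathcal{A}(P,\gamma)=\bigsqcup_{w\in\mathcal{L}(P)}\mathcal{A}(w,\gamma)$, and split the sum defining $\mathbf{F}_{(P,\gamma,\sigma)}$ accordingly. Your added remark that $\sigma$ only reorders the letters within each monomial and hence commutes with partitioning the index set is a nice clarification, but the underlying argument is identical to the paper's.
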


\begin{proof}
	By Theorem \ref{thm:ppar}, we have $\mathcal{A}(P,\gamma)=\bigsqcup_{w\in\mathcal{L}(P)}\mathcal{A}(w,\gamma)$. Hence,
	\begin{align*}
		\mathbf{F}_{(P,\gamma,\sigma)}=\sum_{f\in\mathcal{A}(P,\gamma)}\prod_{i=1}^{n}\mathbf{x}_{f(\sigma^{-1}(i))}=\sum_{w\in\mathcal{L}(P)}\left(\sum_{f\in\mathcal{A}(w,\gamma)}\prod_{i=1}^n\mathbf{x}_{f(\sigma^{-1}(i))}\right)=\sum_{w\in\mathcal{L}(P)}\mathbf{F}_{(w,\gamma,\sigma)}.
	\end{align*}
\end{proof}

\begin{Proposition}\label{prop:fund-eq}
	We have that $\mathbf{F}_{(w,\gamma,\sigma)}=\mathbf{F}_{(w,\gamma',\sigma)}$ if and only if $\Des(w,\gamma)=\Des(w,\gamma')$.
\end{Proposition}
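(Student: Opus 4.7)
The plan is to prove both directions separately, with the forward direction being a direct consequence of the chain structure and the backward direction requiring a distinguishing monomial.

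\textbf{For the ``if'' direction.} I would observe that when $P$ is a chain $w = (w_1, \ldots, w_n)$, the defining conditions for $f \in \mathcal{A}(w,\gamma)$ reduce to the weak inequalities $f(w_i) \le f(w_{i+1})$ together with strict inequalities $f(w_i) < f(w_{i+1})$ precisely at positions $i \in \Des(w,\gamma)$. Hence $\mathcal{A}(w,\gamma)$ depends only on the descent set. Since the monomial $\mathbf{x}_{f(\sigma^{-1}(1))} \cdots \mathbf{x}_{f(\sigma^{-1}(n))}$ appearing in the sum is a function of $f$ and $\sigma$ alone and does not involve $\gamma$, the sums defining $\mathbf{F}_{(w,\gamma,\sigma)}$ and $\mathbf{F}_{(w,\gamma',\sigma)}$ are term-by-term identical whenever $\Des(w,\gamma) = \Des(w,\gamma')$.

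\textbf{For the ``only if'' direction.} Contrapositively, assume $\Des(w,\gamma) \neq \Des(w,\gamma')$; without loss of generality fix $i \in \Des(w,\gamma) \setminus \Des(w,\gamma')$. I would exhibit one explicit monomial that appears in $\mathbf{F}_{(w,\gamma',\sigma)}$ but not in $\mathbf{F}_{(w,\gamma,\sigma)}$. The natural candidate is the monomial produced by the function
\[
f_0(w_j) \;=\; 1 + \bigl|\{k < j : k \in \Des(w,\gamma')\}\bigr|,
\]
which is constant on maximal runs between descents of $(w,\gamma')$ and increases by $1$ across each such descent. Then $f_0 \in \mathcal{A}(w,\gamma')$ by construction, but since $i \notin \Des(w,\gamma')$ we have $f_0(w_i) = f_0(w_{i+1})$, violating the strict-inequality condition required by $i \in \Des(w,\gamma)$, so $f_0 \notin \mathcal{A}(w,\gamma)$.

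\textbf{Extracting the coefficient.} The remaining step is to verify that the monomial $\mathbf{x}_{f_0(\sigma^{-1}(1))} \cdots \mathbf{x}_{f_0(\sigma^{-1}(n))}$ records $f_0$ uniquely, so it cannot be generated by some other $f \in \mathcal{A}(w,\gamma)$. This is immediate: since $\sigma$ is a bijection, reading the monomial left to right recovers the values $f(\sigma^{-1}(1)), \ldots, f(\sigma^{-1}(n))$, which in turn determine $f$. Therefore this monomial has coefficient $1$ in $\mathbf{F}_{(w,\gamma',\sigma)}$ and coefficient $0$ in $\mathbf{F}_{(w,\gamma,\sigma)}$, forcing the two power series to differ.

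\textbf{Anticipated obstacle.} There is no substantial obstacle; the main care required is in the construction of $f_0$, ensuring it lies in $\mathcal{A}(w,\gamma')$ while failing at position $i$. An alternative, if one prefers to avoid constructing a monomial by hand, is to apply the commutativization map $\rho:\mathbf{x}_i \mapsto x_i$, which sends $\mathbf{F}_{(w,\gamma,\sigma)}$ to $F_{(w,\gamma)}$, and then invoke the classical fact that distinct descent sets give distinct fundamental quasisymmetric functions $F_A$.
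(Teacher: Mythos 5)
Your proof is correct. The ``if'' direction coincides with the paper's: both observe that for a chain, $\mathcal{A}(w,\gamma)$ is determined by $\Des(w,\gamma)$, and the monomial attached to each $f$ involves only $f$ and $\sigma$, not $\gamma$. For the ``only if'' direction you take a genuinely different route. The paper applies the commutativization map $\rho$, obtains $F_{(w,\gamma)}=F_{(w,\gamma')}$, and then invokes the classical fact (cited to Stembridge) that $F_{(w,\gamma)}$ depends on, and determines, the descent set. You instead construct a single witness $f_0 \in \mathcal{A}(w,\gamma')\setminus\mathcal{A}(w,\gamma)$ and note that because $\sigma$ is a bijection, the word $\mathbf{x}_{f_0(\sigma^{-1}(1))}\cdots\mathbf{x}_{f_0(\sigma^{-1}(n))}$ recovers $f_0$ uniquely, so this monomial has coefficient $1$ in one series and $0$ in the other. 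This is more elementary and self-contained: it does not presuppose the linear independence of the commutative fundamental basis, and in fact the same coefficient-extraction argument shows that the noncommutative monomials coming from distinct $f$ never collide, which is the key extra rigidity noncommuting variables buy. You correctly flag the paper's route as the shorter alternative. Both arguments are sound; yours trades brevity for independence from the commutative theory.
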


\begin{proof}
	If $\Des(w,\gamma)=\Des(w,\gamma')$, by definition, $f\in\mathcal{A}(w,\gamma)$ implies $f\in\mathcal{A}(w,\gamma')$. Therefore, $\mathbf{F}_{(w,\gamma,\sigma)}=\mathbf{F}_{(w,\gamma',\sigma)}$.
	
	Conversely, if $\bfF_{(w,\gamma,\sigma)}=\bfF_{(w,\gamma',\sigma)}$, then commuting the variables we have $F_{(w,\gamma)}=F_{(w,\gamma)}$. Therefore, $\Des(w,\gamma)=\Des(w,\gamma')$.
\end{proof}

Given $A\subseteq[n-1]$ and $\sigma\in\mathfrak{S}_{n}$, define $$\mathbf{F}_{(A,\sigma)}=\mathbf{F}_{(w,\gamma,\sigma)}$$ where $(w,\gamma)$ is any labelled chain with $\Des(w,\gamma)=A$.

\begin{Example}\label{ex:fund-eq}
Let $w=(w_1,w_2,w_3,w_4, w_5)$ be a chain, 
$$\gamma(w_1)\gamma(w_2)\gamma(w_3)\gamma(w_4)\gamma(w_5)=34152,$$ and 
$$\sigma(w_1)\sigma(w_2)\sigma(w_3)\sigma(w_4)\sigma(w_5)=51423,$$
 then 
$$\Des(w,\gamma)=\{2,4\},$$ and so 
$$\bfF_{(\{2,4\},\sigma)}=\bfF_{(w,\gamma,\sigma)}.$$ The representation of $\bfF_{(\{2,4\},\sigma)}$ in terms of generalized chromatic functions in noncommuting variables is shown below. 
$$\scrX_{
  \begin{tikzpicture}[
roundnode/.style={circle, draw=black!60, fill=gray!10, very thick, minimum size=7mm},
squarednode/.style={rectangle, draw=black!60, fill=gray!5, very thick, minimum size=5mm},]
 \node[squarednode](1) at (1.3,0){$\bf 5$};
  \node[squarednode](2) at (2.6,0){$\bfone$};
    \node[squarednode](3) at (3.9,0){$\bffour$};
      \node[squarednode](4) at (5.2,0){$\bftwo$};
        \node[squarednode](5) at (6.5,0){$\bfthree$};
        
    \draw[thick,double,->](1)--(2);
       \draw[ thick,->](2)--(3);
          \draw[ thick,double,->](3)--(4);
             \draw[ thick,->](4)--(5);

 \end{tikzpicture}
}$$
The set $\{2,4\}$ clarifies the types of edges and the bijection $\sigma$ provides the labels of the vertices.

\end{Example}

\subsection{Fundamental functions to monomial basis}\label{subsec:fund-fun-to-M}

\begin{Proposition}\label{prop:FtoM} Let $A\subseteq [n-1]$ and $\sigma\in \mathfrak{S}_n$. Then 
	$$ \mathbf{F}_{(A,\sigma)}=\sum_{A\subseteq C\subseteq[n-1]}\mathbf{M}_{(C,\sigma)}.$$
\end{Proposition}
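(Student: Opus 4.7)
The plan is to unfold the definition of $\bfF_{(A,\sigma)}$ and sort the sum according to the locations of strict ascents. First, pick a labelled chain $(w,\gamma)$ with $w=(1,2,\dots,n)$ and $\Des(w,\gamma)=A$, so that $\bfF_{(A,\sigma)}=\bfF_{(w,\gamma,\sigma)}$. By definition a $(w,\gamma)$-partition is a function $f\colon [n]\to\mathbb{N}$ with $f(1)\le f(2)\le\cdots\le f(n)$ and with a strict inequality at every position $r\in A$. For each such $f$, let $C(f)=\{r\in[n-1]:f(r)<f(r+1)\}$; this is a subset of $[n-1]$ that necessarily contains $A$. Partitioning $\mathcal{A}(w,\gamma)$ according to the value of $C(f)$ yields
\[
\bfF_{(A,\sigma)}=\sum_{A\subseteq C\subseteq [n-1]}\ \sum_{\substack{f\colon[n]\to\mathbb{N}\\ C(f)=C}} \bx_{f(\sigma^{-1}(1))}\,\bx_{f(\sigma^{-1}(2))}\cdots \bx_{f(\sigma^{-1}(n))}.
\]

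The second step is to identify the inner sum with $\bfM_{(C,\sigma)}$. Writing $C=\{c_1<c_2<\cdots<c_k\}$ and $c_0=0$, $c_{k+1}=n$, a function $f$ with $C(f)=C$ is uniquely specified by a strictly increasing sequence of positive integers $a_1<a_2<\cdots<a_{k+1}$ together with the rule $f(r)=a_\ell$ whenever $c_{\ell-1}<r\le c_\ell$. Consequently $f(\sigma^{-1}(i))=a_\ell$ exactly when $i\in\sigma(\{c_{\ell-1}+1,\dots,c_\ell\})$, i.e.\ exactly when $i$ lies in the $\ell$th block of $\SetComp(C,\sigma)$. Thus the monomial $\bx_{f(\sigma^{-1}(1))}\cdots \bx_{f(\sigma^{-1}(n))}$ has the shape $\bx_{i_1}\bx_{i_2}\cdots \bx_{i_n}$ where the tuple $(i_1,\dots,i_n)$ is constant on each block of $\SetComp(C,\sigma)$ and strictly increases between consecutive blocks. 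Summing over all admissible sequences $a_1<\cdots<a_{k+1}$ runs through exactly those tuples that define $\bfM_{(C,\sigma)}$, and the claim follows.

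The main conceptual point (rather than an obstacle) is just making sure the bookkeeping between the indexing by $\sigma^{-1}$ in the definition of $\bfF$ and the block structure of $\SetComp(C,\sigma)$ lines up correctly; once one observes that the $\ell$th block of $\SetComp(C,\sigma)$ is precisely $\sigma(\{c_{\ell-1}+1,\dots,c_\ell\})$, the two descriptions match term by term. Proposition \ref{prop:fund-eq} guarantees that the choice of $(w,\gamma)$ in the first step is immaterial, so the final identity depends only on $A$ and $\sigma$, as required.
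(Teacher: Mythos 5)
Your proof is correct and matches the paper's argument essentially verbatim: both reduce to a chain $(w,\gamma)$ with $\Des(w,\gamma)=A$, decompose $\mathcal{A}(w,\gamma)$ according to the exact set $C\supseteq A$ of strict ascents, and identify the contribution from each $C$ with $\mathbf{M}_{(C,\sigma)}$. The only difference is that you spell out the correspondence between the sequences $a_1<\cdots<a_{k+1}$ and the blocks of $\SetComp(C,\sigma)$, which the paper leaves implicit.
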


\begin{proof}
	By definition, $\mathbf{F}_{(A,\sigma)}=\mathbf{F}_{(w,\gamma,\sigma)}$ where $w=(1,2,\dots,n)$ and $\Des(w,\gamma)=A$. Note the set of $(w,\gamma)$-partitions, $\mathcal{A}(w,\gamma)$, can be decomposed into
	$$\mathcal{A}(w,\gamma)=\bigsqcup_{A\subseteq C\subseteq[n-1]}\mathcal{A}_C(w,\gamma),$$
	where $\mathcal{A}_C(w,\gamma)$ is the set of $(w,\gamma)$-partitions $f$ such that $f(w_i)<f(w_{i+1})$ if and only if $i\in C$.
	Therefore,
	$$\mathbf{F}_{(A,\sigma)}=\sum_{f\in\mathcal{A}(w,\gamma)}\prod_{i=1}^{n}\mathbf{x}_{f(\sigma^{-1}(i))}=\sum_{A\subseteq C\subseteq[n-1]}\left(\sum_{f\in\mathcal{A}_C(w,\gamma)}\prod_{i=1}^{n}\mathbf{x}_{f(\sigma^{-1}(i))}\right).$$
	
	For each $A\subseteq C\subseteq[n-1]$, we have $$\sum_{f\in\mathcal{A}_C(w,\gamma)}\prod_{i=1}^{n}\mathbf{x}_{f(\sigma^{-1}(i))}=\mathbf{M}_{(C,\sigma)}.$$
	Therefore,
	$$\mathbf{F}_{(A,\sigma)}=\sum_{A\subseteq C\subseteq[n-1]}\left(\sum_{f\in\mathcal{A}_C(w,\gamma)}\prod_{i=1}^{n}\mathbf{x}_{f(\sigma^{-1}(i))}\right)=\sum_{A\subseteq C\subseteq[n-1]}\mathbf{M}_{(C,\sigma)}.$$
\end{proof}

\begin{Corollary}\label{cor:Fbasis}
	The set $\{\mathbf{F}_{(A,\sigma)}:(A,\sigma)\text{ is standard}\}$ forms a basis of $\NCQSym$.
\end{Corollary}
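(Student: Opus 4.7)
The plan is to deduce the corollary from the triangular relationship between the fundamental and monomial functions recorded in Proposition \ref{prop:FtoM}. Since $\{\mathbf{M}_{(A,\sigma)} : (A,\sigma)\text{ standard}\}$ is already a basis of $\NCQSym$ by the definition of $\NCQSym_n$ in Section \ref{subsec:NCQSym}, it suffices to show the analogous set of fundamental functions spans the same space; matching cardinalities will then force it to be a basis.

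First I would fix a permutation $\sigma\in\mathfrak{S}_n$ and restrict attention to those standard pairs with that second coordinate, namely $\{(A,\sigma): \Des(\sigma)\subseteq A\subseteq[n-1]\}$. The key observation is that the sum in Proposition \ref{prop:FtoM},
$$\mathbf{F}_{(A,\sigma)}=\sum_{A\subseteq C\subseteq[n-1]}\mathbf{M}_{(C,\sigma)},$$
stays inside the standard locus: if $\Des(\sigma)\subseteq A$ and $A\subseteq C$, then $\Des(\sigma)\subseteq C$, so every $(C,\sigma)$ appearing on the right is standard. Hence, within the monomial basis elements indexed by standard pairs sharing the second coordinate $\sigma$, the expansion matrix of the $\mathbf{F}_{(A,\sigma)}$'s is upper-unitriangular with respect to inclusion on the Boolean interval $[\Des(\sigma),[n-1]]$, and is therefore invertible.

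Möbius inversion on the Boolean lattice then yields, for each standard pair $(A,\sigma)$,
$$\mathbf{M}_{(A,\sigma)}=\sum_{A\subseteq C\subseteq[n-1]}(-1)^{|C|-|A|}\mathbf{F}_{(C,\sigma)},$$
and the sum again runs over standard pairs only. Consequently every monomial basis element is a linear combination of fundamental functions indexed by standard pairs, so $\{\mathbf{F}_{(A,\sigma)}:(A,\sigma)\text{ standard}\}$ spans $\NCQSym$. To finish, I would invoke Proposition \ref{prop:std} to note that standard pairs of length $n$ are in bijection with set compositions of $[n]$, which is the index set of the monomial basis of $\NCQSym_n$; the spanning set therefore has the right cardinality in each degree and is a basis.

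There is essentially no obstacle here: the only point requiring care is verifying the closure property that expansion of $\mathbf{F}_{(A,\sigma)}$ stays within standard pairs, which is immediate from $\Des(\sigma)\subseteq A\subseteq C$. Everything else is a formal triangularity/Möbius-inversion argument of exactly the same shape as in the commutative quasisymmetric setting.
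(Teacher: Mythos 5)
Your proof is correct and follows essentially the same route as the paper: expand $\mathbf{F}_{(A,\sigma)}$ in the monomial basis via Proposition \ref{prop:FtoM}, observe the transition matrix is unitriangular, and appeal to Proposition \ref{prop:std} for the count of standard pairs. The paper's proof is terser (it records the triangularity by noting $A\subseteq C$ iff $\SetComp(A,\sigma)\leq\SetComp(C,\sigma)$ in the refinement order and then says ``the result follows from triangularity''), while you additionally spell out the closure property $\Des(\sigma)\subseteq A\subseteq C$ and the explicit M\"obius inversion; these are the same argument at different levels of detail.
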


\begin{proof} 
By Proposition  \ref{prop:std} we have 
$$ \mathbf{F}_{(A,\sigma)}=\sum_{A\subseteq C\subseteq[n-1]}\mathbf{M}_{(C,\sigma)}.$$
Note that $A\subseteq C$ if and only if $\SetComp(A,\sigma)\leq\SetComp(C,\sigma)$. The result follows from triangularity and Proposition \ref{prop:std}.
\end{proof}

We call $\{\mathbf{F}_{(A,\sigma)}:(A,\sigma)\text{ is standard}\}$ the \emph{fundamental basis}  of $\NCQSym$.

\begin{Remark}\label{rem:fund}
	The fundamental basis also appears in \cite[Section 11]{ALvW23}.
\end{Remark}

\section{Enriched fundamental functions in noncommuting variables and the analogue of the fundamental theorem}\label{sec:peak-en}

\subsection{Enriched fundamental functions in noncommuting variables}\label{subsec:peak-n}

In this section, we present the enriched fundamental functions in noncommuting variables.

Given $(P,\gamma,\sigma)$ where $(P,\gamma)$ is a labelled poset with ground set $X$, $|X|=n$, and $\sigma:X\to[n]$ is a bijection, define the \emph{enriched fundamental function  in noncommuting variables} $\mathbf{K}_{(P,\gamma,\sigma)}$ to be	$$\mathbf{K}_{(P,\gamma,\sigma)}=\sum_{f\in\mathcal{B}(P,\gamma)}\prod_{i=1}^{n}\mathbf{x}_{|f(\sigma^{-1}(i))|}= \sum_{f\in\mathcal{B}(P,\gamma)}\mathbf{x}_{|f(\sigma^{-1}(1))|}\mathbf{x}_{|f(\sigma^{-1}(2))|}\dots \mathbf{x}_{|f(\sigma^{-1}(n))|}.$$


\begin{Example}\label{ex:peak-fun-n}
Given $(P,\gamma, \sigma)$ in  Example \ref{ex:labelfund}, 
\begin{align*}
		\begin{array}{ccccccccc}
			& \begin{tikzpicture}[scale=0.3]
				\draw[black, thick] (0.7,0.7) -- (1.3,1.3);
				\draw[black, thick] (-0.7,0.7) -- (-1.3,1.3);
				\draw[black, thick] (-2,2.8) -- (-2,4.2);
				\node at (-2,5) {$1$};
				\node at (-2,2) {$-1$};
				\node at (0,0) {$-1$};
				\node at (2,2) {$1$};
			\end{tikzpicture} &
			& \begin{tikzpicture}[scale=0.3]
				\draw[black, thick] (0.7,0.7) -- (1.3,1.3);
				\draw[black, thick] (-0.7,0.7) -- (-1.3,1.3);
				\draw[black, thick] (-2,2.8) -- (-2,4.2);
				\node at (-2,5) {$1$};
				\node at (-2,2) {$1$};
				\node at (0,0) {$-1$};
				\node at (2,2) {$1$};
			\end{tikzpicture} &
			& \begin{tikzpicture}[scale=0.3]
				\draw[black, thick] (0.7,0.7) -- (1.3,1.3);
				\draw[black, thick] (-0.7,0.7) -- (-1.3,1.3);
				\draw[black, thick] (-2,2.8) -- (-2,4.2);
				\node at (-2,5) {$2$};
				\node at (-2,2) {$-2$};
				\node at (0,0) {$1$};
				\node at (2,2) {$1$};
			\end{tikzpicture} & 
			& \begin{tikzpicture}[scale=0.3]
				\draw[black, thick] (0.7,0.7) -- (1.3,1.3);
				\draw[black, thick] (-0.7,0.7) -- (-1.3,1.3);
				\draw[black, thick] (-2,2.8) -- (-2,4.2);
				\node at (-2,5) {$2$};
				\node at (-2,2) {$-2$};
				\node at (0,0) {$1$};
				\node at (2,2) {$-2$};
			\end{tikzpicture}\\
			{\bf K}_{(P,\gamma,\sigma)}=& \bx_1\bx_1\bx_1\bx_1 & + & \bx_1\bx_1\bx_1\bx_1 & + & \bx_1\bx_2\bx_1\bx_2 & + & \bx_1\bx_2\bx_2\bx_2 &+\cdots.
		\end{array}
	\end{align*}
\end{Example} 

\begin{Lemma}  Given the labelled edge-coloured digraph ${\bf G}$ associated to $(P,\gamma,\sigma)$, we have 
	$$\bfK_{(P,\gamma,\sigma)}=\scrF_{\bf G}.$$
	\end{Lemma}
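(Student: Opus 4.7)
The plan is to reduce the identity $\bfK_{(P,\gamma,\sigma)} = \scrF_\bfG$ directly to Lemma \ref{lem:ppar-to-gcf}(iii)--(iv) by observing that the labelled edge-coloured digraph $\bfG$ is obtained from its underlying edge-coloured digraph $G$ merely by relabelling vertices via the bijection $\sigma$. So the work reduces to matching the two weight schemes through this relabelling.

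First I would exhibit the natural bijection $f \mapsto \kappa_f$ between $\mathcal{B}(P,\gamma)$ and the set of enriched colourings of $\bfG$, defined by $\kappa_f(i) = f(\sigma^{-1}(i))$ for each $i \in [n]$. To verify $\kappa_f$ is indeed an enriched colouring of $\bfG$, I would check the two edge constraints against the definition of $\bfG$. An edge $i \Rightarrow j$ in $\bfG$ corresponds to a cover relation $\sigma^{-1}(i) \leq \sigma^{-1}(j)$ in $P$ with $\gamma(\sigma^{-1}(i)) < \gamma(\sigma^{-1}(j))$, so conditions (1)--(2) defining an enriched $(P,\gamma)$-partition force either $f(\sigma^{-1}(i)) \prec f(\sigma^{-1}(j))$ or $f(\sigma^{-1}(i)) = f(\sigma^{-1}(j)) > 0$ (the equal-negative case is ruled out by the $\gamma$-comparison together with condition (3)). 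This is exactly the enriched-colouring condition for a double edge. The analogous check for a solid edge $i \rightarrow j$ uses $\gamma(\sigma^{-1}(i)) > \gamma(\sigma^{-1}(j))$, which rules out the equal-positive case and leaves the allowed option $f(\sigma^{-1}(i)) = f(\sigma^{-1}(j)) < 0$. Injectivity and surjectivity of $f \mapsto \kappa_f$ are immediate from $\sigma$ being a bijection.

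Comparing weights term-by-term along this bijection then yields
$$\bfK_{(P,\gamma,\sigma)} = \sum_{f \in \mathcal{B}(P,\gamma)} \prod_{i=1}^{n} \bx_{|f(\sigma^{-1}(i))|} = \sum_{\kappa} \prod_{i=1}^{n} \bx_{|\kappa(i)|} = \scrF_\bfG,$$
where the middle sum runs over all enriched colourings $\kappa$ of $\bfG$. I do not anticipate a substantive obstacle: the only bookkeeping is matching the edge conditions of $\bfG$ to the three conditions defining enriched $(P,\gamma)$-partitions, which is essentially the same check already performed in Lemma \ref{lem:ppar-to-gcf}(iii) for the unlabelled case. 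The labelling $\sigma$ only reindexes the product of noncommuting variables without altering which colourings are admissible, so the argument is a direct labelled analogue of the proof of Lemma \ref{Def:PosetGraph}.
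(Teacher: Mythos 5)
Your proof is correct. The paper states this lemma without proof (treating it, like Lemma~\ref{lem:ppar-to-gcf} and Lemma~\ref{Def:PosetGraph}, as a routine translation between the $(P,\gamma)$-partition language and the edge-coloured digraph language), and your bijection $f\mapsto\kappa_f$ with $\kappa_f(i)=f(\sigma^{-1}(i))$, together with the edge-by-edge verification that the enriched-partition conditions match the enriched-colouring conditions on $\Rightarrow$ and $\rightarrow$ edges, is precisely the argument the paper leaves implicit. One small point worth spelling out for surjectivity: the inverse map $f(x)=\kappa(\sigma(x))$ must be checked to satisfy the three conditions for \emph{all} comparable pairs $a\leq b$, not just cover relations; this follows by chaining cover relations and using transitivity of $\preceq$ together with the monotonicity of $\gamma$ forced by the equal-value cases, but it is a genuine (if standard) step rather than ``immediate from $\sigma$ being a bijection.''
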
 
	

\subsection{The analogue of the fundamental theorem}\label{subsec:fun-thm-en}

\begin{Proposition}\label{prop:fund2} We have that
	$$\mathbf{K}_{(P,\gamma,\sigma)}=\sum_{w\in\mathcal{L}(P)}\mathbf{K}_{(w,\gamma,\sigma)}.$$
\end{Proposition}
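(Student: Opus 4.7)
The plan is to mirror the proof of Proposition \ref{prop:fund1} essentially verbatim, replacing the role of Theorem \ref{thm:ppar} with its enriched counterpart, Theorem \ref{thm:eppar}. The key observation is that the definition of $\mathbf{K}_{(P,\gamma,\sigma)}$ is a sum over $f \in \mathcal{B}(P,\gamma)$ of a monomial that depends only on $f$ and $\sigma$ (not on the underlying poset structure, which has already been used to determine the set $\mathcal{B}(P,\gamma)$). Therefore any set-theoretic decomposition of $\mathcal{B}(P,\gamma)$ immediately yields the corresponding decomposition of $\mathbf{K}_{(P,\gamma,\sigma)}$.

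Concretely, I would proceed as follows. First, invoke Theorem \ref{thm:eppar} to write
\[
\mathcal{B}(P,\gamma)=\bigsqcup_{w\in\mathcal{L}(P)}\mathcal{B}(w,\gamma).
\]
Second, substitute this decomposition into the defining formula
\[
\mathbf{K}_{(P,\gamma,\sigma)}=\sum_{f\in\mathcal{B}(P,\gamma)}\prod_{i=1}^{n}\mathbf{x}_{|f(\sigma^{-1}(i))|},
\]
which gives
\[
\mathbf{K}_{(P,\gamma,\sigma)}=\sum_{w\in\mathcal{L}(P)}\left(\sum_{f\in\mathcal{B}(w,\gamma)}\prod_{i=1}^{n}\mathbf{x}_{|f(\sigma^{-1}(i))|}\right)=\sum_{w\in\mathcal{L}(P)}\mathbf{K}_{(w,\gamma,\sigma)}.
\]

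There is no real obstacle here: the entire content is packaged in Theorem \ref{thm:eppar}, and the bijection $\sigma$ plays a purely cosmetic role since it merely records the order of the noncommuting letters in each monomial and is insensitive to the partition of $\mathcal{B}(P,\gamma)$ by linear extensions. The only thing one might worry about is whether some $f \in \mathcal{B}(P,\gamma)$ gets counted for multiple $w$, but this is ruled out by the disjointness in Theorem \ref{thm:eppar}. Thus the proof is a two-line consequence of the classical enriched fundamental theorem, exactly parallel to how Proposition \ref{prop:fund1} was deduced from Theorem \ref{thm:ppar}.
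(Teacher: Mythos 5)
Your proof is correct and is essentially identical to the paper's: both invoke the disjoint decomposition $\mathcal{B}(P,\gamma)=\bigsqcup_{w\in\mathcal{L}(P)}\mathcal{B}(w,\gamma)$ from Theorem \ref{thm:eppar} and substitute it directly into the defining sum for $\mathbf{K}_{(P,\gamma,\sigma)}$.
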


\begin{proof}
	By Theorem \ref{thm:eppar}, we have $\mathcal{B}(P,\gamma)=\bigsqcup_{w\in\mathcal{L}(P)}\mathcal{B}(w,\gamma)$. Hence,
	\begin{align*}
	\mathbf{K}_{(P,\gamma,\sigma)}=\sum_{f\in\mathcal{B}(P,\gamma)}\prod_{i=1}^{n}\mathbf{x}_{|f(\sigma^{-1}(i))|}=\sum_{w\in\mathcal{L}(P)}\left(\sum_{f\in\mathcal{B}(w,\gamma)}\prod_{i=1}^n\mathbf{x}_{|f(\sigma^{-1}(i))|}\right)=\sum_{w\in\mathcal{L}(P)}\mathbf{K}_{(w,\gamma,\sigma)}.
	\end{align*}
\end{proof}

\subsection{Enriched fundamental functions to monomial basis}\label{subsec:peak-to-M}

First, we expand the $\mathbf{K}$ functions in the $\mathbf{M}$ basis, showing that $\NCPeak$ is a subspace of $\NCQSym$. The idea of the proof is similar to the commutative case given in \cite{S97}.

\begin{Proposition}\label{prop:KtoM}
	Assume $w=(w_1,w_2,\dots,w_n)$ is a chain and $\sigma\in\mathfrak{S}_n$, we have
	$$\mathbf{K}_{(w,\gamma,\sigma)}=\sum_{A\subseteq[n-1] \atop \peak(w,\gamma)\subseteq A\cup (A+1)} 2^{|A|+1}\mathbf{M}_{(A,\sigma)}.$$
	\end{Proposition}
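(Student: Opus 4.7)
My approach is to expand $\mathbf{K}_{(w,\gamma,\sigma)}$ by classifying each enriched $(w,\gamma)$-partition $f$ according to the strict-ascent pattern of its absolute-value sequence $\mu_i := |f(w_i)|$. Writing $A(\mu) = \{i : \mu_i < \mu_{i+1}\}$, the weakly increasing sequence $\mu$ is recovered from $A(\mu)$ together with a strictly increasing tuple of positive integers (one value per constant block of $\mu$). Comparing with the definition of $\mathbf{M}_{(A,\sigma)}$ in Section~\ref{subsec:NCQSym}, the monomial $\mathbf{x}_{\mu_{\sigma^{-1}(1)}} \cdots \mathbf{x}_{\mu_{\sigma^{-1}(n)}}$ ranges over every term of $\mathbf{M}_{(A(\mu),\sigma)}$ as those integers vary. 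Thus
$$
\mathbf{K}_{(w,\gamma,\sigma)} = \sum_{A \subseteq [n-1]} c_A \, \mathbf{M}_{(A,\sigma)},
$$
where $c_A$ counts the sign extensions of a fixed $\mu$ with $A(\mu) = A$ to an enriched $(w,\gamma)$-partition.

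Sign choices decouple across the $|A|+1$ \emph{constant blocks} of $\mu$. Inside one block spanning positions $i, \ldots, j$, the enriched condition forces the signs to be a run of $-$'s followed by a run of $+$'s with transition at some $k \in [i,j+1]$; moreover, two consecutive $-$'s at position $p$ force $p$ to be a $\gamma$-descent, and two consecutive $+$'s force it to be a $\gamma$-ascent. Hence $c_A$ factors as a product over blocks of the number of valid transition points.

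The heart of the argument is to show each block contributes exactly $2$ when $\peak(w,\gamma) \subseteq A \cup (A+1)$, and $c_A = 0$ otherwise. For the vanishing case, any peak $p \notin A \cup (A+1)$ satisfies $\mu_{p-1} = \mu_p = \mu_{p+1}$, placing $p$ strictly inside a block; the $\gamma$-ascent at $p-1$ forces $f(w_p) = +$ while the $\gamma$-descent at $p$ then requires $f(w_{p+1}) \neq +$, contradicting the monotone sign structure. For the main case, the hypothesis $\peak(w,\gamma) \subseteq A \cup (A+1)$ prevents any interior block peak, so within $i, \ldots, j-1$ the $\gamma$-pattern consists of a run of descents followed by a run of ascents; letting $p^*$ be the last descent (with convention $p^* = i-1$ if none), the valid transitions are exactly $k = p^* + 1$ and $k = p^* + 2$, giving $2$ choices per block and hence $c_A = 2^{|A|+1}$.

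The main obstacle is carefully handling the edge cases --- singleton blocks, extreme transitions $k = i$ or $k = j+1$, and degenerate positions $p^* = i-1$ or $p^* = j-1$ --- to confirm that $k = p^* + 1$ and $k = p^* + 2$ are always distinct elements of $[i, j+1]$ yielding the only valid sign patterns.
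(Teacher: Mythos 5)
Your proof is correct and takes essentially the same route as the paper's: both decompose the enriched $(w,\gamma)$-partitions according to the blocks on which $|f|$ is constant (your $A(\mu)$ matches the paper's $\mathcal{B}_A(w,\gamma)$), identify the contribution of each class with a multiple of $\mathbf{M}_{(A,\sigma)}$, and compute the multiplier by observing that sign choices decouple across blocks and that each block admits exactly $2$ valid sign patterns under the hypothesis $\peak(w,\gamma)\subseteq A\cup(A+1)$, while a single interior peak kills the whole class. One small but worthwhile remark: the absence of interior peaks forces the $\gamma$-values within a block to be ``V-shaped'' (a run of descents followed by a run of ascents), exactly as you state, and your parametrization of the two valid transitions as $k=p^*+1$ and $k=p^*+2$ flanking the last descent is the clean way to see the count of $2$; the paper's displayed inequality chain inside a block (increasing to $\gamma(w_{a_j+t})$ then decreasing) describes the opposite, unimodal pattern, which would actually contain a forbidden interior peak when $1<t<a_{j+1}-a_j$ --- so your version corrects an apparent sign slip in the source, while the final count of two sign patterns per block (distinguished by the sign of $f$ at the extremal position) agrees in both treatments.
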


\begin{proof}
Recall that 
$$\bfK_{(w,\gamma,\sigma)}=\sum_{f\in\mathcal{B}(w,\gamma)}\prod_{i=1}^n\mathbf{x}_{|f(\sigma^{-1}(i))|}.$$

Fix $A=\{a_1<a_2<\dots<a_{k}\}\subseteq[n-1]$ with $0=a_0$ and $a_{k+1}=n$. Define 
$$[A]=1 2 \dots  a_1 ~|~ (a_1+1) (a_1+2) \dots a_2~| \cdots |(a_k+1)(a_{k}+2)  \cdots  a_n.$$ 

 Let  $$\mathcal{B}_A(w,\gamma)$$ be the set of all $f\in \mathcal{B}(w,\gamma)$ such that
\begin{enumerate}
\item $|f(w_i)|=|f(w_j)|$ if $i$ and $j$ are in the same block of $[A]$. 
\item $|f(w_i)|<|f(w_j)|$ if $i$ is in a block of $[A]$ to the left of the block containing $j$.
\end{enumerate} 
Then 
$$\bfK_{(w,\gamma,\sigma)}=\sum_{f\in\mathcal{B}(w,\gamma)}\prod_{i=1}^n\mathbf{x}_{|f(\sigma^{-1}(i))|}=\sum_{A\subseteq [n-1]} \left(\sum_{f\in\mathcal{B}_A(w,\gamma)}\prod_{i=1}^n\mathbf{x}_{|f(\sigma^{-1}(i))|}\right).$$
We will show the following claims are true. 
\begin{itemize}
\item[(I)] If  $\peak(w,\gamma)\not\subseteq A\cup (A+1)$, then  $\calB_A(w,\gamma)$ is empty.

\item[(II)] If  $\peak(w,\gamma)\subseteq A\cup (A+1)$, then 
$$\sum_{f\in\mathcal{B}_A(w,\gamma)}\prod_{i=1}^n\mathbf{x}_{|f(\sigma^{-1}(i))|}=2^{|A|+1}\bfM_{(A,\sigma)}.$$
\end{itemize} 
Therefore,  
$$\bfK_{(w,\gamma,\sigma)}=\sum_{A\subseteq [n-1]} \left(\sum_{f\in\mathcal{B}_A(w,\gamma)}\prod_{i=1}^n\mathbf{x}_{|f(\sigma^{-1}(i))|}\right)=\sum_{A\subseteq[n-1] \atop \peak(w,\gamma)\subseteq A\cup (A+1)} 2^{|A|+1}\mathbf{M}_{(A,\sigma)},$$ as desired. 
\\

{\it Proof of }(I). Let $b\in \peak(w,\gamma)\setminus A\cup (A+1)$. Since 
$$ \peak(w,\gamma)\setminus A\cup (A+1)\subseteq \{2,3,\dots,n-1\}\setminus A\cup (A+1)=$$$$\{2,3, \dots, a_{1}-1,a_{1}+2,a_{1}+3,\dots,a_{2}-1,\dots, a_{k}+2,a_{k}+3,\dots,n\},$$
we have that $b-1,b,b+1$ are in the same block of $[A]$. Therefore, if $f\in \calB_A(w,\gamma)\neq\emptyset$, we must have $$|f(w_{b-1})|=|f(w_b)|=|f(w_{b+1})|.$$ 

Note that $b$ is in $\peak(w,\gamma)$, and so $$\gamma(w_{b-1})<\gamma(w_b)>\gamma(w_{b+1}).$$ Since $f$ is an enriched $(P,\gamma)$-partition,   
$$f(w_{b-1})\preceq f(w_b)\preceq f(w_{b+1}).$$

If  $|f(w_{b-1})|=|f(w_b)|$ then $f(w_{b-1})=f(b)>0$, and if $|f(w_{b})|=|f(w_{b+1})|$ then $f(w_{b-1})=f(b)<0$. Therefore, we can not have $|f(w_{b-1})|=|f(w_b)|=|f(w_{b+1})|,$ a contradiction. 
Therefore, $ \calB_A(w,\gamma)=\emptyset$.\\

{\it Proof of }(II). If $\peak(w,\gamma)\subseteq A\cup (A+1)$, then for each $1\leq j\leq \ell-1$, we have
 $$\gamma(w_{a_{j}+1})\leq \gamma(w_{a_{j}+2})\leq \dots\leq \gamma(w_{a_{j}+t})\geq \dots \geq\gamma(w_{a_{j+1}-1})\geq\gamma(w_{a_{j+1}})$$ (note that we may have $t=1$ or $t=(a_{j+1}-a_j)$, which in these cases $\gamma$ is decreasing or increasing, respectively).
 Then there are two types of $f\in \calB_A(P,\gamma)$: 
 \begin{itemize}
 \item $f(w_{a_j+t})<0$, $f(w_{a_j+t})=f(w_{a_j+s})$ for $1\leq s\leq t$, and $f(w_{a_j+t})=-f(w_{a_j+s})$ for $t+1\leq s\leq a_{j+1}-a_j$.
  \item $f(w_{a_j+t})>0$, $f(w_{a_j+t})=f(w_{a_j+s})$ for $1\leq s\leq t$, and $f(w_{a_j+t})=-f(w_{a_j+s})$ for $t+1\leq s\leq a_{j+1}-a_j$.
 \end{itemize} 
 Therefore, there are $2^{|A|+1}$ possible choices of $f\in \calB_{A}(P,\gamma)$ such that they produce the monomial 
 $$\bx_{|f(\sigma^{-1}(1))|}\bx_{|f(\sigma^{-1}(2))|}\dots \bx_{|f(\sigma^{-1}(n))|}$$ such that 
 \begin{enumerate}
\item $|f(\sigma^{-1}(i))|=|f(\sigma^{-1}(j))|$ if $i$ and $j$ are in the same block of $[A]$. 
\item $|f(\sigma^{-1}(i))|<|f(\sigma^{-1}(j))|$ if $i$ is in a block of $[A]$ to the left of the block containing $j$.
\end{enumerate} 
Therefore, $$\sum_{f\in\mathcal{B}_A(w,\gamma)}\prod_{i=1}^n\mathbf{x}_{|f(\sigma^{-1}(i))|}=2^{|A|+1}\bfM_{(A,\sigma)},$$ as desired.
\end{proof}

\begin{Corollary}\label{Coro:PeakSigma} We have that 
$\mathbf{K}_{(w,\gamma,\sigma)}=\mathbf{K}_{(w,\gamma',\sigma)}$ if and only if $\peak(w,\gamma)=\peak(w,\gamma')$.
	
	\end{Corollary}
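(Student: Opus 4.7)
The ``if'' direction is immediate from Proposition \ref{prop:KtoM}: the right-hand side of that expansion depends on $\gamma$ only through $\peak(w,\gamma)$, so if the two peak sets coincide then $\bfK_{(w,\gamma,\sigma)} = \bfK_{(w,\gamma',\sigma)}$.

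For the converse I plan to mirror the strategy used in the proof of Proposition \ref{prop:fund-eq}: commute the variables. Sending each $\bx_i$ to the commuting $x_i$ takes $\bfK_{(w,\gamma,\sigma)}$ to the classical enriched fundamental $K_{(w,\gamma)}$, because in either defining sum the exponent of $x_i$ in each term equals $|\{v : |f(v)| = i\}|$, a quantity that does not involve the auxiliary labelling $\sigma$. Hence the hypothesis $\bfK_{(w,\gamma,\sigma)} = \bfK_{(w,\gamma',\sigma)}$ projects to $K_{(w,\gamma)} = K_{(w,\gamma')}$, and Stembridge's result recalled in Section \ref{subsec:(enr)Ppar} (from \cite[Section 2.2]{S97}) then yields $\peak(w,\gamma) = \peak(w,\gamma')$.

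There is no serious obstacle: the only thing to check is the innocuous claim that the commutative image of $\bfK_{(w,\gamma,\sigma)}$ is exactly $K_{(w,\gamma)}$, which is a direct inspection of the defining sums. As an alternative route one could stay inside $\NCQSym$, combine Proposition \ref{prop:KtoM} with the linear independence of $\{\bfM_{(A,\sigma)}\}_{A \subseteq [n-1]}$ for fixed $\sigma$ (which follows because $\SetComp(A,\sigma)$ has block sizes determined by $A$, so the standardizations $\std(A,\sigma)$ are distinct), and recover the peak set from the collection $T(B) = \{A : B \subseteq A \cup (A+1)\}$ via $B = \bigcap_{A \in T(B)}(A \cup (A+1))$; this is more combinatorial and somewhat longer, so I would opt for the commutative-projection route, which exactly parallels the pattern already set by Proposition \ref{prop:fund-eq}.
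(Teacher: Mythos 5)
Your proposal is correct, and in fact you give two valid proofs. The paper states the result as a \emph{Corollary} of Proposition~\ref{prop:KtoM} with no written proof, so the implicit argument is presumably the one you label the ``alternative route'': the ``if'' direction is immediate because the expansion in Proposition~\ref{prop:KtoM} depends on $\gamma$ only through $\peak(w,\gamma)$, and for the ``only if'' direction one uses that, for fixed $\sigma$, the functions $\bfM_{(A,\sigma)}$ with $A\subseteq[n-1]$ are pairwise distinct monomial basis elements (the block sizes of $\SetComp(A,\sigma)$ recover $A$), so equality of the two expansions forces $\{A : \peak(w,\gamma)\subseteq A\cup(A+1)\}=\{A : \peak(w,\gamma')\subseteq A\cup(A+1)\}$, from which one recovers the peak set as $\bigcap_A (A\cup(A+1))$ over that family, using that peak sets have no two consecutive elements (take $A=B$ to exclude $i$ when $i-1\notin B$, and $A=B-1$ when $i-1\in B$). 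Your preferred commutative-projection route is equally sound and cleanly mirrors the paper's proof of Proposition~\ref{prop:fund-eq}: applying the variable-commuting map $\rho$ term-by-term sends $\bfK_{(w,\gamma,\sigma)}$ to $K_{(w,\gamma)}$ (since $\sigma$ only permutes the noncommuting factors, which is invisible after commutation), and then Stembridge's fact that $K_{(w,\gamma)}$ determines $\peak(w,\gamma)$, recalled in Section~\ref{subsec:(enr)Ppar}, finishes. The trade-off is that the $\rho$ route imports the commutative uniqueness statement as a black box, while the linear-independence route stays entirely inside $\NCQSym$ and makes the corollary genuinely a corollary of Proposition~\ref{prop:KtoM}; both are short and both are correct.
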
 

%
%
%
%
%
%

Given a peak set $B\subseteq\{2,3,\dots,n-1\}$ and $\sigma\in\mathfrak{S}_n$, define  $$\mathbf{K}_{(B,\sigma)}=\mathbf{K}_{(w,\gamma,\sigma)}$$ where $(w,\gamma)$ is any labelled chain with $\peak(w,\gamma)=B$.\\

\section{Peak algebra in noncommuting variables and its dimension}\label{sec:peak-n}

\begin{Definition}\label{Def:NCPeak}
	Let $\NCPeak$ be the space spanned by all enriched fundamental functions in noncommuting variables $\bfK_{(P,\gamma,\sigma)}$, or equivalently, the space spanned by the following set 
	$$\{\scrF_{\bf G}: {\bf G} \text{~is a labelled edge-coloured digraph}\}.$$
	The space $\NCPeak$ is called the \emph{peak algebra in noncommuting variables}.
	\end{Definition}

\subsection{Enriched monomial functions}\label{subsec:monpeak-n}

Given $(B,\sigma)$ where $B\subseteq\{2,3,\dots,n-1\}$ is a peak set and $\sigma\in\mathfrak{S}_n$, define the \emph{enriched monomial function in noncommuting variables} $\boldeta_{(B,\sigma)}$ to be
$$\boldeta_{(B,\sigma)}=\sum_{C\subseteq B}(-1)^{|B|-|C|}\mathbf{K}_{(C,\sigma)}.$$

By inclusion-exclusion, we have
$$\mathbf{K}_{(B,\sigma)}=\sum_{C\subseteq B}\boldeta_{(C,\sigma)}.$$


\begin{Corollary}\label{cor:ncpeak}
 Both sets $$\{\bfK_{(B,\sigma)}: \text{~$B\subseteq \{2,3,\dots,n-1\}$ is a peak set, $\sigma\in \frakS_n$}\}$$ and 
	$$\{\boldeta_{(B,\sigma)}: \text{~$B\subseteq \{2,3,\dots,n-1\}$ is a peak set, $\sigma\in \frakS_n$}\}$$ 
	span $\NCPeak_n$.

\end{Corollary}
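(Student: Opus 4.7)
The plan is to trace back through Proposition \ref{prop:fund2} and Corollary \ref{Coro:PeakSigma} to rewrite every spanning element of $\NCPeak_n$ in terms of the $\bfK_{(B,\sigma)}$'s, and then use the inclusion-exclusion relation to pass to the $\boldeta_{(B,\sigma)}$'s.

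First I would handle the spanning set $\{\bfK_{(B,\sigma)}\}$. By Definition \ref{Def:NCPeak}, $\NCPeak_n$ is spanned by all $\bfK_{(P,\gamma,\sigma)}$ where $(P,\gamma)$ is a labelled poset on $n$ elements and $\sigma$ is a bijection from its ground set to $[n]$. Proposition \ref{prop:fund2} gives
$$\mathbf{K}_{(P,\gamma,\sigma)}=\sum_{w\in\mathcal{L}(P)}\mathbf{K}_{(w,\gamma,\sigma)},$$
so it suffices to show each $\bfK_{(w,\gamma,\sigma)}$ (where $w$ is a chain) lies in the span of the claimed set. This is exactly Corollary \ref{Coro:PeakSigma}: $\bfK_{(w,\gamma,\sigma)}$ depends on $\gamma$ only through $\peak(w,\gamma)$, so $\bfK_{(w,\gamma,\sigma)}=\bfK_{(B,\sigma)}$ for $B=\peak(w,\gamma)$, which is a peak set in $\{2,3,\dots,n-1\}$. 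Hence the $\bfK_{(B,\sigma)}$'s span $\NCPeak_n$.

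For the second set, I would simply invoke the defining relation
$$\boldeta_{(B,\sigma)}=\sum_{C\subseteq B}(-1)^{|B|-|C|}\mathbf{K}_{(C,\sigma)}$$
together with its inverse $\mathbf{K}_{(B,\sigma)}=\sum_{C\subseteq B}\boldeta_{(C,\sigma)}$ (by inclusion-exclusion, as noted in the text). These formulas show that, for each fixed $\sigma$, the transition matrix between $\{\bfK_{(B,\sigma)}\}_B$ and $\{\boldeta_{(B,\sigma)}\}_B$ is unitriangular with respect to inclusion of peak sets, hence invertible. Since the $\bfK_{(B,\sigma)}$'s already span $\NCPeak_n$ by the first part, so do the $\boldeta_{(B,\sigma)}$'s.

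There is no real obstacle here: this is a short bookkeeping argument that chains together Proposition \ref{prop:fund2}, Corollary \ref{Coro:PeakSigma}, and the inclusion-exclusion relation. The only point requiring any attention is the implicit claim that every peak set $B\subseteq\{2,3,\dots,n-1\}$ arises as $\peak(w,\gamma)$ for some labelled chain $(w,\gamma)$, so that the notation $\bfK_{(B,\sigma)}$ is defined for every peak set; this was already observed in Section \ref{subsec:(enr)Ppar}.
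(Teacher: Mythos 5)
Your proof is correct and takes essentially the same approach as the paper: start from the spanning set $\{\bfK_{(P,\gamma,\sigma)}\}$, reduce to the chain case, apply Corollary \ref{Coro:PeakSigma}, and then pass to the $\boldeta_{(B,\sigma)}$'s via the unitriangular transition matrix. In fact you are slightly more careful than the paper's own write-up, which invokes Corollary \ref{Coro:PeakSigma} for a general labelled poset $(P,\gamma)$ even though that corollary is only stated for chains; your explicit use of Proposition \ref{prop:fund2} to first expand $\bfK_{(P,\gamma,\sigma)}$ over linear extensions fills in that small gap.
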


\begin{proof} By definition 
$\NCPeak$ is spanned by $\{\bfK_{(P,\gamma,\sigma)}\}$, and
 by Corollary \ref{Coro:PeakSigma}, if $\Peak(P,\gamma)=\Peak(P,\gamma')$, then $\bfK_{(P,\gamma,\sigma)}=\bfK_{(P,\gamma',\sigma)}$. Therefore, the set 
 $$\{\bfK_{(B,\sigma)}: \text{~$B\subseteq \{2,3,\dots,n-1\}$ is a peak set, $\sigma\in \frakS_n$}\}$$ spans $\NCPeak_n$, and by triangulation we have $$\{\boldeta_{(B,\sigma)}: \text{~$B\subseteq \{2,3,\dots,n-1\}$ is a peak set, $\sigma\in \frakS_n$}\}$$
	spans $\NCPeak_n$.
 
\end{proof}

\subsection{Enriched monomial functions to monomials}\label{subsec:mpeak-to-M} 
In order to show that the dimension of $\NCPeak_n$ is the same as the number of odd set compositions of $[n]$, we need the following lemma.

\begin{Lemma}\label{lem:N}
	For any peak set $B\subseteq[n-1]$ and $\sigma\in\mathfrak{S}_n$, we have
	$$\boldeta_{(B,\sigma)}=(-1)^{|B|}\sum_{A\subseteq \odd(B)}2^{|A|+1}\mathbf{M}_{(A,\sigma)}.$$
\end{Lemma}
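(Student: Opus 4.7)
My plan is to start from the definition of $\boldeta_{(B,\sigma)}$, expand each $\mathbf{K}_{(C,\sigma)}$ into the monomial basis using Proposition \ref{prop:KtoM}, and then swap the order of summation to collect the coefficient of each $\mathbf{M}_{(A,\sigma)}$. Concretely, I would write
\[
\boldeta_{(B,\sigma)}=\sum_{C\subseteq B}(-1)^{|B|-|C|}\mathbf{K}_{(C,\sigma)}=\sum_{C\subseteq B}(-1)^{|B|-|C|}\sum_{\substack{A\subseteq[n-1]\\ C\subseteq A\cup(A+1)}}2^{|A|+1}\mathbf{M}_{(A,\sigma)},
\]
where the inner expansion is legitimate because each $C\subseteq B$ is itself a peak set (a subset of a peak set is still a peak set). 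Swapping sums gives the coefficient of $\mathbf{M}_{(A,\sigma)}$ as
\[
2^{|A|+1}\sum_{\substack{C\subseteq B\\ C\subseteq A\cup(A+1)}}(-1)^{|B|-|C|}=2^{|A|+1}(-1)^{|B|}\sum_{C\subseteq B\cap(A\cup(A+1))}(-1)^{|C|}.
\]

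Next, I would evaluate this inner sum by the elementary identity $\sum_{C\subseteq T}(-1)^{|C|}=[T=\emptyset]$. So the coefficient of $\mathbf{M}_{(A,\sigma)}$ vanishes unless $B\cap(A\cup(A+1))=\emptyset$, in which case it equals $(-1)^{|B|}2^{|A|+1}$. The remaining step is to translate the condition $B\cap(A\cup(A+1))=\emptyset$ into the desired condition $A\subseteq\odd(B)$: this follows from the equivalences
\[
B\cap(A\cup(A+1))=\emptyset\iff A\cap B=\emptyset\text{ and }A\cap(B-1)=\emptyset\iff A\subseteq[n-1]\setminus(B\cup(B-1))=\odd(B).
\]
Combining these observations immediately yields
\[
\boldeta_{(B,\sigma)}=(-1)^{|B|}\sum_{A\subseteq\odd(B)}2^{|A|+1}\mathbf{M}_{(A,\sigma)}.
\]

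I do not anticipate a real obstacle here: the argument is a textbook Möbius-style inclusion-exclusion, entirely analogous to Stembridge's derivation of the monomial peak function $\eta_B$ in the commutative case. The only care needed is to (i) use Proposition \ref{prop:KtoM} for every $C\subseteq B$ (for which I must verify that subsets of peak sets remain peak sets, which is immediate from the definition), and (ii) carry out the set-theoretic translation $B\cap(A\cup(A+1))=\emptyset\Leftrightarrow A\subseteq\odd(B)$ cleanly.
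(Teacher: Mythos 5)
Your proof is correct and follows essentially the same route as the paper's: expand each $\mathbf{K}_{(C,\sigma)}$ via Proposition \ref{prop:KtoM}, interchange the order of summation, and observe that the alternating inner sum over $C\subseteq B\cap(A\cup(A+1))$ vanishes unless that intersection is empty, which is equivalent to $A\subseteq\odd(B)$. Your explicit remark that subsets of peak sets remain peak sets (so that Proposition \ref{prop:KtoM} applies to every $C\subseteq B$) is a small but welcome point of care that the paper leaves implicit.
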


\begin{proof}
	It follows from Proposition \ref{prop:KtoM} that
	\begin{align*}
		\boldeta_{(B,\sigma)}&=\sum_{C\subseteq B}(-1)^{|B|-|C|}\sum_{A\subseteq[n-1] \atop C\subseteq A\cup(A+1)}2^{|A|+1}\mathbf{M}_{(A,\sigma)}\\
		&=\sum_{A\subseteq[n-1]}\left(\sum_{C\subseteq (A\cup(A+1))\cap B}(-1)^{|B|-|C|}\right)2^{|A|+1}\mathbf{M}_{(A,\sigma)}\\
		&=(-1)^{|B|}\sum_{A\subseteq [n-1]\setminus(B\cup (B-1))}2^{|A|+1}\mathbf{M}_{(A,\sigma)}.
	\end{align*}
	The last equality follows from the observation that the sum inside the bracket vanishes unless $(A\cup(A+1))\cap B=\emptyset$ which is equivalent to $A\subseteq[n-1]\setminus(B\cup(B-1))$. Note that $\odd(B)=[n-1]\setminus (B\cup (B-1))$, thus 
	$$\boldeta_{(B,\sigma)}=(-1)^{|B|}\sum_{A\subseteq \odd(B)}2^{|A|+1}\mathbf{M}_{(A,\sigma)}.$$
\end{proof}

\begin{Lemma}\label{lem:eq-eta}
Let $B\subseteq[n-1]$ be a peak set and $\sigma\in \mathfrak{S}_n$. Let $\sigma'\in \mathfrak{S}_n$. If   
$$\std(\odd(B),\sigma)=\std(\odd(B),\sigma'),$$
 then 
$$\boldeta_{(B,\sigma)}=\boldeta_{(B,\sigma')}.$$
\end{Lemma}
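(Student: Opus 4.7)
The plan is to reduce the statement to the monomial level and then use the observation that $\bM_{(A,\sigma)}$ depends only on $\std(A,\sigma)$, i.e. only on $\SetComp(A,\sigma)$.

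First I would invoke Lemma \ref{lem:N} to write
\[
\boldeta_{(B,\sigma)}=(-1)^{|B|}\sum_{A\subseteq\odd(B)}2^{|A|+1}\bM_{(A,\sigma)},
\qquad
\boldeta_{(B,\sigma')}=(-1)^{|B|}\sum_{A\subseteq\odd(B)}2^{|A|+1}\bM_{(A,\sigma')},
\]
so it suffices to show that $\bM_{(A,\sigma)}=\bM_{(A,\sigma')}$ for every $A\subseteq\odd(B)$. By the remark at the end of Section \ref{subsec:NCQSym}, this reduces further to proving
\[
\std(A,\sigma)=\std(A,\sigma'),\qquad\text{equivalently,}\qquad \SetComp(A,\sigma)=\SetComp(A,\sigma'),
\]
the equivalence being Proposition \ref{prop:std}.

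The key combinatorial step is then a monotonicity remark: if $A\subseteq A'\subseteq[n-1]$, then $\SetComp(A,\sigma)$ is obtained from $\SetComp(A',\sigma)$ by merging certain consecutive blocks (namely, merging the blocks separated by the positions in $A'\setminus A$). More precisely, writing $\odd(B)=\{a_1<\cdots<a_k\}$ and $A=\{a_{i_1}<\cdots<a_{i_j}\}\subseteq\odd(B)$, the $r$th block of $\SetComp(A,\sigma)$ is exactly the union of the $(i_{r-1}+1)$th through $i_r$th blocks of $\SetComp(\odd(B),\sigma)$ (with the convention $i_0=0$ and $i_{j+1}=k+1$). This is immediate from the definition of $\SetComp$, since the block boundaries of $\SetComp(A,\sigma)$ are exactly the cuts in $A$, which form a subset of the cuts in $\odd(B)$.

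Now the hypothesis $\std(\odd(B),\sigma)=\std(\odd(B),\sigma')$ is, by Proposition \ref{prop:std}, equivalent to $\SetComp(\odd(B),\sigma)=\SetComp(\odd(B),\sigma')$, meaning the two set compositions have identical blocks (as unordered sets) in identical positions. Applying the same block-merging prescription determined by $A\subseteq\odd(B)$ to both sides yields $\SetComp(A,\sigma)=\SetComp(A,\sigma')$. Therefore $\bM_{(A,\sigma)}=\bM_{(A,\sigma')}$ for every $A\subseteq\odd(B)$, and summing over $A$ gives $\boldeta_{(B,\sigma)}=\boldeta_{(B,\sigma')}$. There is no real obstacle here; the proof is a short unfolding of definitions once Lemma \ref{lem:N} is in hand, the only mild point to state carefully being the compatibility of $\SetComp$ with refinement of the cut set $A$.
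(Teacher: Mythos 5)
Your proposal is correct and follows the same route as the paper: reduce via Lemma \ref{lem:N} to showing $\bM_{(A,\sigma)}=\bM_{(A,\sigma')}$ for every $A\subseteq\odd(B)$, and then observe that this follows from $\bM_{(\odd(B),\sigma)}=\bM_{(\odd(B),\sigma')}$. You simply spell out the block-merging compatibility of $\SetComp$ with $A\subseteq\odd(B)$ that the paper leaves implicit in its ``Thus, for any $A\subseteq\odd(B)$'' step.
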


\begin{proof}
Note that $$\bfM_{(\odd(B),\sigma)}=\bfM_{(\odd(B),\sigma')}.$$ Thus, for any $A\subseteq \odd(B)$, 
$$\bfM_{(A,\sigma)}=\bfM_{(A,\sigma')}.$$
Therefore,
$$\boldeta_{(B,\sigma)}=(-1)^{|B|}\sum_{A\subseteq \odd(B)}2^{|A|+1}\mathbf{M}_{(A,\sigma)}=(-1)^{|B|}\sum_{A\subseteq \odd(B)}2^{|A|+1}\mathbf{M}_{(A,\sigma')}=\boldeta_{(B,\sigma')}.$$

\end{proof} 

\subsection{Bases of the peak algebra in noncommuting variables and its dimension}\label{subsec:bases-peak}

Recall that if $B\subseteq [n-1]$ is a peak set and $\sigma\in \mathfrak{S}_n$, we say $(B,\sigma)$ is  enriched standard if $(\odd(B),\sigma)$ is standard. 

Our main result of this section is the following. 

\begin{Theorem}\label{Thm:NCPeakBasis}
The sets 
$$\{\boldeta_{(B,\sigma)}:B\subseteq \{2,3,\dots,n-1\}, \sigma\in\mathfrak{S}_n, (B,\sigma) \text{~is  enriched standard}   \}$$ and 
 $$\{\bfK_{(B,\sigma)}:B\subseteq \{2,3,\dots,n-1\}, \sigma\in\mathfrak{S}_n, (B,\sigma) \text{~is  enriched standard}   \}$$
  are bases for $\NCPeak_n.$ Moreover, 
$$\dim (\NCPeak_n)=|\{ \phi\vDash [n]: \phi \text{~is odd}\}|.$$
\end{Theorem}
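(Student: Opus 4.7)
The plan is to prove that $\{\boldeta_{(B,\sigma)}\}$ indexed by enriched standard pairs is a basis of $\NCPeak_n$ first, and then to derive both the dimension formula (via Proposition \ref{Prop:OddSetComp}) and the $\bfK$-basis claim from it. Spanning comes almost for free: Corollary \ref{cor:ncpeak} already provides the wider spanning set $\{\boldeta_{(B,\sigma)}:B \text{ peak set},\ \sigma\in\mathfrak{S}_n\}$, and Lemma \ref{lem:eq-eta} lets me replace any such $\sigma$ by the permutation obtained by sorting $\sigma$ increasingly within each block of $\SetComp(\odd(B),\sigma)$. The new permutation has descent set contained in $\odd(B)$, so the resulting pair is enriched standard and $\boldeta_{(B,\sigma)}$ is unchanged.

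For linear independence I would push everything down to the $\bfM$-basis of $\NCQSym_n$. Lemma \ref{lem:N} gives
\[
\boldeta_{(B,\sigma)}=(-1)^{|B|}\sum_{A\subseteq \odd(B)}2^{|A|+1}\,\bfM_{(A,\sigma)},
\]
and since $\bfM_{(A,\sigma)}$ depends only on the set composition $\SetComp(A,\sigma)$, each summand is of the form $\bfM_\phi$ with $\phi=\SetComp(A,\sigma)$. Endow set compositions of $[n]$ with the refinement order $\phi\preceq\psi$ meaning $\psi$ refines $\phi$; then $\SetComp(A,\sigma)\preceq\OddSetComp(B,\sigma)$ for every $A\subseteq \odd(B)$, with equality precisely when $A=\odd(B)$ and with non-zero coefficient $(-1)^{|B|}2^{|\odd(B)|+1}$. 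Given a hypothetical non-trivial relation $\sum c_{(B,\sigma)}\,\boldeta_{(B,\sigma)}=0$ indexed by enriched standard pairs, I would pick $(B_0,\sigma_0)$ in the support of $c$ maximising $\OddSetComp(\cdot)$ under $\preceq$ and compare coefficients of $\bfM_{\OddSetComp(B_0,\sigma_0)}$. Any contributing pair $(B,\sigma)$ would have to satisfy $\OddSetComp(B_0,\sigma_0)\preceq\OddSetComp(B,\sigma)$, forcing equality by maximality and therefore $(B,\sigma)=(B_0,\sigma_0)$ via the bijection of Proposition \ref{Prop:OddSetComp}. The surviving term $c_{(B_0,\sigma_0)}(-1)^{|B_0|}2^{|\odd(B_0)|+1}$ is non-zero, a contradiction.

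The dimension count is then immediate from Proposition \ref{Prop:OddSetComp}, which identifies enriched standard pairs with odd set compositions of $[n]$, counted by $a_n$. For the $\bfK$-basis I would observe that any subset $C\subseteq B$ of a peak set is itself a peak set, and $\odd(C)\supseteq\odd(B)$, so $(C,\sigma)$ is enriched standard whenever $(B,\sigma)$ is. The inclusion--exclusion $\bfK_{(B,\sigma)}=\sum_{C\subseteq B}\boldeta_{(C,\sigma)}$ is thus supported entirely on enriched standard pairs, and for each fixed $\sigma$ presents the transition matrix from $\{\bfK_{(B,\sigma)}\}$ to $\{\boldeta_{(C,\sigma)}\}$ as upper unitriangular under inclusion of peak sets; invertibility upgrades the $\boldeta$-basis to a $\bfK$-basis. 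I expect the only genuinely delicate step to be the maximality argument in the independence proof, which rests essentially on Proposition \ref{Prop:OddSetComp}: without that bijection, several enriched standard pairs could share a leading set composition and the triangularity would break down. Everything else is bookkeeping on the ingredients already assembled in Sections \ref{sec:peak-en}--\ref{sec:peak-n}.
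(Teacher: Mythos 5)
Your proposal is correct and follows the same route as the paper: reduce the spanning set of Corollary \ref{cor:ncpeak} to enriched standard pairs via Lemma \ref{lem:eq-eta}, establish linear independence by triangularity of the $\boldeta$-to-$\bfM$ transition matrix (which the paper merely asserts, and you rightly unpack via the refinement order on set compositions and the bijection of Proposition \ref{Prop:OddSetComp}), and then transfer the basis property to $\{\bfK_{(B,\sigma)}\}$ using the observation that enriched standardness is inherited by subsets of the peak set together with the unitriangular inclusion--exclusion. The only difference is one of exposition: you supply the maximal-element argument and the block-diagonal unitriangularity that the published proof leaves implicit.
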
 

\begin{proof}
Given any pair $(B,\sigma)$ where $B\subseteq \{2,3,\dots,n-1\}$ is a peak set and $\sigma\in \frakS_n$, let $\sigma'$ be the unique permutation in $\mathfrak{S}_n$ such that 
$$\std(\odd(B),\sigma)=(\odd(B),\sigma').$$
By Lemma \ref{lem:eq-eta}, 
$$\boldeta_{(B,\sigma)}=\boldeta_{(B,\sigma')}.$$
Moreover,  $$\{\boldeta_{(B,\sigma)}: B\subseteq \{2,3,\dots,n-1\}, \sigma\in\mathfrak{S}_n, (B,\sigma) \text{~is enriched standard}  \}$$ is linearly independent by triangularity of the transition matrix from $\boldeta$ to $\bfM$. Therefore, 
this set is a basis for $\NCPeak_n$. 

Observe that if $(B,\sigma)$ is enriched standard, then for any $C\subseteq B$, $(C,\sigma)$ is also enriched standard. Therefore, $$\{\bfK_{(B,\sigma)}:B\subseteq \{2,3,\dots,n-1\}, \sigma\in\mathfrak{S}_n, (B,\sigma) \text{~is  enriched standard}   \}$$ also forms a basis for $\NCPeak_n$.
\end{proof}

\section{Hopf structure of the peak algebra in noncommuting variables}\label{Sec:NCPeak}

The goal of this section is to prove that $\NCPeak$ is a Hopf subalgebra of $\NCQSym$.

Recall that the product in $\NCQSym$ is the regular product of power series. The coproduct in $\NCQSym$ is defined as follows (see \cite{B09}). By introducing another set of noncommuting variables $\mathbf{y}=\mathbf{y}_1,\mathbf{y}_2,\dots$ with linear order $\mathbf{x}_1<\mathbf{x}_2<\cdots<\mathbf{y}_1<\mathbf{y}_2<\cdots$, we have the following map composition
$$\begin{array}{ccccccc}
	\NCQSym(\mathbf{x}) & \cong & \NCQSym(\mathbf{x},\mathbf{y}) & \to & \NCQSym(\mathbf{x},\mathbf{y})/\sim & \to & \NCQSym(\mathbf{x})\otimes\NCQSym(\mathbf{y})\\
	f(\mathbf{x}) & \mapsto & f(\mathbf{x},\mathbf{y}) & \mapsto & \bar{f}(\mathbf{x},\mathbf{y}) & \mapsto & \sum \bar{f_1}(\mathbf{x})\otimes \bar{f_2}(\mathbf{y})
\end{array}$$
where $\sim$ denotes the relations $\mathbf{x}_i\mathbf{y}_j=\mathbf{y}_j\mathbf{x}_i$ for all $i,j$. Then, the coproduct  is given by
$$\begin{array}{cccccc}
	\Delta: & \NCQSym(\mathbf{x}) & \to & \NCQSym(\mathbf{x})\otimes\NCQSym(\mathbf{y}) & \cong & \NCQSym(\mathbf{x})\otimes\NCQSym(\mathbf{x})\\
	& f(\mathbf{x}) & \mapsto & \sum \bar{f_1}(\mathbf{x})\otimes \bar{f_2}(\mathbf{y}) & \mapsto & \sum \bar{f_1}(\mathbf{x})\otimes \bar{f_2}(\mathbf{x})
\end{array}$$

Let $\bf G$ be a labelled edge-coloured digraph, and let $n$ be a positive integer. Then ${\bf G}+n$ is the labelled edge-coloured digraph obtained by replacing each vertex $a$ of $\bf G$ by $a+n$. 
%

Given labelled edge-coloured digraphs ${\bf G}$ and ${\bf H}$, by \cite[Proposition 10.1]{ALvW23} 
 $$\scrY_{\bf G} \scrY_{\bf H}=\scrY_{{\bf G}| {\bf H}}$$ where ${\bf G}|{\bf H}$ is the digraph of disjoint union of ${\bf G}$ and  ${\bf H}+|V({\bf G})|$. 
  $$
  \begin{tikzpicture}[
roundnode/.style={circle, draw=black!60, fill=gray!10, very thick, minimum size=7mm},
squarednode/.style={rectangle, draw=black!60, fill=gray!5, very thick, minimum size=5mm},]
 \node[squarednode](1) at (1.3,0){$\bf 1$};
  \node[squarednode](2) at (2.6,0){$\bf 2$};
    \node[squarednode](3) at (3.9,0){$\bf 3$};
    \node at (4.6,-0.7){$Q_{\{1,2,3\}}| Q_{\{1,2,3\}}$};
    \draw[thick,double,->](1)--(2);
       \draw[ thick,double,->](2)--(3);
 \node[squarednode](a) at (4.8,0){${\bf 4}$};
  \node[squarednode](b) at (6.1,0){$\bf 5$};
    \node[squarednode](c) at (7.4,0){$\bf 6$};
    \draw[thick,double,->](a)--(b);
       \draw[ thick,double,->](b)--(c);
          \draw[ thick,double,->](b)--(c);
 \end{tikzpicture} 
 $$
Let $\bf G$ be a labelled edge-coloured digraph. The \emph{standardization} of $\bf G$, denoted $\std({\bf G})$, is the labelled edge-coloured digraph obtained by replacing the $i$th smallest element of $V({\bf G})$ by $i$.
For a subset $A$ of $V({\bf G})$, let ${\bf G}|_A$ be the induced labelled edge-coloured subdigraph of ${\bf G}$ with vertex set $A$.  
   An edge-coloured subdigraph ${\bf F}$  of ${\bf G}$ is called a \emph{$\{\rightarrow,\Rightarrow\}$-induced subdigraph} of ${\bf G}$ 
  if ${\bf F}$ is an induced subdigraph of ${\bf G}$ and if $a\in V({\bf F})$ and either $a\rightarrow b$ or $a\Rightarrow b$ in ${\bf G}$, then $b\in V({\bf F})$.  Then by \cite[Proposition 10.2]{ALvW23} 
$$\Delta(\scrY_{\bf G})=\sum_{{\bf F}} \scrY_{\std({\bf G}|_{V({\bf G})\setminus V({\bf F})})} \otimes \scrY_{\std({\bf F})}$$ where the sum is over all $\{\rightarrow, \Rightarrow\}$-induced subdigraphs ${\bf F}$ of ${\bf G}$.\\

With a similar proof for the product and coproduct of enriched generalized chromatic functions. We have 
$$\scrF_{\bf G} \scrF_{\bf H}=\scrF_{{\bf G} | {\bf H}},$$
and
$$\Delta(\scrF_{\bf G})=\sum_{{\bf F}} \scrF_{\std({\bf G}|_{V({\bf G})\setminus V({\bf F})})} \otimes \scrF_{\std({\bf F})}$$ where the sum is over all $\{\rightarrow, \Rightarrow\}$-induced subdigraphs ${\bf F}$ of ${\bf G}$.

Therefore $\NCPeak$ is isomorphic to a Hopf subalgebra of $\NCQSym$.

\section{The labelled descent-to-peak map}\label{sec:thetamap-n}

	The \emph{labelled descent-to-peak map} from $\NCQSym$ to $\NCPeak$, denoted $\Theta_{\NCQSym}$, is the linear map
	\begin{equation}\label{def:theta-non}
	\begin{array}{cccc}
		\Theta_\NCQSym: & \NCQSym & \to & \NCPeak\\
		& \mathbf{F}_{(A,\sigma)} & \mapsto & \mathbf{K}_{(A\setminus((A+1)\cup\{1\}),\sigma)}
	\end{array}
	\end{equation}
	for all standard pairs $(A,\sigma)$.

Let $A=\{a_1<a_2<\dots<a_k\}\subseteq[n-1]$ with $n-a_k$ is odd. We define $$\oddd(A)=\{a_i:1\leq i\leq k, a_i-a_{i-1} \text{ is odd}\}$$ where $a_0=0$. For example, if $A=\{1,3,4,8\}\subseteq[9]$, then $\oddd(A)=\{1,4\}$. 

Remark that $\oddd(A)$ is an odd set, and there is a unique peak set $B$ such that $$\odd(B)=\oddd(A).$$

The following lemma will be useful in the study of the labelled descent-to-peak map.
\begin{Lemma}\label{lem:M}
	Let $A\subseteq[n-1]$ and $\sigma\in\mathfrak{S}_n$. Then
	$$\sum_{A\subseteq C\subseteq[n-1]}(-1)^{|C|}\mathbf{K}_{(C\setminus((C+1)\cup\{1\}),\sigma)}=\begin{cases}
		(-1)^{n-1-|B|}\boldeta_{(B,\sigma)} & \text{ if }n-\max (A)\text{ is odd,}\\
		0 & \text{ otherwise,}
	\end{cases}$$
	where $B$ is the unique peak set such that $\odd(B)=\oddd(A)$.
	
	\end{Lemma}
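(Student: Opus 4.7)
The plan is to match coefficients in the monomial basis. Expanding each $\mathbf{K}_{(B(C),\sigma)}$ on the left-hand side via Proposition \ref{prop:KtoM} and $\boldeta_{(B,\sigma)}$ on the right-hand side via Lemma \ref{lem:N} (using $\odd(B)=\oddd(A)$), and cancelling the common factor $2^{|D|+1}$ from each $\mathbf{M}_{(D,\sigma)}$-coefficient, the lemma reduces to the purely combinatorial identity
\begin{equation*}
\Sigma(A,D):=\sum_{\substack{A\subseteq C\subseteq[n-1]\\ B(C)\subseteq D\cup(D+1)}}(-1)^{|C|}=(-1)^{n-1}\cdot[D\subseteq\oddd(A)]\cdot[n-\max(A)\text{ is odd}]
\end{equation*}
for every $D\subseteq[n-1]$, where $B(C)=C\setminus((C+1)\cup\{1\})$. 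Setting $E=D\cup(D+1)\cup\{1\}$ and $F=[n-1]\setminus E$, the constraint $B(C)\subseteq D\cup(D+1)$ rewrites as: \emph{every $c\in C\cap F$ satisfies $c-1\in C$}.

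If $D\not\subseteq A$, pick $d\in D\setminus A$; then $d\in D\subseteq E$ and $d+1\in D+1\subseteq E$, so neither $d$ nor $d+1$ lies in $F$. Consequently the map $C\mapsto C\triangle\{d\}$ preserves both $A\subseteq C$ and the constraint above while flipping $(-1)^{|C|}$, so this sign-reversing involution yields $\Sigma(A,D)=0$.

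For $D\subseteq A$, I would run a left-to-right transfer-matrix computation. Write $A=\{a_1<\cdots<a_k\}$ with $a_0=0$, and let $g(c,s)=\sum(-1)^{|C\cap[c]|}$ over valid truncations with $s=[c\in C]$, so that the initial state is $(g(0,0),g(0,1))=(1,0)$ and $\Sigma(A,D)=g(n-1,0)+g(n-1,1)$. Each position $c$ contributes one of four $2\times 2$ transition matrices according to whether $c\in A$ and whether $c\in F$; the key facts are that the free-$F$-position matrix $M_{\bar AF}=\bigl(\begin{smallmatrix}1&1\\0&-1\end{smallmatrix}\bigr)$ satisfies $M_{\bar AF}^2=I$, and that each $A$-position sends the state to one supported on its second coordinate. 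Since $D\subseteq A$, the only $E$-point inside the $\bar A$-interval $I_{j-1}=[a_{j-1}+1,a_j-1]$ is $a_{j-1}+1$, and only when $j=1$ or $a_{j-1}\in D$. Writing the state right after $a_i$ as $(0,w_i)$, a case analysis over whether $a_{i-1}\in D$ and whether $a_i\in D$ gives the recursion
\begin{equation*}
w_i=(-1)^{a_i-a_{i-1}}\,w_{i-1},
\end{equation*}
except that $w_i$ collapses to $0$ precisely when $a_i\in D$ and $a_i-a_{i-1}$ is even, i.e.\ $a_i\in D\setminus\oddd(A)$. Hence $\Sigma(A,D)=0$ whenever $D\subseteq A$ but $D\not\subseteq\oddd(A)$, while telescoping yields $w_k=(-1)^{a_k}$ when $D\subseteq\oddd(A)$.

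Finally I process the tail $I_k=[a_k+1,n-1]$ starting from $(0,(-1)^{a_k})$. Either $I_k\subseteq F$ (when $a_k\notin D$), or $I_k$ consists of the single $E$-point $a_k+1$ followed by a string in $F$ (when $a_k\in D$). In both configurations the oscillation induced by $M_{\bar AF}^2=I$ shows that the final row-sum $g(n-1,0)+g(n-1,1)$ equals $(-1)^{a_k}$ when $n-a_k$ is odd and $0$ when $n-a_k$ is even; since $(-1)^{a_k}=(-1)^{n-1}$ exactly when $n-a_k$ is odd, this is what the claim predicts. The main technical obstacle is the transfer-matrix bookkeeping: the four governing matrices do not commute, and one must exploit both the rank-one projection at each $A$-position and the involution $M_{\bar AF}^2=I$ to keep the scalar recursion for $w_i$ tractable.
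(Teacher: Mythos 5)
Your proposal is correct, but it follows a genuinely different route from the paper's proof. The paper works entirely at the level of the $\bfK$ basis: to handle $n-\max(A)$ even, it constructs a sign-reversing involution $\imath$ on $\{C:A\subseteq C\}$ with no fixed points that preserves $C\mapsto C\setminus((C+1)\cup\{1\})$; to handle $n-\max(A)$ odd, it constructs a second sign-reversing involution $\jmath$ whose fixed points are exactly $\{C:A,B\subseteq C\}$, and then observes that $C\mapsto C\setminus((C+1)\cup\{1\})$ bijects those fixed points with $\{D\subseteq B\}$, turning the surviving sum into the inclusion-exclusion definition of $\boldeta_{(B,\sigma)}$. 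You instead push both sides down to the $\bfM$ basis via Proposition~\ref{prop:KtoM} and Lemma~\ref{lem:N}, match coefficients, and then verify the resulting scalar identity $\Sigma(A,D)$ by a single sign-reversing involution (the $D\not\subseteq A$ case) plus a left-to-right transfer-matrix computation (the $D\subseteq A$ case). Both proofs are valid. The paper's argument is more compact and stays basis-agnostic, at the cost of having to guess two nontrivial involutions; your argument is more mechanical --- once you decide to expand in $\bfM$, the reduction and the per-position case analysis are forced --- but it requires the rather heavy bookkeeping you acknowledge, including checking that the four $2\times 2$ transition matrices interact correctly with the rank-one projection at $A$-positions. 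I verified the key points you state (the constraint rewrite via $F=[n-1]\setminus(D\cup(D+1)\cup\{1\})$, the fact that toggling $d\in D\setminus A$ respects the constraint because $d,d+1\notin F$, the recursion $w_i=(-1)^{a_i-a_{i-1}}w_{i-1}$ with collapse to zero exactly when $a_i\in D\setminus\oddd(A)$, and the tail behavior governed by $M_{\bar AF}^2=I$), and they all check out, so the proof is sound as sketched, though it would benefit from writing out the four transition matrices explicitly and the base-case $i=1$ step, where the ever-present $1\in E$ needs separate handling.
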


\begin{proof}

Consider the set $\{C\subseteq [n-1]: A\subseteq C\}$. Let the sign of $C$ to be $(-1)^{|C|}$. 

We first show that  if $n-\max(A)$ is even, then
$$\sum_{A\subseteq C\subseteq[n-1]}(-1)^{|C|}\mathbf{K}_{(C\setminus((C+1)\cup\{1\}),\sigma)}=0$$ using a sign reversing involution $\imath$ with no fixed point defined as follows. 

Let $C_{\geq \max(A)}$ be the set of the elements of $C$ that are bigger than or equal to $\max(A)$. Let 
$$(C_{\geq \max(A)}\setminus \{1\})\cup \{\max(A),n+1\}=\{\max(A)=d_1<d_2<\cdots<d_k=n+1\}.$$ 

The set $\{1\leq i\leq k-1: d_{i+1}-d_{i} \text{~is odd} \}$ is non-empty since $d_{k}-d_1=n+1-\max(A)$ is odd. Let $j=\min(\{1\leq i\leq k-1: d_{i+1}-d_{i} \text{~is odd}\})$; that is $j$  is the smallest integer such that $d_{j+1}-d_j$ is odd. If $ C\cap \{d_j+1, d_j+2,\dots, d_{j+1}\}=\emptyset$, set $m_C=0$, otherwise, let $m_C$ be the largest integer such that 
$$d_{j}+1,d_j+2,\ldots,d_j+m_C \in C\cap \{d_j+1, d_j+2,\dots, d_{j+1}\}.$$
Now define 
$$
\begin{array}{cccc}
\imath: & \{ C\subseteq [n-1]: A\subseteq C\}&\rightarrow &\{ C\subseteq [n-1]: A\subseteq C\}\\
& C& \mapsto & \begin{cases} 
C\setminus \{d_j+m_C\} & \text{if ~} m_C \text{~is odd,}\\
C\cup \{d_j+m_C+1\} & \text{if ~} m_C \text{~is even.}
\end{cases} 
\end{array}
$$
Then $\imath$ is a sign reversing involution with no fixed point. Moreover, for any $C$, we have
$$\imath(C)\setminus((\imath(C)+1)\cup\{1\})=C\setminus((C+1)\cup\{1\}),$$ and so 
$$\mathbf{K}_{(C\setminus((C+1)\cup\{1\}),\sigma)}=\mathbf{K}_{\imath(C)\setminus((\imath(C)+1)\cup\{1\}),\sigma)}.$$
Consequently, 
$$\sum_{A\subseteq C\subseteq[n-1]}(-1)^{|C|}\mathbf{K}_{(C\setminus((C+1)\cup\{1\}),\sigma)}=0.$$

Let $n-\max(A)$ be odd and let $B$ be the unique peak set such that $\odd(B)=\oddd(A)$. We want to show that 
$$\sum_{A\subseteq C\subseteq[n-1]}(-1)^{|C|}\mathbf{K}_{(C\setminus((C+1)\cup\{1\}),\sigma)}=(-1)^{n-1-|B|}\boldeta_{(B,\sigma)}$$ using a sing reversing involution $\jmath$ defined as follows. \\

Define 
$$
\begin{array}{cccc}
\jmath:& \{C\subseteq [n-1]: A\subseteq C\} & \rightarrow& \{C\subseteq [n-1]: A\subseteq C\}\\
& C & \mapsto & 
\begin{cases} 
C & \text{if~}B\subseteq C,\\
 C\setminus\{\ (\min(B)\setminus C)-1 \} & \text{if~}  B\not\subseteq C, (\min(B)\setminus C)-1\in C,\\
 C\cup\{\ (\min(B)\setminus C)-1 \}  & \text{if~}  B\not\subseteq C, (\min(B)\setminus C)-1\not\in C.\\
\end{cases} 
\end{array} 
$$
Note that the set of fixed points of $\jmath$ is 
$$\{C\subseteq [n-1]: A,B\subseteq C\}.$$ 
We have that
$$\{C\setminus ( (C+1)\cup \{1\} ): A,B\subseteq C\subseteq [n-1]\}=\{ D: D\subseteq B \}.$$
The reason for this equality is that if $A,B\subseteq C\subseteq [n-1]$, then 
$$C=[n-1]\setminus \{d-1:d\in D\}$$ for some $D\subseteq B$.
Note that  $|D|=n-1-|C|$. 
Therefore, 
\begin{align*} 
\sum_{A\subseteq C\subseteq[n-1]}(-1)^{|C|}\mathbf{K}_{(C\setminus((C+1)\cup\{1\}),\sigma)}&=\sum_{A,B\subseteq C\subseteq[n-1]}(-1)^{|C|}\mathbf{K}_{(C\setminus((C+1)\cup\{1\}),\sigma)}\\
&=\sum_{D\subseteq B}(-1)^{n-1-|D|}\mathbf{K}_{(D,\sigma)}\\
&=(-1)^{n-1-|B|}\boldeta_{(B,\sigma)}.
\end{align*}

\end{proof}

\begin{Proposition}\label{prop:M} 
Let $A\subseteq [n-1]$ and $\sigma\in \mathfrak{S}_n$.  
	Let $(A,\sigma)$ be standard such that $n-\max(A)$ is odd. Then
	$$\Theta_{\NCQSym}(\mathbf{M}_{(A,\sigma)})=\begin{cases}
			(-1)^{n-1-|B|-|A|}\boldeta_{(B,\sigma)} & \text{ if }n-\max(A)\text{ is odd,}\\
			0 & \text{ otherwise,}
		\end{cases}
		$$
		where $B$ is the unique peak set such that $\odd(B)=\oddd(A)$.
\end{Proposition}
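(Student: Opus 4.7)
The plan is to reduce to Lemma \ref{lem:M} via Möbius inversion on the Boolean lattice of subsets of $[n-1]$ above $A$. By Proposition \ref{prop:FtoM}, the fundamental basis expands in the monomial basis as
\[
\mathbf{F}_{(A,\sigma)}=\sum_{A\subseteq C\subseteq [n-1]}\mathbf{M}_{(C,\sigma)},
\]
so Möbius inversion gives
\[
\mathbf{M}_{(A,\sigma)}=\sum_{A\subseteq C\subseteq [n-1]}(-1)^{|C|-|A|}\mathbf{F}_{(C,\sigma)}.
\]
A subtle point I would check first is that this inversion remains valid at the level of standard pairs, i.e.\ that when $(A,\sigma)$ is standard, every pair $(C,\sigma)$ with $A\subseteq C$ is automatically standard (since $\Des(\sigma)\subseteq A\subseteq C$), so that the above formula is an identity between well-defined fundamental-basis elements.

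Next I would apply the linear map $\Theta_{\NCQSym}$ term by term using its definition in (\ref{def:theta-non}):
\[
\Theta_{\NCQSym}(\mathbf{M}_{(A,\sigma)})
=\sum_{A\subseteq C\subseteq [n-1]}(-1)^{|C|-|A|}\,\mathbf{K}_{(C\setminus((C+1)\cup\{1\}),\sigma)}
=(-1)^{-|A|}\!\!\sum_{A\subseteq C\subseteq [n-1]}(-1)^{|C|}\,\mathbf{K}_{(C\setminus((C+1)\cup\{1\}),\sigma)}.
\]
At this point the inner sum is exactly the expression evaluated in Lemma \ref{lem:M}. Invoking that lemma finishes both cases: when $n-\max(A)$ is even the sum vanishes, while when $n-\max(A)$ is odd the sum equals $(-1)^{n-1-|B|}\boldeta_{(B,\sigma)}$ for the unique peak set $B$ with $\odd(B)=\oddd(A)$. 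Multiplying by $(-1)^{-|A|}=(-1)^{|A|}$ and combining exponents yields
\[
\Theta_{\NCQSym}(\mathbf{M}_{(A,\sigma)})=(-1)^{n-1-|B|-|A|}\boldeta_{(B,\sigma)},
\]
as claimed.

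The whole argument is therefore a short wrapper around Lemma \ref{lem:M}. The only nontrivial checks are that Möbius inversion on the Boolean interval $[A,[n-1]]$ is the correct tool (immediate from the definition $\mathbf{F}_{(A,\sigma)}=\sum_{C\supseteq A}\mathbf{M}_{(C,\sigma)}$) and that standardness is preserved when enlarging $A$ to $C$; there is no further combinatorial obstacle in this proposition since the involutions that killed the even case and collapsed the odd case were already absorbed into Lemma \ref{lem:M}.
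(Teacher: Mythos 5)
Your proposal is correct and follows essentially the same route as the paper: expand $\mathbf{M}_{(A,\sigma)}$ in the fundamental basis via inclusion-exclusion on Proposition \ref{prop:FtoM}, observe that standardness of $(A,\sigma)$ propagates to all $(C,\sigma)$ with $A\subseteq C$, apply the definition of $\Theta_{\NCQSym}$ termwise, and invoke Lemma \ref{lem:M}. The standardness check you flag as a "subtle point" is the same observation the paper makes explicitly, so nothing is missing.
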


\begin{proof}
	By Proposition \ref{prop:FtoM} and inclusion-exclusion, we have
	$$\Theta_{\NCQSym}(\mathbf{M}_{(A,\sigma)})=\sum_{A\subseteq C}(-1)^{|C|-|A|}\Theta_{\NCQSym}(\mathbf{F}_{(C,\sigma)}).$$
	
	For any $A\subseteq C$, we must have $(C,\sigma)$ is standard. Hence by definition, $\Theta_{\NCQSym}(\mathbf{F}_{(C,\sigma)})=\mathbf{K}_{(C\setminus((C+1)\cup\{1\}),\sigma)}$. Let $B$ be the unique peak set such that $\odd(B)=\oddd(A)$. Then, by Lemma \ref{lem:M}, we have
	\begin{align*}
		\Theta_{\NCQSym}(\mathbf{M}_{(A,\sigma)})&=\sum_{A\subseteq C\subseteq[n-1]}(-1)^{|C|-|A|}\mathbf{K}_{(C\setminus((C+1)\cup\{1\}),\sigma)}\\
		&=\begin{cases}
			(-1)^{n-1-|B|-|A|}\boldeta_{(B,\sigma)} & \text{ if }n-\max(A)\text{ is odd,}\\
			0 & \text{ otherwise.}
		\end{cases}
	\end{align*}
	
\end{proof}

\begin{Remark}\label{rem:comm-m}
	A commutative version of Proposition \ref{prop:M} appears in \cite[Theorem 2.4]{H07}. So this result is an extension of \cite[Theorem 2.4]{H07}.
\end{Remark}

\begin{Theorem}\label{thm:theta} 
Let $(P,\gamma)$ be a labelled poset with ground set $X$ and $|X|=n$. Let $\sigma:X\to[n]$ be a bijection. 
	We have that:
	\begin{enumerate}
		\item $\Theta_{\NCQSym}(\mathbf{F}_{(A,\sigma)})=\mathbf{K}_{(A\setminus((A+1)\cup\{1\}),\sigma)}$ for all $A\subseteq[n-1]$, $\sigma\in\mathfrak{S}_n$ ($(A,\sigma)$ is not necessarily standard).
		\item $\Theta_{\NCQSym}(\mathbf{F}_{(P,\gamma,\sigma)})=\mathbf{K}_{(P,\gamma,\sigma)}$.
		\item The following diagram commutes.
		\begin{center}
			\begin{tikzpicture}
				\node(TV) at (0,0){$\NCQSym$};
				\node(Sym) at (4,0){$\QSym$};
				\node(OV) at (0,-2){$\NCPeak$};
				\node(Omega) at (4,-2){$\Pi$};
				
				\draw[thick,->]  (TV)->(Sym);
				\draw[thick,->]  (OV)->(Omega);
				\draw[thick,->]   (TV)->(OV);
				\draw[thick,->]   (Sym)->(Omega);
				
				\node(Phi) at (2,0.2){$\rho$};
				\node(Phi) at (2,-1.8){$\rho$};
				\node(thetaV) at (-0.9,-1){$\Theta_{\NCQSym}$};
				\node(thetaV) at (4.7,-1){$\Theta_\QSym$};
			\end{tikzpicture} 
		\end{center}
	\end{enumerate}
\end{Theorem}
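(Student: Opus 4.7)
My plan is to establish (1) first, then deduce (2) from (1) using the fundamental-theorem analogues, and finally derive (3) by applying the commuting-variables map $\rho$.

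For (1), the case where $(A,\sigma)$ is standard is immediate from the definition (\ref{def:theta-non}). For non-standard $(A,\sigma)$, I would first expand via Proposition \ref{prop:FtoM} (whose proof goes through for any pair, not just standard ones) to write $\mathbf{F}_{(A,\sigma)} = \sum_{A\subseteq C\subseteq [n-1]} \mathbf{M}_{(C,\sigma)}$, and then apply $\Theta_{\NCQSym}$ termwise. This requires upgrading Proposition \ref{prop:M} to non-standard pairs. The key observation is that $\oddd(C) \subseteq C$, so the blocks of $\SetComp(\oddd(C), -)$ are unions of blocks of $\SetComp(C, -)$; hence sorting within the $C$-blocks to obtain $\std(C,\sigma)$ preserves $\SetComp(\oddd(C), -)$, and Lemma \ref{lem:eq-eta} forces $\boldeta_{(B_C,\std(C,\sigma))} = \boldeta_{(B_C,\sigma)}$, where $B_C$ is the peak set with $\odd(B_C) = \oddd(C)$. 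This yields $\Theta_{\NCQSym}(\mathbf{M}_{(C,\sigma)}) = (-1)^{n-1-|B_C|-|C|}\boldeta_{(B_C,\sigma)}$ when $n-\max(C)$ is odd and $0$ otherwise, for every pair $(C,\sigma)$.

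To finish (1), I would identify the resulting sum $\sum_{A\subseteq C,\, n-\max(C)\text{ odd}} (-1)^{n-1-|B_C|-|C|}\boldeta_{(B_C,\sigma)}$ with $\mathbf{K}_{(A\setminus((A+1)\cup\{1\}),\sigma)}$. Substituting Lemma \ref{lem:N} on the left and Proposition \ref{prop:KtoM} on the right, both sides become linear combinations of the $\mathbf{M}_{(A',\sigma)}$ whose scalar coefficients depend only on $A$ and $A'$. Choosing $\sigma$ equal to the identity permutation makes $(A,\sigma)$ standard for every $A$ and renders $\{\mathbf{M}_{(A',\sigma)}\}_{A'\subseteq[n-1]}$ linearly independent; the definition of $\Theta_{\NCQSym}$ then pins down the $\sigma$-free coefficient identity, which transports to arbitrary $\sigma$.

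Part (2) follows at once: Proposition \ref{prop:fund1} gives $\mathbf{F}_{(P,\gamma,\sigma)} = \sum_{w\in\mathcal{L}(P)}\mathbf{F}_{(w,\gamma,\sigma)}$, and applying (1) to each chain summand --- together with the chain identity $\peak(w,\gamma) = \Des(w,\gamma)\setminus((\Des(w,\gamma)+1)\cup\{1\})$ --- produces $\sum_w\mathbf{K}_{(w,\gamma,\sigma)}$, which equals $\mathbf{K}_{(P,\gamma,\sigma)}$ by Proposition \ref{prop:fund2}. For (3), the map $\rho$ sends $\mathbf{F}_{(P,\gamma,\sigma)}$ to $F_{(P,\gamma)}$ and $\mathbf{K}_{(P,\gamma,\sigma)}$ to $K_{(P,\gamma)}$; combining (2) with the commutative identity $\Theta_{\QSym}(F_{(P,\gamma)}) = K_{(P,\gamma)}$ gives $\rho\circ\Theta_{\NCQSym} = \Theta_{\QSym}\circ\rho$ on a spanning set of $\NCQSym$, hence everywhere. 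The main obstacle throughout is the non-standard case of (1), where the delicate point is isolating the $\sigma$-independent combinatorial identity from the standard-pair computation.
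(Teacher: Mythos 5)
Your proof is correct, and part (1) is handled by a genuinely different mechanism than the paper's. The paper proves (1) by downward induction on $|A|$: the base case $|A|=n-1$ is automatically standard, and the inductive step peels off the top term of the identity $\mathbf{M}_{(A,\sigma)}=\mathbf{M}_{(A,\sigma')}$ expanded in the $\mathbf{F}$-basis, using Lemma \ref{lem:M} together with Lemma \ref{lem:eq-eta} to match the two alternating $\mathbf{K}$-sums for $\sigma$ and $\sigma'$. You instead upgrade Proposition \ref{prop:M} to arbitrary pairs via the inclusion $\oddd(C)\subseteq C$ and Lemma \ref{lem:eq-eta}, then observe that both sides of (1) expand in the $\mathbf{M}$-basis with coefficients independent of $\sigma$; setting $\sigma=\mathrm{id}$ (where every pair $(C,\mathrm{id})$ is standard and the $\mathbf{M}_{(A',\mathrm{id})}$ are linearly independent) forces the coefficient identity, which then transports to arbitrary $\sigma$. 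Both routes rest on the same structural facts, namely Lemma \ref{lem:eq-eta} and the $\mathbf{M}$-expansion formulas (Lemma \ref{lem:N} and Proposition \ref{prop:KtoM}), but yours avoids the telescoping induction at the cost of a coefficient-comparison step, which is arguably more transparent though it needs the $\sigma$-independence to be stated carefully. Parts (2) and (3) are argued exactly as in the paper; the chain identity $\peak(w,\gamma)=\Des(w,\gamma)\setminus((\Des(w,\gamma)+1)\cup\{1\})$ you invoke in (2) is correct and worth keeping explicit, since it is what connects the defining formula for $\Theta_{\NCQSym}$ to the peak set.
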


\begin{proof} Recall that 
$\Theta_{\NCQSym}$ is the linear map
	$$\begin{array}{cccc}
		\Theta_\NCQSym: & \NCQSym & \to & \NCPeak\\
		& \mathbf{F}_{(A,\sigma)} & \mapsto & \mathbf{K}_{(A\setminus((A+1)\cup\{1\}),\sigma)}
	\end{array}$$ where $(A,\sigma)$ is standard.

	(1) We prove by induction on $|A|$. If $|A|=n-1$, then $(A,\sigma)$ must be standard and we are done. 
	
	Assume the statement is true for all $n-1 \geq |A|>k$. We want to show that for any $\sigma\in\mathfrak{S}_n$ and $A\subseteq[n-1]$ with $|A|=k$, $\Theta_{\NCQSym}( \mathbf{F}_{(A,\sigma)})= \mathbf{K}_{(A\setminus((A+1)\cup\{1\}),\sigma)}$. 
	Let $\mathrm{std}(A,\sigma)=(A,\sigma')$. By inclusion-exclusion, we have
	$$\sum_{A\subseteq C\subseteq [n-1]}(-1)^{|C|-|A|}\mathbf{F}_{(C,\sigma)}=\mathbf{M}_{(A,\sigma)}=\mathbf{M}_{(A,\sigma')}=\sum_{A\subseteq C\subseteq [n-1]}(-1)^{|C|-|A|}\mathbf{F}_{(C,\sigma')}.$$

Let $B$ be the peak set such that $\odd(B)=\oddd(A)$. Since $\oddd(A)\subseteq A$, we have $\std(\odd(A),\sigma)=\std(\odd(A),\sigma')$. Therefore,  $\std(\odd(B),\sigma)=\std(\odd(B),\sigma')$, and so by Lemma \ref{lem:eq-eta}, we have $\boldeta_{(B,\sigma)}=\boldeta_{(B,\sigma')}$. Then, by Lemma \ref{lem:M},
	$$\sum_{A\subseteq C\subseteq [n-1]}(-1)^{|C|-|A|}\mathbf{K}_{(C\setminus((C+1)\cup\{1\}),\sigma)}=\sum_{A\subseteq C\subseteq [n-1]}(-1)^{|C|-|A|}\mathbf{K}_{(C\setminus((C+1)\cup\{1\}),\sigma')}.$$
	Therefore, by induction hypothesis, we have
	\begin{align*}
		\Theta_{\NCQSym}(\mathbf{F}_{(A,\sigma)})&=\Theta_{\NCQSym}\left( \sum_{A\subseteq C\subseteq [n-1]}(-1)^{|C|-|A|}\mathbf{F}_{(C,\sigma')} - \sum_{A\subsetneq C\subseteq [n-1]}(-1)^{|C|-|A|}\mathbf{F}_{(C,\sigma)}\right)\\
		&=\sum_{A\subseteq C\subseteq[n-1]}(-1)^{|C|-|A|}\mathbf{K}_{(C\setminus((C+1)\cup\{1\}),\sigma')}-\sum_{A\subsetneq C\subseteq[n-1]}(-1)^{|C|-|A|}\mathbf{K}_{(C\setminus((C+1)\cup\{1\}),\sigma)}\\
		&=\mathbf{K}_{(A\setminus((A+1)\cup\{1\}),\sigma)}
	\end{align*}
	as desired.
	
	Part (2) follows from (1) by Propositions \ref{prop:fund1} and \ref{prop:fund2}, and (3) follows from the fact that the commutation map $\rho$ sends $\mathbf{F}_{(P,\gamma,\sigma)}$ and $\mathbf{K}_{(P,\gamma,\sigma)}$ to $F_{(P,\gamma)}$ and $K_{(P,\gamma)}$, respectively.
\end{proof}

\begin{Corollary}\label{cor:theta-gcf}
We have that
$$
\begin{array}{cccc}
\Theta_{\NCQSym}:& \NCQSym & \rightarrow & \NCPeak\\
& \scrY_\bfG& \mapsto& \scrF_\bfG.
\end{array} 
$$
Moreover, the following diagram commutes.

\begin{center}
\begin{tikzpicture}
	\node(NCSym) at (4,0){$\mathsf{\NCQSym}$};
	\node(NCQSym) at (8,0){$\QSym$};
	\node(NCGamma) at (4,-4){$\NCPeak$};
	\node(NCPeak) at (8,-4){$\Pi$};
	
	\draw[thick,->]  (NCSym)->(NCQSym);
	\draw[thick,->]  (NCGamma)->(NCPeak);
	\draw[thick,->]   (NCSym)->(NCGamma);
	\draw[thick,->]   (NCQSym)->(NCPeak);
	
	\node(iota1) at (6,0.2){$\rho$};
	\node(iota2) at (6,-3.8){$\rho$};
	\node(ThetaNCSym) at (3.15,-2){$\Theta_{\NCQSym}$};
	\node(ThetaNCQSym) at (8.65,-2){$\Theta_\QSym$};
	
	\node(yg) at (4.5,-1){$\scrY_\bfG$};
	\node(fg) at (4.5,-3){$\scrF_\bfG$};
	
		\node(xg) at (7.5,-1){$\scrX_G$};
	\node(eg) at (7.5,-3){$\scrE_G$};

	
	\draw[thick,|->] (yg)--(fg);
	\draw[thick,|->] (xg)--(eg);
	\draw[thick,|->] (yg)--(xg);
	\draw[thick,|->] (fg)--(eg);
\end{tikzpicture} 
\end{center}
\end{Corollary}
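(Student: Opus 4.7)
The plan is to prove Corollary \ref{cor:theta-gcf} by transporting Theorem \ref{thm:theta}(2) and (3) into the language of chromatic functions, using Lemma \ref{Def:PosetGraph} together with its enriched analog from Section \ref{subsec:peak-n}. These lemmas identify $\bfF_{(P,\gamma,\sigma)}$ with $\scrY_\bfG$ and $\bfK_{(P,\gamma,\sigma)}$ with $\scrF_\bfG$ whenever $\bfG$ is the labelled edge-coloured digraph associated to $(P,\gamma,\sigma)$; Theorem \ref{thm:theta}(2) then immediately yields $\Theta_{\NCQSym}(\scrY_\bfG) = \scrF_\bfG$ for every such $\bfG$.

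To obtain the identity for an arbitrary labelled edge-coloured digraph, the plan is to decompose $\scrY_\bfG$ and $\scrF_\bfG$ into sums over linear extensions of the underlying digraph, mirroring Propositions \ref{prop:fund1} and \ref{prop:fund2}. Specifically, by grouping proper (respectively, enriched) colourings of $\bfG$ according to the sorted order they induce on vertices---with ties broken by a rule compatible with $\bfG$'s edge colouring---one expects parallel expansions $\scrY_\bfG = \sum_{w} \bfF_{(D_w,\sigma_w)}$ and $\scrF_\bfG = \sum_{w} \bfK_{(D_w',\sigma_w)}$ indexed by a common set of linear extensions $w$ of the underlying digraph, with $D_w$, $D_w'$, and $\sigma_w$ determined combinatorially by $w$ and $\bfG$. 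Applying Theorem \ref{thm:theta}(1) term by term would then complete the verification. An alternative is to expand $\scrY_\bfG$ directly in the fundamental basis via Corollary \ref{cor:Fbasis} and invoke Theorem \ref{thm:theta}(1), cross-checking against the $\boldeta$-expansion of $\scrF_\bfG$ using Lemma \ref{lem:N}.

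The commuting diagram is obtained by combining Theorem \ref{thm:theta}(3)---which provides $\Theta_\QSym \circ \rho = \rho \circ \Theta_{\NCQSym}$ for the commutation map $\rho$---with the elementary identities $\rho(\scrY_\bfG) = \scrX_G$ and $\rho(\scrF_\bfG) = \scrE_G$, where $G$ is the underlying unlabelled digraph. These follow straight from the definitions: commuting the noncommuting variables $\bx_i$ sends each monomial $\bx_{\kappa(1)}\cdots\bx_{\kappa(n)}$ to its commutative counterpart $\prod_{v\in V(G)} x_{\kappa(v)}$, so the defining sums for $\scrY_\bfG$ and $\scrF_\bfG$ collapse to the corresponding sums defining $\scrX_G$ and $\scrE_G$. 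The commutativity of the four faces of the square then follows by chasing a generic $\scrY_\bfG$ around both paths.

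The main technical point is the second step: not every labelled edge-coloured digraph $\bfG$ arises as the one associated to a single triple $(P,\gamma,\sigma)$, because the edge colouring may impose cyclic constraints incompatible with any bijection $\gamma$. Hence one cannot appeal directly to Lemma \ref{Def:PosetGraph} in full generality; instead one must establish the linear-extension decomposition above, handling carefully the tie-breaking among vertices receiving equal colour. Once this decomposition is in place, the remainder of the proof is a routine term-by-term application of Theorem \ref{thm:theta}(1), and the commuting diagram follows by the observation above together with Theorem \ref{thm:theta}(3).
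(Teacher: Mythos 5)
Your core route is the same as the paper's implicit one: apply Theorem~\ref{thm:theta}(2) together with Lemma~\ref{Def:PosetGraph} and its enriched analogue (the unnumbered lemma after Example~\ref{ex:peak-fun-n}) to get $\Theta_{\NCQSym}(\scrY_\bfG)=\scrF_\bfG$ whenever $\bfG$ is the labelled edge-coloured digraph associated to some $(P,\gamma,\sigma)$, and obtain the square from Theorem~\ref{thm:theta}(3) plus the elementary identities $\rho(\scrY_\bfG)=\scrX_G$ and $\rho(\scrF_\bfG)=\scrE_G$. That part is fine and matches the paper.

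Your flagged concern---that not every labelled edge-coloured digraph arises from a triple $(P,\gamma,\sigma)$---is genuine: take the diamond with covers $a<b$, $a<c$, $b<d$, $c<d$ and edges $a\Rightarrow b$, $a\to c$, $b\Rightarrow d$, $c\to d$; any candidate $\gamma$ would need $\gamma(c)<\gamma(a)<\gamma(b)<\gamma(d)<\gamma(c)$, which is impossible. But the patch you sketch does not go through. For such a $\bfG$ the proper colourings do not decompose by linear extensions into fundamental-function-shaped pieces: for the diamond above, the colourings whose sorted order is $a,b,c,d$ are those with $\kappa(a)\le\kappa(b)\le\kappa(c)<\kappa(d)$ subject additionally to $\kappa(a)<\kappa(c)$, i.e.\ excluding the case $\kappa(a)=\kappa(b)=\kappa(c)$; this is a disjunction (``$\kappa(a)<\kappa(b)$ or $\kappa(b)<\kappa(c)$''), not a conjunction of chain inequalities, so it is not $\mathcal{A}(w,\gamma)$ for any chain labelling $\gamma$. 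Hence there is no term-by-term application of Theorem~\ref{thm:theta}(1) to fall back on, and the ``carefully handled tie-breaking'' you defer to is exactly the obstruction, not a technicality. The clean reading of the corollary is that its scope is the $\bfG$'s that arise from Lemma~\ref{Def:PosetGraph}---which covers $Q_S$, $P_S$, their solid sums, disjoint unions, and $\sigma\circ Q_\pi$, i.e.\ everything the paper actually uses---and on that domain the corollary is an immediate restatement of Theorem~\ref{thm:theta}(2)--(3) with no further work required; you should either say this explicitly or produce an argument that handles the non-realizable digraphs, which your current sketch does not.
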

Comparing the product and coproduct formulas for generalized chromatic functions and enriched generalized chromatic functions in noncommuting variables in Section \ref{Sec:NCPeak} yield the following result.
\begin{Theorem}\label{theta-hopf}
	The labelled descent-to-peak map $\Theta_{\NCQSym}$ is a Hopf morphism. Hence, $\NCPeak$ is both a Hopf subalgebra and a quotient of $\NCQSym$.
\end{Theorem}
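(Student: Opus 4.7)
The plan is to establish that $\Theta_{\NCQSym}$ respects the product and coproduct by working on the spanning set $\{\scrY_\bfG\}$, using Corollary~\ref{cor:theta-gcf} which identifies $\Theta_{\NCQSym}(\scrY_\bfG) = \scrF_\bfG$, and then to read off the Hopf subalgebra and quotient statements from surjectivity onto $\NCPeak$.

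For multiplicativity, I would apply the product formulas from Section~\ref{Sec:NCPeak}, which give $\scrY_\bfG \scrY_\bfH = \scrY_{\bfG|\bfH}$ and $\scrF_\bfG \scrF_\bfH = \scrF_{\bfG|\bfH}$ with the \emph{same} labelled digraph $\bfG|\bfH$ appearing on the right-hand side in both cases. Hence
$$\Theta_{\NCQSym}(\scrY_\bfG \scrY_\bfH) = \Theta_{\NCQSym}(\scrY_{\bfG|\bfH}) = \scrF_{\bfG|\bfH} = \scrF_\bfG\, \scrF_\bfH = \Theta_{\NCQSym}(\scrY_\bfG)\, \Theta_{\NCQSym}(\scrY_\bfH),$$
and multiplicativity extends by linearity. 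For comultiplicativity, the key observation is that the formulas
$$\Delta(\scrY_\bfG) = \sum_{\bfF} \scrY_{\std(\bfG|_{V(\bfG)\setminus V(\bfF)})} \otimes \scrY_{\std(\bfF)}, \qquad \Delta(\scrF_\bfG) = \sum_{\bfF} \scrF_{\std(\bfG|_{V(\bfG)\setminus V(\bfF)})} \otimes \scrF_{\std(\bfF)}$$
are indexed by the \emph{same} set of $\{\rightarrow,\Rightarrow\}$-induced subdigraphs $\bfF$ of $\bfG$, so applying $\Theta_{\NCQSym}\otimes \Theta_{\NCQSym}$ termwise to $\Delta(\scrY_\bfG)$ produces exactly $\Delta(\scrF_\bfG)=\Delta(\Theta_{\NCQSym}(\scrY_\bfG))$. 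Compatibility with the unit and counit is immediate since both correspond to the empty labelled edge-coloured digraph, and since $\NCQSym$ and $\NCPeak$ are graded connected bialgebras, compatibility with the antipode follows automatically.

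For the second sentence of the theorem, the very same two formulas show closure of $\NCPeak$ under the product and coproduct inherited from $\NCQSym$: $\scrF_\bfG\, \scrF_\bfH \in \NCPeak$ by the product formula, and $\Delta(\scrF_\bfG) \in \NCPeak \otimes \NCPeak$ by the coproduct formula. Hence $\NCPeak$ is a Hopf subalgebra of $\NCQSym$. Surjectivity of $\Theta_{\NCQSym}$ onto $\NCPeak$ is immediate because every element of the spanning set $\{\scrF_\bfG\}$ lies in the image, so $\NCPeak \cong \NCQSym / \ker(\Theta_{\NCQSym})$ as Hopf algebras, realizing $\NCPeak$ as a quotient as well.

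No serious obstacle appears in this argument: the genuine combinatorial work was done earlier when deriving the parallel product and coproduct formulas for $\scrY$ and $\scrF$ in Section~\ref{Sec:NCPeak}. The only point requiring mild care is to confirm that the same indexing set of $\{\rightarrow,\Rightarrow\}$-induced subdigraphs governs both coproduct formulas; this is guaranteed by the fact that the coproduct of $\NCQSym$ is defined by splitting the alphabet $\bx<\by$, and the proper/enriched colouring conditions translate identically across this split since both constraints are local to each edge and only involve the order of the images at the two endpoints.
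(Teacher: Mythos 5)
Your argument is correct and follows essentially the same route as the paper, which proves Theorem \ref{theta-hopf} by directly comparing the parallel product and coproduct formulas for $\scrY_\bfG$ and $\scrF_\bfG$ established in Section \ref{Sec:NCPeak} (via $\Theta_{\NCQSym}(\scrY_\bfG)=\scrF_\bfG$ from Corollary \ref{cor:theta-gcf}). Your version is simply more explicit, spelling out the term-by-term matching, the unit/counit/antipode compatibility, and the resulting subalgebra and quotient structure, all of which the paper leaves implicit.
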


\section{The generalized Dehn-Sommerville equation}\label{sec:D-S}

In this section, we study the relation between the functions that satisfy the generalized Dehn-Sommerville equation and the elements of the peak algebra in noncommuting variables. \\

A function $f$ from the set of $0\text{-}1$ sequences to $\mathbb{Q}$ 
satisfies \emph{generalized Dehn-Sommerville equation} if for all $0\text{-}1$ sequences $a=a_1a_2\cdots a_{\ell(a)-1}1$ or $()$, $b=1b_2b_3\cdots b_{\ell(b)}$ or $()$, and $j\geq 1$, 
$$\sum_{i=1}^j(-1)^{i-1}f({a0^{i-1}10^{j-i}b})=\begin{cases}
2f({a 0^j b})& \text{if $j$ is odd,}\\
0& \text{otherwise.}
\end{cases}$$


Let $P$ be an Eulerian poset of rank $n+1$ with the minimum $\hat{0}$ and maximum $\hat{1}$. Here we always assume that a chain $\mathcal{C}$ of length $k$ in $P$ does not contain $\hat{0}$ and $\hat{1}$. For such a chain, define a $0\text{-}1$ sequence ${\rm seq}_\mathcal{C}$ of length $k$ such that 
the $i$th component of ${\rm seq}_\mathcal{C}$  is $1$ if and only if there exists an element in $\mathcal{C}$ with rank $i$ in $P$. A \emph{flag $f$-vector} of $P$ is function $f$ from the set of  $0\text{-}1$ sequences to $\mathbb{Q}$ such that   

$$f(a) = |\{\text{Chains $\mathcal{C}$ in $P$ with ${\rm seq}_{\mathcal{C}}=a$}\}|.$$

To each $0\text{-}1$ sequence $a=a_1a_2\cdots a_{n-1}$ correspond a subset $A_a$ of $[n-1]$ such that $i$ is in $A_a$ if and only if $a_i=1$. 
For any function $$f:\bigsqcup_{n\geq 0}\{A: A\subseteq[n-1]\}\to \mathbb{Q},$$ define 
$$f_{0\text{-}1}: \{0\text{-}1~\text{sequences}\} \rightarrow \mathbb{Q}$$ such that 
$$f_{0\text{-}1}(a) = f(A_a).$$

\begin{Theorem}\label{thm:D-S}
 Let $$f:\bigsqcup_{n\geq 0}\{A: A\subseteq[n-1]\}\to \mathbb{Q}.$$ Then the following are equivalent.
 \begin{enumerate}
  \item $f_{0\text{-}1}$ satisfies generalized Dehn-Sommerville equation.
  \item $f_{0\text{-}1}$ is a linear combination of flag $f$-vectors of Eulerian posets.
  \item $\sum f(A)M_A$ is in $\Pi$.
  \item For each $n$, fix $\sigma_n\in\mathfrak{S}_n$. Then $\sum_n\sum_{A\subseteq[n-1]}f(A)\bfM_{(A,\sigma_n)}$ is in $\NCPeak$.
 \end{enumerate}
\end{Theorem}

\begin{proof}
The equivalence of (1) and (2) is established in \cite{BB85} (see also \cite[Theorem 2.2]{B14}), and that of (1) and (3) in \cite[Proposition 1.3]{B03}. If $$\sum_n\sum_{A\subseteq[n-1]}f(A)\bfM_{(A,\sigma_n)} \in \NCPeak,$$ then  commuting the variables we have, $\sum f(A)M_A\in \Pi.$  

Now let $\sum f(A)M_A$ in $\Pi$. Since $\Pi$ is a graded vector space, we have 
$$\sum f(A)M_A=\sum_n \sum_{A\subseteq [n-1]} f(A)M_A.$$

 Now for each $\sigma_n$, consider the linear map 
 $$\begin{array}{cccc}L_n: &\QSym_n &\rightarrow & \NCQSym_n\\
 & M_A & \mapsto & \bfM_{(A,\sigma_n)}.
 \end{array}$$
 We will show that $L_n(\Pi_n)\subseteq \NCPeak_n$. As a result of \cite[Proposition 2.2]{S97}, $\{K_B: B~\text{is a peak set}\}$ is a basis for $\Pi$ where 
 $$K_B=\sum_{B\subseteq A\cup(A-1)}2^{|A|+1}M_A.$$ Note that 
 $$L_n(K_B)=\sum_{B\subseteq A\cup(A-1)}2^{|A|+1}\bfM_{(A,\sigma_n)}= \bfK_{(B,\sigma_n)} \in \NCPeak_n.$$ Therefore, 
 $$L_n(\Pi_n)\subseteq \NCPeak_n.$$
 Consequently, if $$\sum_{A\subseteq[n-1]}f(A)M_A\in\Pi_n,$$ then $$\sum_{A\subseteq[n-1]}f(A)\bfM_{(A,\sigma_n)} =L_n\left(\sum_{A\subseteq[n-1]}f(A)M_A\right)  \in \NCPeak_n,$$ and so  
 $$\sum_n\sum_{A\subseteq[n-1]}f(A)\bfM_{(A,\sigma_n)} \in \NCPeak.$$
\end{proof}

\section{Schur $Q$-functions in noncommuting variables}\label{Sec:NCSym} In this section we define the space of Schur $Q$-functions in noncommuting variables, which is a Hopf subalgebra of symmetric functions in noncommuting variables $\NCSym$. We will follow notation from \cite{R04}.

\subsection{Symmetric functions in noncommuting variables}\label{sec:ncsym} For any set composition $\phi$, define $\pi(\phi)$ to be the set partition with the same blacks as $\phi$. For example, 
$$\pi(256|14|389)=256/389/14.$$
Given two set partitions $\pi=\pi_1/\pi_2/\cdots/\pi_{\ell(\pi)}\vdash [n]$ and $\tau=\tau_1/\tau_2/\cdots/\tau_{\ell(\tau)}\vdash [m]$, the \emph{shifted concatenation} of $\pi$ and $\tau$ is  
$$\pi|\tau=\pi_1/\pi_2/\cdots/\pi_{\ell(\pi)}/\tau_1+n/\tau_2+n/\cdots/\tau_{\ell(\tau)}+n.$$  

Recall that $\{\pi\vdash[n]\}$ forms a lattice defined as $\pi\leq\tau$ if all blocks of $\tau$ is contained in some block of $\pi$. 
Throughout this section, we use $\mu(\pi,\tau)$ to denote the M\"{o}bius function of $\pi$ and $\tau$ in this lattice. 
Let $\hat{0}$ and $\hat{1}$ denote the minimal element $12\cdots n$ and the maximal element $1/2/\cdots/n$, respectively. It is well-known that $\mu(\hat{0},\pi)=\prod_i(-1)^{\lambda_i-1}(\lambda_i-1)!$ where $(\lambda_1,\dots,\lambda_\ell)=\lambda(\pi)$.

For any set partition $\pi$, define 
$$\bfm_\pi=\sum_{\phi\vDash [n]: \atop \pi(\pi)=\pi} \bfM_\phi.$$
Then the \emph{Hopf algebra of symmetric functions in noncommuting variables} is 
$$\NCSym=\bigoplus_{n\geq 0} \NCSym_n$$ where 
$$\NCSym_n=\mathbb{Q}\text{-span}\{ \bfm_\pi: \pi\vdash [n]\}.$$
The set $\{\bfm_\pi\}$ is called the \emph{monomial basis} of $\NCSym$. Moreover, define 
$$\bfp_\pi=\sum_{\tau\leq \pi} \bfm_\tau.$$ The set $\{\bfp_\pi\}$ is a basis for $\NCSym$ and it is called the  \emph{power sum basis} of $\NCSym$. 


There is also a fundamental involution 
$$\begin{array}{cccc}
\omega:&\NCSym&\to&\NCSym\\
& \bfh_\pi &\mapsto & \bfe_\pi.
\end{array}$$
Moreover, 
$$\omega(\bfp_\pi)=(-1)^{n-\ell(\pi)}\bfp_\pi.$$
We will also use the following result.

\begin{Theorem}\label{thm:htop}{\cite[Theorems 3.4 and 3.5]{R04}} 
	For any set partition $\pi$, we have the following. 
	\begin{enumerate} 
		\item $\mathbf{e}_\pi=\displaystyle\sum_{\tau\leq \pi}\mu(\hat{0},\tau)\mathbf{p}_{\tau}$,
		\item  $\mathbf{h}_\pi=\displaystyle\sum_{\tau\leq \pi}|\mu(\hat{0},\tau)|\mathbf{p}_{\tau}$.
	\end{enumerate} 
\end{Theorem}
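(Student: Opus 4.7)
The plan is to prove part (1) combinatorially and deduce part (2) via the involution $\omega$.

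For part (1), the key step is to expand $\mathbf{e}_\pi$ in the monomial basis $\{\mathbf{m}_\tau\}$ using the definition $\mathbf{e}_\pi = \sum_{\sigma \in \mathfrak{S}_\pi} \mathcal{Y}_{\sigma \circ P_\pi}$. A proper colouring of $\sigma \circ P_\pi$ is a map $\kappa\colon[n]\to\mathbb{N}$ whose values are strictly increasing along the $\sigma_i$-order on each block $\pi_i$. Summing over $\sigma \in \mathfrak{S}_{\pi_1}\times\cdots\times\mathfrak{S}_{\pi_{\ell(\pi)}}$ collects exactly the monomials $\mathbf{x}_{\kappa(1)}\cdots\mathbf{x}_{\kappa(n)}$ for which $\kappa$ is injective on each block of $\pi$. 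Grouping by the equality set-partition $\tau$ of $\kappa$ gives $\mathbf{e}_\pi = \sum \mathbf{m}_\tau$, where $\tau$ runs over set partitions \emph{transversal} to $\pi$, meaning every block of $\tau$ meets every block of $\pi$ in at most one element. Substituting the M\"{o}bius inversion $\mathbf{m}_\tau = \sum_{\rho \leq \tau} \mu(\rho,\tau)\mathbf{p}_\rho$ of the given identity $\mathbf{p}_\pi = \sum_{\tau\leq\pi}\mathbf{m}_\tau$ and collecting the coefficient of each $\mathbf{p}_\rho$ produces the inner sum $\sum_{\tau \geq \rho,\ \tau \text{ trans.\ to }\pi}\mu(\rho,\tau)$. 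Using the multiplicative structure of $\mu$ on the partition lattice together with a sign-reversing involution that annihilates contributions from $\rho \not\leq \pi$, this inner sum collapses to $\mu(\hat{0},\rho)$ when $\rho \leq \pi$ and vanishes otherwise.

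For part (2), applying $\omega$ to (1), together with $\omega(\mathbf{e}_\pi)=\mathbf{h}_\pi$ and $\omega(\mathbf{p}_\tau)=(-1)^{n-\ell(\tau)}\mathbf{p}_\tau$, yields
$$\mathbf{h}_\pi = \sum_{\tau \leq \pi} (-1)^{n-\ell(\tau)} \mu(\hat{0},\tau) \mathbf{p}_\tau.$$
Since $\mu(\hat{0},\tau) = \prod_i (-1)^{\lambda_i-1}(\lambda_i-1)!$ has sign $(-1)^{\sum_i(\lambda_i-1)} = (-1)^{n-\ell(\tau)}$, the product $(-1)^{n-\ell(\tau)} \mu(\hat{0},\tau)$ equals $|\mu(\hat{0},\tau)|$, giving (2).

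The main obstacle is the M\"{o}bius-sum identity in part (1), namely showing that $\sum_{\tau\geq\rho,\ \tau\text{ trans.\ to }\pi}\mu(\rho,\tau)$ equals $\mu(\hat{0},\rho)$ when $\rho \leq \pi$ and $0$ otherwise. The transversality condition is not a pure lattice relation with $\pi$, so one cannot directly invoke a standard interval formula; I expect the cleanest route to be an explicit sign-reversing involution pairing incompatible refinements of $\rho$ in the vanishing case, together with the standard product decomposition of the partition lattice in the non-vanishing case.
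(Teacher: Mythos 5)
This theorem is cited in the paper from Rosas--Sagan \cite{R04} and is not proved there, so there is no internal proof to compare against; I will evaluate the proposal on its own terms. Your route is the natural one and is essentially correct: the expansion $\mathbf{e}_\pi=\sum_{\tau\,\text{transversal to}\,\pi}\mathbf{m}_\tau$ (where transversal means every block of $\tau$ meets every block of $\pi$ in at most one element, equivalently the common refinement of $\tau$ and $\pi$ is the all-singletons partition) follows from $\mathbf{e}_\pi=\sum_{\sigma\in\frakS_\pi}\scrY_{\sigma\circ P_\pi}$ exactly as you describe, the substitution of $\mathbf{m}_\tau=\sum_{\rho\le\tau}\mu(\rho,\tau)\mathbf{p}_\rho$ is fine, and the deduction of part (2) from part (1) via $\omega$ and the sign of $\mu(\hat 0,\tau)$ is correct.

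The one real gap is the M\"obius-sum identity, which you assert and sketch a plan for but do not prove. You need not build a bespoke sign-reversing involution: the statement is precisely the dual form of Weisner's theorem applied inside the interval $L=[\hat 0,\rho]$ of the partition lattice with the element $a=\pi\wedge\rho$. For $\tau\le\rho$ one has $\tau\wedge\pi=\tau\wedge a$, so the transversality condition becomes $\tau\wedge a=\hat 0$ inside $L$. If $\rho\not\le\pi$ then $a\ne\hat 1_L=\rho$, and dual Weisner gives $\sum_{\tau\in L,\ \tau\wedge a=\hat 0}\mu(\tau,\hat 1_L)=0$; if $\rho\le\pi$ then $a=\rho$ and the only surviving term is $\tau=\hat 0$, giving $\mu(\hat 0,\rho)$. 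This handles both cases uniformly and avoids having to reason directly about the transversality condition (which, as you correctly worry, is not itself an interval condition). You should also beware that the paper's stated convention (it declares $12\cdots n$ to be the minimal element and all singletons to be maximal, yet states the M\"obius formula $\mu(\hat 0,\tau)=\prod_i(-1)^{\lambda_i-1}(\lambda_i-1)!$ in terms of block sizes of $\tau$) is internally inconsistent; your derivation is correct if read in the convention actually used in \cite{R04}, where $\hat 0$ is the partition into singletons and $\le$ is ordinary refinement.
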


\subsection{Theta map for symmetric functions in noncommuting variables}\label{subsec:theta-ncsym}
	Define 
	$$\Theta_{\NCSym}=\Theta_{\NCQSym}|_{\NCSym}.$$

\begin{Theorem}\label{thm:NCSym}
	Suppose $\pi=\pi_1/\pi_2/\cdots/\pi_{\ell(\pi)}\vdash[n]$ is a set partition with $n\geq 1$, then
	$$\Theta_{\NCSym}(\mathbf{p}_\pi)=\begin{cases}
		2^{\ell(\pi)}\mathbf{p}_\pi & \text{ if }\pi\text{ is odd set partition,}\\
		0 & \text{ otherwise.}
	\end{cases}$$
	Consequently, $\Theta_\NCSym$ commutes with $\omega$, that is, the following diagram is commutative.
	\begin{center}
		\begin{tikzpicture}
			\node(TV) at (0,0){$\NCSym$};
			\node(Sym) at (4,0){$\NCSym$};
			\node(OV) at (0,-2){$\NCGamma$};
			\node(Omega) at (4,-2){$\NCGamma$};
			
			\draw[thick,->]  (TV)->(Sym);
			\draw[thick,->]  (OV)->(Omega);
			\draw[thick,->]   (TV)->(OV);
			\draw[thick,->]   (Sym)->(Omega);
			
			\node(Phi) at (2,0.2){};
			\node(Phi) at (2,-1.8){};
			\node(thetaV) at (-0.8,-1){$\Theta_{\NCSym}$};
			\node(thetaV) at (4.8,-1){$\Theta_\NCSym$};
			
			\node(omega1) at (2,0.3){$\omega$};
			\node(omega2) at (2,-2+.3){$\omega$};
		\end{tikzpicture} 
	\end{center}
\end{Theorem}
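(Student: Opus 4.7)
The starting point is the identity \eqref{eq:theta-h-e}, $\Theta_\NCSym(\bfh_\pi)=\Theta_\NCSym(\bfe_\pi)$, together with the power-sum expansions from Theorem~\ref{thm:htop}. Subtracting gives $\bfh_\pi-\bfe_\pi = 2\sum_{\tau\leq\pi,\ n-\ell(\tau)\text{ odd}}|\mu(\hat 0,\tau)|\,\bfp_\tau$, so applying $\Theta_\NCSym$ and using \eqref{eq:theta-h-e} yields
\[
\sum_{\substack{\tau\leq\pi\\n-\ell(\tau)\text{ odd}}}|\mu(\hat 0,\tau)|\,\Theta_\NCSym(\bfp_\tau)=0 \qquad \text{for every }\pi\vdash[n].
\]
Möbius inversion on the partition lattice then forces $\Theta_\NCSym(\bfp_\tau)=0$ whenever $n-\ell(\tau)$ is odd. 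In particular, this handles every $\tau$ with an odd number of even-sized blocks, and the base case $\tau=[k]$ with $k$ even.

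Next, I would exploit that $\Theta_\NCSym$ is a Hopf morphism (Theorem~\ref{theta-hopf}) together with the product rule $\bfp_\pi\bfp_\tau=\bfp_{\pi|\tau}$ in $\NCSym$ (a direct computation from the definition $\bfp_\pi=\sum_{\tau\leq\pi}\bfm_\tau$). For $k$ odd I would verify $\Theta_\NCSym(\bfp_{[k]})=2\bfp_{[k]}$ by direct computation: since $\bfp_{[k]}=\bfM_{12\cdots k}$, Proposition~\ref{prop:M} combined with Lemma~\ref{lem:N} produces an explicit expansion whose dominant term is $2\bfM_{12\cdots k}=2\bfp_{[k]}$, with every other contribution already eliminated by the vanishing above. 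Multiplicativity then extends the formula to every shifted-concatenation set partition $\pi=[k_1]|\cdots|[k_\ell]$: the product vanishes as soon as any $k_i$ is even, and equals $2^{\ell}\bfp_\pi$ if all $k_i$ are odd.

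The main obstacle is extending from shifted concatenations to an arbitrary $\pi$, since for non-concatenation $\pi$ the element $\bfp_\pi$ is not a product of single-block power sums. For this step I plan to compare the explicit formula $\Theta_\NCSym(\bfh_\pi)=\sum_{\sigma\in\frakS_\pi}\scrF_{\sigma\circ Q_\pi}$ with the $\bfp$-basis expansion $\Theta_\NCSym(\bfh_\pi)=\sum_{\tau\leq\pi,\ n-\ell(\tau)\text{ even}}|\mu(\hat 0,\tau)|\,\Theta_\NCSym(\bfp_\tau)$ (after discarding the odd-$(n-\ell)$ terms via Step~1). The candidate answer $2^{\ell(\pi)}\bfp_\pi$ for odd $\pi$ and $0$ otherwise already satisfies Step~1 and the concatenation case, and it projects correctly to $\Sym$ under $\rho$ by the classical identity $\Theta_\Sym(p_\lambda)=2^{\ell(\lambda)}p_\lambda$ combined with the commuting diagram of Theorem~\ref{thm:theta}. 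The finer noncommuting coefficients must then be pinned down by a combinatorial analysis of $\sum_{\sigma\in\frakS_\pi}\scrF_{\sigma\circ Q_\pi}$ that isolates the $\bfp_\pi$-component directly, together with a cancellation argument in $\ker\rho\cap\NCSym$. Once the formula is proved, commutativity of the $\omega$-diagram is immediate: for odd $\pi$, $n-\ell(\pi)$ is even so $\omega(\bfp_\pi)=\bfp_\pi$ and both compositions return $2^{\ell(\pi)}\bfp_\pi$; for $\pi$ not odd, both $\omega(\Theta_\NCSym(\bfp_\pi))$ and $\Theta_\NCSym(\omega(\bfp_\pi))=\pm\Theta_\NCSym(\bfp_\pi)$ evaluate to $0$.
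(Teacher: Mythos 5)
Steps~1 and~2 of your plan are sound and genuinely different from the paper's argument. The observation that $\bfh_\pi-\bfe_\pi=2\sum_{\tau\leq\pi,\ n-\ell(\tau)\text{ odd}}|\mu(\hat 0,\tau)|\bfp_\tau$, so that $\Theta_\NCSym(\bfh_\pi)=\Theta_\NCSym(\bfe_\pi)$ and M\"obius inversion force $\Theta_\NCSym(\bfp_\tau)=0$ whenever $n-\ell(\tau)$ is odd, is a clean reduction that the paper does not use. Likewise the computation $\Theta_\NCSym(\bfp_{[k]})=2\bfp_{[k]}$ for $k$ odd via Proposition~\ref{prop:M} and Lemma~\ref{lem:N}, combined with the multiplicativity $\bfp_\pi\bfp_\tau=\bfp_{\pi|\tau}$ and Theorem~\ref{theta-hopf}, correctly handles every shifted concatenation $\pi=[k_1]|\cdots|[k_\ell]$.

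The gap is in Step~3, and it is a real gap, not a cosmetic one. What remains after Steps~1--2 is precisely the hard case: an arbitrary $\pi$ that is not a shifted concatenation (and, for the vanishing case, has an even block but an even number of even blocks, so Step~1 is silent). Your two proposed tools do not close it. First, matching the image under $\rho$ with the classical identity $\Theta_\Sym(p_\lambda)=2^{\ell(\lambda)}p_\lambda$ cannot determine $\Theta_\NCSym(\bfp_\pi)$, because $\rho$ collapses $\bfp_\pi$ to a function of the shape $\lambda(\pi)$ only; e.g. $\bfp_{13/2}-\bfp_{12/3}\in\ker\rho\cap\NCGamma$ is nonzero, so the projection leaves the noncommutative coefficients entirely undetermined. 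Second, the phrases ``must then be pinned down by a combinatorial analysis\dots'' and ``a cancellation argument in $\ker\rho\cap\NCSym$'' describe what a proof would have to accomplish rather than supply one. An observation that \emph{would} actually close this gap — and which you do not invoke — is the $\frakS_n$-equivariance of $\Theta_\NCQSym$ under the relabeling action $\sigma\cdot\scrY_\bfG=\scrY_{\sigma\circ\bfG}$, $\sigma\cdot\scrF_\bfG=\scrF_{\sigma\circ\bfG}$, which transports $\bfp_{[k_1]|\cdots|[k_\ell]}$ to any $\bfp_\pi$ of the same shape and hence propagates your concatenation computation to all $\pi$. As written, though, the proposal does not establish the theorem. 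For comparison, the paper bypasses all of this by expanding $\bfp_\pi=\sum_{\phi:\pi(\phi)\leq\pi}\bfM_\phi$, applying Proposition~\ref{prop:M} term by term, and handling the vanishing case with an explicit sign-reversing involution on $\{\phi\vDash[n]:\pi(\phi)\leq\pi\}$ and the odd case with the technical counting identity of Lemma~\ref{lem:odd-set}. The final paragraph of your argument, deducing commutativity of the $\omega$-diagram once the displayed formula is known, is correct.
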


To prove the above theorem, first we need the following lemma. For set compositions $\phi$ and $\psi$, define $\psi\leq \phi$ if  each part of $\psi$ is a union of consecutive parts of $\phi$. Let $\odd(\phi)$ be a set composition that union the maximal segment whose the sizes of the blocks are maximal segments of the form $({\rm even},\dots, {\rm even}, {\rm odd})$ (for example, $\odd(1| 58| 69| 4| 23|7) = 1| 45689| 237$).

\begin{Lemma}\label{lem:odd-set}
Let $\pi$ be an odd set partition. For any $\psi=\psi_1|\psi_2|\cdots|\psi_{\ell(\psi)}$ with $\pi(\psi)\leq \pi$, we have that $$\displaystyle\sum_{\phi\vDash[n]: \atop{ \pi(\phi)\leq \pi,\odd(\phi)\geq\psi \atop |\phi_{\ell(\phi)}|\text{is odd}}}(-1)^{\ell(\phi)}=2^{\ell(\pi)-\ell(\psi)}(-1)^{\ell(\pi)}.$$
\end{Lemma}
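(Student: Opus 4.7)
First I would translate the combinatorial conditions into a clean factorisation. The hypothesis $\pi(\phi)\leq\pi$ means $\pi$ refines $\pi(\phi)$ as set partitions, so every block of $\phi$ is a union of blocks of $\pi$; the same holds for $\psi$ by $\pi(\psi)\leq\pi$. Since all blocks of $\pi$ have odd size, the parity of a $\phi$-block equals the parity of the number of $\pi$-blocks it contains. Next, $\odd(\phi)\geq\psi$ says $\odd(\phi)$ refines $\psi$; because each maximal $(\text{even},\dots,\text{even},\text{odd})$-segment of $\phi$ must fit inside a single $\psi$-block, this is equivalent to the local condition that within each $\psi_i$ the last $\phi$-block has odd size. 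The condition $|\phi_{\ell(\phi)}|$ odd is merely the $i=\ell(\psi)$ instance of this, and so is redundant (its role is simply to ensure $\odd(\phi)$ is defined).

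Consequently, specifying $\phi$ amounts to independently specifying, for each $i$, a set composition $\phi^{(i)}$ of $\psi_i$ whose blocks are unions of $\pi$-blocks and whose last block has odd size. Writing $k_i$ for the number of $\pi$-blocks inside $\psi_i$, so that $\sum_i k_i=\ell(\pi)$ and $\sum_i(k_i-1)=\ell(\pi)-\ell(\psi)$, the data of $\phi^{(i)}$ is a set composition of a $k_i$-element set (the set of $\pi$-blocks contained in $\psi_i$) whose final block contains an odd number of elements. Since $\ell(\phi)=\sum_i\ell(\phi^{(i)})$, the sum factorises as
\[\sum_{\phi}(-1)^{\ell(\phi)}=\prod_{i=1}^{\ell(\psi)}T(k_i),\qquad T(k):=\sum_{\substack{l\geq 1,\ s_1+\cdots+s_l=k\\ s_j\geq 1,\ s_l\text{ odd}}}(-1)^l\binom{k}{s_1,\dots,s_l}.\]

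It remains to show $T(k)=(-1)^k\,2^{k-1}$ for $k\geq 1$. Using exponential generating functions, an interior block of arbitrary size contributes $e^x-1$ and the terminal odd block contributes $\sinh x$, so
\[\sum_{k\geq 1}T(k)\frac{x^k}{k!}=\sinh(x)\sum_{l\geq 1}(-1)^l(e^x-1)^{l-1}=-\sinh(x)\,e^{-x}=\frac{e^{-2x}-1}{2},\]
whose coefficient of $x^k/k!$ is $(-1)^k\,2^{k-1}$. Equivalently, one can condition on the odd terminal block of size $j$ and use $\sum_l(-1)^l\cdot\#\{\text{set compositions of }[m]\text{ into }l\text{ blocks}\}=(-1)^m$ together with $\sum_{j\text{ odd}}\binom{k}{j}=2^{k-1}$ to arrive at the same value.

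Multiplying over $i$ gives $\prod_i T(k_i)=(-1)^{\sum k_i}\,2^{\sum(k_i-1)}=(-1)^{\ell(\pi)}\,2^{\ell(\pi)-\ell(\psi)}$, which is the claim. The main delicate step is verifying the equivalence between $\odd(\phi)\geq\psi$ and the per-$\psi_i$ condition ``the last $\phi$-block inside $\psi_i$ has odd size''; once this is in hand, the factorisation is immediate and everything downstream is a short generating-function identity.
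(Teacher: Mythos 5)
Your proof is correct and follows essentially the same route as the paper's: both reduce the condition $\odd(\phi)\geq\psi$ to the per-block condition that the last $\phi$-block inside each $\psi_i$ has odd size, and both exploit the resulting factorisation over the blocks of $\psi$. The paper splits the computation of each factor by first fixing the odd terminal block $D_j\subseteq\psi_j$ and then applying $\sum_{\varphi\vDash[a]}(-1)^{\ell(\varphi)}=(-1)^a$ and $\sum_{j\text{ odd}}\binom{c}{j}=2^{c-1}$ separately; your $T(k)=(-1)^k2^{k-1}$ computation (via EGF or the equivalent conditioning argument you sketch) packages those two steps into one, but the underlying combinatorics is identical.
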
 

\begin{proof}
We call $\phi\vDash[n]$ \emph{suitable} if  it appears in the above summation, that is, $\pi(\phi)\leq \pi,\odd(\phi)\geq\psi$, and  $|\phi_{\ell(\phi)}|$. Consider that $\phi\vDash[n]$ is suitable  if and only if $\phi=\phi_1|\phi_2|\cdots|\phi_{\ell(\phi)}\leq \pi$ such that for all $j$, $\psi_j=\phi_{i_{j-1}+1}\cup \cdots \cup \phi_{i_j}$ for some $i_j$,  and $|\phi_{i_j}|$ is odd. In particular, $i_0=0$ and $i_{\ell(\psi)}=\ell(\phi)$. 

With the notation above, let $I$ be the collection of all possible tuples $(\phi_{i_j}:1\leq j\leq \ell(\psi))$, that is,
	$$I=\{(\phi_{i_1},\phi_{i_2},\dots,\phi_{i_{\ell(\psi)}}):\pi(\phi)\leq \pi,\text{ for all ~}j, \psi_j=\phi_{i_{j-1}+1}\cup\cdots\cup \phi_{i_j}\text{ and }|\phi_{i_j}|\text{ is odd}\}.$$
	For each $D=(D_1,\dots,D_{\ell(\psi)})\in I$, let $I_D$ be the collection of all suitable $\phi$ with the given tuple $D$, that is, 
	$$I_D=\{\phi\vDash [n]:\pi(\phi)\leq \pi \text{~ and for all ~}j, \psi_j=\phi_{i_{j-1}+1}\cup\cdots\cup \phi_{i_j}\text{ with }\phi_{i_j}=D_j\}.$$
	Then, we have
	$$\sum_{\phi\in Q_\pi,\odd(\phi)\geq\phi' \atop \left|\phi_{\ell(\phi)}\right|\text{ is odd}}(-1)^{\ell(\phi)}=\sum_{D\in I}\sum_{\phi\in I_D}(-1)^{\ell(\phi)}.$$	
	Fix $D=(D_1,\dots,D_{\ell(\psi)})\in I$. For each $j$, let $\psi_j\setminus D_j$ be the union of $a_j$ blocks of $\pi$. Let $D_j$ be the union of $b_j$ blocks of $\pi$. Since all blocks of $\pi$ are odd and $|D_j|$ is odd, we must have $b_j$ is odd. Also,
	$$\sum_{j=1}^{\ell(\psi)}(a_j+b_j)=\ell(\pi).$$
	
	Note that $I_D$ consists of all $\phi\vDash[n]$ such that $\psi_j\setminus D_j=\phi_{i_{j-1}+1}\cup\cdots\cup \phi_{i_j-1}$, and each $\phi_p\subseteq \psi_j\setminus D_j$ is a union of some of the $a_j$ blocks of $\pi$. Therefore, each tuple of blocks $(\phi_{i_{j-1}+1},\dots, \phi_{i_j-1})$ corresponds to a set composition $\varphi$ of $[a_j]$. Thus, we have
	$$\sum_{\phi\in I_D}(-1)^{\ell(\phi)}=\prod_{j=1}^{\ell(\psi)}\sum_{\varphi\models[a_j]}(-1)^{\ell(\varphi)+1}=\prod_{j=1}^{\ell(\psi)}(-1)^{a_j+1}=\prod_{j=1}^{\ell(\psi)}(-1)^{a_j+b_j}=(-1)^{\sum_{j=1}^{\ell(\psi)}a_j+b_j}=(-1)^{\ell(\pi)},$$ where
	the second equality follows from the well-known fact that $\sum_{\varphi\vDash[a]}(-1)^{\ell(\varphi)}=(-1)^{a}$ for all $a$. 
		
	Suppose $\psi_j$ is a union of $c_j$ blocks of $\pi$ with $\sum_{j=1}^{\ell(\psi)} c_j=\ell(\pi)$. Note that $I$ consists of all tuples $(D_1,\dots,D_{\ell(\psi)})$ such that $D_j\subseteq \psi_j$ is a union of odd many blocks of $\pi$. Then,
	$$|I|=\prod_{j=1}^{\ell(\psi)}\left(\sum_{i\text{ is odd}}\binom{c_j}{i}\right)=\prod_{j=1}^{\ell(\psi)}2^{c_j-1}=2^{\ell(\pi)-\ell(\psi)}.$$
	Therefore, 
	$$\displaystyle\sum_{\phi\vDash[n]: \atop{ \pi(\phi)\leq \pi,\odd(\phi)\geq\psi \atop |\phi_{\ell(\phi)}|\text{is odd}}}(-1)^{\ell(\phi)}=\sum_{D\in I}\sum_{\phi\in I_D}(-1)^{\ell(\phi)}=2^{\ell(\pi)-\ell(\psi)}(-1)^{\ell(\pi)}.$$
\end{proof}

\begin{proof}[Proof of Theorem \ref{thm:NCSym}]
	We have that
	$$\displaystyle\mathbf{p}_{\pi}=\sum_{\tau\leq \pi}\mathbf{m}_{\tau}=\sum_{\phi\vDash [n]:\atop  \pi(\phi)\leq \pi}\mathbf{M}_{\phi}.$$
	
	We show that if  there is a block $\pi_j$ in $\pi$ such that $|\pi_j|$ is even, then 
	$$\Theta_{\NCSym}(\mathbf{p}_{\pi})=0.$$ 
	
Define
	$$\begin{array}{cccc}
	\imath: & \{\phi\vDash [n]: \pi(\phi)\leq \pi\} & \rightarrow &  \{\phi\vDash [n]: \pi(\phi)\leq \pi\}
	\end{array} 
	$$
	where 
	$$\imath(\phi)=
	\begin{cases} 
	\phi & \text{if~}\phi_{\ell(\phi)}=\pi_j\\
	\phi_1|\phi_2|\cdots|\phi_{m-1}|\phi_m\cup \phi_{m+1}|\phi_{m+1}|\cdots|\phi_{\ell(\phi)} & \text{if ~} \phi_m=\pi_j, m<\ell(\phi)\\
	\phi_1|\phi_2|\cdots|\phi_{m-1}|\pi_j|\phi_m\setminus \pi_j|\phi_{m+1}|\cdots|\phi_{\ell(\phi)} & \text{if~} \pi_j\subsetneq \phi_m \text{~for some~} m.
	\end{cases} 
	$$
	Consider $\{\phi\vDash [n]: \pi(\phi)\leq \pi\}$ as a sign set where the sign of $\phi$ is $(-1)^{\ell(\phi)}$. Then $\imath$ is a sign reversing involution. \\
	
By Proposition \ref{prop:M}, we have
\begin{equation}\label{eq:theta-M}
\Theta_{\NCQSym}(\mathbf{M}_{\phi})
		=\begin{cases}
		(-1)^{\ell(\phi)+n}\displaystyle\sum_{\psi\leq \odd(\phi)}2^{\ell(\psi)}\mathbf{M}_{\psi} & \text{ if $|\phi_{\ell(\phi)}|$}  \text{~ is odd,}\\
		0 & \text{ otherwise.}
	\end{cases}
	\end{equation}
		
	 Note that if $\imath(\phi)\neq \phi$ and $|\phi_{\ell(\phi)}|$ is odd, then $|\iota(\phi)_{\ell(\iota(\phi))}|$ is odd, moreover,
	$$\odd(\phi)=\odd(\imath(\phi)) \quad \text{and}\quad (-1)^{\ell(\phi)}=(-1)^{\ell(\imath(\phi))\pm 1}.$$  Consequently, if $\iota(\phi)\neq \phi$, 
	$$\Theta_{\NCQSym}(\bfM_\phi)=-\Theta_{\NCQSym}(\bfM_{\imath(\phi)}).$$
	Therefore, 
	\begin{align*}
	\Theta_{\NCSym}(\mathbf{p}_\pi)&=\Theta_{\NCQSym}\left(\sum_{\phi\vDash [n]:\atop  \pi(\phi)\leq \pi}\mathbf{M}_{\phi} \right)\\
	&=\sum_{\phi\vDash [n]:\atop  \pi(\phi)\leq \pi}\Theta_{\NCQSym}\left(\mathbf{M}_{\phi}\right)\\
	&=\sum_{\phi\vDash[n]:\atop \pi(\phi)\leq \pi, \imath(\phi)=\phi}\Theta_{\NCQSym}\left(\mathbf{M}_{\phi}\right)=0,
	\end{align*}
	where the last equality is because if $\imath(\phi)=\phi$, then $|\phi_{\ell(\phi)}|$ is even, and so by Equation \ref{eq:theta-M}, we have $\Theta_{\NCQSym}\left(\mathbf{M}_{\phi}\right)=0$.\\

	Assume $\pi$ is an odd set partition, then since all blocks of $\pi$ have odd sizes, $(-1)^n=(-1)^{\ell(\pi)}$. Using Theorem \ref{thm:htop}
	\begin{align*}
	\Theta_{\NCSym}(\mathbf{p}_\pi)&=\Theta_{\NCQSym}\left(\sum_{\phi\vDash [n]:\atop  \pi(\phi)\leq \pi}\mathbf{M}_{\phi} \right)\\
	&=\sum_{\phi\vDash [n]:|\phi_{\ell(\phi)}| \text{~is odd,}\atop  \pi(\phi)\leq \pi}\Theta_{\NCQSym}\left(\mathbf{M}_{\phi}\right)\\
	(\text{by Equation \ref{eq:theta-M}})&=\sum_{\phi\vDash [n]:|\phi_{\ell(\phi)}| \text{~is odd,}\atop  \pi(\phi)\leq \pi} \left((-1)^{\ell(\phi)+n}\displaystyle\sum_{\psi\leq \odd(\phi)}2^{\ell(\psi)}\mathbf{M}_{\psi}\right)\\
	(\text{by reordering the sums})&=\sum_{\phi\vDash[n]:\atop \pi(\phi)\leq \pi} \left( 
	2^{\ell(\psi)}(-1)^{\ell(\pi)} \left( 
	\sum_{\phi\vDash [n]: \pi(\phi)\leq \pi \atop \odd(\phi)\geq \psi, |\phi_{\ell(\phi)}|\text{~is odd}} (-1)^{\ell(\phi)}
	\right) 
	\right)\bfM_{\psi}\\
	(\text{by Lemma \ref{lem:odd-set}})&=\sum_{\psi\vDash[n]:\atop \pi(\psi)\leq \pi} \left( 
	2^{\ell(\psi)}(-1)^{\ell(\pi)} 2^{\ell(\pi)-\ell(\psi)}(-1)^{\ell(\pi)}
	\right)\bfM_{\psi}\\
	&=\sum_{\psi\vDash[n]:\atop \pi(\phi)\leq \pi} 2^{\ell(\pi)} \bfM_{\psi}\\
	&=2^{\ell(\pi)}\mathbf{p}_\pi.
	\end{align*}

\end{proof}

Recall that for a set partition $\pi$, the \emph{Schur $Q$-function in noncommuting variables} is 
$$\mathbf{q}_\pi=\sum_{\sigma\in \frakS_{\pi}} \scrF_{\sigma\circ Q_{\pi}}=\sum_{\sigma\in \frakS_{\pi}} \scrF_{\sigma\circ P_{\pi}}.$$
Note that  $$\mathbf{q}_\pi=\Theta_{\NCQSym}(\mathbf{h}_\pi)=\Theta_{\NCQSym}(\mathbf{e}_\pi).$$ 
The space of the Schur $Q$-functions is defined to be the image of $\Theta_\Sym$, similarly, we define the \emph{space of Schur $Q$-functions in noncommuting variables}, $\NCGamma$, to be the image of $\Theta_{\NCSym}$ i.e., 
	$$\NCGamma=\mathrm{Img}(\NCSym).$$
	The following corollary is a result of Theorem \ref{thm:NCSym}.

\begin{Corollary}
	For $\pi\vdash[n]$, we have
	$$\Theta_{\NCSym}(\mathbf{m}_\pi)=(-1)^{\ell(\pi)+n}\sum_{\sigma\vdash[n]}C_\pi^\sigma2^{\ell(\psi)}\mathbf{m}_\sigma$$
	where $C_\pi^\sigma=|\{\phi\models[n]:\phi_{\ell(\phi)}\text{ is odd}, B_1|\cdots|B_k\leq\mathrm{Odd}(\phi)\}|$ for $\sigma=B_1/\cdots/B_k$.
\end{Corollary}

\begin{proof}
	By Proposition \ref{prop:M}, We have
	\begin{align*}
		\Theta_\NCSym(\mathbf{m}_\pi)&=\Theta_\NCQSym\left(\sum_{\phi\models[n] \atop \pi(\phi)=\pi}\mathbf{M}_\phi\right)\\
		&=\sum_{\phi\models [n]:|\phi_{\ell(\phi)}|\text{ is odd} \atop \pi(\phi)=\pi}\left((-1)^{\ell(\phi)+n}\sum_{\psi\leq\mathrm{Odd}(\phi)}2^{\ell(\psi)}\mathbf{M}_\psi\right)\\
		&=(-1)^{\ell(\pi)+n}\sum_{\psi}C_\pi^\psi2^{\ell(\psi)}\mathbf{M}_\psi.
	\end{align*}
	where $C_\pi^\psi=|\{\phi\models[n]:|\phi_{\ell(\phi)}|\text{ is odd},\pi(\phi)=\pi,\psi\leq\mathrm{Odd}(\phi)\}|$.
	
	It then follows from Theorem \ref{thm:NCSym} that $\Theta_{\NCSym}(\mathbf{m}_\pi)\in\NCSym$.
\end{proof}

\begin{Corollary}\label{cor:p}
	The set $\{p_\pi:\pi\vdash[n],\pi\text{ is odd set partition}\}$ forms a linear basis of $\NCGamma_n$.
\end{Corollary}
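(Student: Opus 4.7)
The plan is to derive this corollary directly from Theorem \ref{thm:NCSym}, which already computes $\Theta_{\NCSym}(\bfp_\pi)$ explicitly for every set partition $\pi$. Since $\NCGamma_n$ is defined as the image of $\Theta_{\NCSym}$ restricted to $\NCSym_n$, I need only push forward a known basis of $\NCSym_n$ through $\Theta_{\NCSym}$ and read off which images survive.

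First I would recall that the power sum elements $\{\bfp_\pi : \pi \vdash [n]\}$ form a basis of $\NCSym_n$, as noted in Section \ref{sec:ncsym}. Hence the set $\{\Theta_{\NCSym}(\bfp_\pi) : \pi \vdash [n]\}$ spans $\NCGamma_n$. Theorem \ref{thm:NCSym} tells us that
\[
\Theta_{\NCSym}(\bfp_\pi) = \begin{cases} 2^{\ell(\pi)}\bfp_\pi & \text{if $\pi$ is an odd set partition,} \\ 0 & \text{otherwise.} \end{cases}
\]
Discarding the zero images and dividing by the nonzero scalars $2^{\ell(\pi)}$, this spanning set reduces to $\{\bfp_\pi : \pi \vdash [n], \pi \text{ is odd}\}$.

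For linear independence, I would simply observe that these vectors already sit inside the larger linearly independent family $\{\bfp_\pi : \pi \vdash [n]\}$ in $\NCSym_n$, and any subset of a linearly independent set is linearly independent. Therefore $\{\bfp_\pi : \pi \vdash [n], \pi \text{ is odd}\}$ is a basis of $\NCGamma_n$.

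There is no serious obstacle here; the entire argument is a one-line consequence of Theorem \ref{thm:NCSym} combined with the fact that $\{\bfp_\pi\}$ is a basis of $\NCSym$. The only small point to make explicit is that $\NCGamma_n \subseteq \NCSym_n$, so linear independence in the ambient space implies linear independence in $\NCGamma_n$.
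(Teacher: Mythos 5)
Your argument is exactly the intended deduction: push the $\bfp$-basis of $\NCSym_n$ through $\Theta_{\NCSym}$, use Theorem \ref{thm:NCSym} to identify the nonzero images as scalar multiples of $\bfp_\pi$ for odd $\pi$, and note that these are linearly independent because they form a subset of the $\bfp$-basis. The paper gives no explicit proof and simply states the corollary as a consequence of Theorem \ref{thm:NCSym}, so your writeup faithfully fills in the same reasoning.
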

The main goal of this section is to show that 
$$\NCGamma=\NCSym\cap\NCPeak$$
 and also $$\{\mathbf{q}_\pi: \pi \text{~is an odd set partition}\}$$ is a basis for $\NCGamma_n$, so the dimension of  $\NCGamma_n$ is equal to the number of odd set partitions.

Recall from Lemma \ref{lem:N} that
$$\boldeta_{(B,\sigma)}=(-1)^{|B|}\sum_{A\subseteq\odd(B)}2^{|A|+1}\mathbf{M}_{(A,\sigma)}.$$
For any odd set composition $\phi$, we have $$\boldeta_\phi=(-1)^{(n-\ell(\phi))/2}\sum_{\psi\leq \phi} 2^{\ell(\psi)}\bfM_\psi$$
under the identification ${\rm SetCompOdd}:(B,\sigma)\mapsto\phi$. 

For any odd set partition $\pi$ define 
$$\mathbf{n}_\pi=(-1)^{(n-\ell(\pi))/2}\sum_{\pi(\phi)=\pi} \boldeta_\phi.$$

\begin{Theorem}\label{thm:NCGamma}
	 We have that $$\{\mathbf{n}_\pi: \pi\vdash[n], \pi\text{ is an odd set partition}\}$$ is a basis for $\NCSym\cap\NCPeak$.
\end{Theorem}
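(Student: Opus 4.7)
I will verify three properties of $\{\mathbf{n}_\pi : \pi \vdash [n]\text{ odd}\}$: it is contained in $\NCSym\cap\NCPeak$, it is linearly independent, and it spans that intersection.

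For the containment, $\mathbf{n}_\pi \in \NCPeak$ is immediate from its definition as a signed sum of $\boldeta_\phi$'s, each lying in $\NCPeak$ by Theorem~\ref{Thm:NCPeakBasis}. For $\mathbf{n}_\pi \in \NCSym$ I would expand $\mathbf{n}_\pi$ in the $\bfM_\psi$-basis via the formula $\boldeta_\phi = (-1)^{(n-\ell(\phi))/2}\sum_{\psi \leq \phi} 2^{\ell(\psi)}\bfM_\psi$ from Lemma~\ref{lem:N}. Because $\pi$ is odd, $n - \ell(\pi)$ is even; since $\ell(\phi) = \ell(\pi)$ whenever $\pi(\phi) = \pi$, all sign factors cancel and one obtains
\[
\mathbf{n}_\pi = \sum_{\psi \vDash [n]} 2^{\ell(\psi)}\, N(\pi,\psi)\, \bfM_\psi,\qquad N(\pi,\psi) = |\{\phi \vDash [n] : \pi(\phi) = \pi,\ \psi \leq \phi\}|.
\]
The key combinatorial observation is that $N(\pi,\psi)$ depends only on $\pi(\psi)$: it vanishes unless $\pi(\psi) \leq \pi$, in which case the valid $\phi$'s correspond bijectively to independent orderings of the blocks of $\pi$ inside each block of $\psi$, giving $N(\pi,\psi) = \prod_i a_i!$, where $a_i$ is the number of blocks of $\pi$ contained in the $i$-th block of $\pi(\psi)$. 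Hence the $\bfM_\psi$-coefficient depends only on $\pi(\psi)$ and $\mathbf{n}_\pi \in \NCSym$.

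For linear independence, repackaging the above expansion into the $\bfm_\tau$-basis yields
\[
\mathbf{n}_\pi = 2^{\ell(\pi)}\bfm_\pi + \sum_{\tau \lneq \pi} c_{\pi,\tau}\, \bfm_\tau,
\]
since when $\tau = \pi$ every $a_i$ equals $1$. The transition is triangular in the set partition lattice with nonzero diagonal entries, so $\{\mathbf{n}_\pi\}$ is linearly independent.

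For the spanning property I plan to prove $\NCSym \cap \NCPeak = \NCGamma$. The inclusion $\NCGamma \subseteq \NCSym \cap \NCPeak$ is immediate from $\NCGamma = \Theta_{\NCQSym}(\NCSym)$ combined with Theorem~\ref{thm:NCSym} (which shows the image lies in $\NCSym$) and the surjectivity of $\Theta_{\NCQSym}$ onto $\NCPeak$. By Corollary~\ref{cor:p} the space $\NCGamma_n$ has dimension equal to the number of odd set partitions of $[n]$, matching $|\{\mathbf{n}_\pi\}|$; together with the linear independence just established, the equality $\NCSym \cap \NCPeak = \NCGamma$ would immediately force $\{\mathbf{n}_\pi\}$ to be a basis. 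For the reverse inclusion I would write any $f \in \NCSym \cap \NCPeak$ uniquely in the $\boldeta$-basis of $\NCPeak$ (Theorem~\ref{Thm:NCPeakBasis}) as $f = \sum_\phi d_\phi \boldeta_\phi$ and then use the symmetry of $f$ -- i.e., that its $\bfM_\psi$-coefficients depend only on $\pi(\psi)$ -- to force each $d_\phi$ to depend only on $\pi(\phi)$, which expresses $f$ as a linear combination of the $\mathbf{n}_\pi$'s. The main obstacle will be this final step; I expect to handle it by Möbius inversion on the set composition lattice (the transition between the $\bfM$-basis and the $\boldeta$-basis is triangular and invertible by Lemma~\ref{lem:N}), so that the orbit-dependence propagates from the $\bfM_\psi$-coefficients of $f$ back to the coefficients $d_\phi$.
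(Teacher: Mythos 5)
Your proposal follows essentially the same route as the paper: show $\mathbf{n}_\pi\in\NCSym\cap\NCPeak$, linear independence via triangularity, and spanning by taking an arbitrary $f\in\NCSym\cap\NCPeak$, writing $f=\sum_\phi c_\phi\boldeta_\phi$ over odd set compositions, and arguing that the symmetry of $f$ forces $c_\phi$ to depend only on $\pi(\phi)$. Your closed form $N(\pi,\psi)=\prod_i a_i!$ for the $\bfM_\psi$-coefficient of $\mathbf{n}_\pi$ is a nice, more explicit substitute for the paper's argument, which instead exhibits a block-shuffling bijection $g:\{\phi\geq\psi:\pi(\phi)=\pi\}\to\{\phi\geq\varphi:\pi(\phi)=\pi\}$ and does not compute the count.

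The step you flag as the ``main obstacle'' is exactly where the paper does real work, and ``M\"obius inversion because the transition matrix is triangular and invertible'' is not by itself enough to carry the orbit-dependence across. If you set $d_\psi=\sum_{\phi\geq\psi,\ \phi\ \text{odd}}(-1)^{p(\phi)}c_\phi$, then the assumption $f\in\NCSym$ tells you $d_\psi=d_\varphi$ whenever $\pi(\psi)=\pi(\varphi)$, and M\"obius inversion on the odd-set-composition poset writes $(-1)^{p(\psi)}c_\psi=\sum_{\chi\geq\psi,\ \text{odd}}\mu(\psi,\chi)\,d_\chi$. But $\mu(\psi,\chi)$ is a function of the \emph{compositions} $\psi,\chi$, not of their underlying partitions, so the inverted sum does not obviously collapse to something depending only on $\pi(\psi)$. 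What you actually need is an order-isomorphism $\mathsf{g}:\{\chi\geq\psi:\chi\ \text{odd}\}\to\{\chi\geq\varphi:\chi\ \text{odd}\}$ satisfying $\pi(\mathsf{g}(\chi))=\pi(\chi)$ and $\ell(\mathsf{g}(\chi))=\ell(\chi)$ (hence also $\mu(\varphi,\mathsf{g}(\chi))=\mu(\psi,\chi)$ and $d_{\mathsf{g}(\chi)}=d_\chi$), so that the two inverted sums match term by term. The paper constructs exactly such a bijection $\mathsf{g}$ — the same block-reshuffling map as $g$ — and then runs a downward induction on $\ell(\psi)$ rather than invoking M\"obius inversion, which amounts to the same thing but keeps the bijection front and center. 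Until you make that bijection explicit (or otherwise prove that $\mu$ and the poset structure are compatible with the orbits of $\pi(-)$), the spanning step is not closed. Finally, the detour through $\NCGamma$ and the dimension count from Corollary~\ref{cor:p} is unnecessary and, as you yourself note, circular as stated: the direct spanning argument is what actually proves the theorem, and the equality $\NCGamma=\NCSym\cap\NCPeak$ is a consequence of it in the paper's order of development, not an input.
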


\begin{proof}
	Clearly, $\{\mathbf{n}_\pi: \pi\vdash[n], \pi\text{ is an odd set partition}\}$ is linearly independent and $\mathbf{n}_\pi\in\NCPeak$.
	
For any odd set composition $\pi\vdash[n]$, we have
	$$\mathbf{n}_\pi=(-1)^{(n-\ell(\pi))/2}\sum_{\pi(\phi)=\pi}\boldeta_\phi=\sum_{\pi(\phi)=\pi}\sum_{\psi\leq\phi}2^{\ell(\psi)}\mathbf{M}_\psi=\sum_{\psi\models [n]}\left(\sum_{\phi\geq\psi \atop \pi(\phi)=\pi}1\right)2^{\ell(\psi)}\mathbf{M}_\psi.$$ 
	
	Suppose $\pi(\psi)=\pi(\varphi)$. We define a bijection $$g:\{\phi\geq\psi:\pi(\phi)=\pi\}\to\{\phi\geq\varphi:\pi(\phi)=\pi\}$$ as follows: if $\psi$ is obtained from $\phi$ by replacing some consecutive blocks $\phi_t|\phi_{t+1}|\cdots|\phi_{t+k}$ of $\phi$ by $\psi_i$, then since $\psi_i=\varphi_j$, we obtain $g(\phi)$ by replacing $\varphi_j$ by $\phi_t|\phi_{t+1}|\cdots|\phi_{t+k}$. Then $g$ is a bijection, and so  $\mathbf{n}_\pi\in\NCSym$.
	
	Suppose $f\in \NCSym\cap\NCPeak_n$. We show that 
	$f$ is a linear combination of the elements in the set $\{\mathbf{n}_\pi: \pi~ \text{is an odd set partition}\}$. 
	
Let $f=\displaystyle\sum_{\phi\models [n] \atop \phi\text{ is odd }}c_\phi\boldeta_\phi$. We have
	$$f=\sum_{\phi\models [n] \atop \phi\text{ is odd }}(-1)^{(n-\ell(\phi))/2}\sum_{\psi\leq\phi}2^{\ell(\psi)}c_\phi\mathbf{M}_\psi=\sum_{\psi\models[n]}\left(\sum_{\psi\leq\phi \atop \phi\text{ is odd}}(-1)^{(n-\ell(\phi))/2}c_\phi\right)2^{\ell(\psi)}\mathbf{M}_\psi.$$
	Since $f\in\NCSym$, we must have
	$$\sum_{\psi\leq\phi \atop \phi\text{ is odd}}(-1)^{(n-\ell(\phi))/2}c_\phi=\sum_{\varphi\leq\phi \atop \phi\text{ is odd}}(-1)^{(n-\ell(\phi))/2}c_\phi$$
	whenever $\pi(\psi)=\pi(\varphi)$. We prove $\pi(\psi)=\pi(\varphi)$ implies $c_\psi=c_{\varphi}$ by induction on $\ell(\psi)$. If $\psi,\varphi\vDash[n]$ and $\ell(\psi)=\ell(\varphi)=n$, the equality reduces to $c_\psi=c_{\varphi}$ and we are done. Assume $\pi(\psi)=\pi(\varphi)$ implies $c_\psi=c_{\varphi}$ for all odd set compositions $\psi,\varphi$ with $\ell(\psi)>\ell$. Take odd set compositions $\psi$ and $\varphi$ with $\pi(\psi)=\pi(\varphi)$ and $\ell(\psi)=\ell$. We define a bijection $$\mathsf{g}:\{\phi\geq\psi:\phi\text{ is odd}\}\to\{\phi\geq\varphi:\phi\text{ is odd}\}$$ as follows: if $\psi$ is obtained from $\phi$ by replacing some consecutive blocks $\phi_t|\phi_{t+1}|\cdots|\phi_{t+k}$ of $\phi$ by $\psi_i$, then since $\psi_i=\varphi_j$, we obtain $\mathsf{g}(\phi)$ by replacing $\varphi_j$ by $\phi_t|\phi_{t+1}|\cdots|\phi_{t+k}$. Then $\mathsf{g}$ is a bijection. Then, $\pi(\phi)=\pi(\mathsf{g}(\phi))$ and by induction hypothesis, we have
	\begin{align*}
		(-1)^{(n-\ell)/2}(c_\psi-c_{\varphi})&=\sum_{\varphi<\phi \atop \phi\text{ is odd }}(-1)^{(n-\ell(\phi))/2}c_\phi-\sum_{\psi<\phi \atop \phi\text{ is odd}}(-1)^{(n-\ell(\phi))/2}c_\phi\\
		&=\sum_{\psi<\phi \atop \phi\text{ is odd}}(-1)^{(n-\ell(\phi))/2}(c_{\mathsf{g}(\phi)}-c_\phi)=0.
	\end{align*}
	Therefore, $\NCSym\cap\NCPeak$ is spaned by $\{\mathbf{n}_\pi:\pi\text{ is odd}\}$, and since this is a linearly independent set, we have $\{\mathbf{n}_\pi:\pi\text{ is odd}\}$ is a basis for $\NCSym\cap\NCPeak$. 
\end{proof}

\begin{Corollary}\label{coro:NCGamma}
	We have $\NCGamma=\NCSym\cap \NCPeak$. In particular, $\NCGamma$ is both a subalgebra and a quotient algebra of $\NCSym$.
\end{Corollary}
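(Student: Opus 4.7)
The plan is to establish the set equality $\NCGamma = \NCSym \cap \NCPeak$ by combining an easy inclusion with a dimension count, and then read off the algebraic consequences from what is already in hand.

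First I would prove $\NCGamma \subseteq \NCSym \cap \NCPeak$. By definition $\NCGamma = \Theta_{\NCSym}(\NCSym)$ is the image of the restriction of $\Theta_{\NCQSym}$ to $\NCSym$, and since $\Theta_{\NCQSym}$ maps into $\NCPeak$, we immediately obtain $\NCGamma \subseteq \NCPeak$. For the inclusion $\NCGamma \subseteq \NCSym$, Theorem \ref{thm:NCSym} shows
\[
\Theta_{\NCSym}(\mathbf{p}_\pi) =
\begin{cases}
2^{\ell(\pi)}\mathbf{p}_\pi & \text{if $\pi$ is an odd set partition,}\\
0 & \text{otherwise,}
\end{cases}
\]
so $\Theta_{\NCSym}$ sends the power-sum basis of $\NCSym$ into $\NCSym$, and consequently $\Theta_{\NCSym}(\NCSym) \subseteq \NCSym$. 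Combining the two, $\NCGamma \subseteq \NCSym \cap \NCPeak$.

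Next I would compare dimensions degree by degree. By Corollary \ref{cor:p}, $\{\mathbf{p}_\pi : \pi \vdash [n], \pi \text{ odd}\}$ is a basis of $\NCGamma_n$, so $\dim \NCGamma_n$ equals the number of odd set partitions of $[n]$. By Theorem \ref{thm:NCGamma}, $\{\mathbf{n}_\pi : \pi \vdash [n], \pi \text{ odd}\}$ is a basis of $(\NCSym \cap \NCPeak)_n$, giving the same count. Since $\NCGamma_n$ is contained in $(\NCSym \cap \NCPeak)_n$ and has the same finite dimension, the two spaces coincide in every degree, hence $\NCGamma = \NCSym \cap \NCPeak$.

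Finally, the two consequences come for free. Since $\NCSym$ and $\NCPeak$ are both subalgebras of $\NCQSym$, their intersection is a subalgebra, so the identity $\NCGamma = \NCSym \cap \NCPeak$ makes $\NCGamma$ a subalgebra of $\NCSym$. On the other hand, $\Theta_{\NCSym} : \NCSym \to \NCGamma$ is surjective by construction and is an algebra morphism as the restriction of the Hopf morphism $\Theta_{\NCQSym}$ from Theorem \ref{theta-hopf}; therefore $\NCGamma \cong \NCSym/\ker \Theta_{\NCSym}$, realizing $\NCGamma$ as a quotient algebra of $\NCSym$. There is no real obstacle here once Theorems \ref{thm:NCSym} and \ref{thm:NCGamma} are in place; the only thing to check carefully is that the two bases produced by those theorems are indexed by the same combinatorial set (odd set partitions of $[n]$), which is immediate.
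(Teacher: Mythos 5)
Your argument is the same as the paper's: show the inclusion $\NCGamma\subseteq\NCSym\cap\NCPeak$ using Theorem~\ref{thm:NCSym}, then invoke Corollary~\ref{cor:p} and Theorem~\ref{thm:NCGamma} to match dimensions degree by degree. You merely spell out the subalgebra/quotient conclusions more explicitly, which is fine.
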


\begin{proof}
	By Theorem \ref{thm:NCSym}, $\NCGamma\subseteq\NCSym\cap\NCPeak$. Corollary \ref{cor:p} and Theorem \ref{thm:NCGamma} implies $\NCGamma$ and $\NCSym\cap\NCPeak$ have the same dimension in each degree.
\end{proof}

\begin{Theorem}\label{thm:qschur} We have the following. 
	\begin{enumerate}
		\item  For each set partition $\pi$, we have $\omega(\mathbf{q}_\pi)=\mathbf{q}_\pi$. 
		\item The set $\{\mathbf{q}_\pi:\pi\text{ is odd set partition}\}$ forms a linear basis of $\NCGamma$.
		\item As an algebra, $\NCGamma$ is freely generated by the set $\{\mathbf{p}_\pi:\pi\text{ is connected odd set partition}\}$ and the set $\{\mathbf{q}_\pi:\pi\text{ is connected odd set partition}\}$.
	\end{enumerate}
\end{Theorem}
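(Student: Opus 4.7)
The plan is to tackle the three parts in order, with Theorem~\ref{thm:NCSym} as the main engine. For part~(1), the commutative square in that theorem asserts $\omega \circ \Theta_{\NCSym} = \Theta_{\NCSym} \circ \omega$. Since $\omega(\mathbf{h}_\pi) = \mathbf{e}_\pi$ and $\mathbf{q}_\pi = \Theta_{\NCSym}(\mathbf{h}_\pi) = \Theta_{\NCSym}(\mathbf{e}_\pi)$, we immediately obtain
$$\omega(\mathbf{q}_\pi) \;=\; \omega(\Theta_{\NCSym}(\mathbf{h}_\pi)) \;=\; \Theta_{\NCSym}(\omega(\mathbf{h}_\pi)) \;=\; \Theta_{\NCSym}(\mathbf{e}_\pi) \;=\; \mathbf{q}_\pi.$$

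For part~(2), I would expand $\mathbf{q}_\pi$ in the power-sum basis by combining Theorem~\ref{thm:htop}(2) with Theorem~\ref{thm:NCSym}:
$$\mathbf{q}_\pi \;=\; \Theta_{\NCSym}(\mathbf{h}_\pi) \;=\; \sum_{\tau \leq \pi} |\mu(\hat{0},\tau)|\, \Theta_{\NCSym}(\mathbf{p}_\tau) \;=\; \sum_{\substack{\tau \leq \pi \\ \tau \text{ odd}}} |\mu(\hat{0},\tau)|\, 2^{\ell(\tau)}\, \mathbf{p}_\tau.$$
When $\pi$ is odd, the diagonal term $\tau = \pi$ appears with the nonzero coefficient $|\mu(\hat{0},\pi)|\,2^{\ell(\pi)}$, so the change of basis from $\{\mathbf{q}_\pi : \pi \text{ odd}\}$ to $\{\mathbf{p}_\pi : \pi \text{ odd}\}$ is triangular in the refinement order with nonvanishing diagonal. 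Linear independence in $\NCGamma$ follows, and Corollary~\ref{cor:p} supplies the matching dimension, so $\{\mathbf{q}_\pi : \pi \text{ odd}\}$ is indeed a basis.

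For part~(3), the strategy is to transport the classical free-algebra structure of $\NCSym$ to $\NCGamma$. I would first invoke the standard fact that $\NCSym$ is freely generated as an associative algebra by $\{\mathbf{p}_\pi : \pi \text{ connected}\}$, with $\mathbf{p}_\pi \mathbf{p}_{\pi'} = \mathbf{p}_{\pi|\pi'}$ and analogously $\mathbf{h}_\pi \mathbf{h}_{\pi'} = \mathbf{h}_{\pi|\pi'}$; both identities drop out of $\scrY_{\bfG_1}\scrY_{\bfG_2} = \scrY_{\bfG_1|\bfG_2}$ applied to the expansions in Section~\ref{sec:ncsym}, using the unique atomic decomposition $\pi = \pi^{(1)} | \cdots | \pi^{(k)}$ of every set partition. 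Because a block of $\pi$ is a block of some $\pi^{(i)}$, the partition $\pi$ is odd iff every $\pi^{(i)}$ is odd; combined with Corollary~\ref{cor:p}, this shows that $\NCGamma$ is freely generated by $\{\mathbf{p}_\pi : \pi \text{ connected odd}\}$. For the $\mathbf{q}$-generators, applying the algebra morphism $\Theta_{\NCSym}$ (Theorem~\ref{theta-hopf}) to $\mathbf{h}_\pi \mathbf{h}_{\pi'} = \mathbf{h}_{\pi|\pi'}$ gives $\mathbf{q}_\pi \mathbf{q}_{\pi'} = \mathbf{q}_{\pi|\pi'}$, and the same atomic-decomposition argument combined with part~(2) yields freeness on $\{\mathbf{q}_\pi : \pi \text{ connected odd}\}$. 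The main obstacle I anticipate is recording the shifted-concatenation identities cleanly in the paper's conventions and confirming that ``connected'' matches the atomic decomposition; once those are in place, everything else is routine bookkeeping.
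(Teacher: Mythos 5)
Your parts (1) and (2) are identical in substance to the paper's proof: $\omega\circ\Theta_{\NCSym}=\Theta_{\NCSym}\circ\omega$ for (1), and the power-sum expansion from Theorems~\ref{thm:NCSym} and~\ref{thm:htop} with triangularity and dimension count from Corollary~\ref{cor:p} for (2). For part (3), your route differs mildly from the paper's: the paper establishes generation by the $\mathbf{q}_\pi$ with $\pi$ connected odd via $\Theta_{\NCSym}$ being an algebra epimorphism, and then infers algebraic independence from a dimension-matching argument (the $\mathbf{q}$-generators have the same count in each degree as the already-free $\mathbf{p}$-generators). You instead push the identity $\mathbf{h}_\pi\mathbf{h}_{\pi'}=\mathbf{h}_{\pi|\pi'}$ through $\Theta_{\NCSym}$ to get $\mathbf{q}_\pi\mathbf{q}_{\pi'}=\mathbf{q}_{\pi|\pi'}$ directly, and then argue freeness from the unique atomic decomposition of odd set partitions together with the basis result of part~(2). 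Your version is a little more self-contained and avoids the Hilbert-series comparison; it also makes explicit the freeness of the $\mathbf{p}$-generators (shifted concatenation plus Corollary~\ref{cor:p}), which the paper asserts without spelling it out. Both arguments are valid; yours trades the counting lemma for the slightly more concrete observation that monomials in $\{\mathbf{q}_\pi:\pi\text{ connected odd}\}$ sweep out exactly the basis $\{\mathbf{q}_\pi:\pi\text{ odd}\}$.
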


\begin{proof} 
(1) Note that $$\omega(\mathbf{q}_\pi) =\omega(\Theta_{\NCSym}(\bfh_\pi)) =\Theta_{\NCSym}(\omega(\bfh_\pi))=\Theta_{\NCSym}(\bfe_\pi)=\mathbf{q}_\pi.$$

(2) 
By Theorems \ref{thm:NCSym} and \ref{thm:htop},
$$\mathbf{q}_\pi=\sum_{\tau\leq\pi \atop \tau\text{ is odd set partition}}|\mu(\hat{0},\tau)|2^{\ell(\tau)}\mathbf{p}_{\tau}.$$
Observe that for any odd set partition $\tau$, $\mu(\hat{0},\tau)$ is always positive. Hence,
$$\mathbf{q}_\pi=\sum_{\tau\leq\pi \atop \tau\text{ is odd }}\mu(\hat{0},\tau)2^{\ell(\tau)}\mathbf{p}_{\tau}=\Theta_{\NCQSym}(\mathbf{e}_\pi).$$
	 By triangularity we have  the set $\{\mathbf{q}_\pi:\pi\text{ is odd}\}$ forms a linear basis of $\NCGamma$.
	\\
	
(3)	We have that $\{\mathbf{h}_\pi:\pi\text{ is connected set partition}\}$ generates $\NCSym$. Also, $\mathbf{h}_\pi\cdot \mathbf{h}_{\tau}=\mathbf{h}_{\pi|\tau}$. Note that $\pi|\tau$ is an odd set partition if and only if $\pi$ and $\tau$ are odd set partitions. Consequently, the set $\{\mathbf{h}_\pi:\pi\text{ is odd set partition}\}$ can be generated by $\{\mathbf{h}_\pi:\pi\text{ is connected odd set partition}\}$. Since $\Theta_{\NCQSym}$ is an algebra epimorphism, the set $\{\mathbf{q}_\pi:\pi\text{ is connected odd set partition}\}$ generates $\{\mathbf{q}_\pi:\pi\text{ is odd set partition}\}$ and $\NCGamma$.
	
	The set $\{\mathbf{q}_\pi:\pi\text{ is connected odd set partition}\}$ must be algebraically independent because $\{\mathbf{p}_\pi:\pi\text{ is connected odd set partition}\}$ is algebraically independent and these two sets have the same cardinality in every degree. This proves (4).
\end{proof}

\begin{Remark}
The commutative version of Theorem \ref{thm:qschur} is a classical result that can be found in section 3.8 of \cite{M95}.
\end{Remark}

\subsection{Theta maps for noncommutative symmetric functions}\label{sec:NSym}
We define the noncommutative symmetric functions $\NSym$ to be the Hopf subalgebra of $\NCSym$ generated by $H_{n}$ for all $n\geq 1$, where $H_\alpha$ is the dual of $M_\alpha$. Thus we have
$$\NSym=\langle H_n: n\geq 1\rangle.$$ 
Thus, the Hopf algebra $\NSym$ is the dual of $\QSym$. In \cite[Section 3]{ALi}, the authors defined the shuffle basis $\{S_\alpha\}$ for $\QSym$. Let $\mathbf{p}_\alpha$ be the dual of $S_\alpha$. The peak algebra for $\NSym$ is $$\Pi^*=\langle \mathbf{p}_{n}: n\geq 1, \text{$n$ is odd} \rangle,$$ and the  theta map for $\NSym$, $\Theta_\NSym=\Theta_{\QSym}^*$, by \cite[Theorem 3.8]{ALi} is 
$$\begin{array}{cccc}\Theta_{\NSym}: &\NSym &\rightarrow& \Pi^* \\ & \mathbf{p}_{\alpha} & \mapsto & \begin{cases}
2 \mathbf{p}_{\alpha} & \text{if $\alpha$ is odd},\\ 0 & \text{otherwise.}
\end{cases} \end{array}$$

\section{Combinatorial identities and conjectures}\label{Sec:combid}

This section is a preparation for our main result on internal coproduct formula, and it has its own combinatorial interests.

Suppose $\phi=\phi_1|\phi_2|\cdots|\phi_{\ell(\phi)}$ and $\psi=\psi_1|\psi_2|\cdots|\psi_{\ell(\psi)}$. Let $\phi\land\psi$ be the set composition obtained from $(\phi_1\cap \psi_1,\phi_1\cap \psi_2,\dots,\phi_1\cap \psi_{\ell(\psi)},\phi_2\cap \psi_1,\dots,\phi_{\ell(\phi)}\cap \psi_{\ell(\psi)})$ with empty blocks removed.
For two set compositions $\phi,\psi\models[n]$, recall the \emph{refinement order} is given by $\psi\leq\phi$ if the blocks of $\psi$ are union of consecutive blocks of $\phi$.

%
%

We now consider the sub-poset $\{\phi\models[n]:\phi\text{ is odd set composition}\}$, with inherited refinement order $\leq$. In this section, for odd set compositions $\phi,\psi\models[n]$, we let $\mu(\phi,\psi)$ denote the M\"{o}bius function of $(\phi,\psi)$ on this sub-poset.

\begin{Lemma}\label{lem:poset}
	If $\phi\vDash[n]$ is an odd set composition, then the sub-posets $P=\{\psi\leq\phi:\psi\text{ is odd set composition}\}$ is isomorphic to the poset $P'=\{\alpha\vDash \ell(\phi):\alpha\text{ is odd composition}\}$.
\end{Lemma}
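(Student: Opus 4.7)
The plan is to write down the obvious bijection $\Psi \colon P \to P'$ coming from the fact that any $\psi \leq \phi$ is obtained by merging consecutive blocks of $\phi$, and then check (i) that $\Psi$ restricts to odd set compositions $\leftrightarrow$ odd compositions, and (ii) that $\Psi$ is order-preserving.

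\medskip
\noindent\textbf{Step 1: Define the map.} Suppose $\psi \leq \phi$, so each block $\psi_j$ of $\psi$ is a union of consecutive blocks of $\phi$. Then there exist unique indices $0 = i_0 < i_1 < \cdots < i_{\ell(\psi)} = \ell(\phi)$ such that
\[
\psi_j = \phi_{i_{j-1}+1} \cup \phi_{i_{j-1}+2} \cup \cdots \cup \phi_{i_j}, \qquad 1 \leq j \leq \ell(\psi).
\]
Define $\Psi(\psi) = \alpha$, where $\alpha = (\alpha_1,\dots,\alpha_{\ell(\psi)})$ with $\alpha_j = i_j - i_{j-1}$. Then $\alpha$ is a composition of $\ell(\phi)$.

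\medskip
\noindent\textbf{Step 2: Odd $\leftrightarrow$ odd.} Since $\phi$ is an odd set composition, $|\phi_i|$ is odd for every $i$. Hence
\[
|\psi_j| = |\phi_{i_{j-1}+1}| + \cdots + |\phi_{i_j}|
\]
is a sum of $\alpha_j$ odd integers, and therefore $|\psi_j|$ is odd if and only if $\alpha_j$ is odd. It follows that $\psi$ is an odd set composition if and only if $\alpha = \Psi(\psi)$ is an odd composition, so $\Psi$ maps $P$ into $P'$.

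\medskip
\noindent\textbf{Step 3: Bijectivity and order-preservation.} The inverse map sends an odd composition $\alpha = (\alpha_1,\dots,\alpha_k)$ of $\ell(\phi)$ to the set composition $\psi$ whose $j$-th block is $\phi_{i_{j-1}+1} \cup \cdots \cup \phi_{i_j}$, with $i_j = \alpha_1 + \cdots + \alpha_j$; by Step 2 this $\psi$ lies in $P$, and the two constructions are mutually inverse. For order-preservation, suppose $\psi \leq \psi' \leq \phi$ in $P$. Each block of $\psi$ is a union of consecutive blocks of $\psi'$, each of which is in turn a union of consecutive blocks of $\phi$; translating through $\Psi$, this says precisely that $\Psi(\psi)$ is obtained from $\Psi(\psi')$ by summing consecutive parts, i.e. $\Psi(\psi) \leq \Psi(\psi')$ in the refinement order on compositions. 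The same argument with $\Psi^{-1}$ gives the converse, so $\Psi$ is a poset isomorphism.

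\medskip
There is really no obstacle here; the lemma is essentially the statement that the lower interval $[\hat{0},\phi]$ in the set-composition refinement lattice is isomorphic to the refinement lattice of compositions of $\ell(\phi)$, together with the elementary observation that a sum of odd integers is odd iff the number of summands is odd, which translates this isomorphism to the ``odd'' sub-posets on both sides.
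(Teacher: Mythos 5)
Your proof is correct and complete. It takes a slightly different (and cleaner) route than the paper: the paper defines the isomorphism $f\colon P\to P'$ recursively by starting with $f(\phi) = (1,1,\dots,1)$ and then, whenever $\varphi$ is obtained from $\psi$ by merging three consecutive blocks $\psi_i,\psi_{i+1},\psi_{i+2}$ (the covering relation in the odd sub-poset), declaring that $f(\varphi)$ is obtained from $f(\psi)$ by merging the corresponding three consecutive parts. The paper then asserts without further argument that this is a well-defined bijection preserving the order. Your approach instead writes down the map $\Psi$ in closed form via the indices $i_j$ and records the part sizes $\alpha_j = i_j - i_{j-1}$ directly; this makes well-definedness immediate (no need to check independence of the chain of coverings used to build it) and reduces the odd-to-odd correspondence to the observation that a sum of odd integers is odd iff the number of summands is odd, which you state explicitly in Step~2. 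The two constructions produce the same isomorphism; your version is simply the unwound, non-recursive description of it, and arguably fills in a minor gap (well-definedness of the recursion) that the paper leaves to the reader.
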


\begin{proof}
	We construct a map $f:P\to P'$ such that $\ell(\psi)=\ell(f(\psi))$ recursively as follows. First, define $f(\phi)=\hat{1}=(1,1,\dots,1)$. Suppose we have already defined $f(\psi)=\alpha$ with $\psi=\psi_1|\psi_2|\cdots|\psi_{\ell(\psi)}$ and $\alpha=\alpha_1\alpha_2\cdots\alpha_{\ell(\psi)}$. If $\varphi\in P$ is obtained from $\psi$ by merging $\psi_i,\psi_{i+1},\psi_{i+2}$, then we define $f(\varphi)$ to be obtained from $\alpha$ by merging $\alpha_i,\alpha_{i+1},\alpha_{i+2}$. Then $f(\varphi)\in P'$ and $\ell(\psi)=\ell(f(\psi))$. Clearly, $f$ is a bijection that preserves the partial order.
\end{proof}

From the above lemma, we can conclude that $$\displaystyle \sum_{\psi\leq\phi \atop \psi\text{ is odd}}\mu(\psi,\phi)$$ depends only on the length of $\phi$. We define $\mu_\ell$ to be the sum above for any $\phi$ with $\ell(\phi)=\ell$. 
If $\ell$ is odd, then there is a unique minimum element among the summands, namely $\{[\ell]\}$. Hence, $\mu_1=1$ and $\mu_\ell=0$ for $\ell>1$. For any odd composition $\beta\leq\alpha$, it is also consistent to write $\mu(\beta,\alpha)=\mu(\psi,\phi)$ when $\alpha(\psi)=\beta$, $\alpha(\phi)=\alpha$ and $\psi\leq\phi$.

\begin{Remark}
	It has been shown in \cite[Corollary 1.4 and Theorem 2.5]{C15}  that $\mu(\hat{0},\alpha)$ and $\mu_\ell$ are signed Catalan numbers. More precisely, let $\Cat_n=\frac{1}{n+1}\binom{2n}{n}$ be the $n$-th Catalan number. If $\alpha\models[n]$ is odd and $n$ is odd, we have
	$$\mu(\hat{0},\alpha)=(-1)^{(\ell(\alpha)-1)/2}\Cat_{(\ell(\alpha)-1)/2}$$
	Moreover, if $\ell$ is even, then $\mu_\ell=(-1)^{\ell/2-1}\Cat_{\ell/2-1}$.
\end{Remark}

\begin{Lemma}\label{lem:case1}
	For any $n$, we have
	$$\sum_{\alpha\vDash n \atop \alpha\text{ is odd}}\mu_{\ell(\alpha)}=1.$$
\end{Lemma}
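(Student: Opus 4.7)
The plan is to rewrite the target sum as a double Möbius sum on the poset of odd compositions of $n$, then swap the order of summation and invoke the standard defining identity of the Möbius function for a poset with a maximum element.

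Let $Q_n$ denote the poset of odd compositions of $n$ under the inherited refinement order (so $\beta \le \alpha$ means $\beta$ is coarser than $\alpha$). The all-ones composition $\hat{1} = (1,1,\ldots,1)$ is always odd, and it is the unique maximum of $Q_n$ since every odd composition of $n$ is coarser than $(1,\ldots,1)$. By the definition of $\mu_\ell$, together with the fact (Lemma \ref{lem:poset}) that the value depends only on $\ell(\alpha)$, for every $\alpha \in Q_n$ one has
$$\mu_{\ell(\alpha)} = \sum_{\beta \in Q_n,\, \beta \le \alpha} \mu(\beta,\alpha).$$
Substituting this expression into the left-hand side of the claim and swapping the order of summation, the identity to prove becomes
$$\sum_{\alpha \in Q_n} \sum_{\beta \in Q_n,\, \beta \le \alpha} \mu(\beta,\alpha) = \sum_{\beta \in Q_n} \sum_{\alpha \in Q_n,\, \alpha \ge \beta} \mu(\beta,\alpha) = 1.$$

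The key ingredient is now the standard defining identity of the Möbius function: in any finite poset with maximum $\hat{1}$, for every $\beta$,
$$\sum_{\alpha \ge \beta} \mu(\beta,\alpha) = [\beta = \hat{1}].$$
Applying this to $Q_n$ with $\hat{1} = (1,\ldots,1)$, the outer sum collapses to $\sum_{\beta \in Q_n} [\beta = (1,\ldots,1)] = 1$, as required.

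There is essentially no obstacle. The only small point to verify is that the Möbius function $\mu(\beta,\alpha)$ on odd compositions (defined by the paper via the order ideal $\{\psi \le \phi : \psi \text{ odd}\}$ under a fixed $\phi$ through Lemma \ref{lem:poset}) coincides with the Möbius function of the global poset $Q_n$; this is automatic because Möbius values depend only on the interval $[\beta,\alpha]$, which is the same in both settings. A quick sanity check at $n=4$ gives $2\mu_2 + \mu_4 = 2(1) + (-1) = 1$, consistent with the signed Catalan formula in the preceding remark, and the argument avoids any reliance on that formula.
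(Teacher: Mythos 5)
Your proof is correct and is essentially identical to the paper's own argument: you unfold $\mu_{\ell(\alpha)}$ by its definition as a sum of Möbius values, swap the order of summation, and then invoke the defining property of the Möbius function on the poset of odd compositions, which has unique maximum $(1,1,\dots,1)$, so that the inner sum vanishes unless $\beta = \hat{1}$.
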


\begin{proof}
	By definition, we can rewrite the left-hand side as
	$$\sum_{\alpha\vDash  n \atop \alpha\text{ is odd}}\sum_{\beta\leq\alpha \atop \beta\text{ is odd}}\mu(\beta,\alpha)=\sum_{\beta\vDash n \atop \beta\text{ is odd}}\sum_{\beta\leq\alpha \atop \alpha\text{ is odd}}\mu(\beta,\alpha).$$
	Since there is a unique maximal element in the poset of odd compositions of $n$, namely $\hat{1}=(1,1,\dots,1)$, for any $\beta\neq \hat{1}$,
	$$\sum_{\beta\leq\alpha \atop \beta\text{ is odd}}\mu(\beta,\alpha)=0.$$
	Hence, the expression can be further reduced to $\mu(\hat{1},\hat{1})=1$.
\end{proof}

For a set composition $\phi=\phi_1|\phi_2|\cdots|\phi_{\ell(\phi)}$ and a set composition $\psi\leq\phi$, we define $$\mathscr{C}_\phi^\psi=\{\phi_{\tau(1)}|\phi_{\tau(2)}|\cdots|\phi_{\tau(\ell(\phi))}:\tau\in\mathfrak{S}_{\ell(\phi)},\psi\land \left(\phi_{\tau(1)}|\phi_{\tau(2)}|\cdots|\phi_{\tau(\ell(\phi))}\right)=\phi\}.$$
For example, if $\phi=3|1|4|2$ and $\psi=3|124$, then $\mathscr{C}_\phi^\psi=\{3|1|4|2,1|3|4|2,1|4|3|2,1|4|2|3\}$.

\begin{Proposition}\label{lem:int1}
	Let $\phi$ be a set composition and $\psi\leq\phi$. Then for a set compostition $\varphi$, we have $\psi\leq\psi\land\varphi\leq\phi$ if and only if $\varphi\leq\xi$ for some $\xi\in\mathscr{C}_\phi^\psi$.
\end{Proposition}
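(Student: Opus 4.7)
The plan is to first observe that $\psi\leq\psi\land\varphi$ holds for every $\varphi$, because by the definition of $\land$ the blocks of $\psi\land\varphi$ lying in a single ``segment'' indexed by $\psi_k$ appear consecutively and have union $\psi_k$. Hence the two-sided inequality $\psi\leq\psi\land\varphi\leq\phi$ reduces to the single condition $\psi\land\varphi\leq\phi$. Unpacking this refinement yields the following combinatorial statement: every block $\phi_i$ is contained in some (necessarily unique) $\varphi_{j(i)}$, and within each $\psi_k$-segment of $\phi$ (which is a consecutive substring of $\phi$, since $\psi\leq\phi$) the blocks $\phi_i$ are grouped by the value $j(i)$, with the groups appearing in the natural increasing order of $j$. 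On the other side, membership of $\xi$ in $\mathscr{C}_\phi^\psi$ says $\xi$ permutes the blocks of $\phi$ while preserving their relative $\phi$-order within each $\psi_k$, and the additional condition $\varphi\leq\xi$ requires that the blocks of $\xi$ sharing a $\varphi_j$ are consecutive in $\xi$ and arranged in the natural order of $j$.

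For the direction $(\Leftarrow)$, suppose $\xi\in\mathscr{C}_\phi^\psi$ and $\varphi\leq\xi$. Each $\phi_i$, being a block of $\xi$, lies in a unique $\varphi_{j(i)}$. Because $\xi$ refines $\varphi$ globally, the $\varphi_j$-groups remain consecutive and in increasing order of $j$ when restricted to the $\psi_k$-subsequence of $\xi$. Since by assumption this $\psi_k$-subsequence of $\xi$ agrees (in relative order) with the $\psi_k$-segment of $\phi$, the same grouping structure holds in $\phi$, which is precisely the characterization of $\psi\land\varphi\leq\phi$ given above.

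For the direction $(\Rightarrow)$, given $\psi\land\varphi\leq\phi$, I construct the required $\xi$ explicitly by listing the blocks of $\phi$ grouped by $j(i)$: first all $\phi_i\subseteq\varphi_1$ in their $\phi$-order, then all those contained in $\varphi_2$ in their $\phi$-order, and so on. By construction $\varphi\leq\xi$. To verify $\xi\in\mathscr{C}_\phi^\psi$, I must check that within each $\psi_k$ the blocks of $\xi$ appear in the same relative order as in $\phi$: in $\xi$ they appear grouped by $j(i)$ in increasing order with $\phi$-order preserved inside each $j$-group, and the assumption $\psi\land\varphi\leq\phi$ says that within $\psi_k$ the blocks of $\phi$ already exhibit exactly this grouping. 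Hence the two relative orders coincide and $\xi$ is the desired element. The only real obstacle is notational bookkeeping of the two nested groupings (by $\psi$-blocks and by $\varphi$-blocks); once the unpackings of the two refinement conditions in the first paragraph are made explicit, both directions amount to matching them.
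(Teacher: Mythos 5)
Your proof is correct and follows essentially the same approach as the paper: in the forward direction you build $\xi$ by the same explicit construction (list the blocks of $\phi$ grouped by which block of $\varphi$ they lie in, preserving $\phi$-order within each group), and verify $\xi\in\mathscr{C}_\phi^\psi$ by matching the relative orders within each $\psi_k$, just as the paper does. The only small difference is in the converse: the paper observes abstractly, via covering relations, that $\varphi\leq\xi$ forces $\psi\land\varphi\leq\psi\land\xi=\phi$, whereas you argue directly that the non-decreasing $\varphi$-group indices along $\xi$ pass to the $\psi_k$-subsequence and hence to $\phi$; both are valid and equivalent in substance.
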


\begin{proof}
	The fact that $\psi\leq\psi\land\varphi$ is obvious. Let $\phi=\phi_1|\phi_2|\cdots|\phi_\ell(\phi)$, $\psi\land\varphi=C_1|C_2|\cdots|C_k$ and $\varphi=\varphi_1|\varphi_2|\cdots|\varphi_{\ell(\varphi)}$.

	If $\psi\land\varphi\leq\phi$, then each block $\phi_i$ of $\phi$ is contained in some block $C_j$ of $\psi\land\varphi$ which is contained in some block $\psi_m$ of $\varphi$. Let $\xi\geq\varphi$ obtained from $\varphi$ by decomposing each block $\psi_m$ into $\phi_{m_1}|\phi_{m_2}|\cdots|\phi_{m_q}$ such that $m_1<m_2<\cdots<m_q$.
	
	Then, $\psi\land\xi$ is obtained by decomposing each block $C_j=A\cap \psi_m$ for some block $A$ of $\psi$ into $A\cap \phi_{m_1}|A\cap \phi_{m_2}|\cdots|A\cap \phi_{m_q}$. Each block $A\cap \phi_{m_i}$ is either $\phi_{m_i}$ or $\emptyset$.
	
	Assume that $\psi\land\xi=\phi_{\tau(1)}|\phi_{\tau(2)}|\cdots|\phi_{\tau(\ell(\phi))}.$
	\begin{itemize}
		\item If $\phi_{\tau(i)}\subseteq C_j$, $\phi_{\tau(i+1)}\subseteq C_{j+1}$ for some $j$, then $\tau(i)<\tau(i+1)$ since $\psi\land\varphi\leq\phi$.
		\item If $\phi_{\tau(i)},\phi_{\tau(i+1)}\subset C_j$ for some $j$, then $\tau(i)=m_s$, $\tau(i+1)=m_t$ where $\phi_{m_s},\phi_{m_t}\subset \psi_m$ for some $m$ and $s<t$. By construction, $m_s<m_t$ and $\tau(i)<\tau(i+1)$.
	\end{itemize}
	Therefore, $\tau$ is the identity permutation and $\psi\land\xi=\phi$, and so $\xi\in\mathscr{C}_\phi^\psi$.
	
	The converse can be seen from covering relations. If $\varphi$ is obtained from $\varphi'$ by merging two blocks $A_1$ and $A_2$ of $\varphi'$, then $\psi\land\varphi$ is obtained from $\psi\land\varphi'$ by merging $A\cap A_1$ and $A\cap A_2$ for each block $A$ of $\psi$. Hence, $\varphi\leq\xi$ implies $\psi\land\varphi\leq\psi\land\xi=\phi$.
\end{proof}

Let $$\mathscr{D}_\phi^\psi = \{\chi\leq\xi:\xi\in\mathscr{C}_\phi^\psi,\chi\text{ is odd set composition}\}=\{\chi:\psi\land\chi\leq\phi,\chi\text{ is odd set composition}\}.$$

For example, take $\phi=3|1|4|2$ and $\psi=3|124$, then $\mathscr{C}_\phi^\psi=\{3|1|4|2,1|3|4|2,1|4|3|2,1|4|2|3\}$ and $\mathscr{D}_\phi^\psi=\{3|1|4|2,1|3|4|2,1|4|3|2,1|4|2|3,134|2,3|124,1|234,124|3\}$.

The following theorem is the most technical part of this section.

\begin{Theorem}\label{lem:main}
	Let $\phi\vDash[n]$ be an odd set composition and $\psi\leq\phi$. Then,
	\begin{equation}\label{eq:1}
		\sum_{\chi\in\mathscr{D}_\phi^\psi}2^{\ell(\psi\land\chi)-\ell(\chi)}\mu_{\ell(\chi)}=2^{\ell(\psi)-1}.
	\end{equation}
\end{Theorem}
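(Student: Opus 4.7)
The plan is to parameterize each $\chi\in\mathscr{D}_\phi^\psi$ combinatorially and then recognize the weighted sum on the left of \eqref{eq:1} as a coefficient extraction from an explicit rational generating function. For each $j\in[\ell(\psi)]$, write $k_j$ for the number of $\phi$-blocks contained in $\psi_j$. The condition $\psi\land\chi\leq\phi$ forces the restriction $\chi|_{\psi_j}$ to be $\leq\phi|_{\psi_j}$, equivalently to be recorded by a composition $\alpha^{(j)}\vDash k_j$ grouping the $\phi$-blocks of $\psi_j$ into consecutive intervals. Setting $L=\ell(\chi)$, the assignment of each block of $\chi|_{\psi_j}$ to the $\chi$-block it lies in is recorded by a subset $S_j\subseteq[L]$ of size $\ell(\alpha^{(j)})$, read in increasing order along row $j$; globally we require $\bigcup_j S_j=[L]$ so that every $\chi$-block is non-empty. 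Since each $\phi$-block has odd size, the parity of $|C_m|$ equals that of $\sum_{j\in T_m}\alpha^{(j)}_{\mathrm{rank}_{S_j}(m)}$, where $T_m=\{j:m\in S_j\}$, so ``$\chi$ is odd'' becomes the condition that every column sum of the resulting $\ell(\psi)\times L$ cell table is odd.

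Using the factorization $2^{\ell(\psi\land\chi)-\ell(\chi)}=\prod_{m=1}^{L}2^{|T_m|-1}$, the left-hand side of \eqref{eq:1} rewrites as
\[
\left[u_1^{k_1}\cdots u_{\ell(\psi)}^{k_{\ell(\psi)}}\right]\sum_{L\geq 1}\mu_L\,p(u_1,\dots,u_{\ell(\psi)})^L,
\]
where the variable $u_j$ tracks the row-$j$ sum $k_j$ and $p(u)$ is the per-column generating function obtained by summing, over non-empty $T\subseteq[\ell(\psi)]$ and over cell values $c_j\geq 1$ for $j\in T$ with $\sum_{j\in T}c_j$ odd (enforced by applying the averaging operator $\tfrac{1}{2}(f(t)-f(-t))$ at $t=1$), the product $2^{|T|-1}\prod_{j\in T}u_j^{c_j}$. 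A direct computation collapses $p(u)$ to
\[
p(u)=\frac{V(u)-V(u)^{-1}}{4},\qquad V(u)=\prod_{j=1}^{\ell(\psi)}\frac{1+u_j}{1-u_j}.
\]

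To evaluate $\sum_{L\geq 1}\mu_L\,p^L$, I would use the closed form $\sum_{L\geq 1}\mu_L\,y^L=y+\tfrac{1}{2}(\sqrt{1+4y^2}-1)$, which follows from the Catalan-number values $\mu_1=1$, $\mu_{2m}=(-1)^{m-1}C_{m-1}$, $\mu_L=0$ for odd $L>1$ (stated in the Remark preceding the theorem) combined with $\sum_{m\geq 0}C_m x^m=(1-\sqrt{1-4x})/(2x)$. The crucial algebraic simplification is $1+4p^2=(V+V^{-1})^2/4$, so $\sqrt{1+4p^2}=(V+V^{-1})/2$ and therefore $\sum_{L\geq 1}\mu_L\,p^L=\tfrac{1}{2}(V-1)$. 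Since $[u_j^{k_j}]\tfrac{1+u_j}{1-u_j}=2$ for every $k_j\geq 1$, we obtain $[u_1^{k_1}\cdots u_{\ell(\psi)}^{k_{\ell(\psi)}}]V=2^{\ell(\psi)}$, while the constant term of $V$ does not contribute because each $k_j\geq 1$; thus the sum in \eqref{eq:1} equals $2^{\ell(\psi)-1}$, as desired.

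The main obstacle will be rigorously justifying the initial parameterization, in particular the equivalence between $\psi\land\chi\leq\phi$ and the per-row conditions $\chi|_{\psi_j}\leq\phi|_{\psi_j}$, which hinges on tracking the block-order of $\psi\land\chi$ and using $\psi\leq\phi$ to match $\phi$-order globally. A secondary difficulty is the generating-function manipulation, where the factor $2^{|T|-1}$ and the column-oddness constraint must be packaged carefully so that the identity $\sqrt{1+4p^2}=(V+V^{-1})/2$ emerges and collapses the entire sum to the clean answer $2^{\ell(\psi)-1}$, which notably depends on neither $\phi$ nor the individual $k_j$ beyond $\ell(\psi)$.
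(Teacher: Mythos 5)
Your proof is correct, and it takes a genuinely different route from the paper's. The paper first reduces to $\phi=1|2|\cdots|n$, dispatches the case of odd $n$ and the base case $\psi=\{[n]\}$ via its Lemma~\ref{lem:case1}, and then runs an induction on $n$: it compares $\psi$ with the composition $\psi'$ obtained by merging a block of size $m$ with an adjacent block of size $1$, uses a cyclic group action $\sigma(\chi)$ on the blocks of $\chi$ to match up contributions, sorts the terms into classes $A^t,B^t$, and finishes with an equivalence-class/binomial-coefficient computation over compositions. Your argument instead parameterizes each $\chi\in\mathscr{D}_\phi^\psi$ by an $\ell(\psi)\times L$ cell table (columns $=$ blocks of $\chi$, rows $=$ blocks of $\psi$, nonempty columns, odd column sums, row-$j$ sum $k_j$), packages the weight $2^{\ell(\psi\land\chi)-\ell(\chi)}\mu_{\ell(\chi)}$ as a coefficient of $\sum_L\mu_L\,p(u)^L$, computes $p(u)=\tfrac14(V-V^{-1})$ with $V=\prod_j\tfrac{1+u_j}{1-u_j}$, and collapses everything via the algebraic miracle $\sqrt{1+4p^2}=\tfrac12(V+V^{-1})$ to get $\tfrac12(V-1)$, whence coefficient extraction yields $2^{\ell(\psi)-1}$. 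Your approach requires the closed Catalan-number formula for $\mu_\ell$ (quoted in the Remark after Lemma~\ref{lem:poset}), whereas the paper's induction gets by with only the vanishing of $\mu_\ell$ for odd $\ell>1$ and Lemma~\ref{lem:case1}; however, this is not a real extra input, since Lemma~\ref{lem:case1} itself translates to $F\bigl(\tfrac{x}{1-x^2}\bigr)=\tfrac{x}{1-x}$ for $F(y)=\sum_\ell\mu_\ell y^\ell$, which after inverting the substitution gives exactly $F(y)=y+\tfrac12(\sqrt{1+4y^2}-1)$, so your generating function can be re-derived from the paper's own Lemma~\ref{lem:case1} if one wishes to avoid citing \cite{C15}. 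Your route is shorter and structurally transparent, turning the identity into a rational-function computation and making the independence of the answer from $\phi$ and the individual $k_j$ manifest; the paper's route is more elementary and stays purely within poset/permutation combinatorics. The one step you should spell out carefully in a polished write-up is the equivalence $\psi\land\chi\leq\phi \iff \chi|_{\psi_j}\leq\phi|_{\psi_j}$ for all $j$, which you flag yourself; since $\psi\leq\phi$ makes the $\psi$-blocks consecutive chunks of $\phi$-blocks and $\land$ orders the intersections with outer index $j$ and inner index $m$, the block-order condition decomposes row by row exactly as you claim, so the gap is only expository.
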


\begin{proof}
	First observe that for any set composition appearing on the left-hand side, each of its blocks is a union of blocks of $\phi$. Therefore, the exact contents of the blocks in $\phi$ play no role, as long as the sizes are odd. We only need to prove the case  $\phi=1|2|\cdots|n$.
	
	If $n$ is odd, then $\ell(\chi)$ is odd for all $\chi\in\mathscr{D}_\phi^\psi$. Hence, $\mu_{\ell(\chi)}=0$ unless $\chi=\{[n]\}$. The equality holds since $\psi\land\{[n]\}=\psi$.
	
	Assume $n$ is even. We begin with a special case. If $\psi=\{[n]\}$, then $\mathscr{C}_\phi^\psi=\{\phi\}$ and the poset $\mathscr{D}_\phi^\psi$ is isomorphic to the poset $\{\alpha\vDash n:\alpha\text{ is odd composition}\}$. Furthermore, $\psi\land\chi=\chi$ for all $\chi\vDash[n]$. Hence, the equality holds by Lemma \ref{lem:case1}.

	We prove Equation \ref{eq:1} by induction on $n$. The base case $n=2$ can be easily checked and we omit it. Assume the equality holds for $n<\ell$. Let $\phi=1|2|\cdots|\ell$. 
	
	Let $\psi=\psi_1|\psi_2|\cdots|\psi_{\ell(\psi)}$ with $\psi_i=\{j+1,j+2,\dots,j+m\}$ and $\psi_{i+1}=\{j+m+1\}$, and let $\psi'=\psi_1|\cdots|\psi_{i-1}|\psi_i\cup \psi_{i+1}|\psi_{i+2}|\cdots|\psi_{\ell(\psi)}$. That is, $\psi'$ is obtained from $\psi$ by merging a block of size $m$ and a block of size $1$. Then, it suffices to show that $$\displaystyle\sum_{{\chi\in\mathscr{D}_\phi^\psi}}2^{\ell(\psi\land\chi)-\ell(\chi)}\mu_{\ell(\chi)}=2^{\ell(\psi)-1} \text{~if and only if~} \displaystyle\sum_{{\chi\in\mathscr{D}_\phi^{\psi'}}}2^{\ell(\psi'\land\chi)-\ell(\chi)}\mu_{\ell(\chi)}=2^{\ell(\psi')-1}$$ since all set compositions $\varphi\leq\phi$ can be obtained by a series of such processes from the base case $\{[n]\}$.
	
	Suppose $\chi=\chi_1|\cdots|\chi_{\ell(\chi)}\in\mathscr{D}_\phi^\psi$ and $\sigma\in\mathfrak{S}_{\ell(\chi)}$. We now define $\sigma(\chi)$ to be the set composition obtained from $\chi_{\sigma(1)}|\cdots|\chi_{\sigma({\ell(\chi)})}$ by \emph{renumbering} each block $\psi_r$ of $\psi$ increasingly, that is, if $\psi_r=\{a_1<\cdots<a_p\}$ and $\chi_{\sigma(1)}|\cdots|\chi_{\sigma({\ell(\chi)})}=\SetComp(A,\tau)$, then $\sigma(\chi)=\SetComp(A,\tau')$ where $\tau'$ is the permutation with $\tau'^{-1}(a_1)<\cdots<\tau'^{-1}(a_p)$, and $\tau^{-1}(q)=\tau'^{-1}(q)$ for $q\notin\psi_r$. For example, if $\psi=146|35|2$, $\chi=134|256$ and $\sigma=21$, then $\chi_{\sigma(1)}|\chi_{\sigma(2)}=256|134$, and we need to renumber $\{1,4,6\}$ and $\{3,5\}$ increasingly. Hence, $\sigma(\chi)=123|456$.
	
	Clearly, $\sigma(\chi)$ is an odd set composition. The renumbering ensures that $\psi\land\sigma(\chi)\leq\phi$. Therefore, $\sigma(\chi)\in\mathscr{D}_\phi^\psi$. Moreover, $\ell(\psi\land\chi)-\ell(\chi)=\ell(\psi\land\sigma(\chi))-\ell(\sigma(\chi))$ because 
	$$\ell(\psi\land\chi)=\sum_{1\leq p\leq {\ell(\chi)}}|\{1\leq r\leq {\ell(\psi)}:\chi_p\cap \psi_r\neq\emptyset\}|$$
	and $\{1\leq r\leq {\ell(\psi)}:\chi_p\cap \psi_r\neq\emptyset\}=\{1\leq r\leq {\ell(\psi)}:\chi_{\sigma(p)}\cap \psi_r\neq\emptyset\}$. For simplicity, we write $c_\chi^\psi=2^{\ell(\psi\land\chi)-\ell(\chi)}\mu_{\ell(\chi)}$. Then we have $c_\chi^\psi=c_{\sigma(\chi)}^\psi$.
	
	We partition $\{\chi\in\mathscr{D}_\phi^\psi\}$ into classes according to $\chi|_{\psi_i}$ and $\chi|_{\psi_i\cup \psi_{i+1}}$. For any $1\leq t\leq m$, let $A^t=\{\chi\in\mathscr{D}_\phi^\psi:\ell(\chi|_{\psi_i})=\ell(\chi|_{\psi_i\cup \psi_{i+1}})-1=t\}$ and $B^t=\{\chi\in\mathscr{D}_\phi^\psi:\ell(\chi|_{\psi_i})=\ell(\chi|_{\psi_i\cup \psi_{i+1}})=t\}$.
	
	Assume $\chi=\chi_1|\cdots|\chi_{\ell(\chi)}\in\mathscr{D}_\phi^\psi$ and let $\{1\leq p\leq {\ell(\chi)}:\chi_p\cap \psi_i\neq\emptyset\}=\{p_1<\dots<p_t\}$. If $\chi\in A^t$, let $\psi_{i+1}\subseteq \chi_{p_{t+1}}$, then there are $t+1$ permutations in the subgroup of $\mathfrak{S}_{\ell(\chi)}$ generated by the $(t+1)$-cycle $(p_1,p_2,\dots,p_{t+1})$. Moreover, for each such permutation $\sigma$, we have $c_{\sigma(\chi)}^\psi=c_\chi^\psi=c_\chi^{\psi'}$. Among these cyclic permutations, there is a unique permutation $\sigma$ making $\sigma(p_{t+1})=\max\{p_1,p_2,\dots,p_{t+1}\}$, in which case $\sigma(\chi)\in  \mathscr{D}_\phi^{\psi'}$. Therefore, $$\displaystyle\sum_{\chi\in A^t}c_\chi^\psi=(t+1)\sum_{\chi\in A^t\cap\mathscr{D}_\phi^{\psi'}}c_\chi^{\psi'}.$$
	
	Similarly, if $\chi\in B^t$, then there are $t$ permutations in the subgroup of $\mathfrak{S}_{\ell(\chi)}$ generated by the $t$-cycle $(p_1,p_2,\dots,p_t)$. Moreover, for each such permutation $\sigma$, we have $c_{\sigma(\chi)}^\psi=c_\chi^\psi=2c_\chi^{\psi'}$. Among these cyclic permutaitons,  there is a unique permutation $\sigma$ making $\psi_{i+1}\subseteq \chi_{p_t}$, in which case $\sigma(\chi)\in \mathscr{D}_\phi^{\psi'}$. Therefore, $$\displaystyle\sum_{\chi\in B^t}c_\chi^\psi=2t\sum_{\chi\in B^t\cap\mathscr{D}_\phi^{\psi'}}c_\chi^{\psi'}.$$ Combining these two cases yields
	$$\sum_{\chi\in\mathscr{D}_\phi^\psi}c_\chi^\psi=\sum_{\alpha\vDash m}\left((\ell(\alpha)+1)\sum_{\chi\in\mathscr{D}_\phi^{\psi'} \atop \alpha\left(\chi|_{\psi_i\cup \psi_{i+1}}\right)=\alpha\cdot 1}c_\chi^{\psi'}+2\ell(\alpha)\sum_{\chi\in\mathscr{D}_\phi^{\psi'} \atop \alpha\left(\chi|_{\psi_i\cup \psi_{i+1}}\right)=\alpha\odot 1}c_\chi^{\psi'}\right).$$
	
	Note that for $\chi\in\mathscr{D}_\phi^{\psi'}$, the order of appearance of $\psi_i\cup \psi_{i+1}$ is fixed. Hence,
	$$\sum_{\chi\in\mathscr{D}_\phi^{\psi'}}c_\chi^{\psi'}=\sum_{\alpha\vDash m}\left(\sum_{\chi\in\mathscr{D}_\phi^{\psi'} \atop \alpha\left(\chi|_{\psi_i\cup \psi_{i+1}}\right)=\alpha\cdot 1}c_\chi^{\psi'}+\sum_{\chi\in\mathscr{D}_\phi^{\psi'} \atop \alpha\left(\chi|_{\psi_i\cup \psi_{i+1}}\right)=\alpha\odot 1}c_\chi^{\psi'}\right).$$
	
	If $m=1$, then $\{\alpha\vDash m\}=\{1\}$ and $$\displaystyle\sum_{\chi\in\mathscr{D}_\phi^{\psi}}c_\chi^{\psi} = 2\sum_{\chi\in\mathscr{D}_\phi^{\psi'}}c_\phi^{\psi'},$$ so we are done.
	
	Assume that $m\geq2$, let $\phi'$ be obtained from $\phi$ by merging $\{j+1\},\{j+2\}$, and $\{j+3\}$. We use again the fact that the contents of blocks in $\phi'$ play no role and the induction hypothesis to get
	$$\sum_{\alpha\vDash m-1}\sum_{\chi\in\mathscr{D}_\phi^{\psi'} \atop \alpha\left(\chi|_{\psi_i\cup \psi_{i+1}}\right)=2\odot\alpha}c_\chi^{\psi'}=\sum_{\chi\in\mathscr{D}_{\phi'}^{\psi'}}c_\chi^{\psi'}=2^{\ell(\psi')-1}.$$
	
	We now claim that
	$$\sum_{\chi\in\mathscr{D}_\phi^\psi}c_\chi^\psi+(m-1)\left(\sum_{\alpha\vDash m-1}\sum_{\chi\in\mathscr{D}_\phi^{\psi'} \atop \alpha\left(\chi|_{\psi_i\cup \psi_{i+1}}\right)=2\odot\alpha}c_\chi^{\psi'}\right)=(m+1)\sum_{\chi\in\mathscr{D}_\phi^{\psi'}}c_\chi^{\psi'}.$$
	
	If the claim were true, we have
	$$\sum_{\chi\in\mathscr{D}_\phi^\psi}c_\chi^\psi+(m-1)2^{\ell(\psi')-1}=(m+1)\sum_{\chi\in\mathscr{D}_\phi^{\psi'}}c_\chi^{\psi'}.$$
	It is clear that  $$\displaystyle\sum_{\chi\in\mathscr{D}_\phi^\psi}c_\chi^\psi=2^{\ell(\psi)-1}\text{~if and only if~}\displaystyle\sum_{\chi\in\mathscr{D}_\phi^{\psi'}}c_\chi^{\psi'}=2^{\ell(\psi')-1}$$ and we are done.
	
	To prove our claim, we define an equivalence relation on $\{\alpha\vDash m+1\}\cup\{\alpha\vDash m-1\}$ by $\alpha$ is equivalent to $\beta=(\beta_1,\dots,\beta_t)$ if
	\begin{enumerate}
		\item $\alpha=(\beta_{\sigma(1)},\dots,\beta_{\sigma(t)})$ for some $\sigma\in\mathfrak{S}_t$, or
		\item $\beta\vDash m-1$ and $\alpha=(\beta_1+2,\beta_2,\dots,\beta_t)$,
	\end{enumerate}
	and take the transitive closure. For example, take $m=6$, then the equivalence class containing $32$ is $\{32,23,52,25,43,34,41,14,61,16\}$. Equivalently, suppose $\alpha$ consists of $a$ odd parts and $b$ even parts, then the equivalence class of $\alpha$ is the set of $\beta\vDash m+1$ or $m-1$ that consists of $a$ odd parts and $b$ even parts. We use $E(a,b)$ to denote such an equivalence class.
	
	If $\alpha\vDash m+1$ is equivalent to $\beta\vDash m+1$ satisfying (1), then by the group action,
	$$\sum_{\chi\in\mathscr{D}_\phi^{\psi'} \atop \alpha\left(\chi|_{\psi_i\cup \psi_{i+1}}\right)=\alpha}c_\chi^{\psi'}=\sum_{\chi\in\mathscr{D}_\phi^{\psi'} \atop \alpha\left(\chi|_{\psi_i\cup \psi_{i+1}}\right)=\beta}c_\chi^{\psi'}.$$
	
	If $\alpha\vDash m-1$ is equivalent to $\beta\vDash m-1$ satisfying (1), we claim
	$$\sum_{\chi\in\mathscr{D}_\phi^{\psi'} \atop \alpha\left(\chi|_{\psi_i\cup \psi_{i+1}}\right)=2\odot\alpha}c_\chi^{\psi'}=\sum_{\chi\in\mathscr{D}_\phi^{\psi'} \atop \alpha\left(\chi|_{\psi_i\cup \psi_{i+1}}\right)=2\odot\beta}c_\chi^{\psi'}.$$
	For all compositions $\chi$ on both sides, $j+1,j+2$ and $j+3$ appear in the same block of $\chi$. Hence, we can view $\{j+1,j+2,j+3\}$ as a single element and apply the group action.
	
	If $\alpha\vDash m+1$ is equivalent to $\beta\vDash m-1$ satisfying (2), then obviously
	$$\sum_{\chi\in\mathscr{D}_\phi^{\psi'} \atop \alpha\left(\chi|_{\psi_i\cup \psi_{i+1}}\right)=\alpha}c_\chi^{\psi'}=\sum_{\chi\in\mathscr{D}_\phi^{\psi'} \atop \alpha\left(\chi|_{\psi_i\cup \psi_{i+1}}\right)=2\odot\beta}c_\chi^{\psi'}.$$
	
	For each equivalence class $E(a,b)$, let $\alpha\in E(a,b)$ with $\alpha\vDash m+1$, and let $$c(a,b)=\displaystyle\sum_{\chi\in\mathscr{D}_\phi^{\psi'} \atop \alpha\left(\chi|_{\psi_i\cup \psi_{i+1}}\right)=\alpha}c_\chi^{\psi'}.$$ We examine the coefficients of $c(a,b)$ on both sides of the equation.
	
	On the left-hand side, we have
	
	\begin{align*}
		\sum_{a,b}\left(\sum_{\alpha\in E(a,b),\alpha\vDash m+1 \atop \alpha_{a+b}=1}(a+b)+\sum_{\alpha\in E(a,b),\alpha\vDash m+1 \atop \alpha_{a+b}>1}2(a+b)+\sum_{\alpha\in E(a,b) \atop \alpha\vDash m-1}(m-1)\right)c(a,b)
	\end{align*}
	and on the right-hand side, we have
	$$\sum_{a,b}\left(\sum_{\alpha\in E(a,b) \atop \alpha\vDash m+1}(m+1)\right)c(a,b).$$
	
	Recall that the number of compositions of $n$ into $a$ odd parts and $b$ even parts is $\displaystyle\binom{\frac{n+a-2}{2}}{a+b-1}\binom{a+b}{a}$.
	
	\begin{align*}
		&\sum_{\alpha\in E(a,b),\alpha\vDash m+1 \atop \alpha_{a+b}=1}(a+b)+\sum_{\alpha\in E(a,b),\alpha\vDash m+1 \atop \alpha_{a+b}>1}2(a+b)+\sum_{\alpha\in E(a,b) \atop \alpha\vDash m-1}(m-1)\\
		=&\sum_{\alpha\in E(a,b)\atop \alpha\vDash m+1}2(a+b)-\sum_{\alpha\in E(a,b),\alpha\vDash m+1 \atop \alpha_{a+b}=1}(a+b)+\sum_{\alpha\in E(a,b) \atop \alpha\vDash m-1}(m-1)\\
		=&\binom{\frac{m+a-1}{2}}{a+b-1}\binom{a+b}{a}2(a+b)-\binom{\frac{m+a-3}{2}}{a+b-2}\binom{a+b-1}{a-1}(a+b)+\binom{\frac{m+a-3}{2}}{a+b-1}\binom{a+b}{a}(m-1)\\
		=&\binom{\frac{m+a-1}{2}}{a+b-1}\binom{a+b}{a}\left(2(a+b)-\frac{2a(a+b-1)}{m+a-1}+\frac{(m-a-2b+1)(m-1)}{m+a-1}\right)\\
		=&\binom{\frac{m+a-1}{2}}{a+b-1}\binom{a+b}{a}(m+1)=\sum_{\alpha\in E(a,b) \atop \alpha\vDash m+1}(m+1).
	\end{align*}
\end{proof}

From now on for any odd set composition $\phi\vDash [n]$, let $p(\phi)=(n-\ell(\phi))/2$. 

Let $\phi$ be an odd set composition and $\psi\leq\phi$. For each odd set composition $\varphi$ such that $\varphi\leq\xi$ for some $\xi\in\mathscr{C}_\phi^\psi$, we define the coefficient $C_\varphi^{\phi\psi}$ recursively as follows:
$$\sum_{\varphi\leq\zeta\leq\xi\text{ for some }\xi\in\mathscr{C}_\phi^\psi \atop \zeta\text{ is odd}}(-1)^{p(\zeta)}C_\zeta^{\phi\psi}=(-1)^{p(\phi)}2^{\ell(\psi\land\varphi)-\ell(\varphi)}.$$ 
By inclusion-exclusion, we can see that
$$C_\varphi^{\phi\psi}=(-1)^{p(\varphi)+p(\phi)}\sum_{\varphi\leq\zeta\leq\xi\text{ for some }\xi\in\mathscr{C}_\phi^\psi \atop \zeta\text{ is odd}}\mu(\varphi,\zeta)2^{\ell(\psi\land\zeta)-\ell(\zeta)}.$$
In particular, $\varphi\in\mathscr{C}_\phi^\psi$, implies $C_\varphi^{\phi\psi}=1$.

\begin{Corollary}\label{lem:int2}
	Let $\phi$ be an odd set composition and $\psi\leq\phi$. Then for any set composition $\varphi$, not necessarily odd, such that $\varphi\leq\xi$ for some $\xi\in\mathscr{C}_\phi^\psi$, we have
	$$\sum_{\varphi\leq\zeta\leq\xi\text{ for some }\xi\in\mathscr{C}_\phi^\psi \atop \zeta\text{ is odd}}(-1)^{p(\zeta)}C_\zeta^{\phi\psi}=(-1)^{p(\phi)}2^{\ell(\psi\land\varphi)-\ell(\varphi)}.$$
\end{Corollary}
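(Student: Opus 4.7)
The plan is to substitute the explicit Möbius inversion formula for $C_\zeta^{\phi\psi}$ into the LHS and interchange sums. Writing
$$(-1)^{p(\zeta)} C_\zeta^{\phi\psi} = (-1)^{p(\phi)} \sum_{\eta \geq \zeta,\ \eta \in \mathscr{D}_\phi^\psi} \mu(\zeta,\eta)\, 2^{\ell(\psi\land\eta)-\ell(\eta)},$$
(which holds by Möbius inversion on the poset of odd set compositions inside $\mathscr{D}_\phi^\psi$), substituting, and swapping the order of summation yields
$$\text{LHS} = (-1)^{p(\phi)} \sum_{\eta \geq \varphi,\ \eta \in \mathscr{D}_\phi^\psi} 2^{\ell(\psi\land\eta) - \ell(\eta)} \cdot g_\eta(\varphi), \quad\text{where}\quad g_\eta(\varphi) := \sum_{\substack{\varphi \leq \zeta \leq \eta \\ \zeta \text{ odd}}} \mu(\zeta,\eta).$$
For odd $\varphi$ the inner Möbius sum collapses to $\delta_{\varphi,\eta}$, recovering the defining recursion. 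For general $\varphi$ the two remaining steps are (i) factor $g_\eta(\varphi)$, and (ii) factor the outer sum.

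For (i), restricting $\zeta$ to each block $\varphi_i$ gives a poset isomorphism $[\varphi,\eta]^{\text{odd}} \cong \prod_i [\varphi_i,\eta|_{\varphi_i}]^{\text{odd}}$, under which $\mu$ factors as a product. Since $\varphi_i$ is a single block, every refinement of $\varphi_i$ lies above it; by Lemma \ref{lem:poset} and the definition of $\mu_\ell$, one obtains $g_\eta(\varphi) = \prod_{i=1}^{\ell(\varphi)} \mu_{\ell(\eta|_{\varphi_i})}$. For (ii), set $\phi^{(i)} := \phi|_{\varphi_i}$ and $\psi^{(i)} := \psi|_{\varphi_i}$; these are odd set compositions of $\varphi_i$ with $\psi^{(i)} \leq \phi^{(i)}$. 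The claim is that the restriction map $\eta \mapsto (\eta|_{\varphi_i})_i$ yields a bijection $\{\eta \in \mathscr{D}_\phi^\psi : \eta \geq \varphi\} \longleftrightarrow \prod_i \mathscr{D}_{\phi^{(i)}}^{\psi^{(i)}}$. The forward direction is routine ($\eta \leq \xi \in \mathscr{C}_\phi^\psi$ implies $\eta|_{\varphi_i} \leq \xi|_{\varphi_i} \in \mathscr{C}_{\phi^{(i)}}^{\psi^{(i)}}$). The reverse direction is the technical heart: given $\xi^{(i)} \in \mathscr{C}_{\phi^{(i)}}^{\psi^{(i)}}$, their concatenation $\xi := \xi^{(1)}|\xi^{(2)}|\cdots|\xi^{(\ell(\varphi))}$ in $\varphi$'s block order must satisfy $\psi\land\xi = \phi$ (not merely $\leq \phi$). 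Here one uses the standing hypothesis $\varphi \leq \xi'$ for some $\xi' \in \mathscr{C}_\phi^\psi$, which forces the $\phi$-blocks inside each $\psi_j$ to appear in $\phi$'s order grouped by $\varphi_i$-membership; this compatibility, combined with $\psi^{(i)}\land\xi^{(i)} = \phi^{(i)}$, yields $\psi\land\xi = \phi$.

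Combining (i) and (ii) with the additive decomposition $\ell(\psi\land\eta) - \ell(\eta) = \sum_i (\ell(\psi^{(i)}\land\eta|_{\varphi_i}) - \ell(\eta|_{\varphi_i}))$, the sum factors completely:
$$\text{LHS} = (-1)^{p(\phi)} \prod_{i=1}^{\ell(\varphi)} \left( \sum_{\eta_i \in \mathscr{D}_{\phi^{(i)}}^{\psi^{(i)}}} 2^{\ell(\psi^{(i)}\land\eta_i) - \ell(\eta_i)} \mu_{\ell(\eta_i)} \right) = (-1)^{p(\phi)} \prod_{i=1}^{\ell(\varphi)} 2^{\ell(\psi^{(i)})-1},$$
where each factor is evaluated by Theorem \ref{lem:main} applied to $(\phi^{(i)},\psi^{(i)})$. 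Since $\sum_i \ell(\psi^{(i)}) = |\{(i,j):\psi_j \cap \varphi_i \neq \emptyset\}| = \ell(\psi\land\varphi)$, the product equals $2^{\ell(\psi\land\varphi) - \ell(\varphi)}$, matching the desired RHS. The main obstacle is the reverse direction in (ii): without the hypothesis $\varphi \leq \xi'$ for some $\xi' \in \mathscr{C}_\phi^\psi$, local choices need not assemble into a global element of $\mathscr{C}_\phi^\psi$, and the factorization fails.
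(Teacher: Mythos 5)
Your proof follows the same route as the paper's: substitute the Möbius-inversion expression for $C_\zeta^{\phi\psi}$, swap sums, observe that the inner Möbius sum $\sum_{\varphi\leq\zeta\leq\chi,\ \zeta\text{ odd}}\mu(\zeta,\chi)$ factors over the blocks of $\varphi$ into copies of $\mu_\ell$, factor the outer sum into a product of restrictions $(\phi|_{\varphi_j},\psi|_{\varphi_j})$, and apply Theorem \ref{lem:main} to each factor. You are somewhat more explicit than the paper about the bijection underlying the outer factorization and the role of the hypothesis $\varphi\leq\xi'$ (equivalently $\psi\land\varphi\leq\phi$) in making the reverse direction work; the paper asserts the block-by-block decomposition more tersely but the substance is identical.
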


\begin{proof}
	For simplicity, we write $\mathscr{C}=\mathscr{C}_\phi^{\psi}$ and $C_\zeta=C_\zeta^{\phi\psi}$. Assume $\varphi\leq \xi$ for some $\xi\in\mathscr{C}$. Then,
	\begin{align*}
		\sum_{\varphi\leq\zeta\leq\xi\text{ for some }\xi\in\mathscr{C} \atop \zeta\text{ is odd }}(-1)^{p(\zeta)}C_\zeta&=\sum_{\varphi\leq\zeta\leq\xi\text{ for some }\xi\in\mathscr{C} \atop \zeta\text{ is odd}}(-1)^{p(\phi)}\left(\sum_{\zeta\leq\chi\leq\xi\text{ for some }\xi\in\mathscr{C} \atop \chi\text{ is odd}}\mu(\zeta,\chi)2^{\ell(\psi\land\chi)-\ell(\chi)}\right)\\
		&=(-1)^{p(\phi)}\sum_{\varphi\leq\chi\leq\xi\text{ for some }\xi\in\mathscr{C} \atop \chi\text{ is odd}}2^{\ell(\psi\land\chi)-\ell(\chi)}\left(\sum_{\varphi\leq\zeta\leq\chi \atop \zeta\text{ is odd}}\mu(\zeta,\chi)\right).
	\end{align*}
	This can be computed on each block of $\varphi$ separately. Let $\varphi=\varphi_1|\varphi_2|\cdots|\varphi_{\ell(\varphi)}\vDash[n]$ and $\psi=\psi_1|\psi_2|\cdots|\psi_k\vDash [n]$. When $\varphi\leq\chi\leq\xi$ for some $\xi\in\mathscr{C}$ and $\chi$ is odd set composition, then $\chi=\chi_1|\cdots|\chi_{\ell(\chi)}$ such that for all $j$, $\varphi_j=\chi_{i_j+1}\cup \chi_{i_j+2}\cup\cdots\cup \chi_{i_{j+1}}$ for some $i_j,i_{j+1}$.  We have the following.
		\begin{itemize}
		\item Let $a_i=\ell(\psi|_{\chi_i})$. Then, $$\displaystyle\ell(\psi\land\chi)=\sum_{i=1}^{\ell(\chi)} a_i=\sum_{j=1}^{\ell(\varphi)}\sum_{i=i_j+1}^{i_{j+1}}a_i.$$
		\item We have $$\displaystyle\sum_{\varphi\leq\zeta\leq\chi \atop \zeta\text{ is odd}}\mu(\zeta,\chi)=\prod_{j=1}^{\ell(\varphi)}\sum_{\beta\vDash i_{j+1}-i_j \atop \beta\text{ is odd composition}}\mu(\beta,\hat{1})$$ where $\hat{1}=(1,1,\dots,1)$ is the maximal composition.
	\end{itemize}
	
	Now, let $\phi_j=\mathrm{std}(\phi|_{\varphi_j})$ and $\psi_j=\mathrm{std}(\psi|_{\varphi_j})$ where $\mathrm{std}(-)$ gives the unique set composition that preserves the relative order of elements in the blocks. We have
	
	\begin{align*}
		\sum_{\varphi\leq\chi\leq\xi\text{ for some }\xi\in\mathscr{C} \atop \chi\text{ is odd}}2^{\ell(\psi\land\chi)-\ell(\chi)}\left(\sum_{\varphi\leq\zeta\leq\chi \atop \zeta\text{ is odd}}\mu(\zeta,\chi)\right)
		=&\prod_{j=1}^{\ell(\varphi)}\left(\sum_{\chi\leq\xi\text{ for some }\xi\in\mathscr{C}_{\phi_j}^{\psi_j} \atop \chi\text{ is odd}}2^{\ell(\psi_j\land\chi)-\ell(\chi)}\left(\sum_{\zeta\leq\chi \atop \zeta\text{ is odd}}\mu(\zeta,\chi)\right)\right)\\
		=&\prod_{j=1}^{\ell(\varphi)} 2^{\ell(\psi_j)-1}=2^{\ell(\psi\land\varphi)-\ell(\varphi)}.
	\end{align*}
	The second equality follows from Lemma \ref{lem:main}.
\end{proof}

In our computation of the special case $\phi=\psi$, we find the following identity that relates Catalan numbers with Eulerian numbers that we cannot prove. This problem is irrelevant to the main result of this section.

\begin{Conjecture}\label{conj:euler}
	If $n$ is odd, then
	$$\sum_{\phi\vDash[n] \atop \phi\text{ is odd}}2^{n-\ell(\phi)}\mu_{\ell(\phi)+1}=\sum_{0\leq k\leq n-1}(-1)^kE(n,k)$$
	where $E(n,k)$ are the Eulerian numbers i.e. number of permutations of $n$ with $k$ descents. (OEIS A000182 with alternating sign)
\end{Conjecture}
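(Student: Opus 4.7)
The plan is to show that both sides equal $n!\,[x^n]\tanh(x)$ via exponential generating functions; the key observation is that the signed Catalan formula for $\mu_\ell$ recorded in the Remark after Lemma~\ref{lem:poset} feeds directly into the Taylor series of $\tanh$.

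\textbf{Left-hand side.} By that Remark, $\mu_\ell = 0$ for odd $\ell > 1$, so only $\ell(\phi)$ odd contributes; since $n$ is odd, every odd set composition of $[n]$ already has odd length, so nothing is lost. Writing $\ell(\phi)=2k+1$ gives $\mu_{2k+2}=(-1)^k\Cat_k$. The number of odd set compositions of $[n]$ with exactly $\ell$ blocks is $n!\,[x^n]\sinh(x)^\ell$, so
\[
\text{LHS} \;=\; n!\sum_{k\ge 0} 2^{n-2k-1}(-1)^k\Cat_k\,[x^n]\sinh(x)^{2k+1}
\;=\; 2^n n!\,[x^n]\,\frac{\sinh(x)}{2}\,C\!\left(-\frac{\sinh^2(x)}{4}\right),
\]
where $C(y)=\sum_{k\ge 0}\Cat_k y^k = \tfrac{1-\sqrt{1-4y}}{2y}$. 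Using $\cosh^2(x)-\sinh^2(x)=1$ and the half-angle identity $\tfrac{\cosh(x)-1}{\sinh(x)}=\tanh(x/2)$, the bracketed expression collapses to $\tanh(x/2)$. The prefactor $2^n$ is absorbed by $x\mapsto 2x$ (since $[x^n]f(2x)=2^n[x^n]f(x)$), yielding $\text{LHS}=n!\,[x^n]\tanh(x)$.

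\textbf{Right-hand side.} Invoke the classical exponential generating function for the Eulerian polynomials $A_n(t)=\sum_{k=0}^{n-1} E(n,k)t^k$:
\[
\sum_{n\ge 0} A_n(t)\frac{x^n}{n!} \;=\; \frac{t-1}{t-e^{(t-1)x}}.
\]
Setting $t=-1$ gives $\tfrac{-2}{-1-e^{-2x}}=\tfrac{2}{1+e^{-2x}}=1+\tanh(x)$, so for $n\ge 1$,
\[
\text{RHS} \;=\; A_n(-1) \;=\; n!\,[x^n]\tanh(x),
\]
matching the left-hand side.

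\textbf{Main obstacle.} The only computation requiring care is the reduction $\tfrac{\sinh(x)}{2}\,C(-\sinh^2(x)/4)=\tanh(x/2)$; concretely, $C(-\sinh^2(x)/4) = \tfrac{2(\cosh(x)-1)}{\sinh^2(x)}$, after which the half-angle identity finishes the job. No deeper combinatorial argument appears necessary. Both ingredients are classical: the signed Catalan values of $\mu_\ell$ are already cited in the Remark, and the identity $A_n(-1)=n!\,[x^n]\tanh(x)$ is a standard consequence (sometimes attributed to Frobenius) of the Eulerian generating function. The conjecture thus reduces to a purely formal power-series identity, without the need for a bijective or M\"obius-theoretic interpretation.
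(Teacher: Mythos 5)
You should first notice that the paper states this as a \emph{Conjecture}, not a theorem --- the authors offer no proof, so there is no argument to compare against. Your blind generating-function calculation actually closes the gap, and it is correct. Every step checks out: the parity constraint ($n$ odd forces $\ell(\phi)$ odd, so $\mu_{\ell(\phi)+1}=\mu_{2k+2}=(-1)^k\Cat_k$ via the Remark's signed Catalan evaluation) is right; the EGF count $n!\,[x^n]\sinh(x)^\ell$ for ordered odd set compositions with $\ell$ blocks is standard; with $y=-\sinh^2(x)/4$ one has $1-4y=\cosh^2(x)$, so $C(y)=\tfrac{2(\cosh x-1)}{\sinh^2 x}$ and the half-angle identity gives $\tfrac{\sinh x}{2}C(y)=\tanh(x/2)$; the rescaling $2^n[x^n]\tanh(x/2)=[x^n]\tanh(x)$ is correct; and the evaluation $A_n(-1)=n!\,[x^n]\tanh(x)$ for $n\ge 1$ follows from the Eulerian EGF $\tfrac{t-1}{t-e^{(t-1)x}}$ at $t=-1$, which simplifies to $1+\tanh(x)$. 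Numerically, $n=1,3,5$ give $1,-2,16$ on both sides, in agreement. The one presentational remark is that the phrase about ``absorbing the prefactor'' would read more cleanly as: since $\tanh(x/2)=\sum_m c_m 2^{-m}x^m$ where $\tanh(x)=\sum_m c_m x^m$, one has $2^n[x^n]\tanh(x/2)=c_n$. In short, this is a valid proof, relying only on the signed Catalan formula for $\mu_\ell$ (which the Remark attributes to Choi--Park) and the classical Frobenius identity $A_n(-1)=n!\,[x^n]\tanh(x)$, and it resolves the open question as stated.
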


\begin{Corollary}\label{cor:main}
	Let $\phi$ be an odd set composition and $\psi\leq\phi$. Let $\varphi$ be an odd set composition  such that $\varphi\leq\xi$ for some $\xi\in\mathscr{C}_\phi^\psi$. Let $\varphi=\varphi_1|\varphi_2|\cdots|\varphi_{\ell(\varphi)}\vDash[n]$. Let $\phi_j=\mathrm{std}(\phi|_{\varphi_j})$ and $\psi_j=\mathrm{std}(\psi|_{\varphi_j})$ where $\mathrm{std}(-)$ gives the unique set composition that preserves the relative order of elements in the block. We have
	$$C_\varphi^{\phi\psi}=(-1)^{p(\varphi)+p(\phi)}\prod_{j=1}^{\ell(\varphi)}\left(\sum_{\zeta\leq\xi\text{ for some }\xi\in\mathscr{C}_{\phi_j}^{\psi_j} \atop \zeta\text{ is odd}}2^{\ell(\psi_j\land\zeta)-\ell(\zeta)}\mu_{\ell(\zeta)+1}\right).$$
\end{Corollary}

\begin{proof}
	Recall that we have
	$$C_\varphi^{\phi\psi}=(-1)^{p(\varphi)+p(\phi)}\sum_{\varphi\leq\zeta\leq\xi\text{ for some }\xi\in\mathscr{C}_\phi^\psi \atop \zeta\text{ is odd}}\mu(\varphi,\zeta)2^{\ell(\psi\land\zeta)-\ell(\zeta)}.$$
	This can be compute on each block of $\varphi$ separately and we have
	\begin{align*}
		\sum_{\varphi\leq\zeta\leq\xi\text{ for some }\xi\in\mathscr{C}_\phi^\psi \atop \zeta\text{ is odd}}\mu(\varphi,\zeta)2^{\ell(\psi\land\zeta)-\ell(\zeta)}=\prod_{j=1}^{\ell(\varphi)}\left(\sum_{\zeta\leq\xi\text{ for some }\zeta\in\mathscr{C}_{\phi_j}^{\psi_j} \atop \zeta\text{ is odd}}\mu(\hat{0},\zeta)2^{\ell(\psi_i\land\zeta)-\ell(\zeta)}\right).
	\end{align*}
	Finally, $\mu(\hat{0},\zeta)=\mu_{\ell(\zeta)+1}=(-1)^{(\ell(\zeta)-1)/2}\Cat_{(\ell(\zeta)-1)/2}$ yields the result.
\end{proof}

\begin{Conjecture}
	The coefficients $C_\varphi^{\phi\psi}$ are non-negative. 
\end{Conjecture}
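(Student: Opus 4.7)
The plan is to first prove the product formula stated (without proof) in the Remark that immediately precedes the conjecture, and then to use it to reduce the non-negativity claim to a single-block calculation. To establish the product formula, I would take the inclusion-exclusion expression
\[ C_\varphi^{\phi\psi}=(-1)^{p(\varphi)+p(\phi)}\sum_{\varphi\leq\zeta\leq\xi\text{ for some }\xi\in\mathscr{C}_\phi^\psi,\,\zeta\text{ odd}}\mu(\varphi,\zeta)\,2^{\ell(\psi\land\zeta)-\ell(\zeta)}, \]
observe, as in the proof of Corollary \ref{lem:int2}, that any such $\zeta$ splits canonically as a concatenation of odd set compositions of each block $\varphi_j$ (with the blocks drawn from $\std(\phi|_{\varphi_j})$), and exploit the fact that $\ell(\psi\land\zeta)-\ell(\zeta)$ is additive across these blocks and that $\mu(\varphi,\zeta)$ factors across the product decomposition of the sub-poset of odd refinements of $\varphi$ into the sub-posets of odd refinements of each $\varphi_j$. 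This should reduce $C_\varphi^{\phi\psi}$ to the product form written in the Remark.

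Granted the product formula, non-negativity of $C_\varphi^{\phi\psi}$ reduces to non-negativity of each local factor. Substituting the signed-Catalan formula $\mu_\ell=(-1)^{\ell/2-1}\Cat_{\ell/2-1}$ recorded in the Remark following Lemma \ref{lem:poset}, each local factor becomes a signed sum of products of Catalan numbers and powers of $2$ indexed by odd set compositions of a single block. The signs are governed exactly by $p(\phi_j)+p(\varphi_j)$, so my first attempt would be to find a sign-reversing involution on the index set, paralleling the involutions used in the proofs of Theorem \ref{lem:main} and Lemma \ref{lem:M}, that leaves behind only terms of one fixed sign. Conjecture \ref{conj:euler} strongly suggests that the surviving fixed points should be counted by a combinatorial object linked to the tangent numbers A000182, presumably a class of alternating permutations or Andr\'e-style labellings of the refinement poset of $\varphi_j$ inside $\phi_j$.

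The main obstacle will be identifying the right set of fixed points. Unlike in the proof of Lemma \ref{lem:M}, where a single odd-size condition provided a natural pairing target, the competition here between the power-of-$2$ weight and the Catalan M\"obius factor makes a crude pair-and-cancel unlikely to succeed; one will probably need a two-step involution, first folding odd Catalan refinements onto non-crossing matchings and then folding set compositions onto permutations. A parallel and potentially cleaner route would be to interpret $C_\varphi^{\phi\psi}$ as a structure constant of the internal coproduct on $\NCPeak$ studied in the next section, or as the character value of some associated module; if such a representation-theoretic interpretation exists, non-negativity would follow automatically. I would pursue both routes in parallel, but I expect the involution approach to settle the easy special cases ($\varphi\in\mathscr{C}_\phi^\psi$, where $C_\varphi^{\phi\psi}=1$, and $\psi=\phi$, where every block of $\varphi$ is contained in a block of $\psi$ and the formula collapses), while the full conjecture will likely require the representation-theoretic viewpoint together with a resolution of Conjecture \ref{conj:euler}.
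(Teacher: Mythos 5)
This statement is labelled a Conjecture in the paper, and the paper offers no proof of it; the Remark immediately preceding it explicitly concedes that even the special case $\psi=\phi$ is established only conditionally on the open Conjecture~\ref{conj:euler}, and that a cancellation-free formula for $C_\varphi^{\phi\psi}$ remains open. There is therefore no proof in the paper to compare against, and your submission is, by your own description, a research plan rather than a proof.

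Within that plan there are several concrete gaps. The product formula for $C_\varphi^{\phi\psi}$ is only stated ``without proof'' in the Remark; you sketch why the sub-poset of odd refinements of $\varphi$ should factor block-by-block, but the additivity of $\ell(\psi\land\zeta)-\ell(\zeta)$ and the multiplicativity of $\mu(\varphi,\zeta)$ across the blocks are asserted rather than verified (Lemma~\ref{lem:poset} controls the poset structure within a single block, but the product decomposition of the index set $\{\zeta : \varphi\leq\zeta\leq\xi \text{ for some } \xi\in\mathscr{C}_\phi^\psi\}$ still needs to be made explicit; Corollary~\ref{lem:int2} uses such a decomposition only at the level of sums, not of M\"obius functions). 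More fundamentally, the sign-reversing involution that would carry the positivity argument is never constructed; you correctly observe that the competition between the Catalan M\"obius weights and the powers of $2$ blocks a crude pair-and-cancel, and this is precisely where the conjecture stands open. The ``easy special cases'' you cite also conflate two distinct situations from the Remark: the case where every block of $\varphi$ is contained in a block of $\psi$ (which forces $C_\varphi^{\phi\psi}\in\{0,1\}$) is essentially the opposite of the case $\psi=\phi$ (where each block of $\phi$ is contained in a block of $\varphi$, and positivity rests on Conjecture~\ref{conj:euler}), and neither is settled by the involution you describe. Since neither the representation-theoretic interpretation nor the resolution of Conjecture~\ref{conj:euler} that your plan ultimately leans on is supplied, the conjecture remains unproved by this proposal.
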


\begin{Remark}
	From Corollary \ref{cor:main}, to understand the coefficients $C_\varphi^{\phi\psi}$, it suffices to understand the case where $\phi=1|2|\cdots|n$. In this case, $\psi=1\dots \alpha_1|(\alpha_1+1)\dots \alpha_2|\cdots|(\alpha_{\ell(\psi)-1}+1)\cdots n$. Therefore, we can write the coefficients as $C^\alpha_\varphi$ where $\alpha\models n$ is a composition.
	
	If every block of $\varphi$ is contained in some block of $\psi$, then $\ell(\psi\land\zeta)=\ell(\zeta)$ for all $\zeta\geq\varphi$. In this case, it can be deduced that $C_\varphi^{\phi\psi}=0$ unless $\varphi\in\mathscr{C}_\phi^\psi$. If Problem \ref{conj:euler} is true, then $\psi=\phi$ implies $C_\varphi^{\phi\psi}$ is non-negative. A cancellation-free formula for $C_\varphi^{\phi\psi}$ remains open.
\end{Remark}

\begin{Conjecture}\label{conj:euler2}
	By taking the special case of $\psi=\phi$ in Lemma \ref{lem:main}, we have that
	if the Conjecture \ref{conj:euler} is true and if $n$ is even, then
	$$\sum_{k=1}^{n/2}\binom{n}{2k-1}2^{n-2k}\left(\sum_{m=0}^{2k-1}(-1)^mE(2k-1,m)\right)=2^{n-1}.$$
\end{Conjecture}

We would like to remark that Conjecture \ref{conj:euler} has recently been confirmed by Zhao, Lin and Zang \cite{ZLZ} using the method of generating functions. In addition, they prove a $q$-analogue of Conjecture \ref{conj:euler2}. They also provide two interesting and unexpected applications in enumerative combinatorics. The first one is that Conjecture \ref{conj:euler2} leads to a simple proof of that the Genocchi numbers are odd. This non-trivial fact was traditionally proved by several sophisticated methods, and no simple proof was known. As the second application, an alternative proof of Foata's divisibility property of $q$-tangent numbers is obtained as a consequence of the $q$-analogue of Conjecture \ref{conj:euler2}.

\section{The peak algebra as a left co-ideal of $\NCQSym$ under the internal coproduct}\label{Sec:com}
The goal of this section is to prove that $\NCPeak$ is a left co-ideal of $\NCQSym$ under the internal coproduct. 
In NCQSym, one can define (cf. \cite{BHRZ06}) an internal coproduct $\Delta^\circ:\NCQSym\to\NCQSym\otimes\NCQSym$ via the substitution $f(\mathbf{x})\mapsto f(\mathbf{xy})\mapsto f(\mathbf{x})\otimes f(\mathbf{y})$ with $\{\mathbf{x}_i\mathbf{y}_j\}$ ordered lexicographically. Together with the regular product, $m$, of power series, $(\NCQSym,m,\Delta^\circ)$ forms an ungraded bialgebra.

The internal coproduct is given by
$$\Delta^\circ(\mathbf{M}_\phi)=\sum_{\psi\land\varphi=\phi}\mathbf{M}_\psi\otimes \mathbf{M}_\varphi.$$

For example, $\Delta^\circ(\mathbf{M}_{1|2})=\mathbf{M}_{12}\otimes\mathbf{M}_{1|2}+\mathbf{M}_{1|2}\otimes(\mathbf{M}_{12}+\mathbf{M}_{1|2}+\mathbf{M}_{2|1})$. Note that $\varphi\land\psi\neq\psi\land\varphi$ in general, so $\Delta^\circ$ is not cocommutative. Since $\psi\land\varphi=\phi$ implies $\psi\leq\phi$, we will use the following refined formula
$$\Delta^\circ(\mathbf{M}_\phi)=\sum_{\psi\leq\phi \atop \psi\land\varphi=\phi}\mathbf{M}_\psi\otimes\mathbf{M}_\varphi.$$

We are ready to prove the main theorem of this section.

\begin{Theorem}\label{thm:internal} Let $\phi$ be an odd set composition. We have that
	$$\Delta^\circ(\boldeta_\phi)=\sum_{\psi\leq\phi}\mathbf{M}_{\psi}\otimes\left(\sum_{\varphi\leq\xi\text{ for some }\xi\in\mathscr{C}_\phi^\psi \atop \varphi\text{ is odd }}C_\varphi^{\phi\psi}\boldeta_\varphi\right)$$
	In particular, the set $\NCPeak$ forms a left co-ideal in $\NCQSym$ under the internal coproduct i.e. $\Delta^{\circ}(\NCPeak)\subseteq\NCQSym\otimes\NCPeak$.
\end{Theorem}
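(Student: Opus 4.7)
The plan is a direct computation that applies $\Delta^{\circ}$ to the monomial expansion of $\boldeta_{\phi}$ and then repackages the result using the combinatorial machinery of Section~\ref{Sec:combid}.

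First I would start from
$$\boldeta_{\phi} = (-1)^{p(\phi)} \sum_{\omega \leq \phi} 2^{\ell(\omega)}\, \mathbf{M}_{\omega}$$
and apply the internal coproduct formula $\Delta^{\circ}(\mathbf{M}_{\omega}) = \sum_{\psi \land \varphi = \omega} \mathbf{M}_{\psi} \otimes \mathbf{M}_{\varphi}$. Re-indexing the resulting double sum by the pair $(\psi, \varphi)$ rather than by $\omega = \psi \land \varphi$, and noting that $\psi \land \varphi \leq \phi$ automatically forces $\psi \leq \phi$ (because $\psi \leq \psi \land \varphi$), one arrives at
$$\Delta^{\circ}(\boldeta_{\phi}) = (-1)^{p(\phi)} \sum_{\psi \leq \phi}\, \mathbf{M}_{\psi} \otimes \sum_{\varphi \,:\, \psi \land \varphi \leq \phi} 2^{\ell(\psi \land \varphi)}\, \mathbf{M}_{\varphi}.$$
Proposition~\ref{lem:int1} then converts the constraint $\psi \land \varphi \leq \phi$, for fixed $\psi \leq \phi$, into the equivalent constraint that $\varphi \leq \xi$ for some $\xi \in \mathscr{C}_{\phi}^{\psi}$, yielding the cleaner expression
$$\Delta^{\circ}(\boldeta_{\phi}) = (-1)^{p(\phi)} \sum_{\psi \leq \phi}\, \mathbf{M}_{\psi} \otimes \sum_{\varphi \leq \xi \text{ for some } \xi \in \mathscr{C}_{\phi}^{\psi}} 2^{\ell(\psi \land \varphi)}\, \mathbf{M}_{\varphi}.$$

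Second I would expand the proposed right-hand side in the monomial basis. Using $\boldeta_{\varphi} = (-1)^{p(\varphi)} \sum_{\tau \leq \varphi} 2^{\ell(\tau)}\, \mathbf{M}_{\tau}$ and interchanging sums, the coefficient of $\mathbf{M}_{\psi} \otimes \mathbf{M}_{\tau}$ on the proposed right-hand side equals
$$2^{\ell(\tau)} \sum_{\substack{\tau \leq \varphi \leq \xi \text{ for some } \xi \in \mathscr{C}_{\phi}^{\psi} \\ \varphi \text{ is odd}}} (-1)^{p(\varphi)}\, C_{\varphi}^{\phi \psi}.$$
By Corollary~\ref{lem:int2}, the inner sum equals $(-1)^{p(\phi)} 2^{\ell(\psi \land \tau) - \ell(\tau)}$ whenever $\tau \leq \xi$ for some $\xi \in \mathscr{C}_{\phi}^{\psi}$ and vanishes otherwise, so the coefficient simplifies to $(-1)^{p(\phi)} 2^{\ell(\psi \land \tau)}$, which is in exact agreement with the coefficient extracted from the displayed formula for $\Delta^{\circ}(\boldeta_{\phi})$ above. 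This proves the identity.

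Third, for the left co-ideal claim, I would invoke Theorem~\ref{Thm:NCPeakBasis} (under the identification $\mathrm{SetCompOdd}$), which guarantees that $\{\boldeta_{\varphi} : \varphi \text{ is an odd set composition}\}$ spans $\NCPeak$. Since each right tensor factor in the established formula for $\Delta^{\circ}(\boldeta_{\phi})$ is a linear combination of $\boldeta_{\varphi}$ for odd $\varphi$, it lies in $\NCPeak$, so $\Delta^{\circ}(\boldeta_{\phi}) \in \NCQSym \otimes \NCPeak$; by linearity, $\Delta^{\circ}(\NCPeak) \subseteq \NCQSym \otimes \NCPeak$.

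The conceptually hard part of the argument is not in Theorem~\ref{thm:internal} itself but in the preparatory identity Corollary~\ref{lem:int2}, which in turn rests on Theorem~\ref{lem:main}: the coefficients $C_{\varphi}^{\phi \psi}$ were defined recursively so that exactly this inversion goes through, and the nontrivial combinatorics (cyclic-cancellation arguments, parity accounting, and the equivalence-class reduction on odd compositions) is carried out there. Given that machinery, the present theorem is essentially bookkeeping---expand, swap sums, and match coefficients.
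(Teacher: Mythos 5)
Your proof is correct and follows essentially the same route as the paper: both rely on the refined internal coproduct formula, Proposition~\ref{lem:int1}, and Corollary~\ref{lem:int2}, with the only cosmetic difference being that you match coefficients of $\mathbf{M}_\psi\otimes\mathbf{M}_\tau$ on both sides, whereas the paper manipulates the left-hand side forward into the right-hand side by substituting the Corollary~\ref{lem:int2} identity for $(-1)^{p(\phi)}2^{\ell(\psi\land\epsilon)}$ and swapping the order of summation.
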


\begin{proof}
	Fix an odd set composition $\phi$, Recall that $\boldeta_\phi=(-1)^{p(\phi)}\sum_{\chi\leq\phi}2^{\ell(\chi)}\mathbf{M}_\chi$. We have that 
	\begin{align*}
		\Delta^\circ(\boldeta_\phi)&=(-1)^{p(\phi)}\sum_{\chi\leq\phi}2^{\ell(\chi)}\Delta^\circ(\mathbf{M}_\chi)\\
		&=(-1)^{p(\phi)}\sum_{\chi\leq\phi}2^{\ell(\chi)}\left(\sum_{\psi\leq\chi \atop \psi\land\epsilon=\chi}\mathbf{M}_\psi\otimes\mathbf{M}_\epsilon\right)\\
		&=\sum_{\psi\leq\phi}\mathbf{M}_{\psi}\otimes\left((-1)^{p(\phi)}\sum_{\psi\leq\chi\leq\phi \atop \psi\land\epsilon=\chi}2^{\ell(\chi)}\mathbf{M}_\epsilon\right)
	\end{align*}
	Now we fix $\psi\leq\phi$ and for simplicity, we write $\mathscr{C}=\mathscr{C}_\phi^\psi$ and $C_{\varphi}=C_{\varphi}^{\phi\psi}$. By Lemma \ref{lem:int1} and Corollary \ref{lem:int2},
	\begin{align*}
		(-1)^{p(\phi)}\sum_{\psi\leq\chi\leq\phi \atop \psi\land\varphi=\chi}2^{\ell(\chi)}\mathbf{M}_\epsilon&=
		(-1)^{p(\phi)}\sum_{\epsilon\leq\xi ~\text{for some }\xi\in\mathscr{C}}2^{\ell(\psi\land\epsilon)}\mathbf{M}_\epsilon\\
		&=\sum_{\epsilon\leq\xi ~ \text{for some }\xi\in\mathscr{C}}\left(2^{\ell(\epsilon)}\sum_{\varphi\leq{\varphi}\leq\xi\text{ for some }\xi\in\mathscr{C} \atop {\varphi}\text{ is odd}}(-1)^{p({\varphi})}C_{\varphi}\right)\mathbf{M}_\epsilon\\
		&=\sum_{{\varphi}\leq\xi\text{ for some }\xi\in\mathscr{C} \atop {\varphi}\text{ is odd}}(-1)^{p(\varphi)}C_{\varphi}\left(\sum_{\epsilon\leq{\varphi}}2^{\ell(\epsilon)}\mathbf{M}_\epsilon\right)\\
		&=\sum_{{\varphi}\leq\xi\text{ for some }\xi\in\mathscr{C} \atop {\varphi}\text{ is odd}}C_{\varphi}\boldeta_{\varphi}.
	\end{align*}
\end{proof}

\begin{Example}
	Suppose $\phi=3|1|4|2$ and $\psi=3|124$, then $\mathscr{C}_\phi^\psi=\{3|1|4|2,1|3|4|2,1|4|3|2,1|4|2|3\}$. For $\zeta=134|2$, since $3|124\land 134|2=3|14|2$ and $|\{\xi\in\mathscr{C}_\phi^\psi:\zeta\leq\xi\}|=3$, we have $3-C_\zeta^{\phi\psi}=2^{\ell(3|14|2)-\ell(3|124)}$ i.e. $C_\zeta^{\phi\psi}=1$. Similarly, $C_{124|3}^{\phi\psi}=0$, $C_{3|124}^{\phi\psi}=0$ and $C_{1|234}^{\phi\psi}=1$. Hence,
	$$\Delta^\circ(\boldeta_{3|1|4|2})=\mathbf{M}_{3|124}\otimes(\boldeta_{3|1|4|2}+\boldeta_{1|3|4|2}+\boldeta_{1|4|3|2}+\boldeta_{1|4|2|3}+\boldeta_{134|2}+\boldeta_{1|234})+\cdots.$$
\end{Example}

\begin{Remark}
	The set $\NCPeak$ fails to be a right co-ideal in $\NCQSym$. For example,
	$$\Delta^\circ(\mathbf{K}_{(\emptyset,12)})=\mathbf{K}_{(\emptyset,12)}\otimes\mathbf{M}_{12}+4(\mathbf{M}_{12}+\mathbf{M}_{1|2})\otimes\mathbf{M}_{1|2}+4\mathbf{M}_{1|2}\otimes\mathbf{M}_{2|1}$$
	while $\mathbf{M}_{1|2}\notin\NCPeak$.
\end{Remark}

\section{Representation-theoretic interpretations of peak algebras and theta maps}\label{Sec:rep}
In this section, we find representation-theoretic interpretations of the theta maps for the Hopf algebra of symmetric functions and noncommutative symmetric functions. 
\subsection{Symmetric functions}

Let ${\rm Irr}(G)$ be the set of irreducible characters of the finite group $G$ and ${\rm cf}(G)$ denote the space of the class functions of $G$.

The space $${\rm cf}(\mathfrak{S}_\bullet)=\bigoplus_{n\geq 0}{\rm cf}(\mathfrak{S}_n)$$  is a Hopf algebra with the product and coproduct
$$f . g = {\rm Ind}(f.g) = {\rm Ind}_{\mathfrak{S}_n\times \mathfrak{S}_m}^{\mathfrak{S}_{n+m}} (f\otimes g)$$ and $${\rm Res}(f) = \bigoplus_{i+j=n}{\rm Res}_{\mathfrak{S}_i\otimes \mathfrak{S}_j}^{\frakS_n} (f),$$ respectively, where $f\in\frakS_n$ and $g\in \frakS_m$. This Hopf algebra is isomorphic to the Hopf algebra of symmetric functions $\Sym$ using the Frebenius characteristic map 
$${\rm ch}_{\Sym}: \cf(\frakS_\bullet) \rightarrow \Sym$$ give by 
$$z_\lambda \delta_\lambda \mapsto p_\lambda$$ where $z_\lambda = 1^{m_1}m_1! 2^{m_2} m_2! 3^{m_3} m_3! \dots n^{m_n}m_n!$ such that $\lambda$ has $m_i$ parts equal to $i$.

For $f,g\in \cf(G)$, define $$f\odot g$$ give by $$(f \cdot g)(a)=f(a)g(a)$$ for all $a\in G$. Also, for any $f \in \cf(G)$, define $$(f\odot -):\cf(G)\rightarrow \cf(G)$$ given by 
$$(f\odot -)(g) = (f\odot g).$$ In the following theorem, we show that the following functor describes $\Theta_\Sym$,
$$\Theta_{\cf(\frakS_\bullet)}|_{\cf(\frakS_n)}=  {\rm Ind} \circ ((\sgn\odot -) \otimes (\mathbbm{1}\odot -)) \circ {\rm Res}$$ where $\bbmone$ is the trivial character and $\sgn$ is the sign character of $\frakS_n$.

\begin{Theorem}
The following diagram 
\begin{center}
\begin{tikzpicture}
	\node(NCSym) at (4,0){$\mathsf{\cf(\frakS_\bullet)}$};
	\node(NCQSym) at (8,0){$\Sym$};
	\node(NCGamma) at (4,-4){${\rm Img}(\Theta_{\cf(\frakS_\bullet)})$};
	\node(NCPeak) at (8,-4){$\Omega$};
	
	\draw[thick,->]  (NCSym)->(NCQSym);
	\draw[thick,->]  (NCGamma)->(NCPeak);
	\draw[thick,->]   (NCSym)->(NCGamma);
	\draw[thick,->]   (NCQSym)->(NCPeak);
	
	\node(iota1) at (6,0.25){${\rm ch}_\Sym$};
	\node(iota2) at (6,-3.75){${\rm ch}_\Sym$};
	\node(ThetaNCSym) at (4-0.7,-2){$\Theta_{\cf(\frakS_\bullet)}$};
	\node(ThetaNCQSym) at (8.5,-2){$\Theta_\Sym$};
	

	
\end{tikzpicture} 
\end{center}
is commutative.
\end{Theorem}

\begin{proof}
Let $\delta_\lambda$ be the class function that is $1$ over all permutations with cycle type $\lambda$ and $0$ otherwise.   For $I\subseteq [\ell(\lambda)]$, let $\lambda_I$ be the partition whose parts are $\lambda_i, i\in I$. 
We have that 
$${\rm Res} (z_\lambda \delta_{\lambda})=\sum_{I \subseteq [\ell(\lambda)]}  z_{\lambda_I} \delta_{\lambda_I}\otimes z_{\lambda_{[\ell(\lambda)] \setminus I}} \delta_{[\ell(\lambda)] \setminus I}.$$
Then $$(\sgn\odot -)\otimes (\bbmone\odot -) \left(\sum_{I \subseteq [\ell(\lambda)]}  z_{\lambda_I} \delta_{\lambda_I}\otimes z_{\lambda_{[\ell(\lambda)] \setminus I}} \delta_{[\ell(\lambda)] \setminus I}\right) = $$
$$\sum_{I \subseteq [\ell(\lambda)]}  z_{\lambda_I} (\sgn\odot \delta_{\lambda_I})\otimes z_{\lambda_{[\ell(\lambda)] \setminus I}} (\bbmone\odot \delta_{[\ell(\lambda)] \setminus I}) =
$$
$$\sum_{I \subseteq [\ell(\lambda)]}  z_{\lambda_I} ((-1)^{\prod_{i\in I} \lambda_i} \delta_{\lambda_I})\otimes z_{\lambda_{[\ell(\lambda)] \setminus I}} (\delta_{[\ell(\lambda)] \setminus I}) .
$$
Moreover, $${\rm Ind}\left(\sum_{I \subseteq [\ell(\lambda)]}  z_{\lambda_I} ((-1)^{\prod_{i\in I} \lambda_i} \delta_{\lambda_I})\otimes z_{\lambda_{[\ell(\lambda)] \setminus I}} (\delta_{[\ell(\lambda)] \setminus I})\right) = \begin{cases} 2^{\ell(\lambda)}z_\lambda \delta_{\lambda} & \text{if $\lambda$ is odd,} \\ 0, & \text{otherwise.} \end{cases}$$
Therefore, 
$$\Theta_{\cf(\frakS_\bullet)}(\delta_{\lambda})=\begin{cases} 2 \delta_{\lambda} & \text{if $\lambda$ is odd,} \\ 0 & \text{otherwise.} \end{cases}$$
It is now straightforward to check that the diagram is commutative. 
\end{proof}

In Aguiar-Bergeron-Sottile's character theory of Hopf algebras \cite{A06}, we have that the theta map of $\Sym$ is the unique Hopf algebra morphism corresponding to the character $\overline{\zeta^{-1}}\zeta$ which takes $m_{\lambda}$ to $1$ when $\lambda=()$ or $(n)$ and $0$ otherwise. It is interesting to see that this character of the Hopf algebra $\Sym$ is equivalent to the character $(\sgn \odot \bbmone) \circ {\rm Res}$. So both the sign character $\sgn$ and the trivial character $\bbmone$ play an important role in defining the theta map and the character of $\Sym$ corresponding to the theta map. 

\subsection{Noncommutative symmetric functions} In this section, we first use the framework in \cite{AT} to find a representation-theoretic interpretation of the Hopf algebra of Noncommutative symmetric functions $\NSym$, and then we describe the representation-theoretic interpretation of $\Theta_\NSym$, which is identical to the dual of $\Theta_\QSym$.

Let $$\frakS^n = \frakS_2\times \frakS_2\times \cdots \times \frakS_2\times \frakS_1.$$ The group $\frakS_1$ has one ireducible character that we denote it by $1$. Consider that for each compostion $\alpha=(\alpha_1,\alpha_2, \dots, \alpha_{\ell})\vDash n$, 
$$(\sgn)_\alpha^{\bbmone}=\sgn^{\otimes(\alpha_1-1)} \otimes \bbmone \otimes \sgn^{\otimes(\alpha_2-1)} \otimes \bbmone \otimes  \cdots \otimes \bbmone \otimes \sgn^{\otimes(\alpha_{\ell}-1)} \otimes 1$$ is an irreducible character of $\frakS^\bullet$, and we have that 
$${\rm Irr}(\frakS^n)=\{(\sgn)_\alpha^{\bbmone}: \alpha\vDash [n]\}.$$

Then 
$$\cf(\frakS^\bullet)=\bigoplus_{n\geq 0} \cf(\frakS^n)$$ is a Hopf algebra by the representation theoretic functors ${\rm Inf}$ and ${\rm Dn}$ in \cite[(3.1) and (3.2)]{AT} that we describe them combinatorially. Give class functions $f=f_1\otimes f_2\otimes \cdots \otimes f_{n-1} \otimes 1\in \cf(\frakS^n)$ and $g=g_1\otimes g_2\otimes \cdots \otimes g_{m-1} \otimes 1\in \cf(\frakS^m)$, define
$${\rm Inf}(f\otimes g)=f_1\otimes f_2\otimes \cdots \otimes f_{n-1} \otimes \bbmone \otimes g_1\otimes g_2\otimes \cdots \otimes g_{m-1} \otimes 1\in \cf(\frakS^{n+m}),$$
where $\reg$ is the regular character of $\frakS_2$. Let $I=\subseteq [n]$ and $\overline{I}= [n]\setminus I$. Define for each $i\in I$, 
$$\left({\rm Dn}_{(I,\overline{I})}^{[n]}f\right)_i=\begin{cases}
\langle f_i, \bbmone \rangle 1 & \text{if $i=\max(I)$},\\
f_i & \text{if $i,i+1\in I$},\\
\langle f_i, \bbmone \rangle \sgn & \text{otherwise;} 
\end{cases}$$
and for each $i\in \overline{I}=[n]\setminus I$, define
$$\left({\rm Dn}_{(I,\overline{I})}^{[n]}f\right)_i=\begin{cases}
1 & \text{if $i=\max(\overline{I})$},\\
f_i & \text{if $i,i+1\in \overline{I}$},\\
\langle f_i, \bbmone \rangle \sgn & \text{otherwise.} 
\end{cases}$$
Then define 
$${\rm Dn}(f)=\left(\bigotimes_{i\in I} \left({\rm Dn}_{(I,\overline{I})}^{[n]} f\right)_i \right) \otimes \left(\bigotimes_{i\in \overline{I}} \left({\rm Dn}_{(I,\overline{I})}^{[n]} f\right)_i\right)\in \cf(\frakS^{|I|})\otimes \cf(\frakS^{n-|I|}).$$ 
For example, if $I=\{2,3,6,7,8\}\subseteq [10]$, then 
$${\rm Dn}_{(I,\overline{I})}^{[n]} f = \left(f_2\otimes \langle f_3, \bbmone\rangle \sgn \otimes f_6 \otimes f_7 \otimes \langle f_8, \bbmone\rangle 1\right) \otimes \left(\langle f_1, \bbmone \rangle \sgn \otimes f_4\otimes \langle f_5, \bbmone \rangle \sgn \otimes f_9\otimes 1 \right).$$

By \cite[Lemma 4.4]{AT}, we have that 
$${\rm ch}: \cf(\frakS^\bullet) \rightarrow \NSym$$ is a Hopf algebra morphism given by 
$$(\sgn)_{\alpha}^{\bbmone} \mapsto \mathbf{p}_{\alpha},$$ where 
$$(\sgn)_\alpha^{\bbmone}=\sgn^{\otimes(\alpha_1-1)} \otimes \bbmone \otimes \sgn^{\otimes(\alpha_2-1)} \otimes \bbmone \otimes  \cdots \otimes \bbmone \otimes \sgn^{\otimes(\alpha_{\ell}-1)} \otimes 1.$$ 

With respect to the Inner product of characters, since for each composition $\alpha$, $(\sgn)_\alpha^{\bbmone}$ is an irreducible character, we have $$\langle (\sgn)_\alpha^{\bbmone}, (\sgn)_\beta^{\bbmone}  \rangle=\begin{cases}
1 & \text{if $\alpha=\beta$}\\
0 & \text{otherwise.}
\end{cases}$$ 
Therefore, $\cf(\frakS^\bullet)$ with the dual of the functors {\rm Inf} and ${\rm Dn}$ is a Hopf algebra isomorphic to $\QSym$ where $(\sgn)_\alpha^{\bbmone}$ mapps to a shuffle basis of $\QSym$. 

In the following theorem, we show that for each $\alpha\vDash n$, the representation-theoretic interpretation of $\Theta_\NSym$ is defined by 
$$\Theta_{\cf(\frakS^\bullet)}((\sgn)_\alpha^{\bbmone})=\begin{cases}
2 (\sgn)_\alpha^{\bbmone} & \text{if $\alpha$ is odd,}\\
0 & \text{otherwise.}
\end{cases}$$
This suggests that there may be a functor in representation theory that gives us the theta map of $\cf(\frakS^\bullet)$ as the characters map to characters. 

\begin{Theorem}
The following diagram 
\begin{center}
\begin{tikzpicture}
	\node(NCSym) at (4,0){$\mathsf{\cf(\frakS^\bullet)}$};
	\node(NCQSym) at (8,0){$\NSym$};
	\node(NCGamma) at (4,-4){${\rm Img}(\Theta_{\cf(\frakS^\bullet)})$};
	\node(NCPeak) at (8,-4){$\Pi^*$};
	
	\draw[thick,->]  (NCSym)->(NCQSym);
	\draw[thick,->]  (NCGamma)->(NCPeak);
	\draw[thick,->]   (NCSym)->(NCGamma);
	\draw[thick,->]   (NCQSym)->(NCPeak);
	
	\node(iota1) at (6,0.25){${\rm ch}_{\NSym}$};
	\node(iota2) at (6,-3.75){${\rm ch}_{\NSym}$};
	\node(ThetaNCSym) at (4-0.7,-2){$\Theta_{\cf(\frakS^\bullet)}$};
	\node(ThetaNCQSym) at (8.6,-2){$\Theta_\NSym$};
	

	
\end{tikzpicture} 
\end{center}
is commutative.
\end{Theorem}

\begin{remark} A family of supercharacter theories for the group of unipotent upper-triangular matrices, with inflation and restriction functors, provides a representation-theoretic interpretation of $\NCSym$ \cite{ABT}. It would be interesting to find a functor in representation theory that is equivalent to the theta map of $\NCSym$. 
\end{remark}

\section{Acknowledgement}
The authors would like to thank Nantel Bergeron for introducing the problem and to thank Zhicong Lin and Stephanie van Willigenburg for helpful conversations.

\end{document}